\UseRawInputEncoding
\documentclass{amsart}

\usepackage{amssymb,amsmath,amsthm}
\usepackage{amsaddr}
\usepackage{amsfonts}
\usepackage{authblk}
\usepackage{graphicx}
\usepackage{epstopdf}
\usepackage{caption}
\usepackage{subcaption}
\usepackage{hyperref}
\usepackage{color}
\usepackage{marginnote}
\usepackage{float}
\usepackage{graphicx}
\usepackage{comment}
\usepackage{tikz-cd}
\usetikzlibrary{arrows}
\usetikzlibrary{intersections,positioning}
\tikzset{>=latex}
\usepackage{lipsum}%
\usepackage{a4wide}

\usepackage[letterpaper,top=2cm,bottom=2cm,left=3cm,right=3cm,marginparwidth=1.75cm]{geometry}

\allowdisplaybreaks

\newcounter{newcounter}[section]
\numberwithin{equation}{section}
\numberwithin{newcounter}{section}
\numberwithin{figure}{section}
\numberwithin{footnote}{section}

\newtheorem{thm}[newcounter]{Theorem}
\newtheorem{defi}[newcounter]{Definition}
\newtheorem{prop}[newcounter]{Proposition}
\newtheorem{lem}[newcounter]{Lemma}
\newtheorem{cor}[newcounter]{Corollary}
\newtheorem{rem}[newcounter]{Remark}

\newcommand{\fe}{\mathfrak{e}}
\newcommand{\fn}{\mathfrak{n}}

\newcommand{\R}{\mathbb{R}}
\newcommand{\C}{\mathbb{C}}
\newcommand{\Z}{\mathbb{Z}}
\newcommand{\D}{\mathbb{D}}
\newcommand{\N}{\mathbb{N}}
\newcommand{\Q}{\mathbb{Q}}

\title[ECH constraints and twist dynamics in the spatial isosceles three-body problem]{ECH constraints and twist dynamics in the spatial isosceles three-body problem}
\date{\today}

\begin{document}
\maketitle

\begin{center}
\normalsize
Xijun Hu\textsuperscript{1}, Lei Liu\textsuperscript{1}, Yuwei Ou\textsuperscript{1},
Zhiwen Qiao\textsuperscript{1} and Pedro A. S. Salom\~ao\textsuperscript{2} \par \bigskip

\textsuperscript{1} School of Mathematics, Shandong University  \par
\textsuperscript{2} Shenzhen International Center for Mathematics, SUSTech  \par

\end{center}

\begin{abstract}
We study dynamical constraints arising from Embedded Contact Homology (ECH) in the spatial isosceles three-body problem. For energies below the critical
level, the dynamics on the energy surface is identified with a Reeb flow on the tight three-sphere.
We obtain quantitative estimates for the Euler orbit, including monotonicity of its transverse rotation number and a strict inequality comparing its action with the contact volume. Combined with the ECH classification of Reeb flows on the tight three-sphere with two simple periodic orbits, these estimates rule out the two-orbit scenario, thus forcing every compact energy surface below the critical level to have infinitely many periodic orbits.
The result admits a dynamical interpretation via disk-like global surfaces of section bounded by the Euler orbit. In this setting, the rotation number and the contact volume define a non-trivial twist interval which encodes the relative winding of periodic orbits.
For energies above the critical level, where the energy surface is non-compact, we prove the existence of infinitely many periodic orbits and infinitely many parabolic trajectories via twist estimates near infinity.
\end{abstract}

\tableofcontents

\section{Introduction and main results}

The spatial isosceles three-body problem is an important model in celestial mechanics.
It describes the motion of three bodies under Newton¡¯s law of gravitation,
where two bodies have equal masses and move symmetrically around a fixed axis,
and the third body moves along this axis.
The system is far from integrable, and its non-trivial dynamical structure exhibits mechanisms that also arise in more complex celestial mechanics models.

Let $\alpha>0$ be the mass ratio between the first two bodies and the third body, and let $\varpi>0$ be the angular momentum of the three-body system. Fixing the mass center at the origin, the Hamiltonian of the spatial isosceles three-body problem in reduced form is given by
\begin{equation}\label{equ: Ham1}
H(p_r,p_z,r,z)=\frac{p_r^2+p_z^2}{2}+V(r,z),\quad V(r,z):=\frac{\varpi^2}{2r^2}-\frac{1}{r}-\frac{4}{\alpha\sqrt{r^2+(1+2\alpha)z^2}},
\end{equation}
where $(r,z)\in\mathbb{R}_+\times \mathbb{R}$ are suitable coordinates determined by the first two bodies, and $(p_r,p_z)\in \mathbb{R}^2$ are the corresponding momenta, see \cite{HLOSY2023}. Hamilton's equations for $H$ are then given by
\begin{equation}\label{equ: Ham system}
\left\{\begin{aligned}
 \dot r = p_r, \qquad \dot p_r & = \frac{\varpi^2}{r^3} -\frac{1}{r^2} -\frac{4r}{\alpha\sqrt{(r^2+(1+2\alpha)z^2)^3}}, \\
\dot z  = p_z, \qquad \dot p_z &   =  -\frac{4(1+2\alpha)z}{\alpha\sqrt{(r^2+(1+2\alpha)z^2)^3}}.
\end{aligned}
\right.
\end{equation}

The dynamics of the spatial isosceles three-body problem on the energy surface $H^{-1}(h), h<0,$ is very rich. The restricted case,  corresponding to  $\alpha=+\infty$, was studied by Sitnikov \cite{Sitnikov1960}, Alekseev \cite{Ale69}, and Moser \cite{Ms73}. Alekseev \cite{Alekseev1972} and Moeckel \cite{Moeckel1984} later considered large values of $\alpha$ and small values of $\varpi$, finding a diversity of complex motions via symbolic dynamics.  Shibayama \cite{Shibayama2009} studied periodic orbits using variational methods and established the KAM stability of the Euler orbit for every $\mathfrak e:=\sqrt{1+2h\varpi^2\beta^2}\in(0,1)$ sufficiently small. Here, $\beta:=1/(1+4/\alpha)\in(0,1)$. A quantitative refinement was recently given in \cite{C-RLS2025}, where non-resonant Hopf links were proved to exist for $\mathfrak e>0$ sufficiently small. Hu, Ou, and Tang \cite{HOT2023} studied the stability of the Euler orbit for every $\mathfrak e$ sufficiently close to $1$ using a blow-up argument. Hu, Qiao, and Yu \cite{HQY25} later considered $\beta>0$ sufficiently small to study the problem as a perturbation of the spatial Kepler problem. In the general setting, abstract results based on pseudo-holomorphic curves were applied in \cite{HLOSY2023} to obtain global surfaces of section, Hopf links and  periodic orbits.

We aim to study the dynamics of the mechanical Hamiltonian \eqref{equ: Ham1} assuming $\alpha,\varpi>0$ and energy $h<0$, or equivalently assuming $(\beta,\mathfrak{e})\in (0,1)\times (0,1)$. Notice that $\varpi^2h$ is an essential parameter of \eqref{equ: Ham1}. Hence, we may fix $h=-1$ and study the dynamics on the energy surface $\mathfrak M:=H^{-1}(-1)\subset \R^4$ without loss of generality. From Moeckel \cite{Moeckel1984}, see also \cite[Proposition 3.1]{HLOSY2023}, the energy surface $\mathfrak M$ is diffeomorphic to the three-sphere for every $(\beta,\mathfrak e)$ satisfying $0<\beta^2+\mathfrak e^2<1$, and diffeomorphic to $S^2\times \mathbb R$ for every $(\beta,\mathfrak e)\in (0,1)\times (0,1)$ satisfying $\beta^2+\mathfrak e^2>1$. The case $\beta^2+\mathfrak e^2=1$ is called critical, and $\mathfrak M$ is unbounded in the $z$-direction, diffeomorphic to $S^2 \times \R$, and approaches the limiting points $(p_r,p_z,r,z)= (0,0,\varpi^2,\pm\infty)$ at infinity.

A classical periodic orbit of \eqref{equ: Ham1} is the Euler orbit given by
$$
\zeta_e = \mathfrak{M} \cap \{z=p_z=0\},
$$
corresponding to the aligned motion of the three-bodies.
Notice that $\zeta_e$ exists for every $(\beta,\mathfrak{e}) \in (0,1) \times (0,1)$ and is a brake-orbit, i.e., it touches the circle-like boundary of the Hill region $V^{-1}((-\infty,-1])$ twice along its prime period, precisely at those two points in $z=0$ where $V(r,0)=-1$. It is proved in \cite{HLOSY2023} that the (transverse) rotation number of the Euler orbit satisfies $\rho_e\in (2, +\infty)$, and that $\rho_e$ plays a decisive role in the existence of other periodic orbits and their symmetries in the Hill region. In suitable coordinates,  $\rho_e$ is determined by integrating the following Ince-type Hill equation
\begin{equation}\label{Hilleq}
\ddot x(\theta) = -\left( 1+ \frac{7\beta}{1+\mathfrak{e} \cos \theta} \right) x(\theta), \quad \theta \in \R / 2\pi \Z.
\end{equation}
Studying this equation and determining the properties of $\rho_e$ as a function of the parameters $(\beta,\mathfrak{e})$ is a challenging problem and still an object of extensive research, see \cite{HLOSY2023,HOT2023,HLS14,MaWi66}. The monotonicity of $\rho_e$ with respect to $\beta$ is straightforward to prove, see \cite{HLOSY2023, HOT2023}. However, the monotonicity of $\rho_e$ with respect to $\mathfrak{e}$ is a subtle question. Our first result is the following theorem proving the monotonicity of $\rho_e$ with respect to $\mathfrak e$.

\begin{thm}\label{thm: main1}
 For every $(\beta,\mathfrak{e})\in (0,1) \times [0,1)$ denote the rotation number of the Euler orbit $\zeta_e$ by $\rho_{\beta,\mathfrak e}:=\rho_e$. Then for every $\beta\in (0,1)$ the function $\mathfrak{e} \mapsto \rho_{\beta,\mathfrak{e}}, \mathfrak{e} \in [0,1),$ is non-decreasing. Moreover, $$\rho_{\beta,\mathfrak e}>\rho_{\beta,0}=1+\sqrt{1+7 \beta}, \qquad \forall (\beta,\mathfrak{e})\in (0,1)\times (0,1).$$
\end{thm}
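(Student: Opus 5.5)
We will reduce the statement to a spectral statement about the Hill equation \eqref{Hilleq}. First we recall from \cite{HLOSY2023} how $\rho_e$ is read off from \eqref{Hilleq}: in the coordinates used there, the transverse linearized flow along $\zeta_e$ over one period is the monodromy of \eqref{Hilleq} over $\theta\in[0,2\pi]$ composed with one extra full turn. Hence $\rho_{\beta,\mathfrak e}=1+\omega(\beta,\mathfrak e)$, where $\omega$ is the rotation number of \eqref{Hilleq} over one period, normalized so that $x''=-c^2x$ gives $\omega=c$. For $\mathfrak e=0$ the equation has constant coefficients and $\omega=\sqrt{1+7\beta}$, which gives $\rho_{\beta,0}=1+\sqrt{1+7\beta}$. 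It then suffices to show that $\mathfrak e\mapsto\omega(\beta,\mathfrak e)$ is non-decreasing and that $\omega(\beta,\mathfrak e)>\omega(\beta,0)$ for $\mathfrak e>0$.

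Pointwise Sturm comparison cannot work, since $q_{\mathfrak e}(\theta)=1+7\beta/(1+\mathfrak e\cos\theta)$ increases in $\mathfrak e$ near $\theta=\pi$ and decreases near $\theta=0$. We will instead embed the equation in a spectral family $-x''-\frac{7\beta}{1+\mathfrak e\cos\theta}x=\lambda x$ with twisted boundary conditions $x(\theta+2\pi)=e^{i\varphi}x(\theta)$. The rotation number of $x''+(\lambda+7\beta/(1+\mathfrak e\cos\theta))x=0$ is continuous and non-decreasing in $\lambda$. Moreover, $\omega(\beta,\mathfrak e)\ge k+\varphi/2\pi$ exactly when the $k$-th twisted eigenvalue $\lambda_k(\varphi,\mathfrak e)$ is $\le 1$, in the standard correspondence between rotation numbers and Floquet eigenvalues. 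Monotonicity of $\omega$ in $\mathfrak e$ therefore follows once we show that every branch $\mathfrak e\mapsto\lambda_k(\varphi,\mathfrak e)$ is non-increasing. The potential is even in $\theta$, so for $\varphi\in\{0,\pi\}$ the eigenfunctions split into even and odd ones, i.e.\ Neumann/Dirichlet problems on $[0,\pi]$. For general $\varphi$ we will use the ordering of twisted eigenvalues between periodic and antiperiodic ones.

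The core step is the Hellmann--Feynman formula
\[
\partial_{\mathfrak e}\lambda_k=-\frac{\int_0^{2\pi}\frac{7\beta\cos\theta}{(1+\mathfrak e\cos\theta)^2}|x|^2\,d\theta}{\int_0^{2\pi}|x|^2\,d\theta},
\]
so we need $\int \cos\theta\,(1+\mathfrak e\cos\theta)^{-2}|x|^2\,d\theta\ge0$ for the relevant eigenfunctions. This is the main obstacle, since it says eigenfunctions concentrate more mass near $\theta=0$ (the deeper part of the well) than near $\theta=\pi$. We will first try the natural Kepler-type change of variables $x=(1+\mathfrak e\cos\theta)^{-1}u$, possibly with a reparametrization $dt=d\theta/(1+\mathfrak e\cos\theta)$. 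The aim is to turn the $\mathfrak e$-derivative into an expression in $u$ with a manifest sign, possibly after an integration by parts that uses the eigenvalue equation to trade $|x'|^2$ against $|x|^2$. A fallback is a direct Prüfer-angle argument on the half-period $[0,\pi]$. There we would compare the phase $\arg(x+ix')$ at $\theta=\pi$ for $\mathfrak e$ and $\mathfrak e'$ via the variational equation of the angle, and exploit that the perturbation $\partial_{\mathfrak e}q$ is an odd function of $\cos\theta$ with positive mean, $\int_0^{2\pi}\partial_{\mathfrak e}q\,d\theta=7\beta\cdot 2\pi\,\mathfrak e(1-\mathfrak e^2)^{-3/2}\ge0$.

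For the strict inequality, monotonicity already gives $\omega(\beta,\mathfrak e)\ge\omega(\beta,0)$. Strictness will come from analyticity of the eigenvalue branches in $\mathfrak e$, which lets us reduce to showing that $\omega$ is not locally constant near $\mathfrak e=0$. A second-order perturbation expansion at $\mathfrak e=0$ gives the positive average $\langle q_{\mathfrak e}\rangle-q_0=7\beta\big((1-\mathfrak e^2)^{-1/2}-1\big)>0$ as the leading correction. Combining this with monotonicity and the impossibility of an analytic non-increasing-then-constant branch on $(0,1)$, we obtain $\rho_{\beta,\mathfrak e}>\rho_{\beta,0}$ for all $\mathfrak e\in(0,1)$.
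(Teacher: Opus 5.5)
Your reduction is correct and is essentially the paper's own, in different coordinates. At a point where $\lambda_k(\varphi,\mathfrak e)=1$, implicit differentiation with $\partial_\beta\lambda_k<0$ shows that the sign of $\partial_{\mathfrak e}\lambda_k$ equals the sign of $\beta'(\mathfrak e)$ along the paper's $\omega$-degenerate curve (Lemma~\ref{lem: beta'e}). The paper moves $\beta$ at a fixed level; you move the level at fixed $\beta$. It suffices to know the sign at points where $\lambda_k=1$; monotonicity of entire branches is more than you need.

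\textbf{The gap.} The one inequality everything rests on is never proved. You need the sign of $\int_0^{2\pi}\cos\theta\,(1+\mathfrak e\cos\theta)^{-2}|x|^2\,d\theta$ for the relevant twisted eigenfunctions, for all $\mathfrak e\in(0,1)$ and all twists $\varphi$. You call it the main obstacle and list two strategies, but do not show that either yields a definite sign.

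\textbf{The sign is also backwards.} Since $\partial_{\mathfrak e}\bigl(-\frac{7\beta}{1+\mathfrak e\cos\theta}\bigr)=\frac{7\beta\cos\theta}{(1+\mathfrak e\cos\theta)^2}$, Hellmann--Feynman gives $\partial_{\mathfrak e}\lambda_k=+\int_0^{2\pi}\frac{7\beta\cos\theta}{(1+\mathfrak e\cos\theta)^2}|x|^2\,d\theta\big/\int_0^{2\pi}|x|^2\,d\theta$. So you need this integral to be $\le 0$. Correspondingly, the well $-7\beta/(1+\mathfrak e\cos\theta)$ is deepest at $\theta=\pi$, not at $\theta=0$. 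The inequality $\ge 0$ that you set out to prove fails wherever it is needed: the paper shows the integral is strictly negative at every relevant point. A quick check is the high-frequency limit, where $|x|^2$ is weakly close to constant and $\int_0^{2\pi}\cos\theta(1+\mathfrak e\cos\theta)^{-2}\,d\theta=-2\pi\mathfrak e(1-\mathfrak e^2)^{-3/2}<0$.

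\textbf{Why your proposed tools do not close it.} A positive mean of $\partial_{\mathfrak e}q$ controls the weighted integral only when the weight $|x|^2$ is nearly uniform. The difficulty lies exactly in the low-frequency content of $x$.

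\textbf{The missing idea in the paper.} Expand $x=\sum_n a_ne^{i(n+\nu)\theta}$ and use the eigen-equation to rewrite the integral as $\frac{2\pi}{7\mathfrak e\beta}\sum_n C_n|A_n|^2$, with $A_n=(1-(n+\nu)^2)a_n$ and $C_n=\frac{7\beta}{(n+\nu)^2-1}-1$. For $\beta<1$, $C_n>0$ only when $1<(n+\nu)^2<1+7\beta$, which forces $n\in\{-3,-2,1,2\}$. These finitely many possibly positive terms are dominated using the three-term recurrence $C_nA_n=\frac{\mathfrak e}{2}(A_{n-1}+A_{n+1})$. This turns $\sum_{n=-5}^{4}C_n|A_n|^2$ into a quadratic form $Q_4(\nu)$ in $(A_0,A_{-1})$, whose negative definiteness is checked by explicit polynomial inequalities for every $\nu\in(0,1)$, with a separate computation for $\nu=0$. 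Nothing in your sketch plays this role.

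\textbf{Two smaller points.}
\begin{itemize}
\item Interlacing of twisted eigenvalues between periodic and antiperiodic ones cannot transfer monotonicity in $\mathfrak e$ from band edges to the interior of a band. The sign really has to be established for every $\varphi$.
\item The $\mathfrak e^2$-coefficient of $\lambda_k$ at $\mathfrak e=0$ is not just the average shift $-\frac{7\beta}{2}$. Second-order coupling to the modes $n\pm1$ adds $+\frac{49\beta^2}{2(3+28\beta)}$. Only the net value $-\frac{21\beta(1+7\beta)}{2(3+28\beta)}<0$, equivalent to the paper's computation $\beta''(0)<0$, gives the non-constancy you use for strictness.
\end{itemize}
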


The proof of Theorem \ref{thm: main1} is based on Fourier analysis and on Morse-type estimates
for the operator associated with the linear differential equation \eqref{Hilleq}.
This result plays a central role in the proof of Theorem \ref{thm: main2} below and its applications.

In \cite[Theorem 2.3]{HLOSY2023}, the authors prove that for every $(\beta,\mathfrak{e}) \in (0,1) \times (0,1)$ satisfying $0<\beta^2+\mathfrak e^2<1$, the Euler orbit $\zeta_e\subset \mathfrak{M}\equiv S^3$ has rotation number $\rho_e>2$ and is the binding of an open book decomposition whose pages are disk-like global surfaces of section. In particular, $\mathfrak{M}$ carries either two or infinitely many periodic orbits.
It is also proved in \cite{HLOSY2023} that for a dense set of parameters $(\beta,\mathfrak{e})$, $\mathfrak{M}$ contains infinitely many periodic orbits. The question of whether $\mathfrak{M}$ always admits infinitely many periodic orbits was left open in \cite{HLOSY2023}.

Since $\mathfrak M\equiv S^3$ is a regular compact energy surface of the mechanical Hamiltonian \eqref{equ: Ham1}, there exists a smooth contact form $\lambda$ on $\mathfrak M$ so that its Reeb vector field $R$ is parallel to the Hamiltonian vector field. This contact form satisfies $d\lambda = \tilde \omega_0|_{\mathfrak M}$, where $\tilde \omega_0=dp_r\wedge dr + dp_z \wedge dz$ is the standard symplectic form on $\R^4$. Moreover, the contact structure $\xi=\ker \lambda$ is tight, and thus the Hamiltonian flow on $\mathfrak{M}\equiv S^3$ is equivalent to a Reeb flow on the tight three-sphere with contact form also denoted $\lambda$.

In \cite{C-GHHL2023}, Cristofaro-Gardiner, Hryniewicz, Hutchings, and Liu used tools from ECH to obtain the following beautiful result on Reeb dynamics: if a Reeb flow on the tight three-sphere $(S^3,\lambda)$ admits precisely two geometrically distinct simple periodic orbits, say $\gamma_1$ and $\gamma_2$, then the following algebraic relations between their rotation numbers and Reeb periods are verified
\begin{equation}\label{equ: relation in C-GHHL2023}
\mathrm{vol}(S^3,\lambda)=\frac{T_1^2}{\rho_1-1}=\frac{T_2^2}{\rho_2-1}=T_1T_2\quad \text{and}\quad \rho_1,\rho_2\in (1,+\infty) \setminus \mathbb Q.
\end{equation}
Here, $\mathrm{vol}(S^3,\lambda) :=\int_{S^3}\lambda\wedge d\lambda>0$ is the contact volume of $S^3$ with respect to the contact form $\lambda$.
It is also known in this case that $\gamma_1$ and $\gamma_2$ bind open book decompositions whose pages are disk-like global surfaces of section, and the first return map is a pseudo-rotation, i.e., it contains precisely one fixed point and no other periodic point. The two or infinitely many dychotomy was futher discussed in \cite{C-GHHL2024, C-GHP}.

It is one of the goals of this paper to show that ${\rm vol}(\mathfrak{M}) \neq T_{\beta,\mathfrak{e}}^2/(\rho_{\beta,\mathfrak{e}}-1)$ and thus conclude once and for all that $\mathfrak{M}$ admits infinitely many periodic orbits for every $(\beta,\mathfrak{e})\in (0,1) \times (0,1)$ satisfying $0<\beta^2+\mathfrak{e}^2<1$. Here, $T_{\beta,\mathfrak{e}}=T_e$  is the Reeb period of $\zeta_e$, which coincides with its symplectic action, and $\rho_{\beta,\mathfrak{e}}=\rho_e$ is the rotation number of $\zeta_e\subset \mathfrak{M}=\mathfrak{M}_{\beta,\mathfrak{e}}$. Notice that both $T_{\beta,\mathfrak{e}}$ and ${\rm vol}(\mathfrak{M})$ do not depend on the choice of $\lambda$.

Our second result combines Theorem \ref{thm: main1} and an estimate on ${\rm vol}(\mathfrak{M})$ to show that conditions in \eqref{equ: relation in C-GHHL2023} fail and thus $\mathfrak{M}$ cannot admit precisely two periodic orbits.

\begin{thm} \label{thm: main2}
For every $(\beta,\mathfrak{e})\in (0,1)\times (0,1)$ satisfying $0<\beta^2+\mathfrak{e}^2<1$, we have
\begin{equation}\label{ineq_vol}
{\rm vol}(\mathfrak{M}) > \frac{T_{\beta,\mathfrak e}^2}{\sqrt{1+7\beta}} > \frac{T_{\beta,\mathfrak{e}}^2}{\rho_{\beta,\mathfrak{e}}-1}, \qquad T_{\beta,\mathfrak e} = 2\pi \left(\frac{1-\sqrt{1-\mathfrak e^2}}{\sqrt{2} \beta} \right).
\end{equation}
\end{thm}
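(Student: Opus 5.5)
The plan is to prove the first inequality in \eqref{ineq_vol} by an explicit lower bound on ${\rm vol}(\mathfrak M)$, and to read off the second one directly from Theorem \ref{thm: main1}. Indeed, Theorem \ref{thm: main1} gives $\rho_{\beta,\mathfrak e}-1>\sqrt{1+7\beta}$ for $\mathfrak e\in(0,1)$, hence $T^2_{\beta,\mathfrak e}/\sqrt{1+7\beta}>T^2_{\beta,\mathfrak e}/(\rho_{\beta,\mathfrak e}-1)$.

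\emph{Step 1: the period.} On $\{z=p_z=0\}$ the dynamics is the one-degree-of-freedom system with potential $V(r,0)=\frac{\varpi^2}{2r^2}-\frac{1}{\beta r}$. I would compute the action $T_{\beta,\mathfrak e}=\oint p_r\,dr=2\int_{r_-}^{r_+}\sqrt{2(-1-V(r,0))}\,dr$ by the classical Kepler radial-action integral. Since $\varpi^2 h$ is the essential parameter, I would use the normalization in which the stated closed form $2\pi(1-\sqrt{1-\mathfrak e^2})/(\sqrt2\beta)$ comes out, expressing the turning points $r_\pm$ through $\mathfrak e$ as $r_\pm\propto 1/(1\mp\mathfrak e)$.

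\emph{Step 2: the volume as a phase-space integral.} Any two primitives of $\tilde\omega_0|_{\mathfrak M}$ differ by a closed, hence exact, $1$-form on $S^3$, so ${\rm vol}(\mathfrak M)$ is independent of $\lambda$. Taking the Liouville form and applying Stokes gives ${\rm vol}(\mathfrak M)=\int_{\{H\le -1\}}\tilde\omega_0\wedge\tilde\omega_0=2\,{\rm Leb}(\{H\le-1\})$. Integrating out the momenta (discs of area $2\pi(-1-V)$) yields
\[
{\rm vol}(\mathfrak M)=4\pi\int_{\{V\le -1\}}\bigl(-1-V(r,z)\bigr)\,dr\,dz .
\]

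\emph{Step 3: the key estimate (main obstacle).} Set $k=1+2\alpha$. I would use the strict inequality
\[
-\frac{4}{\alpha\sqrt{r^2+kz^2}}\le -\frac{4}{\alpha r}+\frac{2k z^2}{\alpha r^3},
\]
which follows from $(1+x)^{-1/2}\ge 1-x/2$ and is strict for $z\neq0$. This gives $-1-V(r,z)\ge a(r)-b(r)z^2$, where $a(r)=-1-V(r,0)$ and $b(r)=2k/(\alpha r^3)$. Because the integrand is nonnegative on the Hill region and the comparison region is contained in it, the inner integral is bounded below by $\int(a-bz^2)_+\,dz=\frac43 a^{3/2}b^{-1/2}$. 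This leaves a one-dimensional integral
\[
{\rm vol}(\mathfrak M)>\frac{16\pi}{3}\sqrt{\frac{\alpha}{2k}}\int_{r_-}^{r_+}a(r)^{3/2}r^{3/2}\,dr .
\]
Writing $2k/\alpha$ in terms of $\beta$ should produce the factor related to $1+7\beta$, since $7\beta$ is exactly the vertical frequency correction at the circular orbit.

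The hard part is to evaluate or bound this Kepler-type integral, for instance via the eccentric-anomaly substitution $r=c(1-\mathfrak e\cos u)$, and to compare the result with $T^2/\sqrt{1+7\beta}$. I expect the two sides to agree to leading order as $\mathfrak e\to0$, so some slack is needed. I would first try to prove the inequality of the resulting explicit functions of $\mathfrak e$ by Cauchy--Schwarz/Jensen, comparing $\int a^{3/2}r^{3/2}$ with $\bigl(\int a^{1/2}\bigr)^2$. If that is not sharp enough, I would refine Step 3 by keeping the exact $z$-integral $\int\bigl(-1-V\bigr)_+dz$ instead of the quadratic truncation.

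Strictness of the final inequality comes from the strict inequality in Step 3 on a set of positive measure.
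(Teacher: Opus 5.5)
Your Steps 1 and 2, and the derivation of the second inequality from Theorem~\ref{thm: main1}, are fine. Step 3, however, has a fatal gap: the lower bound it produces is strictly smaller than $T_{\beta,\fe}^2/\sqrt{1+7\beta}$ for every admissible $(\beta,\fe)$. So no Cauchy--Schwarz or Jensen manipulation can close it.

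The reason is that $2k/\alpha=2(1+2\alpha)/\alpha=(1+7\beta)/(2\beta)$. Hence your $b(r)$ is exactly the $z^2$-Taylor coefficient of the Kepler-type potential
\[
V_2(r,z)=\frac{\varpi^2}{2r^2}-\frac{1}{\beta\sqrt{r^2+(1+7\beta)z^2}},
\]
which coincides with $V$ on $\{z=0\}$. The same convexity inequality $(1+x)^{-1/2}\ge 1-x/2$ gives $V_2\le V(r,0)+b(r)z^2$, strictly for $z\neq 0$. Therefore $(a-bz^2)_+\le(-1-V_2)_+$, and your bound is strictly less than $4\pi\iint(-1-V_2)_+\,dr\,dz={\rm vol}(\mathfrak M_2)$.

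After the substitution $w=\sqrt{1+7\beta}\,z$, this last integral equals $(1+7\beta)^{-1/2}$ times the volume of the reduced spatial Kepler problem (mass parameter $1/\beta$, angular momentum $\varpi$, energy $-1$). That volume is exactly $T_{\beta,\fe}^2$, for instance via Delaunay actions, the region being $\varpi\le G\le L\le 1/(\sqrt2\beta)$. So ${\rm vol}(\mathfrak M_2)=T_{\beta,\fe}^2/\sqrt{1+7\beta}$, and your Step~3 bound falls strictly below the target.

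Quantitatively, the ratio of your bound to $T_{\beta,\fe}^2/\sqrt{1+7\beta}$ depends only on $\fe$. It tends to $1$ as $\fe\to 0$, which is the first-order agreement you anticipated. But it tends to $16\sqrt2/(15\pi)\approx 0.48$ as $\fe\to 1^-$, a regime that is allowed when $\beta$ is small.

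The missing idea is to compare $V$ not with its quadratic truncation but with $V_2$ itself. This keeps the $1/\sqrt{\cdot}$ dependence on $z$ and only lowers the anisotropy from $1+2\alpha$ to $1+7\beta$. Indeed,
\[
\partial_{z^2}V=\frac{2(1+2\alpha)/\alpha}{\bigl(r^2+(1+2\alpha)z^2\bigr)^{3/2}}<\frac{(1+7\beta)/(2\beta)}{\bigl(r^2+(1+7\beta)z^2\bigr)^{3/2}}=\partial_{z^2}V_2\qquad (z\neq 0),
\]
because the numerators agree and $7\beta=7\alpha/(\alpha+4)<2\alpha$. Together with $V=V_2$ on $\{z=0\}$, this gives $V<V_2$ off $\{z=0\}$. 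Hence $\{H_2\le -1\}\subset\{H\le -1\}$ with a complement of positive measure, and ${\rm vol}(\mathfrak M)>{\rm vol}(\mathfrak M_2)=T_{\beta,\fe}^2/\sqrt{1+7\beta}$. This is exactly Lemma~\ref{lem_volume2} in the paper. Your fallback of keeping the exact $z$-integral is not a workable plan without such a comparison potential whose volume can be computed in closed form.
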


\begin{cor}\label{cor_infinitely_many}
    For every $(\beta,\mathfrak{e})\in (0,1)\times (0,1)$ satisfying $0<\beta^2+\mathfrak{e}^2<1$,  $\mathfrak M$ admits infinitely many periodic orbits.
\end{cor}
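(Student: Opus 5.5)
The argument is by contradiction. We combine the two-or-infinitely-many dichotomy from \cite{HLOSY2023}, the ECH rigidity \eqref{equ: relation in C-GHHL2023} of \cite{C-GHHL2023}, and the strict inequality of Theorem \ref{thm: main2}.

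Fix $(\beta,\mathfrak e)\in(0,1)\times(0,1)$ with $0<\beta^2+\mathfrak e^2<1$. By \cite{Moeckel1984} and \cite[Proposition 3.1]{HLOSY2023}, $\mathfrak M\equiv S^3$. As recalled in the introduction, the Hamiltonian flow on $\mathfrak M$ is, up to reparametrization, the Reeb flow of a contact form $\lambda$ on the tight three-sphere, with $d\lambda=\tilde\omega_0|_{\mathfrak M}$. Reparametrization does not change the set of periodic orbits. By \cite[Theorem 2.3]{HLOSY2023}, $\zeta_e$ binds an open book whose pages are disk-like global surfaces of section. Hence $\mathfrak M$ carries either exactly two or infinitely many geometrically distinct simple periodic orbits.

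Suppose $\mathfrak M$ carries exactly two. The Euler orbit $\zeta_e$ is one of them, since it is a periodic orbit for every parameter value; call it $\gamma_1=\zeta_e$, traversed once along its prime period. Its Reeb period $T_1$ equals its symplectic action $\int_{\zeta_e}\lambda=T_{\beta,\mathfrak e}$. Its rotation number is $\rho_1=\rho_{\beta,\mathfrak e}$, which lies in $(1,+\infty)$ because $\rho_{\beta,\mathfrak e}>2$ by \cite{HLOSY2023}, or alternatively by Theorem \ref{thm: main1}. The result of \cite{C-GHHL2023}, summarized in \eqref{equ: relation in C-GHHL2023}, then gives
\[
{\rm vol}(\mathfrak M)={\rm vol}(S^3,\lambda)=\frac{T_{\beta,\mathfrak e}^2}{\rho_{\beta,\mathfrak e}-1}.
\]
This contradicts the strict inequality \eqref{ineq_vol} of Theorem \ref{thm: main2}. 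Therefore the two-orbit alternative is impossible, and $\mathfrak M$ has infinitely many periodic orbits.

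No step is genuinely hard once Theorem \ref{thm: main2} is available; all the analytic difficulty sits in that theorem and in Theorem \ref{thm: main1}. The point requiring care is consistency of normalizations: the contact volume and action in \eqref{equ: relation in C-GHHL2023} must be computed for the same $\lambda$ as in Theorem \ref{thm: main2}, and the rotation number must use the same framing convention. Both ${\rm vol}(\mathfrak M)=\int\lambda\wedge d\lambda$ and $T_{\beta,\mathfrak e}$ are independent of the choice of primitive $\lambda$ of $\tilde\omega_0|_{\mathfrak M}$. Indeed, two primitives differ by a closed, hence exact, $1$-form on $S^3$, which changes neither integral. The rotation number $\rho_e$ is the transverse rotation number with respect to the global trivialization of $\xi$ on $S^3$, as in both \cite{HLOSY2023} and \cite{C-GHHL2023}. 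So the two results can be compared directly.
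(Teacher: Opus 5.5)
Your proposal is correct and is essentially the paper's argument. If $\mathfrak M$ had exactly two simple periodic orbits, one of them would be $\zeta_e$, and the identity \eqref{equ: relation in C-GHHL2023} from \cite{C-GHHL2023} would force ${\rm vol}(\mathfrak M)=T_{\beta,\mathfrak e}^2/(\rho_{\beta,\mathfrak e}-1)$, contradicting the strict inequality \eqref{ineq_vol} of Theorem~\ref{thm: main2}. Your remarks on independence of the primitive $\lambda$ and on the framing convention for $\rho_e$ are accurate, and they spell out consistency checks that the paper leaves implicit.
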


Corollary \ref{cor_infinitely_many} can be seen as a first step towards proving the following question posed by Marco \cite[Problem 6]{ACS12}:  Is the topological entropy of the Hamiltonian flow on $\mathfrak{M}$ positive?

Another important corollary of Theorem \ref{thm: main2} refers to the systolic ratio of the energy surface $\mathfrak M$. Let $T_{\rm min}(\mathfrak M)$ be the action of the periodic orbit of $\mathfrak M$ with the smallest action. This minimum is always realized by some periodic orbit, called systole, due to the Arzel\'a-Ascoli Theorem, and does not depend on the choice of $\lambda$. The systolic ratio of $\mathfrak M$ is defined as
$$
\rho_{\rm sys}(\mathfrak M):= \frac{(T_{\rm min}(\mathfrak M))^2}{{\rm vol}(\mathfrak M)}.
$$
It is proved in \cite{ABHS2018} that for contact forms sufficiently $C^3$-close to the standard contact form on the tight three-sphere, the systolic ratio is bounded from above by $1$. It is an open question whether the systolic ratio is bounded from above in the space of dynamically convex contact forms on the tight three-sphere \cite{ABHS2018b}.

\begin{cor}\label{cor_systolic}
    For every $(\beta,\mathfrak{e})\in (0,1)\times (0,1)$ satisfying $\beta^2+\mathfrak{e}^2<1$, the systolic ratio of $\mathfrak M$ satisfies $\rho_{\rm sys}(\mathfrak M) < \sqrt{1+7\beta} < 2\sqrt{2}.$
\end{cor}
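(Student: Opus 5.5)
The corollary should follow directly from Theorem \ref{thm: main2}. The idea is to use the Euler orbit $\zeta_e$ as a competitor in the definition of the systole.

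First, I will record that $\zeta_e$ is a periodic orbit of the Reeb flow on $\mathfrak M$. Its action equals its Reeb period $T_{\beta,\mathfrak e}$, which is independent of the choice of $\lambda$. Since $T_{\rm min}(\mathfrak M)$ is the minimum of the actions over all periodic orbits, and this minimum is attained by Arzel\'a--Ascoli as noted above, we get
\[
0<T_{\rm min}(\mathfrak M)\le T_{\beta,\mathfrak e}.
\]
Positivity holds because actions of Reeb orbits are periods, and these are bounded below on a compact manifold.

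Next, for $(\beta,\mathfrak e)\in(0,1)\times(0,1)$ with $\beta^2+\mathfrak e^2<1$, the hypotheses of Theorem \ref{thm: main2} hold. Here $\beta^2+\mathfrak e^2>0$ is automatic since $\beta>0$. Also $\mathfrak M\equiv S^3$ is compact, so ${\rm vol}(\mathfrak M)$ is finite and positive. Squaring the previous inequality and using the first inequality in \eqref{ineq_vol} gives
\[
\rho_{\rm sys}(\mathfrak M)=\frac{T_{\rm min}(\mathfrak M)^2}{{\rm vol}(\mathfrak M)}\le \frac{T_{\beta,\mathfrak e}^2}{{\rm vol}(\mathfrak M)}<\sqrt{1+7\beta}.
\]
Only the first, strict inequality in \eqref{ineq_vol} is needed; the comparison with $\rho_{\beta,\mathfrak e}-1$ is irrelevant here.

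Finally, $\beta<1$ gives $\sqrt{1+7\beta}<\sqrt 8=2\sqrt2$, which completes the chain.

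There is no genuine obstacle: all the analytic content is in Theorem \ref{thm: main2}, namely the volume estimate together with the explicit formula for $T_{\beta,\mathfrak e}$. The only points to check are that $\zeta_e$ is an admissible closed orbit in the minimization, and that both ${\rm vol}(\mathfrak M)$ and the actions are independent of $\lambda$, so the ratio is well defined.
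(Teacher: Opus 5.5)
Your proof is correct and is essentially the argument the paper intends (the corollary is stated as an immediate consequence of Theorem~\ref{thm: main2}): the Euler orbit gives $T_{\min}(\mathfrak M)\le T_{\beta,\mathfrak e}$, and the first inequality in \eqref{ineq_vol} (Lemma~\ref{lem_volume2}) then yields $\rho_{\rm sys}(\mathfrak M)<\sqrt{1+7\beta}<2\sqrt2$. You are also right that the second inequality of \eqref{ineq_vol}, which relies on Theorem~\ref{thm: main1}, is not needed here.
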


Another application of Theorem \ref{thm: main2} concerns the $\mathrm{ECH}$ spectral invariants.
The estimate on the contact volume obtained in Theorem \ref{thm: main2}
enters directly into the Weyl law for the $\mathrm{ECH}$ spectrum and
provides quantitative information on the growth of the spectral invariants.

We briefly recall the necessary notation.
Let $f_n:\mathfrak M\to \R_{>0}$, $n\in \N$, be a sequence such that
$f_n\to 1$ in the $C^0$-topology and $\lambda_n=f_n\lambda$ is nondegenerate.
Let $\mathrm{ECC}_*(\mathfrak M,\lambda_n)$ denote the $\Z_2$-vector space
freely generated by $\mathrm{ECH}$-generators, endowed with a $\Z$-grading.
An $\mathrm{ECH}$-generator is a finite formal sum
$\alpha=\sum_i m_i\zeta_i$, where $m_i\in\N$, $\zeta_i\in\mathcal P_{\lambda_n}$,
and $m_i=1$ whenever $\zeta_i$ is hyperbolic.

For a generic $\lambda_n$-compatible almost complex structure $J_n$,
one defines a differential on $\mathrm{ECC}_*(\mathfrak M,\lambda_n)$,
obtaining a chain complex whose homology is
$\mathrm{ECH}_*(\mathfrak M,\xi)$, which depends only on
$\xi=\ker\lambda$ by Taubes~\cite{Taubes2010}. In particular, since $(\mathfrak M,\xi)$ is contactomorphic to the tight three-sphere, we have
$$
\mathrm{ECH}_*(\mathfrak M,\xi)=
\begin{cases}
\mathbb Z_2, & *=0,2,4,\dots,\\
0, & \text{otherwise}.
\end{cases}
$$
The empty set defines a nontrivial class
$[\emptyset]\in \mathrm{ECH}_0(\mathfrak M,\xi)$,
and there is a $U$-map
$U:\mathrm{ECH}_*(\mathfrak M,\xi)\to\mathrm{ECH}_{*-2}(\mathfrak M,\xi)$.

Let $\mathcal A(\alpha)=\sum_i m_i T(\zeta_i)$ denote the action of
an $\mathrm{ECH}$-generator $\alpha$.
The filtered complex $\mathrm{ECC}^a_*(\mathfrak M,\lambda_n)$
generated by orbit sets of action less than $a>0$
induces the filtered homology $\mathrm{ECH}^a_*(\mathfrak M,\lambda_n)$.
The $k$-th $\mathrm{ECH}$ spectral invariant is defined by
$$
c_k(\mathfrak M,\lambda)
:=\lim_{n\to+\infty} c_k(\mathfrak M,\lambda_n),
$$
where
$$
c_k(\mathfrak M,\lambda_n)
:=\inf\left\{a>0:\sigma\in \mathrm{ECH}^a_*(\mathfrak M,\lambda_n),\
U^k\sigma=[\emptyset]\right\}.
$$
Each $c_k(\mathfrak M,\lambda)$ is realized as the action of a finite sum $\alpha_k=m_{k,0}\zeta_e+\sum_i m_{k,i}\zeta_{k,i},$ where $(m_{k,i},\zeta_{k,i})\in \N\times (\mathcal P_\lambda\setminus \{\zeta_e\})$.
The Weyl law for the $\mathrm{ECH}$ spectrum \cite{C-GHR} reads as
\begin{equation}\label{eq:weyl}
\lim_{k\to\infty}\frac{c_k(\mathfrak M,\lambda)}{\sqrt{2k}}={\rm vol}(\mathfrak M).
\end{equation}

Combining \eqref{eq:weyl} with the explicit lower bound for the contact volume obtained in
Theorem~\ref{thm: main2}, we obtain the following corollary.

\begin{cor}\label{coro: Weyl_law} Given $(\beta,\fe)\in (0,1)\times (0,1)$ satisfying $\beta^2 + \fe^2<1$, and $\varepsilon>0$, there exists  $k_0=k_0(\varepsilon,\beta,\mathfrak e)\in\mathbb{N}$
such that  for all $k\ge k_0$,
\begin{equation}
c_k(\mathfrak M,\lambda)\geq ({\rm vol}(\mathfrak M)-\varepsilon)\sqrt{2k}
 >\left(\frac{T_{\beta,\fe}^2}{\sqrt{1+7\beta}} -\varepsilon\right)\sqrt{2k}.
\end{equation}
In particular, the rate of growth of $c_k(\mathfrak M,\lambda)$ is uniformly bounded from below on compact subsets
of the subcritical parameter region $\{(\beta,\fe)\in(0,1)\times (0,1): \beta^2+\fe^2<1\}$.
\end{cor}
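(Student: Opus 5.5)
The corollary follows by combining the Weyl law \eqref{eq:weyl} with the volume estimate of Theorem~\ref{thm: main2}. The plan is to fix $(\beta,\fe)$ with $\beta^2+\fe^2<1$ and $\varepsilon>0$. Then $\mathfrak M\equiv S^3$ is compact, $\lambda$ is a contact form on the tight three-sphere, and \eqref{eq:weyl} applies, so
$$c_k(\mathfrak M,\lambda)/\sqrt{2k}\to {\rm vol}(\mathfrak M)$$
as $k\to\infty$. By the definition of the limit there is $k_0=k_0(\varepsilon,\beta,\fe)$ such that
$$\left|c_k(\mathfrak M,\lambda)/\sqrt{2k}-{\rm vol}(\mathfrak M)\right|<\varepsilon$$
for all $k\ge k_0$. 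Keeping only the lower half of this bound and multiplying by $\sqrt{2k}>0$ gives the first inequality. The second, strict, inequality is immediate from the first inequality in \eqref{ineq_vol}: subtract $\varepsilon$ from both sides and multiply by $\sqrt{2k}>0$.

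For the ``in particular'' statement I would read ``rate of growth'' as the limit $\lim_k c_k/\sqrt{2k}={\rm vol}(\mathfrak M)$. By Theorem~\ref{thm: main2} this is bounded below by
$$g(\beta,\fe):=\frac{T_{\beta,\fe}^2}{\sqrt{1+7\beta}},\qquad T_{\beta,\fe}=2\pi\,\frac{1-\sqrt{1-\fe^2}}{\sqrt2\,\beta}.$$
On a compact subset $K$ of the subcritical region, $g$ is continuous. I also need $g>0$ on $K$, which requires $\fe>0$. The region as written has $\fe\in(0,1)$, so compactness of $K$ inside it keeps $\fe$ bounded away from $0$. Hence $\min_K g>0$, and this is the uniform lower bound. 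Also, $\liminf_k c_k/\sqrt{2k}\ge \min_K g$ for each parameter in $K$.

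The only mild obstacle is what ``uniform'' means. The threshold $k_0$ itself depends on the parameters and cannot obviously be made uniform on $K$ without continuity of the $c_k$ in $(\beta,\fe)$. So I would state uniformity for the limiting growth rate, which is what the Weyl law gives directly, and not claim a uniform $k_0$.
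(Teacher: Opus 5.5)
Your route is the paper's route: the paper's proof is the same one-line combination of \eqref{eq:weyl} with the first inequality in \eqref{ineq_vol}. Relative to \eqref{eq:weyl} as printed, your deduction is fine. The genuine gap is that the input is misquoted, in the paper and therefore in your proof, and a scaling check exposes it. Under $\lambda\mapsto a\lambda$ one has $c_k\mapsto a\,c_k$ but ${\rm vol}\mapsto a^2\,{\rm vol}$, so $c_k/\sqrt{2k}$ cannot converge to ${\rm vol}(\mathfrak M)$.

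The Cristofaro-Gardiner--Hutchings--Ramos Weyl law actually reads $\lim_{k\to\infty}c_k(\mathfrak M,\lambda)^2/(2k)={\rm vol}(\mathfrak M)$, that is, $c_k/\sqrt{2k}\to\sqrt{{\rm vol}(\mathfrak M)}$. You can check this on the ball $B(a)$: there $c_k=da$ with $k\approx d^2/2$, while $\int_{\partial B(a)}\lambda\wedge d\lambda=a^2$. So your step ``$|c_k/\sqrt{2k}-{\rm vol}(\mathfrak M)|<\varepsilon$ for $k\ge k_0$'' fails. In fact the first inequality you derive is false whenever $\sqrt{{\rm vol}(\mathfrak M)}<{\rm vol}(\mathfrak M)-\varepsilon$, since then $c_k<({\rm vol}(\mathfrak M)-\varepsilon)\sqrt{2k}$ for all large $k$. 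This does occur in the subcritical region. For example, fix $\fe$ and let $\beta\to0^+$: then $T_{\beta,\fe}\to\infty$, so ${\rm vol}(\mathfrak M)>T_{\beta,\fe}^2/\sqrt{1+7\beta}\to\infty$.

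The repair keeps your structure exactly. From the correct Weyl law and Theorem~\ref{thm: main2}, for every $\varepsilon>0$ there is $k_0\ge 1$ such that
\[
c_k(\mathfrak M,\lambda)^2\ \ge\ \big({\rm vol}(\mathfrak M)-\varepsilon\big)\,2k\ >\ \Big(\frac{T_{\beta,\fe}^2}{\sqrt{1+7\beta}}-\varepsilon\Big)\,2k \qquad (k\ge k_0).
\]
In the normalization of the statement, the same argument applied to $c_k/\sqrt{2k}\to\sqrt{{\rm vol}(\mathfrak M)}$ gives
\[
c_k\ \ge\ \big(\sqrt{{\rm vol}(\mathfrak M)}-\varepsilon\big)\sqrt{2k}\ >\ \big(T_{\beta,\fe}(1+7\beta)^{-1/4}-\varepsilon\big)\sqrt{2k}.
\]
Your treatment of the ``in particular'' clause goes through verbatim with $g$ replaced by $\sqrt{g}=T_{\beta,\fe}(1+7\beta)^{-1/4}$. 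This function is still continuous and positive on compact subsets of the region. Your caution that only the asymptotic rate, and not $k_0$, is uniform is appropriate.
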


To relate the ECH spectral invariants with the linking behavior of periodic
orbits around the binding $\zeta_e$, we introduce an auxiliary quantity associated
with the orbit sets realizing the spectral invariants. We define
$$
F(\alpha_k)
:= m_{k,0}(\rho_{\beta,\fe} - 1)
   + \sum_i m_{k,i}  {\rm lk}(\zeta_{k,i}, \zeta_e),
$$
where, as above, the orbit set $\alpha_k=m_{k,0}\zeta_e + \sum_i m_{k,i}\zeta_{k,i}$ realizes $c_k(\mathfrak M, \lambda)$ for every $k$. The quantity $\mathcal F(\alpha_k)$ combines the multiplicity of the Euler orbit with the
linking numbers of the remaining components relative to $\zeta_e$. Its relevance
in our setting lies in the fact that $\mathcal F(\alpha_k)$ is non-decreasing in $k$, see \cite[Lemma 5.1]{Hutchings2016}  providing monotonic control on how linking accumulates among orbit sets realizing the ECH spectrum.

We now return to the interval determined by Theorem \ref{thm: main2} and investigate
its dynamical meaning. We use tools from ergodic theory
to understand how this interval relates to the dynamics of the return map  and in the organization of periodic orbits.
The inequality \eqref{ineq_vol} determines the non-trivial compact interval
\begin{equation}\label{interval}
I_{\beta,\mathfrak{e}}:= \left[\frac{1}{\rho_{\beta,\mathfrak e}-1}, \frac{{\rm vol}(\mathfrak M)}{T_{\beta,\mathfrak{e}}^2}\right]\subset \R.
\end{equation}

Notice that $I_{\beta,\fe}$ is a geometrically interesting interval since it is invariant under rescaling of the contact form $\lambda \mapsto c\lambda, c\neq 0$. The right endpoint arises from the contact volume, which controls the asymptotic growth of the ECH spectrum via the Weyl law. The left endpoint is determined by the transverse rotation number of the binding orbit $\zeta_e$ and represents the local
linearized dynamics. In this sense, $I_{\beta,\fe}$ measures the discrepancy between the rotation number of the binding and the contact volume constraint, thus its size quantifies the deviation from the two¨Corbit configuration. As we will see, this interval provides a dynamical mechanism linking ECH spectral constraints with the open book structure organizing the flow.

Now we discuss the relation between $I_{\beta,\mathfrak{e}}$ and the periodic orbits of $\mathfrak{M}$. More precisely, we investigate if  $I_{\beta,\mathfrak{e}}$ corresponds to a twist interval explaining the periodic orbits in Theorem \ref{thm: main2}. In order to do that, we need to recall the open book decomposition $(B,\Phi)$ of $\mathfrak M$ considered in \cite{HLOSY2023}, whose binding $B=\zeta_e$ is the Euler orbit and the pages $\Sigma_s:=\Phi^{-1}(s), s\in \R / \Z$, are disk-like global surface of section. Moreover, $\Sigma_s\subset \mathfrak M \setminus \zeta_e$ is an embedded open disk bounded by $\zeta_e$ whose projection to the $(p_z,z)$-plane is a line-segment through the origin with argument $2\pi s$, that is,
$$
\Sigma_s:=\mathfrak{M} \cap \{(p_z,z)\neq (0,0), \arg(p_z+iz) = 2\pi s\},\quad s\in \R/\Z.
$$
Each $\Sigma_s$ is in one-to-one correspondence with its projection to the $(p_r,r)$-plane, given by
\begin{equation}\label{equ: Upsilon}
\dot \Upsilon:=\left\{(p_r,r)\in \mathbb R\times \mathbb R_+:\ \frac{p_r^2}{2}+\frac{\varpi^2}{2r^2}-\frac{1}{\beta r}< -1\right\}.
\end{equation}
Notice that $\dot \Upsilon$ does not depend on $s$ and the boundary of $\dot \Upsilon$ is precisely the simple closed curve $\partial \Upsilon$ determined by the projection of $\zeta_e$ to the $(p_r,r)$-plane. Also, the symplectic form restricted to $\Sigma_s$ becomes $dp_r \wedge dr$ on $\Upsilon$, and thus the Reeb period $T_e>0$ of $\zeta_e$ coincides with the area of $\dot \Upsilon$.

Given $0\leq s_1 \leq s_2 \leq 1$, there exists a smooth function $\tau_{s_1,s_2}:\Sigma_{s_1}: \to [0,+\infty)$ and a smooth map $\psi_{s_1,s_2}:\Sigma_{s_1} \to \Sigma_{s_2}$, both depending smoothly on $(s_1,s_2)$,  given by the first hitting time and the first hitting map from $\Sigma_{s_1}$ to $\Sigma_{s_2}$ along the flow on $\mathfrak M$, and satisfying $\psi_{s,s} = {\rm Id}$ for every $s\in [0,1]$ and $\psi_{0,1}$ is the first return map associated with $\Sigma_0$. These maps naturally extend for every $s_1\leq s_2 \in \R$.  In particular, the maps $\psi_{0,s}: \Sigma_0 \to \Sigma_s, s\in \R,$ induce a smooth family of area-preserving diffeomorphisms
$$
\psi_s: (\dot \Upsilon, dp_r \wedge dr) \to (\dot \Upsilon, dp_r \wedge dr), \qquad s\in \R,
$$
representing the flow on $\mathfrak M$ and satisfying $\psi_0 = {\rm Id}$. Such maps continuously extend to $\Upsilon := \dot \Upsilon \cup \partial \Upsilon$ due to the linearized dynamics near $\zeta_e$. Here, we shall prove that the family $\psi_s:\Upsilon \to \Upsilon,s\in \R,$ is a smooth family of symplectic diffeomorphisms preserving the area form $dp_r \wedge dr$.

Given $x_1\neq x_2 \in \Upsilon$ we define the mean relative winding number of $(x_1,x_2)$ as
\begin{equation}\label{equ: winding number}
w_\infty(x_1,x_2):=\lim_{s\rightarrow +\infty}\frac{\theta(s)-\theta(0)}{s},
\end{equation}
where $2\pi\theta(s)$ denotes a continuous argument of $\psi_s(x_1) - \psi_s(x_2)$ for every $s$.  By Birkhoff's ergodic theorem, $w_\infty$ is an integrable function almost everywhere defined on $(\Upsilon \times \Upsilon) \setminus \{\rm diagonal\}$. If either $x_1$ or $x_2$ lies in $\partial \Upsilon$, then $w_\infty(x_1,x_2)=(\rho_{\beta,\fe}-1)^{-1}$ is the left endpoint of $I_{\beta,\mathfrak e}$.

Given a periodic orbit $\zeta\subset \mathfrak M\setminus\zeta_e$, we denote by $\mathcal O(\zeta)\subset \dot \Upsilon$ the finite set of points in $\dot \Upsilon$ given by the projection of  $\zeta\cap \Sigma_0$ to the $(p_r,r)$-plane. If $\zeta_1, \zeta_2 \subset \mathfrak M \setminus \zeta_e$ are periodic orbits (not necessarily geometrically distinct) and $x_1\in \mathcal{O}(\zeta_1)\neq  x_2\in \mathcal{O}(\zeta_2)$, then $w_\infty(x_1,x_2) \in \Q.$

The next theorem shows that any rational number in the interior of $I_{\beta,\mathfrak e}$ can be realized as the mean relative winding number of points associated with periodic orbits in $\mathfrak M \setminus \zeta_e$.

\begin{thm} \label{thm: main4}
Let $I_{\beta,\mathfrak e}=[(\rho_{\beta,\mathfrak e}-1)^{-1}, T_{\beta,\mathfrak e}^{-2}{\rm vol}(\mathfrak M)]$ be the non-trivial interval determined by Theorem \ref{thm: main2}. Given a rational number $\mathfrak r$ in the interior of $I_{\beta,\mathfrak e}$, there exist geometrically distinct periodic orbits $\zeta_1,\zeta_2\subset \mathfrak M \setminus \zeta_e$, and points $x_1\in \mathcal{O}(\zeta_1) \neq x_2\in \mathcal{O}(\zeta_2)$ in $\dot \Upsilon$, such that $w_\infty(x_1,x_2)= \mathfrak r$.
\end{thm}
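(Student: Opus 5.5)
Here is the approach. The boundary value $w_\infty=(\rho_{\beta,\fe}-1)^{-1}$ is the twist on $\partial\Upsilon$. The average of $w_\infty$ over $\Upsilon\times\Upsilon$ equals ${\rm vol}(\mathfrak M)/T_{\beta,\fe}^2$. A twist-type fixed point theorem for relative winding then produces periodic pairs realizing every rational value in between. Concretely, I would first establish the \emph{Calabi-type identity}
\[
\frac{1}{T_{\beta,\fe}^2}\int_{\Upsilon\times\Upsilon} w_\infty(x_1,x_2)\, dA(x_1)\,dA(x_2)=\frac{{\rm vol}(\mathfrak M)}{T_{\beta,\fe}^2},
\]
where $dA=dp_r\wedge dr$ and the total area of $\Upsilon$ is $T_{\beta,\fe}$. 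Here I will use the fact that $\psi_s$ extends smoothly to $\Upsilon$ as an area-preserving isotopy from the identity. For such isotopies, the average of the relative winding over one period $s\in[0,1]$ equals the Calabi invariant of the generating Hamiltonian, normalized to vanish on $\partial\Upsilon$ (the relative-winding formula of Fathi/Gambaudo–Ghys, adapted to the disk). For a Reeb flow with a disk-like global surface of section, the volume decomposes as ${\rm vol}(\mathfrak M)=\int_{\Sigma_0}\tau\, dA$, where $\tau$ is the return time, and $\int\tau\,dA$ equals $T_e\cdot{\rm area}+\mathrm{Cal}$ up to the boundary normalization. One must check that the period of $\zeta_e$ contributes exactly so that the mean winding is ${\rm vol}/T_e^2$ rather than a shifted value. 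Birkhoff's theorem ensures $w_\infty$ is integrable and that its space average equals the time-one average of the infinitesimal winding.

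Next, Theorem \ref{thm: main2} gives ${\rm vol}(\mathfrak M)/T_{\beta,\fe}^2>1/(\rho_{\beta,\fe}-1)$, so the mean strictly exceeds the boundary value. Hence there is a set of positive measure of pairs $(x_1,x_2)$ with $w_\infty(x_1,x_2)\ge {\rm vol}/T_e^2$. By ergodic decomposition together with the continuity of winding near the boundary, I also expect pairs with winding arbitrarily close to $(\rho-1)^{-1}$, namely pairs near $\partial\Upsilon$. Given a rational $\mathfrak r=p/q$ in the interior of $I_{\beta,\fe}$, there are therefore pairs with $w_\infty<\mathfrak r$ and pairs with $w_\infty>\mathfrak r$.

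The main step is to turn this two-sided bound on relative winding into actual periodic points $x_1\neq x_2$ of $\psi_1^q$ with relative winding exactly $p/q$. Here I would invoke a relative version of the Poincaré–Birkhoff theorem, in the spirit of Le Calvez's transverse foliations and the results of Pirnapasov or Le Calvez–Sambarino. Blow up a periodic point, or pick a point $x_2$ fixed by the isotopy after a suitable normalization. Then the relative winding around $x_2$ becomes an ordinary rotation number on an annulus obtained by compactifying $\Upsilon\setminus\{x_2\}$. Its boundary rotation numbers are the local rotation at $x_2$ and $(\rho-1)^{-1}$ at $\partial\Upsilon$, and Franks' theorem produces periodic orbits for every rational in the rotation interval. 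The obstacle is choosing $x_2$ so that this annulus's rotation set contains $\mathfrak r$; the rotation set only needs to contain values on both sides of $\mathfrak r$. For this I would use the fact that Corollary \ref{cor_infinitely_many}, or the fixed point of the pseudo-rotation alternative, supplies a fixed point $x_2$ of $\psi_1$. If its local rotation does not help, I would instead use the positive-measure set of pairs with large winding, through Le Calvez's forcing theory, to obtain an orbit of winding exceeding $\mathfrak r$ relative to $x_2$. Franks' theorem then gives $x_1$ with $w_\infty(x_1,x_2)=\mathfrak r$. Finally, $x_1,x_2$ are distinct periodic points, and they correspond to geometrically distinct periodic orbits $\zeta_1,\zeta_2\subset\mathfrak M\setminus\zeta_e$, which one must verify by ruling out that they lie on the same orbit.
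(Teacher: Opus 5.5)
Your outline has the right two ends. The averaged winding identity and the final blow-up/Franks step are what the paper uses. The middle step, which you yourself flag as ``the obstacle,'' is a genuine gap.

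Franks' theorem needs an annulus map, so the point you blow up must be periodic. You then need another orbit whose relative winding around \emph{that particular} periodic point exceeds $\mathfrak r$. Your averaging argument gives only a positive-measure set of pairs $(x_1,x_2)$, typically of non-periodic points, with $w_\infty(x_1,x_2)\ge {\rm vol}(\mathfrak M)/T_{\beta,\fe}^2$. It says nothing about windings around a periodic point. The fixed point you propose (Brouwer's, or the one from the pseudo-rotation alternative, which Corollary~\ref{cor_infinitely_many} excludes anyway) comes with no such control. Nothing prevents $w_\infty(x_2,w)\le\mathfrak r$ for almost every $w$. In that case the blown-up annulus has rotation interval below $\mathfrak r$ and Franks gives nothing. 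The appeal to Le Calvez's forcing theory is not an argument as stated.

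The missing ingredient is Hutchings' mean action theorem in Pirnapasov's version (Theorem~\ref{thm: Hutchings2016}).
\begin{itemize}
\item View the return time $\tau$ on $\Sigma_0$ as the action of $\psi=\psi_1$. By Theorem~\ref{thm: main2}, ${\rm CAL}(\psi_s)=\pi\,{\rm vol}(\mathfrak M)/T_{\beta,\fe}^2>\pi(\rho_{\beta,\fe}-1)^{-1}=\pi{\rm Rot}(\psi_s)$. Hence there is a periodic point $z_1$, of some period $k_1$, with mean action $\tau_\infty(z_1)>({\rm vol}(\mathfrak M)-\epsilon)/T_{\beta,\fe}$.
\item To turn this into winding you need a \emph{pointwise} version of your Calabi identity, not just its double average: $\tau_\infty(z)=\int_\Upsilon w_\infty(z,w)\,(dp_r\wedge dr)(w)$. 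This is Lemma~\ref{lem: generalize Bechara}. It comes from Bechara's identity, after Pirnapasov's extension to a larger disk on which the map is the identity near the boundary; that extension handles the rotating boundary $\partial\Upsilon$.
\item The identity yields a point $\hat z_2$, not necessarily periodic, with $w_\infty(z_1,\hat z_2)>({\rm vol}(\mathfrak M)-\epsilon)/T_{\beta,\fe}^2>\mathfrak r$ for $\epsilon$ small.
\item Only now does your last step work. Pass to $\psi^{k_1}$, normalize the isotopy so it fixes $z_1$, and blow up at $z_1$. Then apply Franks' theorem between $\partial\Upsilon$, with rotation $k_1(\rho_{\beta,\fe}-1)^{-1}$, and $\hat z_2$, with rotation $k_1w_\infty(z_1,\hat z_2)$.
\end{itemize}

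The lower end needs no ``pairs near the boundary.'' One has $w_\infty(z_1,w)=(\rho_{\beta,\fe}-1)^{-1}$ exactly for $w\in\partial\Upsilon$, and $\partial\Upsilon$ is itself one boundary component of the blown-up annulus.
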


The proof of Theorem \ref{thm: main4} relies on Hutchings' mean action theorem \cite{Hutchings2016} and its generalization by Pirnapasov \cite{Pirnapasov2021}. In order to apply such results, we first show the crucial fact that $\psi_{s}: \Upsilon \to \Upsilon, s\in \R,$ is a smooth family of symplectic diffeomorphisms preserving the area-form $dp_r \wedge dr$. Applying the results in \cite{Hutchings2016, Pirnapasov2021}, we obtain a simple periodic orbit $\zeta_1 \subset \mathfrak M \setminus \zeta_e$ whose mean action satisfies $T(\zeta_1)/ k_1> ({\rm vol}(\mathfrak M)-\epsilon)/ T_{\beta,\mathfrak e}$, for any given $\epsilon>0$ small. Here, $T(\zeta_1)$ is the action (Reeb period), and $k_1$ is the linking number ${\rm lk}(\zeta_1,\zeta_e)$ which coincides with the prime period of any $x_1\in \mathcal{O}(\zeta_1)$ under $\psi_1$. Then we generalize a result by Bechara \cite{Bechara2023} implying that there exists $\hat x_2\in \dot \Upsilon$, not necessarily periodic, such that $w_\infty(x_1,\hat x_2)> ({\rm vol}(\mathfrak M)-\epsilon)/T_{\beta,\mathfrak e}^2>(\rho_{\beta,\fe}-1)^{-1}$ for some $x_1\in \mathcal{O}(\zeta_1)$. Due to the rotation number $(\rho_{\beta,\fe}-1)^{-1}$ along $\partial \Upsilon$, we apply Franks' generalization of the Poincar\'e-Birkhoff Theorem \cite{Franks03} to the map $\psi_1^{k_1}$ to find a periodic orbit $\zeta_2$ so that $w_\infty(x_1,x_2)\in ((\rho_{\beta,\mathfrak e}-1)^{-1},T_{\beta,\fe}^{-2}({\rm vol}(\mathfrak M)-\epsilon))$ realizing the given rational number in $I_{\beta,\fe}$ for some $x_2 \in \mathcal{O}(\zeta_2)$.  Theorem \ref{thm: main4} then follows by taking $\epsilon \to 0^+$.

\begin{rem}\label{rem: pairing and w_infty} In \cite{BHS2021}, the authors define
\begin{equation}\label{equ: pairing}
\rho(\zeta_1,\zeta_2):=\frac{\mathrm{lk}(\zeta_1,\zeta_2)\mathrm{vol}(\mathfrak M)}{T(\zeta_1)T(\zeta_2)},
\end{equation}
where $\zeta_1,\zeta_2 \subset \mathfrak{M}$ are geometrically distinct periodic orbits.
They ask if $\inf_{\zeta_1 \subset \mathfrak{M} \setminus \zeta_2} \rho(\zeta_1,\zeta_2) \leq 1$ for any given periodic orbit $\zeta_2$.  This fact has been proved for a wide class of Reeb flows on the tight three-sphere, including toric domains (see Question 5 and Remark 1.4 in \cite{BHS2021}).
The proof of Theorem~\ref{thm: main4} implies that $\inf_{\zeta \subset \mathfrak{M}\setminus \zeta_e} \rho (\zeta, \zeta_e) \leq 1.$

Let $\zeta_1,\zeta_2 \subset \mathfrak M\setminus \zeta_e$ be geometrically distinct periodic orbits. Notice that the identity $w_\infty(x_1,x_2)= w_\infty(x_1',x_2')$ does not always hold for every $x_1,x_1'\in \mathcal{O}(\zeta_1)$ and $x_2,x_2'\in \mathcal{O}(\zeta_2)$. In fact, it is possible to show that the following relation holds
\begin{equation}\label{equ: winding and linking}
\begin{aligned}
\sum_{l_1=0}^{k_1-1}\sum_{l_2=0}^{k_2-1}w_\infty(\psi^{l_1}(x_1),\psi^{l_2}(x_2))=\mathrm{lk}(\zeta_1,\zeta_2),
\end{aligned}
\end{equation}
where $k_1,k_2>0$ are the prime periods of $x_1,x_2$ under $\psi=\psi_1$, respectively. Moreover, if $\mathrm{gcd}(k_1,k_2)=1$, then
$w_\infty(x_1,x_2)=\mathrm{lk}(\zeta_1,\zeta_2)/(k_1k_2)$ is independent of $x_1\in \mathcal{O}(\zeta_1),x_2\in \mathcal{O}(\zeta_2)$. See the Appendix \ref{sec: wind and link} for a discussion.
\end{rem}

Finally, we consider parameters $(\beta,\mathfrak{e}) \in (0,1) \times (0,1)$ satisfying $\beta^2 + \mathfrak{e}^2>1$. In this case, the energy surface $\mathfrak{M}$ is unbounded in the $z$-direction and is diffeomorphic to $S^2\times \mathbb R$. The Euler orbit $\zeta_e$ is no longer the boundary of a global surface of section due to the existence of trajectories that escape to infinity.

Recall that the rotation number of the Euler orbit satisfies $\rho_e>2$. Consider the family $\Sigma_s=\mathfrak M\cap \{\mathrm{arg}(p_z+iz)=2\pi s\}, s\in \mathbb R/\mathbb Z$. These surfaces determine a singular foliation of $\mathfrak M$ with binding $\zeta_e$,  consisting of two families of disks $\Sigma_s,s\in (-1/4,1/4)$, corresponding to $p_z>0$, and $\Sigma_s,s\in (1/4,3/4)$, corresponding to $p_z<0$. These families are separated by the cylinders $\Sigma_{1/4}$ and $\Sigma_{-1/4}$, corresponding to $p_z=0$, which are bounded by the Euler orbit $\zeta_e$ and the orbits $\zeta_{\pm\infty}$ at $z=\pm \infty$, respectively, see Figure \ref{fig: foliations}.

Since $\ddot z = -\partial_zV =-g(r,z)z,$ where $g(r,z):=\frac{4(1+2\alpha)}{\alpha\sqrt{(r^2+(1+2\alpha)z^2)^{3}}}>\delta>0$, the continuous argument $\theta(t)=\mathrm{arg}(p_z(t)+iz(t))$ solves the equation
$$
\dot \theta(t)=\cos^2\theta(t) +g(r(t),z(t))\sin^2\theta(t)>\min\{1,\delta\}>0.
$$
Hence, each surface $\Sigma_{s}$ is transverse to the Hamiltonian flow.

As $z$ approaches $\pm \infty$, the dynamics on $\mathfrak M$ is approximated by the dynamics of the decoupled $z$-invariant Hamiltonian
\begin{equation}\label{equ: decoupled Ham}
H_\infty(p_r,p_z,r,z)=\frac{p_r^2+p_z^2}{2}+\frac{\varpi^2}{2r^2}-\frac{1}{r},
\end{equation}
restricted to the energy surface $H_\infty^{-1}(-1)\cong S^2 \times \R$. The two-sphere $\mathcal S_{\pm \infty}:=H_\infty^{-1}(-1)/ \R$ is called the two-sphere at $z=\pm \infty$. It contains a periodic orbit $\zeta_{\pm \infty}:=\mathcal S_{\pm\infty}\cap \{p_z=0\}$, called the periodic orbit of $\mathfrak{M}$ at $z=\pm \infty$.

The whole family of planes and cylinders $\Sigma_s\subset \mathfrak M , s\in \R / \Z$, and the spheres at infinity $\mathcal S_{\pm \infty}$, form a foliation of $\mathfrak M \cup \mathcal S_{\pm \infty}$ which resembles the finite energy foliations introduced by Hofer, Wysocki and Zehnder in \cite{fols} and further investigated in \cite{dPS1, dPHKS, dPKSS, LS2025}. Here, the (degenerate) orbits $\zeta_{\pm \infty}$ at infinity play the role of the index-$2$ hyperbolic orbits of the usual finite energy foliations, while the hemispheres in $\mathcal S_{\pm\infty}\setminus \zeta_{\pm \infty}$ at infinity and the cylinders $\Sigma_{\pm 1/4}$ play the role of the rigid leaves. See Figure \ref{fig: foliations}.

\begin{figure}[hpbt]
    \centering    \includegraphics[width=0.5\linewidth]{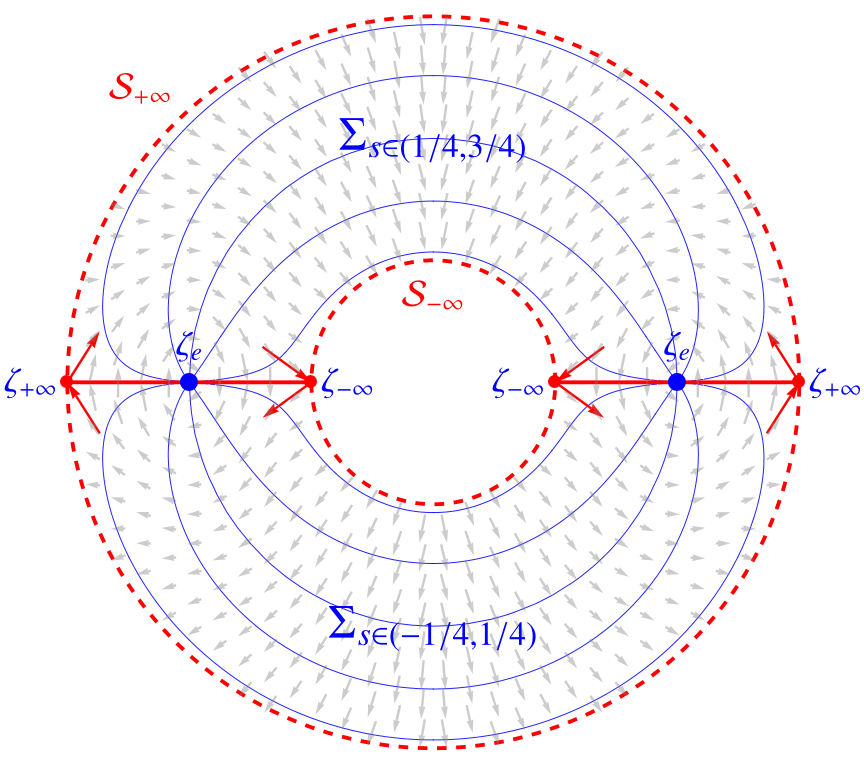}
    \caption{Singular foliation in the unbounded energy surface}
    \label{fig: foliations}
\end{figure}

It is well known since McGehee \cite{McGehee1973} that $\mathfrak{M}$ contains a real-analytic branch $W^{\rm s,u}(\zeta_{+\infty})$ of the stable/ unstable manifold of $\zeta_{+\infty}$, both branches converging to $z=+\infty$. A similar conclusion holds for $\zeta_{-\infty}$. Near $z=\pm \infty$, these branches separate trajectories that monotonically converge to $z=\pm \infty$ with non-vanishing asymptotic velocity from those whose velocity eventually changes sign and return to $z=0$. Understanding the intersections between the invariant manifolds $W^{\rm s,u}(\zeta_{\pm \infty})$ is a difficult task due to the complexity of the flow on $\mathfrak{M}$. However, standard arguments demonstrate that intersections between them, possibly between $W^{\rm s}(\zeta_{+\infty})$ and $W^{\rm u}(\zeta_{-\infty})$, always exist, and the dynamics on $\mathfrak{M}\equiv S^2 \times \R$ has positive topological entropy if one such intersection is transverse.

The trajectories in $\mathfrak{M}$ that converge to $\zeta_{\pm \infty}$ are called {\em parabolic}. The local branches of $W^{\rm s}_{\rm loc}(\zeta_{+ \infty})$ and $W^{\rm u}_{\rm loc}(\zeta_{-\infty})$ intersect each $\Sigma_s, s\in(-1/4,1/4)$, on a real-analytic circles $C^{\rm s}_{s,+}$ and $C^{\rm u}_{s,-}$, respectively. The points in $C^{\rm s}_{s,+}$ and in $C^{\rm u}_{s,-}$ monotonically converge to $z=+\infty$ and $z=-\infty$ as $t\to +\infty$ and $t\to -\infty$, respectively.


Our next result confirms the existence of infinitely many periodic orbits and infinitely many parabolic trajectories both forward and backward in time.

\begin{thm}\label{thm: main5}
Let $(\beta,\mathfrak e)\in(0,1)\times (0,1)$ satisfies $\beta^2+\mathfrak e^2>1$. The following statements hold:
\begin{itemize}
\item[(i)] The energy surface $\mathfrak M = \mathfrak M_{\beta,\mathfrak e}$ carries infinitely many periodic orbits, at least one $z$-symmetric brake orbit, and infinitely many parabolic trajectories satisfying $|z(t)|\to +\infty$ as $t\to \pm \infty$.

\item[(ii)] If $W^{\rm s}(\zeta_{+\infty})$ coincides with either $W^{\rm u}(\zeta_{+\infty})$ or $W^{\rm u}(\zeta_{-\infty})$, then  $\mathfrak M$ has  infinitely many $z$-symmetric periodic orbits, that is, their projection to the $(r,z)$-plane is symmetric with respect to the reflection $(r,z)\mapsto (r,-z)$.

\item[(iii)] If $W^{\rm s}(\zeta_{+\infty})$ coincides with neither
$W^{\rm u}(\zeta_{+\infty})$ nor $W^{\rm u}(\zeta_{-\infty})$,
then $\mathfrak M$ has infinitely many non-$z$-symmetric brake orbits,
and infinitely many brake-parabolic trajectories.

\end{itemize}
\end{thm}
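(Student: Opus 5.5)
We work on the disk-like family $\Sigma_s$, $s\in(-1/4,1/4)$, more precisely on $\Sigma_0=\mathfrak M\cap\{z=0,\ p_z>0\}$, and study a partially defined return map through a twist argument between the binding $\zeta_e$ and infinity. As in the compact case, $\Sigma_0$ projects diffeomorphically onto $\dot\Upsilon$ with area form $dp_r\wedge dr$. The points that never return are those on $C^{\rm s}_{0,+}$ together with the points escaping to $z=+\infty$ without returning. Lying inside the circle $C^{\rm s}_{0,+}$ (the region bounded by $C^{\rm s}_{0,+}$ and $\partial\Upsilon$), we have the annulus $A$ of points whose $z$-velocity changes sign. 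These points cross $\Sigma_{1/4}$, enter the family $p_z<0$, and come back to $\Sigma_0$ after crossing $\Sigma_{-1/4}$, unless they hit $C^{\rm u}_{0,-}$-type sets (the stable manifold of $\zeta_{-\infty}$) on the way.

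The first step is to use the reversing symmetries $(p_r,p_z,r,z)\mapsto(-p_r,p_z,r,-z)$ and $(p_r,p_z,r,z)\mapsto(-p_r,-p_z,r,z)$, which are time reversals preserving $\mathfrak M$. With them we write the return map $\psi=I_2\circ I_1$ as a composition of two area-reversing involutions wherever it is defined, where $I_1$ is the reflection $p_r\mapsto -p_r$ on $\Sigma_0$ and $I_2$ is its conjugate by the half-return map. Fixed lines of these involutions correspond to brake orbits ($p=0$ at a turning point) and to $z$-symmetric orbits (perpendicular crossings of $z=0$).

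The second step is the twist estimate. Near $\partial\Upsilon$ the winding of the return map is governed by $\rho_e>2$: in the universal cover, points near the boundary rotate by at least $(\rho_e-1)^{-1}\cdot 2\pi$ per unit time in $s$, uniformly. Near infinity we use McGehee coordinates $z=2/x^2$ to analyze $\mathfrak M$ close to $\zeta_{+\infty}$. There, $\zeta_{+\infty}$ is a degenerate (parabolic) periodic orbit of the decoupled Hamiltonian \eqref{equ: decoupled Ham}, whose $(p_r,r)$-motion is the Kepler radial oscillation with a fixed period $T_\infty$. A point of $A$ close to $C^{\rm s}_{0,+}$ spends an arbitrarily long time $t\to\infty$ near $z=\infty$ while $s$ advances only by about $1/2$. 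During this time the $(p_r,r)$-component performs roughly $t/T_\infty\to\infty$ Kepler turns. Hence the relative rotation of points in $A$ tends to $+\infty$ as we approach $C^{\rm s}_{0,+}$ (and its counterpart on the return leg). This gives an unbounded twist interval.

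The third step turns the twist into orbits. For each rational $p/q$ above the boundary rotation rate we find a $q$-periodic orbit of rotation $p/q$, hence infinitely many periodic orbits. We do this with a Poincaré–Birkhoff argument on the invariant region of points that return, applying Franks' theorem \cite{Franks03} to the area-preserving map on the open annulus. Because the map is not globally defined, we apply this in the reversible form instead: we follow the symmetry line $\mathrm{Fix}(I_1)$ (the segment $p_r=0$ in $\dot\Upsilon$) under the flow until it meets $\mathrm{Fix}(I_2)$. The twist forces the image curve to wind infinitely often as it approaches the escape circle. By an intermediate-value argument on the angle, every winding produces an intersection with each symmetry line, i.e.\ brake or $z$-symmetric periodic orbits. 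At least one brake orbit is $z$-symmetric, obtained from the lowest winding by intersecting with the other line. The same curve, accumulating on $C^{\rm s}$ infinitely many times, crosses the stable circle infinitely often. These crossings are symmetric (brake) points lying on $W^{\rm s}(\zeta_{+\infty})$, and by the time-reversal they become trajectories with $|z|\to\infty$ in both directions, i.e.\ parabolic ones. This proves (i).

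For (ii), if $W^{\rm s}(\zeta_{+\infty})=W^{\rm u}(\zeta_{\pm\infty})$, the region bounded by the invariant circle is invariant and the return map is defined on a genuine closed annulus (the boundary circle is an invariant curve with infinite rotation in the limit). Then the symmetric Poincaré–Birkhoff theorem gives infinitely many $z$-symmetric orbits. For (iii), when the manifolds are distinct, the spiralling images of $\mathrm{Fix}(I_1)$ cross the non-coinciding circles transversally. This yields infinitely many brake orbits that are not $z$-symmetric, and infinitely many brake-parabolic trajectories (brake points on $W^{\rm s}$).

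The main obstacle is the twist estimate near infinity. It requires a rigorous McGehee-type blow-up showing that the time spent near $z=\pm\infty$ diverges logarithmically, or in power law, as one approaches $C^{\rm s}$, while the coupling term $-4/(\alpha\sqrt{\cdot})$ stays small enough that the $(p_r,r)$ Kepler rotation dominates. It also requires controlling that the spiralling curve actually meets the symmetry lines and does not escape. This is the step I would try first, via the explicit asymptotics of the decoupled flow plus a Gronwall perturbation estimate.
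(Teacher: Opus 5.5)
\textbf{Where your argument breaks.} Your McGehee twist estimate and your use of the two reversors match the paper. That estimate is: divergent passage time near $z=+\infty$ with $d\theta/d\mathfrak t$ bounded below, so arcs ending on $\mathcal C^{\rm s}_0$ are sent to curves spiralling onto $\mathcal C^{\rm u}_0$. The gap is in your third step. The claims that ``every winding produces an intersection with each symmetry line'' and that the spiralling curve ``crosses the stable circle infinitely often'' do not follow from the twist. A curve spiralling onto a circle meets a fixed curve infinitely often only if that fixed curve actually crosses the circle. Nothing in your argument forces the segment $\{p_r=0\}$, or the circle $\mathcal C^{\rm s}_0$, to meet $\mathcal D^{\rm u}_0=\mathcal N(\mathcal D^{\rm s}_0)$. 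For instance, nothing excludes $\mathcal D^{\rm s}_0\subset\{p_r>0\}$. Then $\mathcal D^{\rm u}_0\subset\{p_r<0\}$ is disjoint from both $\{p_r=0\}$ and $\mathcal D^{\rm s}_0$. In that case the image of $\{p_r=0\}$ does not spiral at all. The flowed Hill-boundary curves spiral onto $\mathcal C^{\rm u}_0$ without meeting $\{p_r=0\}$ or $\mathcal C^{\rm s}_0$ near it. So the intersections your argument can detect are finite in number.

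\textbf{The missing step.} What you need is the area-recurrence step. The sets $g_0^j(\mathcal D^{\rm s}_0)$ all have the same area, namely the action of $\zeta_{\pm\infty}$, inside the finite-area disk $\Upsilon$. Hence there is a least $k\ge 1$ with $g_0^k(\mathcal D^{\rm s}_0)\cap\mathcal D^{\rm s}_0\neq\emptyset$. Either $g_0^k(\mathcal D^{\rm s}_0)=\mathcal D^{\rm s}_0$, which is the situation of (ii), or the circles $g_0^k(\mathcal C^{\rm s}_0)$ and $\mathcal C^{\rm s}_0$ cross. Only in the second case does the spiralling become usable:
\begin{itemize}
\item iterating a small arc of $g_0^k(\mathcal C^{\rm s}_0)$ that enters $\mathcal U^{\rm s}_0$ gives infinitely many intersections with $\mathcal C^{\rm s}_0$, i.e. parabolic trajectories;
\item $g_0^{k-1}(\mathcal C_i)$ and $\mathcal N(\mathcal C_l)$ spiral onto the two crossing circles $g_0^k(\mathcal C^{\rm s}_0)$ and $\mathcal C^{\rm s}_0$, so they meet infinitely often, giving brake orbits and hence infinitely many periodic orbits.
\end{itemize}

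\textbf{Further problems in (i)--(iii).}

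In (iii) you use images of $\mathrm{Fix}(I_1)=\{p_r=0\}$. But every orbit through a point of $\Sigma_0\cap\{p_r=0\}$ is invariant under the reversor $(p_r,p_z,r,z)\mapsto(-p_r,p_z,r,-z)$, hence $z$-symmetric. So these images can never produce non-$z$-symmetric brake orbits. Those must come from intersections of two flowed Hill-boundary curves, $g_0^{k-1}(\mathcal C_i)\cap\mathcal N(\mathcal C_l)$ with $i\neq l$, joining different components of $\partial\mathcal H\cap\{z\neq 0\}$.

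In (ii) the invariant domain is $\Upsilon\setminus\bigcup_{j=0}^{k-1}g_0^j(\mathcal D^{\rm s}_0)$. This is a disk with $k$ holes, not an annulus unless $k=1$, and the map is not defined on the hole boundaries. You therefore need Franks' Poincar\'e--Birkhoff theorem for such domains together with Kang's theorem for reversible maps.

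Finally, the boundary rotation $(\rho_e-1)^{-1}$ cannot feed a Poincar\'e--Birkhoff argument here, since in case (iii) there is no invariant annulus. In the paper, $\rho_e>2$ plays a different role. It makes $\mathcal C_1$ and $\mathcal C_2$ start on opposite sides of $\{p_r=0\}$ near $\partial\Upsilon$. Since both spiral onto the same circle $\mathcal C^{\rm u}_0$, one of them must cross $\{p_r=0\}$, which gives the $z$-symmetric brake orbit in (i).
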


The proof of Theorem \ref{thm: main5} is based on dynamical arguments of twist maps. Applying McGehee's degenerate stable manifold theorem \cite{McGehee1973}, we obtain a first hitting map from $A_0\subset \Sigma_0$ to $\Sigma_{1/2}$, where $A_0$ is the annulus-like region in $\Sigma_0$ bounded by $\zeta_e$ and $C^{\rm s}_{0,+}$. This map has an infinite twist near $C^{\rm s}_{0,+}$ and thus a standard argument using intersection of curves gives the desired periodic orbits and parabolic trajectories. Our methods do not directly apply to the critical case $\beta^2 + \mathfrak e^2=1$ even though the main conclusions of Theorem \ref{thm: main5} should also hold in this case.

The results above show that the global dynamics of the spatial isosceles three-body problem is organized by a geometric structure connecting spectral constraints, foliation structures, and twist dynamics.

\section{Proof of Theorem \ref{thm: main1}}\label{sec: rotation number}

The proof of Theorem~\ref{thm: main1} proceeds in two steps.
First, we establish the inequality $\rho_{\beta,\fe} > \rho_{\beta,0}$
for every $(\beta,\fe) \in (0,1) \times (0,1)$.
This follows from Theorems~\ref{thm: for 0<beta<5/28} and
\ref{thm: for beta>5/28}, which treat separately the cases
$\beta \in (0,5/28)$ and $\beta \in [5/28,1)$, respectively.
Theorem~\ref{thm: for 0<beta<5/28} is proved via a monotonicity argument,
while Theorem~\ref{thm: for beta>5/28} relies on a quantitative estimate
of the Morse index. In the second step, we prove that, for every fixed $\beta \in (0,1)$,
the function $\fe \mapsto \rho_{\beta,\fe}$ on $[0,1)$ is non-decreasing.
This follows from a global monotonicity argument based on
Lemmas~\ref{lem: for 1-degenerate curves},
\ref{lem: negative definite}, and
\ref{lem: trace and determinant are positive}.

Recall that the minimal Reeb period and the rotation number of the Euler orbit $\zeta_e=\mathfrak M\cap \{p_z=z=0\}$ are denoted by $T_e>0$ and $\rho_e>2$, respectively. It can be solved from \eqref{equ: Ham system} that
\begin{equation}\label{equ: Euler}
p_{r,e}(t)= \frac{\mathfrak{e}}{\varpi\beta}\sin\theta(t),\quad r_e(t)=\frac{\varpi^2\beta}{1+\mathfrak{e}\cos\theta(t)},\quad \theta(T_e)=2\pi,
\end{equation}
where $\theta(t)$ is determined by $\dot \theta(t)=\varpi/r^2_e(t)$ and $\mathfrak{e}=\sqrt{1-2\varpi^2\beta^2}$. Recall from \cite{HLOSY2023} that the rotation number $\rho_e=\rho_{\beta,\fe}>2$ for every $(\beta,\fe)\in(0,1)\times[0,1)$.
For every $(\beta,\mathfrak{e}) \in(0,1)\times[0,1)$, the linearized flow along $\zeta_e$ is decoupled. The subsystem on the $(p_z,z)$-plane satisfies
\begin{equation}\label{equ: linear}
\dot \xi_1=\begin{pmatrix}
0 & -\dfrac{4\alpha^{-1}(1+2\alpha)}{r_e(t)^3}\\
1 & 0
\end{pmatrix}\xi_1,\quad \xi_1=(\xi_{p_z},\xi_z)^T.
\end{equation}
Let $t(\theta)$ be determined by $t'(\theta)=r_e^2(\theta)/\varpi$. Using the following transformation from \cite{HLOSY2023}
$$
\xi_2(\theta) = \mathcal{R}(\theta) \xi_1(t(\theta)), \quad \mathcal{R}(\theta):=\begin{pmatrix}
 \dfrac{r_e}{\sqrt{\varpi}}  &   - \dfrac{\mathfrak{e} \sin \theta}{\sqrt{\varpi^3}\beta} \\
 0 & \dfrac{\sqrt\varpi}{ r_e} \end{pmatrix}, \quad \forall \theta \in \mathbb{R} / 2\pi \mathbb{Z},
$$
the equation \eqref{equ: linear} becomes
\begin{equation} \label{equ: new linear equation of Euler}
\xi_2'=J_2 \begin{pmatrix}
1 & 0\\
 0 & 1+\dfrac{7\beta}{1+\mathfrak{e}\cos\theta}
\end{pmatrix}\xi_2,\quad \beta=\frac{1}{1+4/\alpha}.
\end{equation}

Let $\gamma_{\beta,\mathfrak e}:\mathbb{R}\rightarrow \mathrm{Sp}(2)$ be the fundamental solution of \eqref{equ: new linear equation of Euler}. Due to Proposition~7.11 in \cite{HLOSY2023}, the rotation number of the Euler orbit $\zeta_e$ is $\rho_{\beta,\mathfrak e}=\rho(\gamma_{\beta,\mathfrak e})+1$, where $\rho(\gamma_{\beta,\fe})$ is the rotation number of $\gamma_{\beta,\fe}$, see Appendix \ref{sec: maslov-type indices and rotation numbers} for the definition of $\rho(\cdot)$. If $\mathfrak e=0$, then
\begin{equation}\label{equ: gamma beta,0}
\gamma_{\beta,0}(\theta)=\begin{pmatrix}
\cos\sqrt{1+7\beta}\theta & -\sqrt{1+7\beta}\sin\sqrt{1+7\beta}\theta\\
\frac{1}{\sqrt{1+\beta}}\sin\sqrt{1+7\beta}\theta & \cos\sqrt{1+7\beta}\theta
\end{pmatrix},\quad \theta\in \mathbb R.
\end{equation}
Moreover, by definition, the rotation number $\rho_{\beta,0}=\sqrt{1+7\beta}+1$.

For every $(\beta,\mathfrak{e}) \in (0,1) \times (0,1)$ and $\omega\in \mathbf{U}:=\{|z|=1\}$, we define the self-adjoint operator
$$
(\mathcal{A}_{\beta,\mathfrak{e}}^\omega\cdot x)(\theta):=-x''(\theta)-\left(1+\frac{7\beta}{1+\mathfrak{e}\cos\theta}\right)x(\theta), \qquad \forall \theta \in [0,2\pi],
$$
defined on a suitable dense subset $\Lambda(\omega)$ of $L^2([0,2\pi], \C)$ containing $W^{2,2}([0,2\pi], \C)$, and satisfying the boundary conditions $x(2\pi)=\omega x(0)$ and $x'(2\pi)=\omega x'(0)$. The spectrum of $\mathcal{A} = \mathcal{A}_{\beta,\mathfrak{e}}^\omega$ is formed by countably many real eigenvalues which are bounded from below and accumulate only at $+\infty$. The eigenspace associated with each eigenvalue has complex dimension at most $2$. The nullity of $\mathcal{A}$ is defined as the complex dimension of its kernel, i.e. $\nu_\omega(\mathcal{A}):=\dim_\C \ker \mathcal{A}$. The Morse index $m_\omega^-(\mathcal{A})$ of $\mathcal{A}$ is defined as the total multiplicity of negative eigenvalues. Both notions are closely related to the solutions of the corresponding first-order linear system
\begin{equation*}
\xi'(\theta)=J_2 \begin{pmatrix}
1 & 0\\
 0 & 1+\dfrac{7\beta}{1+\mathfrak{e}\cos\theta}
\end{pmatrix}\xi(\theta), \qquad J_2 = \begin{pmatrix} 0 & -1 \\ 1 & 0 \end{pmatrix},
\end{equation*}
which coincides with \eqref{equ: new linear equation of Euler} and represents the transverse linearized dynamics along the Euler orbit in a suitable frame.

As above, let $\gamma_{\beta,\mathfrak e}:\mathbb{R}\rightarrow \mathrm{Sp}(2)$ be a fundamental solution of the above equation starting from the identity $\gamma_{\beta,\mathfrak{e}}(0)= I_2$. Then
\begin{equation}\label{index equ beta, e}
m^-_{\omega}(\mathcal{A}_{\beta,\mathfrak e}^\omega)=i_{\omega}(\gamma_{\beta,\mathfrak{e}}), \quad \nu_{\omega}(\mathcal{A}_{\beta,\mathfrak e}^\omega)=\nu_{\omega}(\gamma_{\beta,\mathfrak{e}}),
\end{equation}
where $\nu_\omega(\gamma_{\beta,\mathfrak e})$ is the geometric multiplicity of $\omega$ as an eigenvalue of $\gamma_{\beta,\mathfrak{e}}(2\pi)$, and $i_\omega(\gamma_{\beta,\mathfrak{e}})$ is an algebraic count of intersections between $\gamma_{\beta,\mathfrak{e}}$ and the singular set ${\rm Sp}(2)_\omega^0:=\{M \in {\rm Sp(2)}: \det(M -\omega I_2)=0\}$, see Section \ref{sec: maslov-type indices and rotation numbers} for more details.

Finally, for each $j\in \N$ and $\omega=e^{2\pi \nu i} \in \mathbf{U}\setminus \{-1\}$, $\nu\in [0,1)$, we define the subset $\Gamma_j^\omega \subset [0,1) \times [0,1)$
$$
\Gamma_j^\omega := \{(\beta,\mathfrak{e}):\rho(\gamma_{\beta,\mathfrak{e}})=j+\nu,\ \gamma_{\beta,\mathfrak{e}}(2\pi) \approx R(2\pi \nu)\}, \quad R(2\pi \nu ) = \begin{pmatrix}\cos (2\pi \nu) & - \sin (2\pi \nu) \\ \sin (2\pi \nu) & \cos (2\pi \nu) \end{pmatrix},
$$
where $\rho(\gamma_{\beta,\mathfrak{e}})$ is the rotation number of $\gamma_{\beta,\mathfrak{e}}$. It can be proved that $\Gamma_j^\omega$ is the graph of a real-analytic function $\mathfrak{e} \mapsto \beta_j^\omega(\mathfrak{e}), \mathfrak{e}\in [0,1),$ see the discussion before Lemma \ref{lem: the second derivitive}.

For $j\in \N$ and $\omega = -1$, we define the subsets $\Gamma_{j,\pm} \subset (0,1) \times [0,1)$ by
$$
\Gamma_{j,\pm}:= \left\{(\beta,\mathfrak{e}): \rho(\gamma_{\beta,\mathfrak{e}}) = j+1/2,\ \gamma_{\beta,\mathfrak{e}}(2\pi) \approx N(-1,\pm 1)\right\}, \quad N(-1,\pm 1) = \begin{pmatrix} -1 & \pm 1\\ 0 & -1 \end{pmatrix}.
$$

As before, each $\Gamma_{j,\pm}$ is the graph of a real-analytic function $\mathfrak{e} \mapsto \beta_{j, \pm}, \mathfrak{e} \in [0,1)$. This will be clarified in Lemma \ref{lem: degenerate}.

We say that $\mathcal{A}$ is $\omega$-degenerate if $\nu_{\omega}(\mathcal{A})\neq 0$. It follows that $\mathcal A$ is $\omega$-degenerate along $\Gamma^\omega_j$ and $\Gamma^{\bar \omega}_j$ for every $j\in \N$.

The following propositions summarize some useful properties of the curves $\Gamma^\omega_{j}, \Gamma_{j,\pm}$.

\begin{prop}[{\cite[Theorem 2.9, Remark 7.16]{HLOSY2023}, \cite[Theorem 1.1]{HOT2023}}] \label{prop: degenerate curves}
The following statements hold:
\begin{itemize}
\item[(i)]  If $\omega=e^{2\pi \nu i}, \omega' = e^{2\pi \nu' i} \in \mathbf{U} \setminus \{-1\},$ satisfy $0\leq \nu<\nu'<1$,  then   $\beta^{\omega}_j<\beta^{\omega'}_j< \beta^{\omega}_{j+1}$ for every $j$.  Moreover, $\Gamma^1_1\subset \{\beta=0\}$ and $\beta_j^{\omega}(0)=((j+\nu)^2-1)/7$.

\item[(ii)]  $\beta_{j,-}(\mathfrak{e})<\beta_{j,+}(\mathfrak{e}), \forall \mathfrak{e} \in (0,1)$, and $\beta_{j,-}(0)=\beta_{j,+}(0)=((j+1/2)^2-1)/7$ for every $j$.

\item[(iii)]  $\lim_{\mathfrak e\to 1^-}\beta_{1}^\omega(\mathfrak e)=0$ for every $\omega=e^{2\pi \nu i},  \nu\in [0,1/2)$.
Also, $\lim_{\mathfrak e\to 1^-}\beta_{j}^\omega(\mathfrak e)=1/56$ for every $j\geq 2$, or for $j=1$ and $\nu\in (1/2,1)$. Moreover, $\lim_{\mathfrak e\rightarrow 1^-}\beta_{1,-}(\mathfrak e)=0$ and $\lim_{\mathfrak e\rightarrow 1^-}\beta_{1,+}(\mathfrak e)=1/56$.

\item[(iv)] $(\beta_{j}^\omega)'(0)=(\beta_{j,\pm})'(0)=0$ for every $\omega \in \mathbf U\setminus \{-1\}$ and $j$.

\item[(v)] For each $\mathfrak{e} \in (0,1)$ fixed, the continuous function $\beta \mapsto \rho(\gamma_{\beta,\mathfrak e})$ is strictly increasing on $[0,+\infty)\setminus\cup_{j=1}^{+\infty}[\beta_{j,-}(\mathfrak e),\beta_{j,+}(\mathfrak e)]$ and coincides with $j+1/2$ on $ [\beta_{j,-}(\mathfrak e),\beta_{j,+}(\mathfrak e)]$ for every $j$.
\item[(vi)] For every fixed $\omega\in \mathbf U,\fe\in[0,1)$, the function $\beta\mapsto i_{\omega}(\gamma_{\beta,\fe})$ is non-decreasing on $[0,+\infty)$.
\end{itemize}
\end{prop}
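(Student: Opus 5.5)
Since this proposition collects results from \cite{HLOSY2023} and \cite{HOT2023}, my plan is to reduce each item to two standard tools and cite the earlier papers only for the hardest asymptotic statement (iii). The first tool is the Morse/Maslov index identity \eqref{index equ beta, e}, $m^-_\omega(\mathcal A^\omega_{\beta,\fe})=i_\omega(\gamma_{\beta,\fe})$. The second is the fact that the potential $1+7\beta/(1+\fe\cos\theta)$ is strictly increasing in $\beta$.

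I would start with (vi) and (v), which everything else uses. For fixed $\omega$ and $\fe$, the operators satisfy $\partial_\beta\mathcal A^\omega_{\beta,\fe}=-7/(1+\fe\cos\theta)<0$. So every eigenvalue of $\mathcal A^\omega_{\beta,\fe}$ is strictly decreasing in $\beta$, and the number of negative eigenvalues, which equals $i_\omega(\gamma_{\beta,\fe})$, is non-decreasing. This gives (vi). Since the rotation number is determined by the family $\{i_\omega\}_{\omega\in\mathbf U}$, $\rho(\gamma_{\beta,\fe})$ is non-decreasing in $\beta$. Strict monotonicity fails only where crossing the degenerate set does not move the rotation angle, which is the hyperbolic window $\gamma(2\pi)$ with eigenvalues $-1$ (hyperbolic, negative trace). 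Its boundaries are exactly where $\gamma(2\pi)\approx N(-1,\pm1)$, and this yields (v).

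The same eigenvalue-crossing argument shows that, for fixed $\fe$ and $j$, the level $\rho=j+\nu$ is attained at a unique $\beta$. That $\beta$ is a simple $\omega$-eigenvalue crossing, so the implicit function theorem gives a real-analytic graph $\beta^\omega_j(\fe)$. The ordering $\beta^\omega_j<\beta^{\omega'}_j<\beta^\omega_{j+1}$ in (i) then follows from the monotonicity of $\rho$ in $\beta$. At $\fe=0$ the explicit solution \eqref{equ: gamma beta,0} gives $\rho=\sqrt{1+7\beta}$. Solving $\sqrt{1+7\beta}=j+\nu$ gives $\beta^\omega_j(0)=((j+\nu)^2-1)/7$, and the case $j+\nu=1$ gives $\Gamma^1_1\subset\{\beta=0\}$. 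For (ii), at $\fe=0$ the $\omega=-1$ eigenspace is two-dimensional and spanned by $e^{\pm i(j+1/2)\theta}$. For $\fe>0$ I would split it using the reflection $\theta\mapsto-\theta$ into even and odd $\pm1$-antiperiodic solutions. Each parity class has a simple crossing, which gives two analytic curves $\beta_{j,\pm}$, and a first-order perturbation computation shows they separate for $\fe>0$.

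For (iv), the symmetry $\theta\mapsto\theta+\pi$ sends $\fe\mapsto-\fe$ and is conjugate to the identity on the $\omega$-boundary problem. Hence each $\beta^\omega_j$ and $\beta_{j,\pm}$ is even in $\fe$, so its derivative at $0$ vanishes. I expect the main obstacle to be (iii), the limits as $\fe\to1^-$. Here the coefficient $7\beta/(1+\fe\cos\theta)$ blows up near $\theta=\pi$, so no uniform perturbation argument applies. I would follow the blow-up argument of \cite{HOT2023}: rescale around $\theta=\pi$ at scale $\sqrt{1-\fe}$, and obtain a limiting singular equation whose potential has coefficient $7\beta\cdot 8$ in suitable units. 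A solution of this limit oscillates, and hence contributes extra rotation, exactly when $56\beta>1$, which explains the threshold $1/56$. Matching the inner solution with the outer solution (the regular part away from $\theta=\pi$, which tends to $\ddot x=-x$ and contributes rotation about $1$) then locates the limits of $\beta^\omega_1$ for $\nu<1/2$ at $0$, and all the others at $1/56$. For this step I would ultimately cite \cite[Theorem 1.1]{HOT2023} rather than reproduce the asymptotic matching.
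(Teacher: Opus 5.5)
The paper does not prove this proposition; it imports it from \cite{HLOSY2023} and \cite{HOT2023}. So citing \cite{HOT2023} for (iii) matches the paper. Your arguments for (vi), for $\rho$ being non-decreasing in $\beta$ (as an average of the $i_\omega$), and for the values at $\fe=0$ are fine. The items you reprove yourself, however, have genuine gaps.

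The main gap is in (v), and in (i) for $\nu=0$. Eigenvalue monotonicity, together with standard Hill theory, shows only that $\rho$ is non-decreasing and is constant exactly on instability windows. For a general Hill equation some of these windows have positive hyperbolic monodromy, with $\rho$ stuck at an integer. You simply assert that only the $-1$ windows occur. Ruling out the others requires a coexistence property special to this equation. Write the kernel equation as $(1+\fe\cos\theta)(x''+x)+7\beta x=0$. For $\omega=1$, with $A_n=(1-n^2)a_n$:
\begin{itemize}
\item $A_{\pm1}=0$ automatically;
\item the $n=0$ relation then forces $A_0=0$;
\item hence $C_nA_n=\frac{\fe}{2}(A_{n-1}+A_{n+1})$ splits into two mirror-image half-line Jacobi problems ($C_{-n}=C_n$).
\end{itemize}
So every $1$-degeneracy has a two-dimensional kernel and $\gamma(2\pi)=I$; this is the fact $\nu_1=2$ along $\Gamma^1_j$ quoted before Lemma \ref{lem: beta'e}. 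Without it, neither (v) nor the uniqueness of the level $\rho=j$ follows. Your ``simple crossing, then implicit function theorem'' step also fails on $\Gamma^1_j$, where the kernel is two-dimensional; analyticity has to come from the half-line problem instead.

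Item (ii) is also not established. For $j\ge1$ the first-order perturbation vanishes identically. On $\mathrm{span}\{e^{\pm i(j+1/2)\theta}\}$, the operator $\partial_\fe\mathcal A|_{\fe=0}=7\beta\cos\theta$ has zero matrix, because $\cos\theta$ couples only frequencies differing by $1$, whereas here they differ by $0$ or $2j+1\ge3$. This is consistent with (iv): the two parity branches separate only at order $\fe^{2j+1}$. In any case a perturbative argument is local, while (ii) requires excluding coexistence at $\omega=-1$ for every $\fe\in(0,1)$. A global argument is available from the proof of Lemma \ref{lem: degenerate}. There the reduced operators are tridiagonal in the bases $\sin((k+\frac12)\theta)$ and $\cos((k+\frac12)\theta)$, with off-diagonal entries $-\fe/(14\beta)\neq0$, and they differ only by $\frac{\fe}{7\beta}$ in the $(0,0)$ entry. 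For kernel vectors $u,v$ this gives $0=\langle\bar A_1u,v\rangle-\langle u,\bar A_2v\rangle=\frac{\fe}{7\beta}u_0v_0$. The three-term recursion forces $u_0,v_0\neq0$, so the two zero curves never meet for $\fe>0$.

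In (iv), your evenness argument works for $\omega\neq-1$ but not for $\beta_{j,\pm}$. The shift $\theta\mapsto\theta+\pi$ sends $\sin((j+\frac12)\theta)$ to $\pm\cos((j+\frac12)\theta)$, so it exchanges the two analytic branches through $\fe=0$. It therefore only shows that their derivatives at $0$ are opposite; the even function given by the minimum of the two branches could have a corner. The vanishing must come from the computation $\int_0^{2\pi}\cos\theta\,\sin^2((j+\frac12)\theta)\,d\theta=0$ for $j\ge1$.
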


\begin{figure}[ht]
\centering
\includegraphics[width=0.6\textwidth]{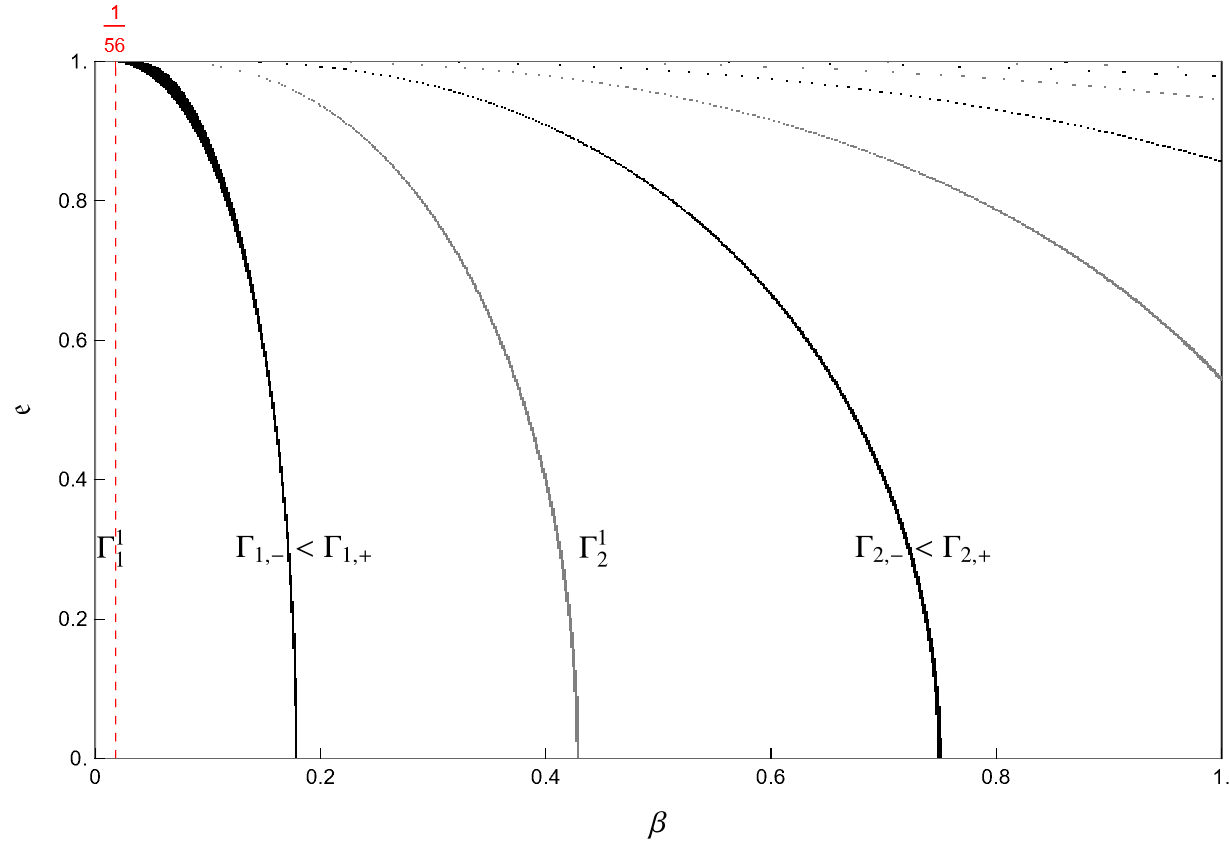}
\caption{The curves $\Gamma_j^1$ and $\Gamma_{j,\pm}$ in $[0,1] \times [0,1]$.}
\label{picture of Gamma}
\end{figure}

To prove Theorem \ref{thm: main1}, we first prove the inequality $\rho_{\beta,\mathfrak e}>\rho_{\beta,0},\fe\in(0,1)$, in two cases: $\beta\in (0,5/28)$ and $\beta\in [5/28,+\infty)$. Notice that $5/28 = \beta_{1,-}(0)=\beta_{1,+}(0)$. We start with the case $\beta\in (0,5/28)$. Recall that $\rho_e=\rho_{\beta,\mathfrak e}=\rho(\gamma_{\beta,\mathfrak e})+1$ for every $(\beta,\mathfrak{e}) \in (0,1) \times [0,1)$.
The inequality $\rho_{\beta,\mathfrak e}>\rho_{\beta,0}$ for $(\beta,\mathfrak{e})\in (0, 5/28) \times (0,1)$ follows directly from Theorem~\ref{thm: for 0<beta<5/28} below.

\begin{thm}\label{thm: for 0<beta<5/28}
For every $\omega=e^{2\pi \nu i} , \nu \in (0,1/2]$,  we have $(\beta_1^\omega)' (\mathfrak{e}), (\beta_{1,\pm})'(\mathfrak{e})<0$ for every $\mathfrak e\in(0,1)$. In particular, for every $\nu \in (0,1/2)$, we have $\rho_{\beta,\mathfrak e}>\rho_{\beta,0}=\sqrt{1+7\beta}+1$ for every $(\beta,\mathfrak{e})\in (0, 5/28) \times (0,1)$.
\end{thm}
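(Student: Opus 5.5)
We prove that the degenerate curves $\Gamma_1^\omega$ and $\Gamma_{1,\pm}$ are strictly decreasing graphs by implicit differentiation along them, using a Hellmann--Feynman type formula. Fix $\omega=e^{2\pi\nu i}$, $\nu\in(0,1/2]$. On $\Gamma_1^\omega$ the operator $\mathcal A^\omega_{\beta,\fe}$ has a kernel, and $0$ is the eigenvalue that crosses as $\beta$ increases. By Proposition~\ref{prop: degenerate curves}(vi) and (i), for $\nu\in(0,1/2)$ this is the eigenvalue with eigenfunction $x$ closest to $e^{i(1+\nu)\theta}$. Let $\lambda(\beta,\fe)$ be this analytic eigenvalue branch, with normalized eigenfunction $x_{\beta,\fe}$. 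First-order perturbation theory gives
\begin{equation*}
\partial_\beta\lambda=-\int_0^{2\pi}\frac{7|x|^2}{1+\fe\cos\theta}\,d\theta<0,\qquad
\partial_\fe\lambda=\int_0^{2\pi}\frac{7\beta\cos\theta\,|x|^2}{(1+\fe\cos\theta)^2}\,d\theta .
\end{equation*}
Since $\Gamma_1^\omega$ is the zero set $\{\lambda=0\}$, we get $(\beta_1^\omega)'(\fe)=-\partial_\fe\lambda/\partial_\beta\lambda$. The claim $(\beta_1^\omega)'<0$ is therefore equivalent to
\begin{equation*}
\int_0^{2\pi}\frac{\cos\theta\,|x(\theta)|^2}{(1+\fe\cos\theta)^2}\,d\theta<0
\end{equation*}
for the kernel element $x$ along $\Gamma_1^\omega$. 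For $\omega=-1$ we apply the same argument to $\Gamma_{1,\pm}$. There the kernel is one-dimensional, spanned by the even or odd (in $\theta$) $4\pi$-periodic solution, so we work on the corresponding subspace, where the eigenvalue is simple.

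To establish this sign condition we change variables, as is standard for the elliptic Euler/Lagrange problems. We set $x(\theta)=(1+\fe\cos\theta)^{a}y(\theta)$ with a suitable exponent, or alternatively rewrite the Hill equation in the time variable $t$ (recall $t'(\theta)=r_e^2/\varpi$). In these coordinates the weight $\cos\theta/(1+\fe\cos\theta)^2$ becomes comparable to $-\partial_\fe$ of the potential, and we can use the identity
\begin{equation*}
\int |x'|^2=\int \Big(1+\frac{7\beta}{1+\fe\cos\theta}\Big)|x|^2,
\end{equation*}
valid in the kernel, together with the Fourier expansion of $x$. The ratio $x(\theta)e^{-i\nu\theta}$ is $2\pi$-periodic, so we write $x=\sum_k c_k e^{i(k+\nu)\theta}$. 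Multiplying the equation by $(1+\fe\cos\theta)$ turns it into a three-term recurrence:
\begin{equation*}
\big(1+7\beta-(k+\nu)^2\big)c_k+\frac{\fe}{2}\big(1-(k+\nu\pm1)^2\big)c_{k\pm1}\text{-terms}=0 .
\end{equation*}
The sign condition becomes a quadratic form in the $c_k$. The constraint $\beta<5/28$, i.e.\ $j=1$ with $1+7\beta<(3/2)^2$, forces the dominant modes to be $k=0$ and $k=1$, that is frequencies $\nu$ and $1+\nu$ (or $-1/2,1/2$ when $\nu=1/2$). The plan is to show that the Rayleigh-quotient structure makes the mixed terms $\mathrm{Re}(c_k\bar c_{k+1})$ negative, which yields the negative sign.

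Once the derivative signs are established, the rotation-number statement follows. Fix $\beta\in(0,5/28)$. Then $(\beta,0)$ lies on $\Gamma_1^{\omega_0}$ with $\rho(\gamma_{\beta,0})=1+\nu_0$ and $\nu_0\in(0,1/2)$. Since every curve $\Gamma_1^{\omega}$, $\nu\in(0,1/2]$, is strictly decreasing, moving to $\fe>0$ at fixed $\beta$ puts the point strictly to the right of $\Gamma_1^{\omega_0}$. By the ordering in Proposition~\ref{prop: degenerate curves}(i),(v), this gives $\rho(\gamma_{\beta,\fe})>1+\nu_0$, hence $\rho_{\beta,\fe}>\rho_{\beta,0}$.

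The main obstacle is the sign of the weighted integral uniformly in $\fe\in(0,1)$. It is zero at $\fe=0$, consistent with Proposition~\ref{prop: degenerate curves}(iv). Near $\fe\to1$ the weight is singular at $\theta=\pi$, and the limits in Proposition~\ref{prop: degenerate curves}(iii) show $\beta_1^\omega\to0$. So we must control $x$ near $\theta=\pi$, which we would do via the change of variables to the time parametrization.
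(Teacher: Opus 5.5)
Your reduction is correct and matches the first half of the paper's Lemma~\ref{lem: beta'e}: along $\Gamma_1^\omega$ the sign of $(\beta_1^\omega)'(\fe)$ is the sign of $\int_0^{2\pi}\cos\theta\,|x|^2(1+\fe\cos\theta)^{-2}\,d\theta$. Your three-term recurrence is right. Your final deduction of $\rho_{\beta,\fe}>\rho_{\beta,0}$ from the monotonicity of $\beta_1^{\omega_0}$ and Proposition~\ref{prop: degenerate curves}-(v) is the paper's argument.

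\textbf{First gap: the sign of the integral.} The heart of the theorem is the sign of this integral for every $\fe\in(0,1)$, and here you only give a plan. ``Dominant modes'' is a perturbative heuristic and gives nothing uniform in $\fe$. Also, in the variables $c_k$ the weight $\cos\theta\,(1+\fe\cos\theta)^{-2}$ couples all pairs $c_k,c_{k+m}$, not just neighbours, and you give no mechanism that would make the cross terms negative.

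The missing idea is an exact identity that uses the kernel equation $(1+\fe\cos\theta)(x''+x)=-7\beta x$ twice.
\begin{itemize}
\item First, $7\beta\int\cos\theta\,|x|^2(1+\fe\cos\theta)^{-2}\,d\theta=\frac{1}{7\beta}\int\cos\theta\,|x''+x|^2\,d\theta$.
\item Then substitute $\fe\cos\theta\,(x''+x)=-(x''+x)-7\beta x$. This gives $\langle\partial_\fe\mathcal A x,x\rangle=\frac{2\pi}{7\fe\beta}\sum_n C_n|A_n|^2$, with $A_n=(1-(n+\nu)^2)c_n$ and $C_n=\frac{7\beta}{(n+\nu)^2-1}-1$.
\end{itemize}
In your language: nearest-neighbour cross terms appear only in the variables $A_n$. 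By the recurrence $C_nA_n=\frac{\fe}{2}(A_{n-1}+A_{n+1})$, their sum $2\pi\sum_n\mathrm{Re}(A_n\bar A_{n+1})$ equals the diagonal form $\frac{2\pi}{\fe}\sum_n C_n|A_n|^2$. This holds exactly for every $\fe$, so you need no change of variables and no control near $\theta=\pi$ as $\fe\to1$.

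\textbf{Second gap: keeping $\beta$ below the threshold.} Even with this identity, one more ingredient is missing. For $\nu\in(0,1/2]$, all $C_n<0$ if and only if $\beta<((1+\nu)^2-1)/7=\beta_1^\omega(0)$. For $\nu<1/2$ this is strictly smaller than $5/28$. So the constraint $\beta<5/28$ you invoke does not control $C_1$. What you actually need is $\beta_1^\omega(\fe)<\beta_1^\omega(0)$, which is the monotonicity being proved.

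The paper closes this circle by continuity:
\begin{itemize}
\item $(\beta_1^\omega)'(0)=0$ together with the second-order computation $(\beta_1^\omega)''(0)=-3\beta_1^\omega(0)\,(1+7\beta_1^\omega(0))/(3+28\beta_1^\omega(0))<0$ (Lemma~\ref{lem: the second derivitive}) gives $(\beta_1^\omega)'<0$ on some interval $(0,\delta)$.
\item If $\fe_0$ were the first zero of $(\beta_1^\omega)'$, then $\beta_1^\omega(\fe_0)<\beta_1^\omega(0)$, so all $C_n<0$. The identity then forces $(\beta_1^\omega)'(\fe_0)<0$, a contradiction.
\end{itemize}
You note that the integral vanishes at $\fe=0$, but you supply no second-order information to start this argument. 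For $\nu=1/2$ the same scheme works on $\Gamma_{1,\pm}$, with $C_1=C_{-2}=\frac{28}{5}\beta-1$ and threshold $5/28$.
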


Recall that $\mathcal A_{\beta,\mathfrak e}^\omega=-d^2/d\theta^2-1-7\beta/(1+\mathfrak e\cos\theta)$ and that $\mathcal A_{\beta_j^\omega(\fe),\mathfrak e}^\omega$ is $\omega$-degenerate for every $j$. If $\beta_j(\fe)=\beta_j^\omega(\fe)\ (\omega\neq 1)$ or $\beta_{j,\pm}(\fe)\ (\omega=-1)$, then $\nu_\omega(\mathcal A^\omega_{\beta_j(\fe),\mathfrak e})=1,\forall \mathfrak e\in [0,1)$. If $\beta_j(\fe)=\beta_j^1(\fe)$, then $\nu_1(\mathcal A_{\beta_j(\fe),\mathfrak e}^1)= 2,\forall \fe\in[0,1)$. Hence, for every $\omega\in \mathbf U$, there always exists a family $x_\fe\in \ker \mathcal A_{\beta_j(\fe),\mathfrak e}^\omega\setminus\{0\}$, which  analytically depends on $\fe\in(0,1)$.
To prove Theorem \ref{thm: for 0<beta<5/28}, we need the following two lemmas.

\begin{lem}\label{lem: beta'e}
Fix $\omega\in \mathbf U$ and $j\in \N$. Assume that $x_{\mathfrak{e}}\in \ker \mathcal{A}^\omega_{\beta(\fe),\fe}\setminus\{0\}$ is analytic in $\fe \in (0,1)$, where $\beta(\fe)=\beta_j^\omega(\fe)\ (\omega\neq -1)$ or $\beta_{j,\pm}(\fe)\ (\omega=-1)$. Let $x_\fe(\theta) = \sum_{n\in \Z} a_ne^{i(n+\nu)\theta}, a_n\in \C,$ be its Fourier expansion. Then the sign of $\beta'(\mathfrak e)$ coincides with the sign of
$$
\langle \partial_{\mathfrak{e}} \mathcal{A}^\omega_{\beta(\fe),\mathfrak{e}} \cdot x_{\mathfrak{e}},x_{\mathfrak{e}}\rangle
=\frac{2\pi}{7\mathfrak{e}\beta(\fe)}\sum_{n\in\mathbb{Z}}C_n|A_n|^2,
$$
where $\langle\cdot,\cdot\rangle$ denotes the $L^2$-product, $A_n:=(1-(n+\nu)^{2})a_n$ and $C_n:=7\beta(\fe)/((n+\nu)^{2}-1)-1, n\in \mathbb{Z}$.
\end{lem}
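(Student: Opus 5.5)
The plan is to differentiate the kernel identity $\mathcal A^\omega_{\beta(\fe),\fe}x_\fe=0$ with respect to $\fe$ and pair the result with $x_\fe$, using self-adjointness to remove the derivative of $x_\fe$. The right-hand side is then rewritten through the Fourier coefficients by means of the eigen-equation itself.

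\emph{Step 1 (sign of $\beta'$).} The boundary conditions $x(2\pi)=\omega x(0)$, $x'(2\pi)=\omega x'(0)$ do not depend on $\fe$. Since $x_\fe$ is analytic in $\fe$ with values in $\Lambda(\omega)$, its derivative $\dot x_\fe:=\partial_\fe x_\fe$ also lies in $\Lambda(\omega)$. Differentiating the kernel identity gives
$$
\partial_\fe\mathcal A^\omega_{\beta(\fe),\fe}\,x_\fe+\beta'(\fe)\,\partial_\beta\mathcal A^\omega_{\beta(\fe),\fe}\,x_\fe+\mathcal A^\omega_{\beta(\fe),\fe}\,\dot x_\fe=0 .
$$
Now take the $L^2$-product with $x_\fe$. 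By self-adjointness, $\langle\mathcal A\dot x_\fe,x_\fe\rangle=\langle\dot x_\fe,\mathcal A x_\fe\rangle=0$. Moreover $\partial_\beta\mathcal A=-7/(1+\fe\cos\theta)$, so $\langle\partial_\beta\mathcal A x_\fe,x_\fe\rangle=-7\int_0^{2\pi}|x_\fe|^2/(1+\fe\cos\theta)\,d\theta<0$. Therefore
$$
\beta'(\fe)=\frac{\langle\partial_\fe\mathcal A x_\fe,x_\fe\rangle}{7\int_0^{2\pi}|x_\fe|^2(1+\fe\cos\theta)^{-1}\,d\theta},
$$
and $\beta'(\fe)$ has the same sign as $\langle\partial_\fe\mathcal A x_\fe,x_\fe\rangle$.

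\emph{Step 2 (the Fourier formula).} We have $\partial_\fe\mathcal A=7\beta\cos\theta/(1+\fe\cos\theta)^2$. Using the identity $\cos\theta/(1+\fe\cos\theta)^2=\fe^{-1}\big[(1+\fe\cos\theta)^{-1}-(1+\fe\cos\theta)^{-2}\big]$, we get
$$
\langle\partial_\fe\mathcal A x,x\rangle=\frac{7\beta}{\fe}\Big[\int_0^{2\pi}\frac{|x|^2}{1+\fe\cos\theta}\,d\theta-\int_0^{2\pi}\frac{|x|^2}{(1+\fe\cos\theta)^2}\,d\theta\Big].
$$
Set $u:=7\beta x/(1+\fe\cos\theta)$. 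The kernel equation says $u=-(x''+x)$, whose Fourier coefficients with respect to $e^{i(n+\nu)\theta}$ are $((n+\nu)^2-1)a_n=-A_n$. Parseval (with the factor $2\pi$) then gives the two integrals in Fourier form:
$$
\int_0^{2\pi}\frac{|x|^2}{(1+\fe\cos\theta)^2}\,d\theta=\frac{\|u\|^2}{49\beta^2}=\frac{2\pi}{49\beta^2}\sum_n|A_n|^2,
$$
$$
\int_0^{2\pi}\frac{|x|^2}{1+\fe\cos\theta}\,d\theta=\frac{\langle u,x\rangle}{7\beta}=\frac{2\pi}{7\beta}\sum_n\big((n+\nu)^2-1\big)|a_n|^2=\frac{2\pi}{7\beta}\sum_n\frac{|A_n|^2}{(n+\nu)^2-1}.
$$
Substituting both into the bracket and factoring out $2\pi/(7\fe\beta)$ yields exactly $\frac{2\pi}{7\fe\beta}\sum_n C_n|A_n|^2$.

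\emph{Main obstacle.} The delicate points are bookkeeping rather than conceptual.

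First, one must justify that $\dot x_\fe\in\Lambda(\omega)$, so that the self-adjointness step is legitimate. This follows from the analytic dependence assumed in the statement, because the domain is fixed.

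Second, one must handle indices $n$ with $(n+\nu)^2=1$, which can occur only when $\nu=0$ and $n=\pm1$. For these $n$ we have $A_n=0$, while $C_n$ is undefined. In the second display the corresponding summand $((n+\nu)^2-1)|a_n|^2$ vanishes identically, so these terms contribute nothing. The convention to adopt is therefore $C_n|A_n|^2:=0$ for such $n$; with it the stated formula holds verbatim.

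Finally, all series converge absolutely because $x_\fe\in W^{2,2}$, which justifies the termwise manipulations.
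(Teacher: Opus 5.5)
Your proof is correct and follows essentially the paper's route. The sign of $\beta'(\mathfrak e)$ comes from the same differentiation of the kernel identity combined with self-adjointness and $\langle\partial_\beta\mathcal A x,x\rangle<0$. The Fourier formula comes from the kernel equation plus Parseval; your partial-fraction identity $\mathfrak e\cos\theta/(1+\mathfrak e\cos\theta)^2=(1+\mathfrak e\cos\theta)^{-1}-(1+\mathfrak e\cos\theta)^{-2}$ is just the paper's substitution $\mathfrak e\cos\theta\,(x''+x)=-(x''+x)-7\beta x$ written differently. Your explicit convention $C_n|A_n|^2:=0$ for $\nu=0$, $n=\pm1$ is a useful clarification that the paper leaves implicit.
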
	

\begin{proof}
Since $x_{\fe}$ analytically depends on $\mathfrak{e}$, and satisfies $\langle \mathcal{A}^\omega_{\beta(\fe),\mathfrak{e}} \cdot x_{\mathfrak{e}},x_{\mathfrak{e}}\rangle =0$ for every $\mathfrak{e}\in(0,1)$. Taking the derivative with respect to $\fe$ on both sides and using that $\mathcal{A}_{\beta(\fe)}^\omega$ is self-adjoint, we obtain
\begin{equation}\label{equ: beta'}
\beta'(\fe)\langle \partial_\beta \mathcal{A}^\omega_{\beta(\fe),\mathfrak{e}} \cdot x_{\mathfrak{e}},x_{\mathfrak{e}}\rangle +
\langle \partial_{\mathfrak{e}} \mathcal{A}^\omega_{\beta(\fe),\mathfrak{e}} \cdot x_{\mathfrak{e}},x_{\mathfrak{e}}\rangle=0, \quad \forall \fe\in (0,1).
	\end{equation}

Since $\langle \partial_\beta \mathcal{A}^\omega_{\beta(\fe),\mathfrak{e}} \cdot x_{\mathfrak{e}},x_{\mathfrak{e}}\rangle
=-\int_{0}^{2\pi}7|x_{\mathfrak{e}}|^2/(1+\mathfrak{e}\cos\theta)d\theta$ is always negative, the sign of $\beta'(\fe)$ coincides with the sign of $\langle \partial_{\mathfrak{e}} \mathcal{A}^\omega_{\beta(\fe),\mathfrak{e}} \cdot x_{\mathfrak{e}},x_{\mathfrak{e}}\rangle$. Moreover, since $x_{\mathfrak{e}}\in \ker\mathcal{A}^\omega_{\beta(\fe),\fe}$, we have
\begin{equation}\label{equ: x equation}
\fe\cos\theta(x_{\mathfrak{e}}''(\theta)+x_{\mathfrak{e}}(\theta))
=-(x_{\mathfrak{e}}''(\theta)+x_{\mathfrak{e}}(\theta))-7\beta(\fe) x_{\fe}(\theta), \quad \forall \theta\in \R/2\pi\Z.
\end{equation}
Using the Fourier expansion $x_{\mathfrak{e}}(\theta)=\sum_{n\in\mathbb{Z}}a_{n}e^{i(n+\nu)\theta},a_n\in\mathbb{C}$, we obtain from \eqref{equ: x equation} that
\begin{equation}\label{equ: iteration of An}
C_n A_n=\frac{\mathfrak{e}}{2}\left(A_{n-1}+A_{n+1}\right),\quad \forall n\in \mathbb{Z},
\end{equation}
where $A_n$ and $C_n$ are as in the statement.

Combining \eqref{equ: x equation} and \eqref{equ: iteration of An}, we compute
$$
\begin{aligned}
\langle \partial_{\mathfrak{e}} \mathcal{A}^\omega_{\beta(\fe),\mathfrak{e}} \cdot x_{\mathfrak{e}},x_{\mathfrak{e}}\rangle
&=\int_{0}^{2\pi}7\beta(\fe)\cos\theta(1+\mathfrak{e}\cos\theta)^{-2}x_{\mathfrak{e}}(\theta) \bar{x}_{\mathfrak{e}}(\theta)d\theta\\ &=\frac{1}{7\beta(\fe)}\int_{0}^{2\pi}\cos\theta(x_{\mathfrak{e}}''(\theta)+x_{\mathfrak{e}}(\theta))(\overline{x_{\mathfrak{e}}''(\theta)+x_{\mathfrak{e}}(\theta)})d\theta\\
&=\frac{1}{7\mathfrak{e}\beta(\fe)}\int_{0}^{2\pi}(-x_{\mathfrak{e}}''(\theta)-x_{\mathfrak{e}}(\theta)-7\beta(\fe) x_{\mathfrak{e}}(\theta)) (\overline{x_{\mathfrak{e}}''(\theta)+x_{\mathfrak{e}}(\theta)})d\theta\\
&=\frac{2\pi}{7\mathfrak{e}\beta(\fe)} \sum_{n\in\mathbb{Z}}C_n|A_n|^2.
\end{aligned}$$
This proves the lemma.
\end{proof}

\begin{lem}\label{lem: the second derivitive}
Fix $\omega\in \mathbf U$ and $j\in \mathbb \N$. Let $\beta(\fe)=\beta_j^\omega(\fe)\ (\omega\neq -1)$ or $\beta_{j,\pm}(\fe)\ (\omega=-1)$, we have
$
\beta''(0)=-\frac{3\beta(0)(1+7\beta(0))}{3+28\beta(0)}< 0.$ In particular, we have
$(\beta_{j,\pm})''(0)=-\frac{3(1+2j)^2((j+1/2)^2-1)}{112j(1+j)}< 0.$
\end{lem}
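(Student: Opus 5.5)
Differentiate the identity \eqref{equ: beta'} once more in $\fe$ and evaluate at $\fe=0$, using perturbation theory. Equivalently, I will expand in Fourier modes via the recursion \eqref{equ: iteration of An}.

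At $\fe=0$ we have $\beta(0)=((j+\nu)^2-1)/7$, so $C_n(0)=0$ exactly for $n+\nu=\pm(j+\nu)$. Write $k:=j+\nu$, so $7\beta_0=k^2-1$, where $\beta_0=\beta(0)$. For $\omega\neq\pm1$ only one index is resonant. For $\omega=-1$ there are two resonant indices; here I choose the branch of the kernel adapted to $\beta_{j,\pm}$, i.e. the real combination $\cos(k\theta)$ or $\sin(k\theta)$ of $e^{\pm ik\theta}$.

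\textbf{Step 1: expansion of the kernel.} Normalize the kernel so that $A_{n_0}=1$ at the resonant mode(s). The recursion $C_nA_n=\frac{\fe}{2}(A_{n-1}+A_{n+1})$ then gives $A_{n_0\pm1}=O(\fe)$ and $A_{n_0\pm2}=O(\fe^2)$. Write $\beta(\fe)=\beta_0+\tfrac12\beta''(0)\fe^2+O(\fe^3)$, using $\beta'(0)=0$ from Proposition \ref{prop: degenerate curves}(iv).

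\textbf{Step 2: solvability condition.} Take the equation at $n=n_0$:
$$C_{n_0}(\fe)=\frac{7\beta(\fe)}{k^2-1}-1=\frac{\fe}{2}\,(A_{n_0-1}+A_{n_0+1}).$$
To first order,
$$A_{n_0\pm1}\approx\frac{\fe}{2\,C_{n_0\pm1}(0)}, \qquad C_{n_0\pm1}(0)=\frac{k^2-1}{(k\pm1)^2-1}-1.$$
The left side equals $\frac{7\beta''(0)}{2(k^2-1)}\fe^2$, and the right side equals $\frac{\fe^2}{4}\Big(\frac{1}{C_{n_0-1}(0)}+\frac{1}{C_{n_0+1}(0)}\Big)$. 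This determines $\beta''(0)$.

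For $\omega=-1$ the two resonant modes $\pm k$ with $k=j+1/2$ are coupled only at order $\fe^{2j+1}$. Their mutual coupling therefore does not affect the $\fe^2$ coefficient unless $j=0$, which does not occur. Hence the same formula holds for both $\beta_{j,\pm}$.

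\textbf{Step 3: computation.} A short computation gives
$$\frac{1}{C_{n_0\pm1}(0)}=\frac{(k\pm1)^2-1}{k^2-(k\pm1)^2}=\frac{k^2\pm2k}{\mp2k-1}.$$
Summing and simplifying, the sum equals $-\frac{6k^2}{4k^2-1}$. Therefore
$$\beta''(0)=\frac{2(k^2-1)}{7}\cdot\frac14\cdot\Big(-\frac{6k^2}{4k^2-1}\Big)=-\frac{3k^2(k^2-1)}{7(4k^2-1)}.$$
Substituting $k^2=1+7\beta_0$ gives $4k^2-1=3+28\beta_0$. This yields exactly $-\frac{3\beta_0(1+7\beta_0)}{3+28\beta_0}$, which is negative since $\beta_0>0$.

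For $k=j+\tfrac12$ we have $4k^2-1=4j(j+1)$ and $k^2=(1+2j)^2/4$. This gives the stated formula for $(\beta_{j,\pm})''(0)$.

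\textbf{Main obstacle.} The delicate point is justifying the expansion: analyticity of $\beta(\fe)$ and of the normalized kernel, and the claim that the $\omega=-1$ degeneracy splitting does not contaminate the second order. I would handle this by the Lyapunov–Schmidt reduction onto the resonant modes, with the off-resonant $\ell^2$-block inverted by a Neumann series. This is valid because $|C_n(0)|$ is bounded below away from the resonant modes.
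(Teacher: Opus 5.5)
Your proposal is correct. For $\omega\neq-1$ it is the same second-order perturbation computation as the paper's, carried out on the Fourier recursion \eqref{equ: iteration of An} instead of on the ODE. The paper differentiates at $\mathfrak e=0$ the quotient for $\beta'(\mathfrak e)$ coming from Lemma~\ref{lem: beta'e}, and this requires the first-order correction $y_1$. The coefficients $\mu_{\pm1}$ of $y_1$ on $e^{i(k\pm1)\theta}$ are exactly your first-order $a_{n_0\pm1}$. Your solvability condition at the resonant index replaces the quotient, and it even yields $\beta'(0)=0$ without Proposition~\ref{prop: degenerate curves}-(iv).

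The genuinely different part is $\omega=-1$. The paper splits $\Lambda_{-1}=E_1\oplus E_2$ using the reflection $\theta\mapsto 2\pi-\theta$. It then proves in Lemma~\ref{lem: degenerate} that $\beta_{j,-}$ carries the $\sin((j+\frac12)\theta)$-kernel and $\beta_{j,+}$ the $\cos$-kernel, and perturbs a simple eigenvalue. You instead use that the recursion is three-term, so the resonant indices $j$ and $-j-1$ interact only at order $\mathfrak e^{2j+1}$. To make this rigorous inside your Lyapunov--Schmidt reduction, note that $C_n=C_{-n-1}$ when $\nu=\frac12$. Hence the reduced $2\times2$ matrix has equal diagonal entries $d(\beta,\mathfrak e)$ and off-diagonal entry $c=O(\mathfrak e^{2j+1})$. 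The two branches are $d=\pm c$, and since $\partial_\beta d\neq0$ at $(\beta(0),0)$, both share the $2$-jet of $d=0$. This frees you from identifying which branch carries which symmetry, and gives the bonus $\beta_{j,+}-\beta_{j,-}=O(\mathfrak e^{2j+1})$. The cost is setting up the reduction, which the paper's symmetry splitting avoids.

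You did not treat $\omega=1$. There both $n=\pm j$ are resonant. In addition $(n+\nu)^2=1$ at $n=\pm1$, so $C_{\pm1}$ is undefined and $A_{\pm1}\equiv0$. For $j=2$ your step $A_{n_0-1}\approx\mathfrak e/(2C_{n_0-1}(0))$ therefore makes no sense as written. It is repaired by reading $1/C_n$ as the bounded multiplier $((n+\nu)^2-1)/(1+7\beta-(n+\nu)^2)$. This multiplier vanishes at $n=\pm1$, agrees with your closed form $\frac{(k-1)^2-1}{k^2-(k-1)^2}=0$ at $k=2$, and decouples the two resonant chains exactly. For $j=1$ one has $\beta^1_1\equiv0$, so $\beta''(0)=0$ and the strict sign fails; your remark that $\beta_0>0$ does not hold there. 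This is a defect of the statement rather than of your argument.
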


\begin{proof}
We first consider $\omega\neq -1$. Let $y_n:=y^{(n)}=\partial_{\mathfrak{e}}^n x_{\mathfrak{e}}$ and $\beta_{j,0}:=\beta_j^\omega(0)$. Recall from Proposition \ref{prop: degenerate curves}-(i) that $\omega=e^{2\pi i\sqrt{1+7\beta_{j,0}}}$. Recall also that $x_\fe(2\pi)=\omega x_\fe(0)$.
Taking the $n$-th derivative of \eqref{equ: x equation} with respect to $\mathfrak e$ and computing it at $\mathfrak e=0$, we obtain
\begin{equation}\label{equ: iteration of yn}
-y''_n-y_n-7\beta_{j,0}^\omega y_n=n\cos\theta(y_{n-1}''+y_{n-1})+7\sum_{k=1}^{n}C_{n}^{k}\partial^k_\fe\beta_j^\omega(0)y_{n-k},\quad  y_n(2\pi)=\omega y_n(0).
\end{equation}

For $n=0$, we may take $y_0(\theta)=x_0(\theta):=e^{i\sqrt{1+7\beta_{j,0}}\theta}$. Indeed, $y_0$ solves $y_0''+(1+7\beta_{j,0}) y_0=0$ and satisfies the $\omega$-boundary condition. Recall from Proposition \ref{prop: degenerate curves}-(iv) that $(\beta_j^\omega)'(0)=0$ for every $\omega, j$. If $n=1$, we obtain $y_1(2\pi)=\omega y_1(0)$ and
$$-y''_1-y_1-7\beta_{j,0} y_1=\cos\theta(y''_0+y_0) =-\frac{7\beta_{j,0}}{2}\left(e^{i(\sqrt{1+7\beta_{j,0}}+1)\theta}+e^{i(\sqrt{1+7\beta_{j,0}}-1)\theta}\right).
$$
By solving the equation above, we obtain
$$y_1=\mu_1e^{i(\sqrt{1+7\beta_{j,0}}+1)\theta}+\mu_{-1}e^{i(\sqrt{1+7\beta_{j,0}}-1)\theta}+\mu_0e^{i\sqrt{1+7\beta_{j,0}}\theta},$$
where $2\mu_{\pm 1}=-7\beta_{j,0}/((\sqrt{1+7\beta_{j,0}}\pm 1)^2-(1+7\beta_{j,0}))$ and $\mu_0\in \C$.

To compute $(\beta_j^\omega)''(0)$, we recall from Lemma \ref{lem: beta'e} that
$$
(\beta_j^\omega)'(\fe)
=\frac{\int_{0}^{2\pi}\cos\theta (x_{\mathfrak{e}}''+x_{\mathfrak{e}})(\overline{x_{\mathfrak{e}}''
+x_{\mathfrak{e}}})d\theta}{49\beta_j^\omega(\fe)\int_{0}^{2\pi}|x_{\mathfrak{e}}|^2/(1+\mathfrak{e}\cos\theta)d\theta}
=\frac{\int_{0}^{2\pi}\cos\theta(x_{\mathfrak{e}}''+x_{\mathfrak{e}})(\overline{x_{\mathfrak{e}}''+x_{\mathfrak{e}}})d\theta}{-7\int_{0}^{2\pi}(x_{\mathfrak{e}}''
+x_{\mathfrak{e}})\bar{x}_{\mathfrak{e}}d\theta},
$$
and thus
$$
\begin{aligned}
(\beta_j^\omega)''(0)&=\frac{2\int_{0}^{2\pi}\cos\theta\cdot \mathrm{Re}((y_1''+y_1)(\overline{y_0''+y_0}))d\theta}{-7\int_{0}^{2\pi}(y_0''+y_0)\bar{y}_0d\theta}\\
&=\frac{-\beta_{j,0}^2\cdot 3(1+7\beta_{j,0})\int_0^{2\pi}2\cos^2\theta d\theta}{(3+28\beta_{j,0})\cdot 2\pi\beta_{j,0}} =-3\beta_{j,0}\cdot\frac{1+7\beta_{j,0}}{3+28\beta_{j,0}}.
\end{aligned}
$$
Here, we have used the equation $y''_1+y_1=-7\beta_{j,0} y_1-\cos\theta(y''_0+y_0)$ which implies
$$
\begin{aligned}
\mathrm{Re}((y''_1+y_1)(\overline{y''_0+y_0}))&=49\beta_{j,0}^2\mathrm{Re}\left(\mu_1e^{i\theta}+\mu_{-1}e^{-i\theta}+\mu_0-\cos\theta\right)\\
		&=49\beta_{j,0}^2\left((\mu_{1}+\mu_{-1}-1)\cos\theta+\mu_0\right)\\
        &=49\beta_{j,0}^2\left(-\frac{3(1+7\beta_{j,0})}{3+28\beta_{j,0}}\cos\theta+\mu_0\right).
\end{aligned}
$$
The proof for $\omega\neq -1$ is complete.

Now assume that $\omega=-1$. Recall that for any $(\beta,\fe)$, with $\beta>0$ and $0\leq \fe<1$, the domain of $\mathcal A_{\beta,\fe}:=\mathcal A^{-1}_{\beta,\fe}$ as $\Lambda_{-1}:=\{x\in W^{2,2}([0,2\pi],\mathbb C): x(2\pi)=-x(0),\dot x(2\pi)=-\dot x(0)\}$. Consider the linear operator $g$ defined on $\Lambda_{-1}$, given by $g\cdot x(\theta):=x(2\pi-\theta)$. We see that $g$ is an involution and satisfies $\mathcal A_{\beta,\mathfrak e}\circ g=g\circ \mathcal A_{\beta,\mathfrak e}$. In particular, $g$ admits two eigenspaces
$$\begin{aligned}
&E_1:=\{x\in \Lambda_{-1}:x(\theta)=x(2\pi-\theta)\}\cong\{x\in W^{2,2}([0,\pi],\mathbb C):x(0)=0,\dot x(\pi)=0\},\\
&E_2:=\{x\in \Lambda_{-1}:x(\theta)=-x(2\pi-\theta)\}\cong\{x\in W^{2,2}([0,\pi],\mathbb C):\dot x(0)=0,x(\pi)=0\}.
\end{aligned}$$

Let $\mathcal A_i:=\mathcal A_{\beta,\fe}=(-d^2/d\theta^2-1-7\beta/(1+\fe \cos \theta))|_{E_i},i=1,2,$ and see from the boundary conditions above that they correspond to Sturm-Liouville operators. The commutativity of $\mathcal A_{\beta,\mathfrak e}$ and $g$ induces the splitting $\mathcal A_{\beta,\mathfrak e}=\mathcal A_1\oplus \mathcal A_2$. For each $(\beta,\fe)$, the eigenvalues of $\mathcal{A}_1, \mathcal{A}_2$ are given by
$\lambda_0< \lambda_1 < \ldots <\lambda_j < \cdots \to +\infty$ and $\bar \lambda_0< \bar \lambda_1 < \cdots <\bar \lambda_j < \cdots \to +\infty$, respectively, and the eigenspaces are all one-dimensional. If $\lambda_j=0$ for some $(\beta_0,\fe_0)$ and $j$, with $\beta_0>0$ and $0\leq \fe_0 <1$, then the strictly inequality $\partial_\beta \mathcal{A}_1 <0$ implies that there exists a real-analytic curve $\beta = \beta_j(\fe)$, defined near $\fe_0$ and satisfying $\beta_j(\fe_0)=\beta_0$, such that $\mathcal{A}_1$ has a one-dimensional kernel along $(\beta_j(\fe), \fe)$. The same holds for $\mathcal{A}_2$. In particular, the curves $\Gamma_{j,\pm}$ are real-analytic for every $j$. They intersect precisely at $(\beta, \fe) = (((j+1/2)^2-1)/7,0)$, where $\lambda_j = \bar \lambda_j=0$ and the kernels of $\mathcal{A}_1$ and $\mathcal{A}_2$ are both one-dimensional.  The proof that $\beta_{j,-}(\fe) < \beta_{j,+}(\fe)$ for every $\fe \in (0,1)$, is given in Proposition \ref{prop: degenerate curves}. The behavior of $\Gamma_{j,\pm}$ near $\fe=1$ is studied in \cite{HOT2023}. Notice that the same argument can be used to show that the curves $\Gamma^\omega_j$ are real-analytic for general values of $\omega$ and $j$.

To better understand the restricted operators $\mathcal{A}_1$ and $\mathcal{A}_2$ along the curves $\Gamma_{j,\pm}$, we have the following lemma.

\begin{lem}\label{lem: degenerate}
Fix $j\in \N$, and let $\mathcal A_{j,i}^\pm:=\mathcal A_{\beta_{j,\pm}(\fe),\mathfrak e}|_{E_i},i=1,2$. The following statements hold:
\begin{itemize}
\item[(i)] $\dim \ker \mathcal A_{j,1}^-=1$ and $\dim \ker \mathcal A_{j,2}^-=0$.
\item[(ii)] $\dim \ker \mathcal A_{j,2}^+=1$ and $\dim \ker \mathcal A_{j,1}^+=0$.

\end{itemize}
\end{lem}

\begin{proof}
For any $(\beta,\fe)$, with $\beta>0$ and $0\leq \fe <1$, we first observe that
$\ker(\mathcal A_i)  = \ker(\bar{\mathcal A}_i),$ where
$$
\bar{\mathcal A}_i:=\left[-\frac{1+\mathfrak e \cos \theta}{7\beta}+\left(-\frac{d^2}{d\theta^2}-1\right)^{-1}\right]\bigg|_{E_i}, i=1,2.
$$

Consider the basis $\mathcal B_1=\{e_k(\theta):=\sin((k+1/2)\theta),k\geq 0\}$ of $E_1$ and  $\mathcal B_2=\{f_k(\theta):=\cos((k+1/2)\theta),k\geq 0\}$ of $E_2$. We compute
$$
\begin{aligned}
&\quad\ \frac{1}{\pi}\int_0^{2\pi} e_k(\theta)e_l(\theta) d\theta =\delta_{kl}.\\
&\quad\ \frac{1}{\pi}\int_0^{2\pi}e_k(\theta) e_l(\theta) \cos \theta d\theta =\left\{\begin{aligned}
&-1/2,\quad \text{if}\ k=l=0,\\
&1/2,\quad \text{if}\ |k-l|= 1,\\
&0,\quad \text{if}\ |k-l|>1.
\end{aligned}\right.\\
&\quad\ \frac{1}{\pi}\int_0^{2\pi}e_k(\theta) \cdot \left[\left(\frac{d^2}{d\theta^2}+1\right)^{-1} \cdot e_l(\theta)\right]d\theta =\frac{\delta_{kl}}{-(k+1/2)^2+1}.
\end{aligned}
$$
and
$$\begin{aligned}
&\quad\ \frac{1}{\pi}\int_0^{2\pi}f_k(\theta)f_l(\theta)d\theta =\delta_{kl}.\\
&\quad\ \frac{1}{\pi}\int_0^{2\pi} f_k(\theta) f_l(\theta)\cos\theta d\theta =\left\{\begin{aligned}
&1/2,\quad \text{if}\ k=l=0\ \text{or}\ |k-l|= 1,\\
&0,\quad \text{if}\ |k-l|>1.
\end{aligned}\right.\\
&\quad\ \frac{1}{\pi}\int_0^{2\pi}f_k(\theta)\cdot \left[\left(\frac{d^2}{d\theta^2}+1\right)^{-1}\cdot f_l(\theta)\right]d\theta =\frac{\delta_{kl}}{-(k+1/2)^2+1}.
\end{aligned}
$$

Let $\bar A_i:=(\bar{\mathcal A}_{i,kl})_{k,l\geq 0}$ be the matrization of $\bar {\mathcal A}_i$ with respect to the basis $\mathcal B_i,i=1,2$, that is, $\mathcal{A}_{1,kl} = (1/\pi)
\int_0^{2\pi} \bar {\mathcal{A}}_1(e_k)(\theta) e_l(\theta) d\theta$ and  $\mathcal{A}_{2,kl} = (1/\pi)
\int_0^{2\pi} \bar {\mathcal{A}}_2(f_k)(\theta) f_l(\theta) d\theta$ for every $k,l$.
From the formulas above, we can check that $\bar A_1\geq \bar A_2$. In particular, $\bar{\mathcal A}_{1,00}=\bar {\mathcal A}_{2,00}+\fe/(7\beta)$ and $\bar{\mathcal A}_{1,kl}=\bar{\mathcal A}_{2,kl},\forall k+l>0$.

It suffices to prove (i) and (ii) for $\fe>0$ sufficiently small. We know that $\bar{ \mathcal A}_1$ admits a sequence of eigenvalues $\lambda_{0,1}>\lambda_{1,1}>\lambda_{2,1}>\cdots>\lambda_{j,1}$ and $\bar{\mathcal A}_2$ admits a sequence of eigenvalues $\lambda_{0,2}>\lambda_{1,2}>\lambda_{2,2}>\cdots>\lambda_{j,2}$ for $\fe>0$ sufficiently small. In particular, $\mathcal A_{\beta,\fe}$ degenerates if some eigenvalue $\lambda_{j,i}$ becomes zero. We also know that $\lambda_{j,i}$ depends on $(\beta,\fe)$ in a real-analytic way near $\fe=0$. From the reasoning above, $\lambda_{j,1}(\beta,\fe)\geq \lambda_{j,2}(\beta,\fe)$ for every $(\beta,\fe)$. Since both $\bar{\mathcal A}_1$ and $\bar{\mathcal A}_2$ are strictly increasing in $\beta>0$, we conclude that $\partial_{\beta}\lambda_{j,i}(\beta,\fe)>0$. Moreover,  $\dim\ker \bar {\mathcal A}_1=1=\dim\ker \bar{\mathcal A}_2$ if $(\beta,\fe)=(\beta_{j,\pm}(0),0)$. By the implicit function theorem, there exists a real-analytic function $\beta_{j,i}(\fe)$, defined for $\fe$ sufficiently close to $0$, such that $\lambda_{j,i}(\beta_{j,i}(\fe),\fe)=0$ and $\beta_{j,i}(0)= \beta_{j,\pm}(0)$. We know that
$
0=\lambda_{j,2}(\beta_{j,2}(\fe),\fe)=\lambda_{j,1}(\beta_{j,1}(\fe),\fe)
\geq \lambda_{j,2}(\beta_{j,1}(\fe),\fe).$
The monotonicity of $\lambda_{j,i}$ in $\beta$ implies that $\beta_{j,2}(\fe)\geq \beta_{j,1}(\fe)$ for every $\fe\geq 0$ sufficiently small. This implies that $\Gamma_{j,-}$ is the graph of $\beta_{j,1}$ and $\Gamma_{j,+}$ is the graph of $\beta_{j,2}$. In other words,
$\beta_{j,-} =\beta_{j,1}$ and $\beta_{j,+}=\beta_{j,2}$. Hence, (i) holds along $\Gamma_{j,-}$ and  (ii) holds along $\Gamma_{j,+}$.

Since the curves $\Gamma_{j,-}$ and $\Gamma_{j,+}$ are real-analytic graphs in $(0,1)\times(0,1)$ and satisfy
$\beta_{j,-}(\fe) < \beta_{j,+}(\fe)$ for every $\fe \in (0,1)$
by Proposition~\ref{prop: degenerate curves}-(ii), they do not intersect for $\fe>0$. Along each such curve, the kernel of $\mathcal A_{\beta,\fe}$ is one-dimensional.
Therefore, the degeneracy cannot switch from $E_1$ to $E_2$ (or vice versa) without producing a simultaneous degeneracy in both subspaces, which would force an intersection of the curves.
Consequently, the identification of the kernel with $E_1$ along
$\Gamma_{j,-}$ and with $E_2$ along $\Gamma_{j,+}$, proved for $\fe$ sufficiently small, holds for all $\fe \in (0,1)$.
\end{proof}

Back to the proof of Lemma \ref{lem: the second derivitive}, we see that the following holds for $\fe=0$
$$
\ker \mathcal A_{j,1}^-=\{c_1\sin((j+1/2)\theta):c_1\in \mathbb C\},\quad \ker \mathcal A_{j,2}^+=\{c_2\cos((j+1/2)\theta):c_2\in \mathbb C\}.
$$

To compute $(\beta_{j,-})''(0)$, we consider a family of solutions $\fe \mapsto x_\mathfrak e\in \ker \mathcal A_{j,1}^-$ so that $x_0=\sin((j+1/2)\theta)$. Let $y_n$ be as before. We have $y_1\in E_1$ and
$$-y''_1-(j+1/2)^2 y_1=\cos\theta(y''_0+y_0) =-((j+1/2)^2-1)\cos\theta\sin((j+1/2)\theta).
$$
Solving this equation for $y_1$, we obtain
$$
y_1(\theta)=\frac{(j+1/2)^2-1}{4j(j+1)}\left((1+j)\sin((j-1/2)\theta) - j \sin((j+3/2)\theta)\right) + \mu_0 \sin((j+1/2)\theta),
$$
for some $\mu_0\in \C$. We directly compute
$$
\begin{aligned}
\mathrm{Re}\left((y''_1+y_1)(\overline{y''_0+y_0})\right)
&=\mu_0 ((j+1/2)^2-1)^2\sin^2((j+1/2)\theta)-\frac{(1 + 2 j) ((j+1/2)^2-1)^2}{32 j (1 + j)}\times\\
&\big[3(1 + 2 j) \cos\theta +(1 + j) (-3 + 2 j) \cos(2 j \theta) - j (5 + 2 j) \cos(2 (1 + j) \theta)\big].
\end{aligned}
$$
Using that $\beta_{j,\pm}(0)=((j+1/2)^2-1)/7$, we obtain
$$
\begin{aligned}
(\beta_{j,-})''(0)&=\frac{2\int_{0}^{2\pi}\cos\theta\cdot \mathrm{Re}((y_1''+y_1)(\overline{y_0''+y_0}))d\theta}{-7\int_{0}^{2\pi}(y_0''+y_0)\bar{y}_0d\theta}=-\frac{3(1+2j)^2((j+1/2)^2-1)^2\int_0^{2\pi}2\cos^2\theta d\theta}{32j(1+j)\cdot 7((j+1/2)^2-1)\cdot \pi} \\
&=-\frac{3(1+2j)^2((j+1/2)^2-1)}{112j(1+j)}=-3\beta_{j,\pm}(0)\cdot \frac{1+7\beta_{j,\pm}(0)}{3+28\beta_{j,\pm}(0)}.
\end{aligned}
$$
The computation of $(\beta_{j,+})''(0)$ using $\mathcal A_{j,2}^+$ is similar.
\end{proof}

\begin{proof}[Proof of Theorem \ref{thm: for 0<beta<5/28}]

Denote $\mathcal A^\omega_{j,\fe}:=\mathcal A^\omega_{\beta^\omega_j(\fe),\fe}$ and $\mathcal A_{j,\fe,\pm}:=\mathcal A_{\beta_{j,\pm}(\fe),\fe}$ for every $\omega$ and $j$. Let $j\in \N$, $\omega=e^{2\pi i\nu},$ $\nu\in (0,1/2),$ $\fe \in (0,1)$ and $x_{\mathfrak{e}}\in \ker \mathcal{A}^\omega_{1,\fe}\setminus \{0\}$. Recall from Lemma \ref{lem: beta'e} that $A_n=(1-(n+\nu)^{2})a_n$ and $C_n=7\beta_j^\omega(\fe)/((n+\nu)^{2}-1)-1$ for every $n\in \Z$, where $a_n\in \mathbb C$ is the $n$-th Fourier coefficient of $x_\fe$. The following properties are easily checked:
\begin{itemize}
    \item[(a)] If $\beta_j^\omega(\fe) \in (0,5/28]$ then $C_n<0$ for every $n\neq -2,1.$

    \item[(b)] If $\beta_j^\omega(\fe)\in(0,((2-\nu)^2-1)/7)$, then $C_{-2}=7\beta_j^\omega(\fe)/(((2-\nu)^2-1)-1< 0$.

    \item[(c)] If $\beta_j^\omega(\fe)\in(0,((1+\nu)^2-1)/7)$, then $C_{1}=7\beta_j^\omega(\fe)/((1+\nu)^2-1)-1< 0$.
    \end{itemize}

We now restrict to the case $j=1$. Recall from Proposition \ref{prop: degenerate curves}-(i) that $\beta_1^\omega(0)=((1+\nu)^2-1)/7$, where $\nu\in (0,1/2)$.
From Proposition \ref{prop: degenerate curves}-(iv) and Lemma \ref{lem: the second derivitive}, we have $(\beta_1^\omega)'(0)=0$ and $(\beta_1^\omega)''(0)<0$, which implies that $(\beta_1^\omega)'(\mathfrak e)<0$ for every $\mathfrak e>0$ sufficiently small. For fixed $\omega$, we want to prove that $(\beta_1^\omega)'(\fe)<0$ for every $\fe\in (0,1)$. By contradiction, assume the existence of $\mathfrak e_0\in(0,1)$ so that $(\beta_1^\omega)'(\mathfrak e_0)=0$ and $(\beta_1^\omega)'(\mathfrak e)<0$ for every $0<\mathfrak e<\mathfrak e_0$. We thus have $0<\beta_1^\omega(\fe_0)<\beta_1^\omega(0)=((1+\nu)^2-1)/7<5/28$. It then follows from the estimates (a)-(c) above that $C_{n}<0$ for every $n\in \mathbb Z$. Lemma \ref{lem: beta'e} implies that $(\beta_1^\omega)'(\mathfrak e_0)<0$, a contradiction. We conclude that $(\beta_1^\omega)'(\fe)<0$ for every $\fe\in (0,1)$. In particular, $\beta_1^\omega$ is strictly decreasing in $[0,1]$.

The case $\omega=-1$ is similar. Let $\mathfrak e\in (0,1)$ and $x_{\fe,\pm}\in \ker \mathcal A_{1,\fe,\pm}\setminus \{0\}$. We have $A_{n,\pm}=(1-(n+1/2)^{2})a_{n,\pm}$ and $C_{n,\pm}=7\beta_{1,\pm}(\fe)/((n+1/2)^{2}-1)-1$ for every $n$, where $a_{n,\pm}\in \mathbb C$ is the $n$-th Fourier coefficient of $x_{\fe,\pm}$. From Proposition \ref{prop: degenerate curves}-(ii), we know that $\beta_{1,\pm}(0)=5/28$. We see that $C_{n,\pm}<0$ for every $n\notin \{-2,1\}$, and $C_{-2,\pm}=C_{1,\pm}=28\beta_{1,\pm}(\fe)/5-1\leq 0$ if $\beta_{1,\pm}(\fe)\in(0,5/28]$. By Proposition \ref{prop: degenerate curves}-(iv) and Lemma~\ref{lem: the second derivitive}, we have $(\beta_{1,\pm})'(0)=0$ and $(\beta_{1,\pm})''(0)<0$, which implies that $(\beta_{1,\pm})'(\mathfrak e)<0$ for every $\mathfrak e>0$ sufficiently small. As in the previous case, assume the existence of $\mathfrak e_{0,\pm}\in (0,1)$ such that $(\beta_{1,\pm})'(\mathfrak e_{0,\pm})=0$ and $(\beta_{1,\pm})'(\mathfrak e)<0$ for every $0<\mathfrak e<\mathfrak e_{0,\pm}$. Since $\{C_{n,\pm}\}$ are non-positive and not identically zero, Lemma \ref{lem: beta'e} implies that $(\beta_{1,\pm})'(\mathfrak e_{0,\pm})<0$, a contradiction. Hence, we conclude that $\beta_{1,\pm}(\fe)$ are strictly decreasing in $\mathfrak e\in[0,1)$.

Finally, since $\beta=\beta_1^\omega(0)\in (0,5/28)$ for some $\omega\in \mathbf U\setminus \{\pm 1\}$ and $\rho_{\beta,\fe}$ strictly increases in $\beta\in(0,\beta_{1,-}(\fe))$ for every fixed $\fe\in(0,1)$, we have $\rho_{\beta,0}=\rho_{\beta_1^\omega(\fe),\fe}<\rho_{\beta,\fe}$ for every $(\beta,\fe)\in(0,5/28)\times (0,1)$. The proof is now complete.
\end{proof}

Now we consider the case $\beta\in [5/28,1)$. As we will see, the proof also covers part of the case $\beta\geq1$. To show that $\rho_{\beta,\mathfrak e}>\rho_{\beta,0}$ for $(\beta,\fe) \in [5/28,1) \times (0,1)$, we need to estimate the Morse index $m^{-}_\omega(\mathcal A^\omega_{\beta,\mathfrak e})$ of $\mathcal A^\omega_{\beta,\mathfrak e}$.

\begin{prop}\label{prop: estimates of Morse index}
Fix $\fe\in(0,1),j\in \mathbb N$. Let $\beta(\fe)=\beta_j^\omega(\fe)\ (\omega\neq -1)$ or $\beta_{j,\pm}(\fe)\ (\omega=-1)$. For every $\beta(0)\in [5/28,(6/\fe+3)/7]$, we have
$m^-_\omega(\mathcal{A}^\omega_{\beta(0),\fe})\geq\left\lfloor2\sqrt{1+7\beta(0)}\right\rfloor+1$ and $\beta(\fe)<\beta(0)$. In particular, we have $m^-_{-1}(\mathcal{A}_{\beta_{j,\pm}(0),\fe})\geq \left\lfloor2\sqrt{1+7\beta_{j,\pm}}(0)\right\rfloor+1$ and $\beta_{j,\pm}(\fe)<\beta_{j,\pm}(0)$ if $\omega=-1$.
\end{prop}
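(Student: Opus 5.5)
The plan is to bound the Morse index from below with an explicit finite-dimensional test space on which the quadratic form of $\mathcal A^\omega_{\beta(0),\fe}$ is negative definite. I would then combine this bound with the monotonicity of the index in $\beta$ (Proposition~\ref{prop: degenerate curves}-(vi)) to place the degenerate curve strictly to the left of $\beta(0)$.

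\emph{Step 1 (the unperturbed count).} Write $\beta_0:=\beta(0)$. By Proposition~\ref{prop: degenerate curves}-(i),(ii) we have $\sqrt{1+7\beta_0}=j+\nu$, where $\omega=e^{2\pi i\nu}$ (with $\nu=1/2$ if $\omega=-1$). At $\fe=0$ the operator $\mathcal A^\omega_{\beta_0,0}$ is diagonal in the basis $e^{i(n+\nu)\theta}$, with eigenvalues $(n+\nu)^2-(j+\nu)^2$. Hence its negative modes are those with $|n+\nu|<j+\nu$, and its kernel is spanned by the modes with $|n+\nu|=j+\nu$, i.e.\ $n=j$ (and also $n=-2j-2\nu$ when this is an integer). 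Counting these modes gives $m^-_\omega(\mathcal A^\omega_{\beta_0,0})=:N_0$, and I expect $N_0+1=\lfloor 2\sqrt{1+7\beta_0}\rfloor+1$ once the kernel modes are included. Let $V$ be the span of all modes with $|n+\nu|\le j+\nu$; this is the test space.

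\emph{Step 2 (negativity on $V$ for $\fe>0$).} For $x\in V$ write
\[
q_\fe(x)=q_0(x)-7\beta_0\int_0^{2\pi}\Big(\frac{1}{1+\fe\cos\theta}-1\Big)|x|^2\,d\theta .
\]
I would use $\frac1{1+u}=1-u+\frac{u^2}{1+u}\ge 1-u+\frac{u^2}{1+\fe}$ with $u=\fe\cos\theta$ to obtain
\[
q_\fe(x)\le q_0(x)+7\beta_0\fe\int\cos\theta|x|^2-\frac{7\beta_0\fe^2}{1+\fe}\int\cos^2\theta|x|^2 .
\]
In Fourier coefficients this is a finite banded Hermitian matrix. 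The diagonal consists of $2\pi\big((n+\nu)^2-(j+\nu)^2\big)\le 0$ plus the strictly negative contribution $-\frac{7\beta_0\fe^2}{1+\fe}\cdot\pi$ coming from the $\cos^2$ term. The $\cos\theta$ and $\cos2\theta$ terms only couple neighbouring modes, with size $O(\beta_0\fe)$ and $O(\beta_0\fe^2)$ respectively. I would prove negative definiteness by a Gershgorin or diagonal-dominance estimate, treating the kernel modes $n+\nu=\pm(j+\nu)$ separately: for those the diagonal is only $-c\beta_0\fe^2$, so one must absorb the off-diagonal $\fe$-coupling into the gap $(j+\nu)^2-(j-1+\nu)^2$ of the adjacent negative modes via a Cauchy--Schwarz (Schur complement) step. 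I expect this to be exactly where the hypotheses $\beta_0\ge 5/28$ (so that $j\ge1$ and the gaps are large enough) and $\beta_0\le(6/\fe+3)/7$ enter. This step is the main obstacle, and it may force a more careful treatment of the $\cos\theta$ coupling than plain Gershgorin; if so, I would first try bounding $\int\cos\theta|x|^2$ by a weighted quadratic form with weights tuned to the gaps. Since $\dim V=N_0+1$, this yields $m^-_\omega(\mathcal A^\omega_{\beta_0,\fe})\ge N_0+1$.

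\emph{Step 3 (locating $\beta(\fe)$).} Just below the curve $\Gamma$ (that is, $\Gamma^\omega_j$ or $\Gamma_{j,\pm}$), the index equals $N_0$ for every $\fe$. This holds at $\fe=0$, and it persists in $\fe$ because the index changes only across degenerate curves, whose order is preserved by Proposition~\ref{prop: degenerate curves}-(i),(ii). Crossing the curve itself adds at most the kernel dimension, so in particular $m^-_\omega(\mathcal A^\omega_{\beta(\fe),\fe})=N_0$. Now suppose $\beta(\fe)\ge\beta_0$. Then monotonicity of $\beta\mapsto i_\omega(\gamma_{\beta,\fe})$, together with \eqref{index equ beta, e}, gives $m^-_\omega(\mathcal A^\omega_{\beta_0,\fe})\le m^-_\omega(\mathcal A^\omega_{\beta(\fe),\fe})=N_0$. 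This contradicts Step~2, so $\beta(\fe)<\beta_0$. The case $\omega=-1$ is identical with $\nu=1/2$, and can equally be run inside $E_1$ or $E_2$ using Lemma~\ref{lem: degenerate}.
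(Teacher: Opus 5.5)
\textbf{Step 2 fails: the form is not negative definite on $V$.} Step 2 is not merely the hard step. It is false in part of the stated range, so no Gershgorin or Schur-complement refinement can close it.

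Put $m=\sqrt{1+7\beta_0}$ and use the exact coefficients $\int_0^{2\pi}\frac{\cos k\theta}{1+\fe\cos\theta}\,d\theta=\frac{2\pi}{\sqrt{1-\fe^2}}\,s^{|k|}$, with $s=\frac{\sqrt{1-\fe^2}-1}{\fe}$. On the two modes $e^{im\theta},e^{i(m-1)\theta}\in V$ the form has entries $a=14\pi\beta_0\bigl(1-(1-\fe^2)^{-1/2}\bigr)\approx-7\pi\beta_0\fe^2$, $c\approx-2\pi(2m-1)$ and $b\approx7\pi\beta_0\fe$. Hence
\[
ac-b^2\approx-7\pi^2\beta_0\fe^2\,(m^2-4m+1),
\]
which is negative as soon as $m>2+\sqrt3$, i.e.\ $\beta_0>(6+4\sqrt3)/7\approx1.85$. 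Such $\beta_0$ satisfy $\beta_0\le(6/\fe+3)/7$ for small $\fe$; for $\beta_0=2$, $\fe=0.3$ the exact value is $ac-b^2\approx-0.24\,\pi^2$.

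The absorption you propose is exactly the Schur complement $a+b^2/|c|$, and it has the wrong sign there. Second-order perturbation theory shows why. The kernel eigenvalue moves by $\fe^2\bigl(-\tfrac{7\beta_0}{2}+\tfrac{49\beta_0^2}{4}\bigl(\tfrac{1}{2m-1}-\tfrac{1}{2m+1}\bigr)\bigr)<0$, which is equivalent to the formula for $\beta''(0)$ in Lemma~\ref{lem: the second derivitive}. The decisive negative term comes from coupling to $e^{i(m+1)\theta}$, which is not in $V$. Compressing to $V$ keeps only the positive term $\tfrac{49\beta_0^2}{4(2m-1)}$.

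Independently, your relaxation $\frac{u^2}{1+u}\ge\frac{u^2}{1+\fe}$ throws away the $(1-\fe^2)^{-1/2}$ growth of the true coefficients. For your majorant the same block has determinant $7\pi^2\beta_0\fe^2\bigl(\tfrac{2(2m-1)}{1+\fe}+\tfrac{7\beta_0\fe^2}{(1+\fe)^2}-7\beta_0\bigr)$. This is negative, e.g., at $(\beta_0,\fe)=(0.9,0.95)$, so the majorant fails even for $\beta_0<1$.

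\textbf{What the paper does instead.} The missing idea is to stop looking for an $(\fn+1)$-dimensional negative definite subspace. Instead, count signatures on a slightly larger space that contains the outer neighbours.

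The paper compresses $\mathcal A^\omega_{\beta_0,\fe}$ to the $\fn+3$ modes $e^{i(n+m)\theta}$, $-\fn-1\le n\le 1$, where $\fn=\lfloor 2m\rfloor$. This is exactly your $V$ plus the two modes just outside $|\text{frequency}|\le m$. It keeps the exact Toeplitz structure $s^{|k-l|}$, bounds the matrix by $C$, and brings $C$ by congruence to $E\oplus K\oplus\hat E$ (Lemma~\ref{lem: pro A}). Each end block has $\det E=-\frac{s^2(1-\fe^2)}{49\beta_0^2}(1+7\beta_0)(3+7\beta_0)<0$, so it supplies one negative direction of $\mathcal A$. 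This is where the outer mode pairs with the near-kernel mode. The middle $(\fn-1)\times(\fn-1)$ block $K$ is shown positive definite in the $A_N$ normalization, i.e.\ negative for $\mathcal A$, under $\beta_0\le(6/\fe+3)/7$ (Lemma~\ref{lem: eig E}). That is where this hypothesis actually enters. Together these give $m^-_\omega(\mathcal A^\omega_{\beta_0,\fe})\ge 1+(\fn-1)+1$.

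\textbf{Step 3 and a bookkeeping point.} Your Step 3 then finishes as in the paper. One correction there: for $\nu\in\{0,1/2\}$ there are two kernel modes, so $\dim V=N_0+2$ rather than $N_0+1$, and along $\Gamma_{j,+}$ the index is $N_0+1$ rather than $N_0$. What you actually need is only $\dim V=\fn+1$ and index at most $\fn$ on the curve.
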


Before proving Proposition \ref{prop: estimates of Morse index}, we need to introduce some notations. Let $u_n(\theta):=e^{i(n+\sqrt{1+7\beta})\theta},\forall n\in\mathbb{Z}$, and recall that $\mathcal{A}=\mathcal A_{\beta,\mathfrak e}^\omega=-d^2/d\theta^2-1-7\beta/(1+\mathfrak e\cos \theta)$ with domain $\Lambda(\omega)$. We compute

\begin{equation}\label{item}
\begin{aligned}			
\mathcal{A}_{k,l}:=\langle\mathcal{A} \cdot u_k,u_l\rangle
&=2\pi\left((k+\sqrt{1+7\beta})^2-1\right)\delta_{k,l}-7\beta\int_{0}^{2\pi}\frac{\cos(k-l)\theta}{1+\fe\cos \theta}d\theta\\
&=2\pi\left((k+\sqrt{1+7\beta})^2-1\right)\delta_{k,l}-\frac{14\pi\beta}{\sqrt{1-\fe^2}}\left(\frac{\sqrt{1-\fe^2}-1}{\fe}\right)^{|k-l|}.
\end{aligned}
\end{equation}

Let $s:=(\sqrt{1-\fe^2}-1)/\fe\in(-1,0)$ and $\lambda_n:=-(\sqrt{1-\fe^2}/(14\pi\beta)) \mathcal{A}_{n,n}=-(\sqrt{1-\mathfrak e^2}/(7\beta))((n+\sqrt{1+7\beta})^2-1)+1$. Hence, we define the matrix
$$
A_N:= -\frac{\sqrt{1-\fe^2}}{14\pi \beta} \left(\mathcal{A}_{k,l}\right)_{-N\leq k,l\leq 1},
$$	
which is given by
\begin{equation}\label{equ: matrixA}
A_N:=\begin{pmatrix}
	\lambda_1 & s & s^2 & \cdots & s^{N} & s^{N+1}\\
	s & \lambda_0 & s & \cdots & s^{N-1} & s^{N}\\
	s^2 & s & \lambda_{-1} & \cdots & s^{N-2} & s^{N-1} \\
	\cdots & \cdots & \cdots & \cdots & \cdots & \cdots \\
	s^{N} & s^{N-1} & s^{N-2} & \cdots & \lambda_{-N+1} & s\\
	s^{N+1} & s^{N} & s^{N-1} & \cdots & s & \lambda_{-N}
\end{pmatrix}.
\end{equation}
We compute
$$\begin{aligned}
\lambda_1&=\frac{\sqrt{1-\fe^2}}{-14\pi\beta}\mathcal{A}_{1,1}=\frac{\sqrt{1-\fe^2}}{-7\beta}((1+\sqrt{1+7\beta})^2-1)+1,\\
\lambda_0&=\frac{\sqrt{1-\fe^2}}{-14\pi\beta}\mathcal{A}_{0,0}=-\sqrt{1-\mathfrak e^2}+1=-\fe s,\\ \lambda_{-1}&=\frac{\sqrt{1-\fe^2}}{-14\pi\beta}\mathcal{A}_{-1,-1}=\frac{\sqrt{1-\fe^2}}{-7\beta}((-1+\sqrt{1+7\beta})^2-1)+1.
\end{aligned}$$

Let $\mathfrak{n}=\mathfrak{n}(\beta):=\lfloor2\sqrt{1+7\beta}\rfloor$.
In order to prove that $m^-_\omega(\mathcal{A})\geq\mathfrak{n}+1$ for every $\beta\in[5/28,(6/\fe+3)/7]$, we need to prove that, for some $N>0$, $A_N$ has at least $ \mathfrak{n}+1$ positive eigenvalues for every $\beta\in[5/28,(6/\fe+3)/7]$.

Let $m^+(A)$ denote the total multiplicity of positive eigenvalues of matrix $A$.

\begin{lem}\label{lem: pro A}
Assume that $0<\mathfrak e< 1$, $\beta\geq 5/28$. Then $m^+(A_{\mathfrak n+1})\geq m^+(H)$, where
$$
H:=\begin{pmatrix}
	E & 0 & 0\\
	0 & K & 0\\
	0 & 0 & \hat E
\end{pmatrix},
$$
where $K:=G-F^TE^{-1}F-\hat F^T\hat E^{-1}\hat F$ is a self-adjoint $(\mathfrak n-1)\times(\mathfrak n-1)$-matrix,
\begin{equation}\label{matrix E-G}
\begin{aligned}
E&:=\begin{pmatrix}
	\lambda_1+(\lambda_0-2)s^2 & s(1-\lambda_0)\\
	s(1-\lambda_0) & \lambda_0+(\lambda_{-1}-2)s^2
\end{pmatrix}\quad
&F:=\begin{pmatrix}
0 & 0_{1\times(\mathfrak{n}-2)}\\
s(1-\lambda_{-1}) & 0_{1\times(\mathfrak{n}-2)}
\end{pmatrix},\\
\hat E&:=\begin{pmatrix}
	\lambda_0+(\lambda_{-1}-2)s^2 & s(1-\lambda_0)\\
	s(1-\lambda_0) & \lambda_1+(\lambda_0-2)s^2
\end{pmatrix}\quad
&\hat F:=\begin{pmatrix}
	0_{1\times(\mathfrak{n}-2)} & s(1-\lambda_{-1})\\
	0_{1\times(\mathfrak{n}-2)} & 0
\end{pmatrix},
\end{aligned}
\end{equation}
and $G:=(G_{ij})_{(\mathfrak n-1)\times(\mathfrak n-1)}$ is a Toeplitz matrix with $G_{ii}:=\lambda_{-1},\forall i$ and $G_{ij}:=s^{|i-j|},\forall i\neq j$.
\end{lem}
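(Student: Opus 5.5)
The plan is to pass from $A_{\mathfrak n+1}$ to $H$ by congruences and Schur complements, using Sylvester's law of inertia. Note that $A_{\mathfrak n+1}$ is indexed by $k=1,0,-1,\dots,-\mathfrak n-1$, so it has size $\mathfrak n+3 = 2+(\mathfrak n-1)+2$. This is exactly the size of $H$.

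\textbf{Step 1: kill the long-range entries.} The entries of $A_N$ are $s^{|k-l|}$ off the diagonal. This is the Kac--Murdock--Szeg\H{o} type structure, which is tridiagonalized by the bidiagonal matrix $P$ with $1$ on the diagonal and $-s$ on the subdiagonal. Indeed, $(1,-s)$ applied to consecutive rows of $(s^{|k-l|})$ annihilates everything beyond the neighbouring position.

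I would apply this elimination only at the two ends, i.e. to the first two and the last two indices. The idea is to use a unit lower/upper triangular congruence $Q^TA_{\mathfrak n+1}Q$. Each corner row $i$ (resp.\ each row near the bottom) is replaced by row $i$ minus $s$ times the next (resp.\ previous) row, and the same is done on columns.

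A direct computation of the $2\times2$ corner block should then give
\[
\lambda_1-2s^2+\lambda_0s^2=\lambda_1+(\lambda_0-2)s^2, \qquad s-s\lambda_0=s(1-\lambda_0),
\]
and similarly $\lambda_0+(\lambda_{-1}-2)s^2$, which is $E$. The coupling to the interior should reduce to the single entry $s(1-\lambda_{-1})$, which is $F$. The symmetric treatment at the bottom end gives $\hat E$ and $\hat F$, with the reversed ordering.

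\textbf{Step 2: compare the interior block with $G$.} The interior block is the Toeplitz-like matrix with diagonal $\lambda_{-2},\dots,\lambda_{-\mathfrak n}$ and off-diagonal entries $s^{|i-j|}$. I expect this to be the main bookkeeping obstacle: making the index choices so that the corner blocks come out exactly as in \eqref{matrix E-G}, and checking that all residual long-range couplings vanish.

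Since $\lambda_n$ is increasing in $n$ for $n\le -1$ when $\sqrt{1+7\beta}\ge 3/2$ (true for $\beta\ge5/28$), we have $\lambda_{-k}\le\lambda_{-1}$ there. So I would not claim equality with $G$. Instead, the congruent matrix is $H'$ with the block structure of $H$ but with interior block $G'\le G$. Any remaining discrepancy will be absorbed by a monotonicity argument.

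\textbf{Step 3: block-diagonalize and conclude.} If $E$ and $\hat E$ are singular, one can perturb (or argue by continuity of $m^+$ from below, which is lower semicontinuous). Otherwise, perform the Schur complement congruence
\[
\begin{pmatrix} I&0&0\\ -F^TE^{-1}&I&-\hat F^T\hat E^{-1}\\ 0&0&I\end{pmatrix}.
\]
This yields $\mathrm{diag}(E,\,G'-F^TE^{-1}F-\hat F^T\hat E^{-1}\hat F,\,\hat E)$. Since $F$ and $\hat F$ touch disjoint coordinates (the first and last interior entries), no cross term appears, provided $\mathfrak n-1\ge2$. This holds because $\mathfrak n\ge3$ for $\beta\ge5/28$.

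By Sylvester's law, $m^+(A_{\mathfrak n+1})=m^+(\mathrm{diag}(E,K',\hat E))$. Here the relation is $K'\le K$, with the wrong sign for the inequality we want. The fix is to build the Step 2 comparison into the definition of $G$ as the paper does, making sure that the actual diagonal entries dominate $\lambda_{-1}$ in the relevant ordering. The alternative is to use $\lambda_1$-side indices.

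If the sign fails, I would instead restrict to the subspace spanned by the corner vectors and use a min–max argument. Positivity of $H$ on a subspace then transfers to $A_{\mathfrak n+1}$ via $m^+(A)\ge m^+(\text{compression})$. That is the route I would try to make the inequality, rather than an equality, come out.
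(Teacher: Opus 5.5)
There is a genuine gap. Your congruence machinery (Steps 1 and 3) is the right one, but you apply it to the wrong matrix, and the comparison you invoke in Step 2 has the wrong sign. That is why you end with $K'\le K$ and no way to close.

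\textbf{The sign of the comparison.} Write $\sigma=\sqrt{1+7\beta}$, so that $\lambda_n=1-\frac{\sqrt{1-\mathfrak e^2}}{7\beta}\big((n+\sigma)^2-1\big)$. This is a \emph{decreasing} function of $|n+\sigma|$, so it is not increasing on $n\le -1$. For $-\mathfrak n+1\le k\le -2$ one has $|k+\sigma|\le \sigma-1$. The endpoint $k=-2$ is exactly the condition $\sigma\ge 3/2$, i.e.\ $\beta\ge 5/28$. The endpoint $k=-\mathfrak n+1$ uses $\mathfrak n=\lfloor 2\sigma\rfloor\le 2\sigma$. Intermediate $k$ follow by convexity. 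Hence $\lambda_k\ge\lambda_{-1}$ there, not $\le$.

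\textbf{The bottom corner.} If you run Step 1 directly on $A_{\mathfrak n+1}$, the bottom corner is built from $\lambda_{-\mathfrak n+1},\lambda_{-\mathfrak n},\lambda_{-\mathfrak n-1}$, and the coupling is $s(1-\lambda_{-\mathfrak n+1})$. So you do not obtain $\hat E$ and $\hat F$, and your identity $m^+(A_{\mathfrak n+1})=m^+(\mathrm{diag}(E,K',\hat E))$ is false.

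\textbf{What is missing.} You need a Loewner-order comparison made \emph{before} any congruence. Besides the interior inequalities above, use $\lambda_{-\mathfrak n}\ge\lambda_0$ and $\lambda_{-\mathfrak n-1}\ge\lambda_1$; both follow from $0\le \mathfrak n\le 2\sigma$. Let $C$ be the symmetric $(\mathfrak n+3)\times(\mathfrak n+3)$ matrix with off-diagonal entries $s^{|i-j|}$ and diagonal $(\lambda_1,\lambda_0,\lambda_{-1},\dots,\lambda_{-1},\lambda_0,\lambda_1)$. Then $A_{\mathfrak n+1}-C$ is a nonnegative diagonal matrix. By Courant--Fischer, $m^+(A_{\mathfrak n+1})\ge m^+(C)$.

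\textbf{Finishing the argument.} Because $C$ is palindromic, your Step 1 applied at both ends of $C$ produces exactly $E,F,G,\hat F,\hat E$. The interior block is $G$ itself, with no $G'$. Your Step 3 Schur complement then gives $H$ with $m^+(C)=m^+(H)$. This step is legitimate because $\det E=\det\hat E<0$ (Lemma \ref{lem: eig E}), so your perturbation remark for singular $E$ is unnecessary.

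The inequality in the lemma therefore comes entirely from the step $A_{\mathfrak n+1}\ge C$. Your compression/min--max fallback does not identify any subspace on which $A_{\mathfrak n+1}$ becomes congruent to $H$. Note also that $H$ has a block $K$ of size $\mathfrak n-1$, so restricting to the corner vectors cannot recover $m^+(H)$.
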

\begin{proof}
For every $\beta\geq 5/28$, we see that $\mathfrak{n}\geq3$ and
\begin{equation}\label{eig1}
\lambda_{-\mathfrak{n}-1}\geq\lambda_1,\quad \lambda_{-\mathfrak{n}}\geq\lambda_0\quad \text{and}\quad  \lambda_{k}\geq\lambda_{-1},\quad \forall -\mathfrak{n}+1\leq k\leq -2.\nonumber
\end{equation}

 Consider the symmetric $(\mathfrak n+3)\times(\mathfrak n+3)$-matrix given by
$$C:=\begin{pmatrix}
	\lambda_1 & s &  \cdots & s^{\mathfrak{n}+1} & s^{\mathfrak{n}+2}\\
	s & \lambda_0 &  \cdots & s^{\mathfrak{n}} & s^{\mathfrak{n}+1}\\
	\cdots & \cdots  & G & \cdots & \cdots\\
	s^{\mathfrak{n}+1} & s^{\mathfrak{n}} &  \cdots  & \lambda_0 & s\\
	s^{\mathfrak{n}+2} & s^{\mathfrak{n}+1} &  \cdots  & s & \lambda_1\\
\end{pmatrix}.
$$
Then $A_{\mathfrak n+1}\geq C$ so that the number of positive eigenvalues of $A_{\mathfrak n+1}$ is at least the number of positive eigenvalues of $C$.
We aim to transform the matrix $C$ into the diagonal block matrix $H$ using congruent transformations. Let
$$
P_1=\begin{pmatrix}
	1 & -s & 0\\
	0 & 1 & -s\\
	0 & 0 & 1
\end{pmatrix},\quad
P=\begin{pmatrix}
P_1 & 0 & 0\\
0 & I_{\mathfrak{n}-3} & 0\\
0 & 0 & P_1^{T}
\end{pmatrix},\quad
Q=\begin{pmatrix}
	I_2 & -E^{-1}F & 0\\
	0 & I_{\mathfrak{n}-1} & 0\\
	0 & -\hat E^{-1}\hat F & I_2
\end{pmatrix}.
$$
Here, we have used the fact that both $E$ and $E'$ are nondegenerate, as proved in Lemma \ref{lem: eig E} below. We compute
$$
D=PCP^T=\begin{pmatrix}
	E & F & 0\\
	F^T & G & \hat F^T\\
	0 & \hat F & \hat E
\end{pmatrix},\quad
H=Q^TDQ=\begin{pmatrix}
	E & 0 & 0\\
	0 & G-F^TE^{-1}F-\hat F^T\hat E^{-1}\hat F & 0\\
	0 & 0 & \hat E
\end{pmatrix}.
$$
In particular, $H$ has the same number of positive and negative eigenvalues as $C$. Hence, $A_{\mathfrak n+1}$ has at least $m^+(H)$ positive eigenvalues.
\end{proof}

The eigenvalues of $H$ are determined by $E, \hat E$, and $K$. To estimate the number of positive eigenvalues of $H$, we need the following lemma.

\begin{lem}\label{lem: eig E}
Assume that $0<\mathfrak e<1$ and $\beta>0$. The following statements hold:
\begin{itemize}
\item[(i)] $\det E=\det \hat E<0$. In particular, both $E$ and $\hat E$ have a positive and a negative eigenvalue.
\item[(ii)] If $0< \beta\leq (6/\fe+3)/7$, then $K$ is positive definite.
\end{itemize}
\end{lem}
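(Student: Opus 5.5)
The approach is to exploit two elementary identities satisfied by $s=(\sqrt{1-\fe^2}-1)/\fe$ and the diagonal entries $\lambda_1,\lambda_0,\lambda_{-1}$; with them, (i) becomes an explicit sign computation and (ii) reduces to a scalar inequality in $(\beta,\fe)$. First, $s$ is a root of $\fe s^2+2s+\fe=0$, so that
$$
1+s^2=-\frac{2s}{\fe},\qquad \lambda_0=1-\sqrt{1-\fe^2}=-\fe s .
$$
Second, expanding $(\sqrt{1+7\beta}\pm1)^2-1=7\beta\pm2\sqrt{1+7\beta}$ gives
$$
\lambda_{\pm1}=\lambda_0\mp\delta,\qquad \delta:=\frac{2\sqrt{1-\fe^2}\sqrt{1+7\beta}}{7\beta}>0 .
$$
Write $a:=\lambda_0$. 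Then $a+(a-2)s^2=a(1+s^2)-2s^2=(-\fe s)(-2s/\fe)-2s^2=0$.

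For (i), the vanishing combination above simplifies $E$ to
$$
E_{11}=\lambda_1+(\lambda_0-2)s^2=-\delta,\qquad E_{22}=\lambda_0+(\lambda_{-1}-2)s^2=\delta s^2,\qquad E_{12}=s(1-a).
$$
Hence $\det E=-s^2\big(\delta^2+(1-a)^2\big)<0$, since $s\neq0$ for $\fe\in(0,1)$. Because $\hat E$ is obtained from $E$ by swapping rows and columns, $\det\hat E=\det E$. A $2\times2$ symmetric matrix with negative determinant has one eigenvalue of each sign, which gives (i).

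For (ii), note that $F$ has a single nonzero entry $c:=s(1-\lambda_{-1})=s(1-a-\delta)$, in position $(2,1)$. Therefore $F^TE^{-1}F=c^2(E^{-1})_{22}\,e_1e_1^T$, with
$$
(E^{-1})_{22}=\frac{E_{11}}{\det E}=\frac{\delta}{s^2(\delta^2+(1-a)^2)} .
$$
The same holds for $\hat F^T\hat E^{-1}\hat F$ at the last corner. This yields the explicit form
$$
K=(\lambda_{-1}-1)I+S-\mu\,(e_1e_1^T+e_{\mathfrak n-1}e_{\mathfrak n-1}^T),\qquad \mu=\frac{\delta(1-a-\delta)^2}{\delta^2+(1-a)^2},
$$
where $S=(s^{|i-j|})$ is the Kac--Murdock--Szeg\H{o} matrix.

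To show $K>0$, I would use that $S$ is positive definite with tridiagonal inverse
$$
S^{-1}=\frac{1}{1-s^2}\,\mathrm{tridiag}\big(-s;\;1,1+s^2,\dots,1+s^2,1;\;-s\big).
$$
Its quadratic form should control vectors concentrated near the corners better than the crude symbol bound $\min\sigma(S)\geq(1-|s|)/(1+|s|)$. The concrete step is to write $x^TSx$ as a sum of squares adapted to $S^{-1}$ (equivalently, the Cholesky factor $S=LL^T$ with $L$ bidiagonal in $s$). From this I would prove the bound
$$
x^TKx\geq\big(\lambda_{-1}-1+\kappa_{\rm int}\big)|x|^2
$$
for interior directions, and a separate corner bound involving $\kappa_{\rm corner}-\mu$. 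I expect to obtain $\kappa_{\rm int}=(1+s)/(1-s)$, and a larger $\kappa_{\rm corner}$ coming from the boundary entry $1$ (rather than $1+s^2$) of $S^{-1}$. Everything is then substituted using $a=-\fe s$, $1+s^2=-2s/\fe$ and the formula for $\delta$, so that $K>0$ reduces to one or two explicit inequalities in $(\beta,\fe)$.

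The main obstacle is this final scalar verification. One must check that $\lambda_{-1}-1+(1+s)/(1-s)>0$ and that the corner deficit $\mu$ is compensated, and that both hold precisely when $7\beta\leq 6/\fe+3$. Since $\delta\sim 2\sqrt{1-\fe^2}/\sqrt{7\beta}$ decreases in $\beta$, I expect the binding constraint to be at the upper endpoint $\beta=(6/\fe+3)/7$, where $1+7\beta=4+6/\fe$. I would first try to verify the inequality there by clearing denominators in $\sqrt{1-\fe^2}$ and $\sqrt{1+7\beta}$. If the interior bound via $\kappa_{\rm int}$ is too weak, I would instead apply Sylvester's criterion directly to the tridiagonal matrix $S^{-1/2}KS^{-1/2}$-type congruence, i.e. to $(1-s^2)\big((\lambda_{-1}-1)S^{-1}+I\big)$ corrected at the corners. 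For that matrix the leading minors satisfy a three-term recursion that can be solved in closed form.
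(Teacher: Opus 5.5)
There is a genuine gap: a computational error in your setup, and part (ii) is not actually proved.

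\textbf{The algebra error.} Your expansion $(\sqrt{1+7\beta}\pm1)^2-1=7\beta\pm2\sqrt{1+7\beta}$ drops a constant; it equals $1+7\beta\pm2\sqrt{1+7\beta}$. Hence $\lambda_{\pm1}=\lambda_0\mp\delta-\epsilon$ with $\epsilon:=\sqrt{1-\mathfrak e^2}/(7\beta)$. This gives $E_{11}=\lambda_1-\lambda_0=-(\delta+\epsilon)$, $E_{22}=(\delta-\epsilon)s^2$, and
\[
\det E=-s^2\bigl(\delta^2-\epsilon^2+(1-\lambda_0)^2\bigr)=-\frac{s^2(1-\mathfrak e^2)}{49\beta^2}(1+7\beta)(3+7\beta).
\]
Part (i) survives, because $\delta>\epsilon$ (i.e.\ $2\sqrt{1+7\beta}>1$), and your row/column-swap argument for $\hat E$ is fine. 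However, $\lambda_{-1}$, $1-\lambda_{-1}$ and your $\mu$ are all wrong. Any scalar inequality you extract in (ii) from them would therefore not reproduce the threshold $7\beta\le 6/\mathfrak e+3$.

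\textbf{Part (ii) is not proved.} The interior/corner splitting is only announced. Separate bounds on ``interior'' and ``corner'' directions do not give positivity of the form on general vectors unless the cross terms are controlled. The decisive scalar check is left as an expectation with a fallback.

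The missing step is simpler than what you plan. With the corrected data,
\[
\mu=\kappa_1=\frac{\sqrt{1-\mathfrak e^2}\,(1+2\sqrt{1+7\beta})(\sqrt{1+7\beta}-2)^2}{7\beta(3+7\beta)}\ge 0,
\]
so the corner entry
\[
\kappa:=\lambda_{-1}-\kappa_1=1-\sqrt{1-\mathfrak e^2}\,\frac{7\beta-3}{7\beta+3}=\frac{2(3+7\beta s^2)}{(3+7\beta)(1+s^2)}
\]
is the smallest diagonal entry of $K$. Hence $K\ge S+(\kappa-1)I$.

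The ``crude'' Toeplitz bound you set aside now suffices. Its strict form for finite sections, $\lambda_{\min}(S)>(1+s)/(1-s)$, gives $K>0$ as soon as $\kappa\ge -2s/(1-s)$. The paper reaches the same condition differently: it conjugates by the upper bidiagonal matrix with $-s$ on the superdiagonal, which makes $S+(\kappa-1)I$ tridiagonal, and checks that. Using $1+s^2=-2s/\mathfrak e$, the inequality $\kappa\ge -2s/(1-s)$ becomes exactly $7\beta\le 6/\mathfrak e+3$.

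So no refined corner analysis is needed. But this computation must actually be carried out, with the corrected $\lambda_{-1}$ and $\kappa_1$, for (ii) to be established.
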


\begin{proof}
Let us prove (i). Using that $\lambda_0=-\fe s$ and $\fe s^2+2s+\fe =0$, we obtain
$$
\begin{aligned}
(\lambda_0-2)s^2&=-(\fe s+2)s^2=-(\fe s^2+2s)s=\fe s=-\lambda_0\\
\lambda_0+(\lambda_{-1}-2)s^2&=\lambda_0+(\lambda_{-1}-\lambda_0)s^2+(\lambda_0-2)s^2=(\lambda_{-1}-\lambda_0)s^2.
\end{aligned}
$$

We thus compute
$$\begin{aligned}
	\det E=\det \hat E&=\det\begin{pmatrix}
	\lambda_1-\lambda_0 & s(1-\lambda_0)\\
	s(1-\lambda_0) & (\lambda_{-1}-\lambda_0)s^2
\end{pmatrix}\\
    &=s^2((\lambda_1-\lambda_0)(\lambda_{-1}-\lambda_0)-(1-\lambda_0)^2)\\
	&=-\frac{s^2(1-\fe^2)}{49\beta^2}(1+7\beta)(3+7\beta)<0.
\end{aligned}
$$
Here, we have used the identity $s=(\sqrt{1-\fe^2}-1)/\mathfrak e$. The proof of (i) is finished.

To prove (ii), recall that $K=G-F^TE^{-1}F-\hat F^T\hat E^{-1}\hat F$. We compute
$$\begin{aligned}
	F^TE^{-1}F&=\begin{pmatrix}
	\kappa_1 & 0_{1\times(\mathfrak{n}-2)}\\
	0_{(\mathfrak{n}-2)\times1} & 0_{(\mathfrak{n}-2)\times(\mathfrak{n}-2)}
\end{pmatrix},\quad
\hat F^T\hat E^{-1}\hat F&=\begin{pmatrix}
	0_{(\mathfrak{n}-2)\times(\mathfrak{n}-2)} & 0_{(\mathfrak{n}-2)\times1}\\
	0_{1\times(\mathfrak{n}-2)} & \kappa_1
\end{pmatrix},
\end{aligned}$$
where
$$\kappa_1:=\frac{s^2(\lambda_1-\lambda_0)(1-\lambda_{-1})^2}{\det E}
	=\frac{\sqrt{1-\fe^2}(1+2\sqrt{1+7\beta})(\sqrt{1+7\beta}-2)^2}{7\beta(3+7\beta)}\geq 0.$$
Then the smallest diagonal element of $K$ is
$$\begin{aligned}
\kappa:=\lambda_{-1}-\kappa_1=-\frac{\sqrt{1-\fe^2}(7\beta-3)}{3+7\beta}+1=\frac{2(3+7\beta s^2)}{(3+7\beta)(1+s^2)}>0.
\end{aligned}$$
Here, we have used that $\fe s^2+2s +\fe=0$.

Let $S_\kappa:=(s_{ij})_{(\mathfrak n-1)\times(\mathfrak n-1)}$ with $s_{ii}=\kappa,\forall i$ and $s_{ij}=s^{|i-j|},\forall i\neq j$. We thus have $K\geq S_{\kappa}$. To further simplify $S_\kappa$, we let
$$P=\begin{pmatrix}
		1 & -s &    &  \\
		  & \ddots &\ddots  &  \\
		  &     & 1 & -s\\
		  &     &   & 1
\end{pmatrix}.$$
If $\kappa=1$, we compute that $PS_1P^T=\mathrm{diag}(1-s^2,\cdots,1-s^2,\kappa)$ is positive-definite. When $\kappa>1$, $S_\kappa=S_1+(\kappa-1)I$ is also positive definite. For $\kappa\in(0,1)$, we have $s(1-\kappa)<0$ and
$$PS_\kappa P^T=-s(1-\kappa)\begin{pmatrix}
		\frac{\kappa+s^2(\kappa-2)}{-s(1-\kappa)} & -1 &   &   &  \\
		-1 & \frac{\kappa+s^2(\kappa-2)}{-s(1-\kappa)} & \ddots &   &  \\
		   & \ddots & \ddots & \ddots &   \\
		   &   & \ddots & \frac{\kappa+s^2(\kappa-2)}{-s(1-\kappa)} & -1\\
		   &   &   & -1 & \frac{\kappa}{-s(1-\kappa)}
\end{pmatrix}.$$
If $-(\kappa+s^2(\kappa-2))/(s(1-\kappa))\geq2$, or equivalently if $-2s/(1-s)\leq \kappa< 1$, we see that $PS_{\kappa}P^T$ is positive-definite, since one can replace all the diagonal elements by $2$ to obtain a positive-definite matrix. Therefore, we conclude that $K\geq S_{\kappa}>0$ for every $-2s/(1-s)\leq \kappa$. Finally, using $\kappa=2(3+7\beta s^2)/((3+7\beta)(1+s^2))$ and $s=(\sqrt{1-\mathfrak e^2}-1)/\mathfrak e$, we compute that $-2s/(1-s)\leq \kappa$ is equivalent to $\beta\leq(-3(1+s^2)/s+3)/7=(6/\fe+3)/7$. Hence, (ii) follows.
\end{proof}

\begin{proof}[Proof of Proposition \ref{prop: estimates of Morse index}]
Fix $\omega\in\mathbf U$ and let $\beta_0$ be the initial point at $\fe=0$ of the corresponding
$\omega$--degenerate curve, that is,
$
\beta_0=\beta_j^\omega(0)$ or $\beta_0=\beta_{j,\pm}(0),
$
and assume $\beta_0\in\big[5/28,(6/\fe+3)/7\big]$. Set
$
\fn:=\lfloor 2\sqrt{1+7\beta_0}\rfloor .
$
Recall that the $(\fn+3)\times(\fn+3)$ matrix of $\mathcal A=\mathcal A^\omega_{\beta_0,\fe}$ in the
truncated Fourier basis is
$$
(\mathcal A_{k,l})_{-\fn-1<k,l<1}
=\frac{-14\pi\beta_0}{\sqrt{1-\fe^2}}\,A_{\fn+1}.
$$
Let $\gamma_{\beta,\fe}$ be the fundamental solution of \eqref{equ: new linear equation of Euler}.
By \eqref{index equ beta, e} we have
$
i_\omega(\gamma_{\beta_0,\fe})=m^-_\omega(\mathcal A^\omega_{\beta_0,\fe}).
$
Since the scalar factor $-14\pi\beta_0/\sqrt{1-\fe^2}$ is negative, negative directions of
$\mathcal A^\omega_{\beta_0,\fe}$ correspond to positive directions of $A_{\fn+1}$, hence
$$
i_\omega(\gamma_{\beta_0,\fe})
=m^-_\omega(\mathcal A^\omega_{\beta_0,\fe})
\ge m^+(A_{\fn+1})
\ge m^+(H)
\ge \fn+1,
\qquad \forall\,\fe\in(0,1).
$$
On the other hand, by definition of the $\omega$--index (Section~\ref{sec: maslov-type indices and rotation numbers})
and \eqref{equ: gamma beta,0}, we obtain
$$
i_\omega(\gamma_{\beta_j^\omega(\fe),\fe})
=i_\omega(\gamma_{\beta_0,0})
<\Big\lfloor 2\sqrt{1+7\beta_0}\Big\rfloor+1
=\fn+1,
\qquad \forall\,\fe\in(0,1).
$$
Finally, since $\beta\mapsto i_\omega(\gamma_{\beta,\fe})$ is non-decreasing on $[0,+\infty)$
for each fixed $\fe$ (Proposition~\ref{prop: degenerate curves}-(vi)),
the two inequalities imply
$
\beta_0>\beta_j^\omega(\fe)$ or $\beta_0>\beta_{j,\pm}(\fe),
\forall\,\fe\in(0,1),
$
as claimed.
\end{proof}

\begin{thm}\label{thm: for beta>5/28}
Let $\rho_{\beta,\mathfrak e}=\rho(\gamma_{\beta,\mathfrak e})+1$ be the rotation number of the Euler orbit $\zeta_e$ on the energy surface $\mathfrak M$. For every $\fe\in(0,1)$, $\beta\in[5/28,(6/\fe+3)/7]$, we have $\rho_{\beta,\fe}>\rho_{\beta,0}=\sqrt{1+7\beta}+1$.
\end{thm}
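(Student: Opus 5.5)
The plan is to mimic the final step of the proof of Theorem \ref{thm: for 0<beta<5/28}, replacing the monotonicity of $\beta_1^\omega$ by Proposition \ref{prop: estimates of Morse index}. Fix $\fe\in(0,1)$ and $\beta\in[5/28,(6/\fe+3)/7]$, and write $\sqrt{1+7\beta}=j+\nu$ with $j\in\N$ and $\nu\in[0,1)$. Since $\beta\ge 5/28$, we have $j\ge 1$, and $j=1$ forces $\nu\ge 1/2$. Then $\rho(\gamma_{\beta,0})=\sqrt{1+7\beta}=j+\nu$. By Proposition \ref{prop: degenerate curves}-(i),(ii), $\beta$ is the initial point at $\fe=0$ of a degenerate curve: $\beta=\beta_j^\omega(0)$ with $\omega=e^{2\pi i\nu}$ if $\nu\neq1/2$, or $\beta=\beta_{j,\pm}(0)$ if $\nu=1/2$. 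In every case $\beta(0)=\beta$ lies in the range of Proposition \ref{prop: estimates of Morse index}.

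Applying that proposition gives $\beta_j^\omega(\fe)<\beta$, respectively $\beta_{j,+}(\fe)<\beta$ (and hence also $\beta_{j,-}(\fe)<\beta$). Along the curve $\Gamma_j^\omega$, the rotation number is constant, equal to $j+\nu$, by the definition of $\Gamma^\omega_j$. Along $\Gamma_{j,\pm}$ it equals $j+1/2$. Hence
$$
\rho(\gamma_{\beta_j^\omega(\fe),\fe})=j+\nu=\rho(\gamma_{\beta,0}),
$$
and similarly for $\omega=-1$.

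It then remains to show that $\beta'\mapsto\rho(\gamma_{\beta',\fe})$ strictly increases between $\beta_j^\omega(\fe)$ and $\beta$, so that $\rho(\gamma_{\beta,\fe})>j+\nu$. By Proposition \ref{prop: degenerate curves}-(v), $\beta'\mapsto \rho(\gamma_{\beta',\fe})$ is non-decreasing. It is strictly increasing off the plateaus $[\beta_{k,-}(\fe),\beta_{k,+}(\fe)]$, on which it equals $k+1/2$.

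\emph{Case $\nu\neq1/2$.} The value $j+\nu$ is not of the form $k+1/2$, so $\beta_j^\omega(\fe)$ does not lie on a plateau. The interval $(\beta_j^\omega(\fe),\beta]$ is therefore not contained in the level set $\{\rho=j+\nu\}$, which is just the single point $\{\beta_j^\omega(\fe)\}$ there. Since $\rho$ is strictly increasing near that point, we get $\rho(\gamma_{\beta,\fe})>j+\nu$.

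\emph{Case $\nu=1/2$.} Here we use $\beta_{j,+}(\fe)<\beta$, which Proposition \ref{prop: estimates of Morse index} provides for the $+$ curve. Thus $\beta$ lies strictly to the right of the plateau $[\beta_{j,-}(\fe),\beta_{j,+}(\fe)]$. Strict monotonicity just beyond $\beta_{j,+}(\fe)$ then gives $\rho(\gamma_{\beta,\fe})>j+1/2$.

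Adding $1$ to both sides yields $\rho_{\beta,\fe}>\rho_{\beta,0}=\sqrt{1+7\beta}+1$.

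The main obstacle is bookkeeping rather than analysis. First, one must check that the hypothesis $\beta(0)\in[5/28,(6/\fe+3)/7]$ of Proposition \ref{prop: estimates of Morse index} matches exactly with $\beta$, so the inequality $\beta(\fe)<\beta(0)$ applies to the relevant curve, including the $+$ curve when $\nu=1/2$. Second, one must justify that the level set $\{\rho(\gamma_{\cdot,\fe})=j+\nu\}$ is a single point when $\nu\neq1/2$ and equals the plateau when $\nu=1/2$. Both facts follow from Proposition \ref{prop: degenerate curves}-(v), combined with continuity of $\rho$ in $\beta$.
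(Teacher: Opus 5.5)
Your proposal is correct and follows essentially the same route as the paper. You identify $\beta$ as the $\fe=0$ endpoint of $\Gamma^\omega_j$ or $\Gamma_{j,\pm}$, use Proposition \ref{prop: estimates of Morse index} to place the curve strictly to the left of $\beta$ at the given $\fe$, and conclude from the constancy of $\rho$ along the curve together with Proposition \ref{prop: degenerate curves}-(v). Your level-set argument for strictness when $\nu\neq 1/2$ (plateaus only carry the values $k+1/2$) is a direct version of the paper's proof by contradiction.
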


\begin{proof}
Fix $\fe\in(0,1)$ and $\beta\in[5/28,(6/\fe+3)/7]$.
Choose $\omega\in\mathbf U$ and $j\in\mathbb Z_+$ such that
$\beta=\beta_j^\omega(0)$ $(\omega\neq -1)$ or
$\beta=\beta_{j,\pm}(0)$ $(\omega=-1).$
By Proposition~\ref{prop: estimates of Morse index}, we know that
$\beta_j^\omega(\fe)<\beta_j^\omega(0) =\beta$ or $\beta_{j,\pm}(\fe)<\beta_{j,\pm}(0) =\beta$.
By definition of the degenerate curves, the rotation numbers along $\Gamma_j^\omega$ and $\Gamma_{j,\pm}$ are constants. Hence,
$
\rho_{\beta,0}
=\rho_{\beta_j^\omega(0),0}
=\rho_{\beta_j^\omega(\fe),\fe}
$
or
$
\rho_{\beta,0}
=\rho_{\beta',\fe},\forall \beta'\in[\beta_{j,-}(\fe),\beta_{j,+}(\fe)].
$

Now we fix $\fe$. By Proposition~\ref{prop: degenerate curves}-(v), the function
$
\beta\mapsto \rho_{\beta,\fe}
$
is non-decreasing on $[0,+\infty)$, and it is strictly increasing except on the intervals
$
[\beta_{k,-}(\fe),\beta_{k,+}(\fe)], k\in\mathbb Z_+,
$
where it remains constant.
If $\beta=\beta_{j,\pm}(0)$, then $\beta_{j,+}(\fe)<\beta$ and monotonicity imply
$\rho_{\beta,0}< \rho_{\beta,\fe}$. Assume that $\beta=\beta_j^\omega(0),\,\omega\neq -1$.
Since $\beta_j^\omega(\fe)<\beta$, monotonicity gives
$
\rho_{\beta,0}
=\rho_{\beta_j^\omega(\fe),\fe}
\le \rho_{\beta,\fe}.
$
We claim that the inequality is strict. Suppose by contradiction that
$
\rho_{\beta,0}=\rho_{\beta,\fe}.
$
Then the function $\rho_{\beta,\fe}$ must remain constant on the whole interval
$
[\beta_j^\omega(\fe),\beta].
$
Therefore, this interval must be contained in one of the plateau intervals
$
[\beta_{k,-}(\fe),\beta_{k,+}(\fe)]
$
for some $k\in\mathbb Z_+$.
However, on such intervals the rotation number corresponds to the case $\omega=-1$.
This contradicts our choice $\omega\neq -1$.
Hence,
$
\rho_{\beta,0}<\rho_{\beta,\fe},
$
which completes the proof.
\end{proof}

For every $(\beta,\fe)\subset (0,1)\times (0,1)$, the inequality $\rho_{\beta,\fe}>\rho_{\beta,0}$ directly follows from Theorems \ref{thm: for 0<beta<5/28} and \ref{thm: for beta>5/28}. To complete the proof Theorem \ref{thm: main1}, it is suffice to show that $\fe\mapsto \rho_{\beta,\fe},\fe\in(0,1)$, is non-decreasing for every $\beta\in(0,1)$. To do this, we need to explore the monotonicity of the function $\beta^\omega_j$ and $\beta_{j,\pm}$ for every $\omega\in \mathbf U\setminus\{-1\}$ and $j\in \mathbb N$. Recall the self-adjoint operator $\mathcal A^\omega_{\beta,\mathfrak{e}}=-d^2/d\theta^2-1-7\beta/(1+\mathfrak{e}\cos\theta)$ with domain $\Lambda(\omega)$. Fix $\mathfrak e\in[0,1),j\in \mathbb N$, and simply denote $\beta(\mathfrak{e}):=\beta_j^{\omega}(\mathfrak{e})$ or $\beta_{j,\pm}(\mathfrak{e})$. In particular, $\ker \mathcal A^\omega_{\beta(\mathfrak{e}),\mathfrak{e}}\neq \{0\}$. Assume $x_{\mathfrak{e}}\in\ker \mathcal A^\omega_{\beta(\mathfrak{e}),\mathfrak{e}}\setminus \{0\}$, which $x_{\mathfrak{e}}$ satisfies the equation \eqref{equ: x equation}. Recall $\omega=e^{2\pi \nu i}$ and $\nu=\rho(\gamma_{\beta,\mathfrak e})-j\in[0,1)$. 
Recall from \eqref{equ: iteration of An} that $C_n(\nu)A_n=(\mathfrak{e}/2) \left(A_{n-1}+A_{n+1}\right),\forall n\in \mathbb{Z},$
where $A_n=(1-(n+\nu)^{2})a_n$, $C_n(\nu)=7\beta(\fe)/((n+\nu)^{2}-1)-1$ and $\{a_n\in\mathbb{C}\}$ are Fourier coefficients of $x_\fe$, that is, $x_{\mathfrak{e}}(\theta)=\sum_{n\in\mathbb{Z}}a_{n}e^{i(n+\nu)\theta}$. Note that $C_n(\nu)<0$ for every $\nu\in(0,1)$, $\beta\in(0,1)$ and $n\notin\{-3,-2,1,2\}$. Moreover, both real and imaginary part of the sequence $\{A_n\}$ satisfy the same relations \eqref{equ: iteration of An}. We may assume $\{a_n\}$, $\{A_n\}$ are real sequences.

For $\nu=0$, we have the following result.
\begin{lem}\label{lem: for 1-degenerate curves}
Assume $j\in\mathbb N$. For every $(\beta_j^1(\mathfrak{e}),\mathfrak{e})\in\Gamma^1_j$ in $(0,1)\times (0,1)$, we have $(\beta_j^1)'(\mathfrak{e})<0$.
\end{lem}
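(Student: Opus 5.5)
The plan is to argue by continuity along $\Gamma^1_j$, following the proof of Theorem \ref{thm: for 0<beta<5/28} but with the sign criterion of Lemma \ref{lem: beta'e} specialized to $\nu=0$. Fix an analytic family $x_\fe\in\ker\mathcal A^1_{\beta_j^1(\fe),\fe}\setminus\{0\}$ with real Fourier coefficients $a_n$. Set $A_n=(1-n^2)a_n$ and $C_n=7\beta/(n^2-1)-1$. By Lemma \ref{lem: beta'e}, $(\beta_j^1)'(\fe)$ has the sign of $\sum_{n\neq\pm1}C_n A_n^2$; the terms $n=\pm1$ drop out because $A_{\pm1}=0$ automatically.

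First I would record the structure of the three-term recursion \eqref{equ: iteration of An} at $\nu=0$. At $n=0$ it gives $C_0A_0=\tfrac{\fe}{2}(A_{-1}+A_1)=0$, and $C_0=-7\beta-1<0$, so $A_0=0$. The recursion therefore decouples into two one-sided recursions, for $n\ge2$ and for $n\le-2$, each with the boundary value $A_{\pm1}=0$. Next, the sequence $(A_n)$ is not identically zero. Otherwise $x_\fe$ would be a combination of $e^{\pm i\theta}$, so $x_\fe''+x_\fe=0$, and \eqref{equ: x equation} would force $7\beta x_\fe=0$, which is impossible.

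Next come the signs of the coefficients. For $|n|\ge3$ and $\beta<1$ we have $C_n\le 7\beta/8-1<0$. For $n=\pm2$ we have $C_{\pm2}=7\beta/3-1$, which is negative exactly when $\beta<3/7=\beta^1_2(0)$. Hence at any point of $\Gamma^1_j$ with $\beta<3/7$ all coefficients are nonpositive, with $C_n<0$ for $n\neq\pm1$, so $(\beta^1_j)'<0$ there.

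For $j=2$ the continuity argument closes the case. By Proposition \ref{prop: degenerate curves}-(iv) and Lemma \ref{lem: the second derivitive}, $(\beta^1_2)'(0)=0$ and $(\beta^1_2)''(0)<0$, so $\beta^1_2$ is strictly decreasing on some $(0,\fe_0)$. If $\fe_0<1$ were a first zero of $(\beta^1_2)'$, then $\beta^1_2(\fe_0)<3/7$, and the sign computation above gives $(\beta^1_2)'(\fe_0)<0$, a contradiction. The curve $j=1$ lies in $\{\beta=0\}$ and is irrelevant.

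The main obstacle is $j\ge3$. There $\beta^1_j(0)=(j^2-1)/7\ge 8/7$, and the curve may enter $(0,1)\times(0,1)$ at heights $\beta\in[3/7,1)$ where $C_{\pm2}\ge0$. There I would exploit the decoupled half-line recursions. Multiplying $C_nA_n=\tfrac{\fe}{2}(A_{n-1}+A_{n+1})$ by $A_n$ and summing over $n\ge2$ with $A_1=0$ gives $\sum_{n\ge2}C_nA_n^2=\fe\sum_{n\ge2}A_nA_{n+1}$, and similarly for $n\le-2$. From $C_2A_2=\tfrac{\fe}{2}A_3$ I would then try to show that $C_2A_2^2$ is dominated by $C_3A_3^2$, using $C_3\le-1/8$ together with the decay of $A_n$ forced by $C_n\to-1$ and $\fe<1$. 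If this direct estimate fails, the fallback is ordering: the curves satisfy $\beta^1_2<\beta^1_3<\cdots$ (Proposition \ref{prop: degenerate curves}-(i)). I would combine this with a Sturm-type oscillation count of $x_\fe$, which is fixed by the Maslov index $i_1$ along $\Gamma^1_j$. The aim is to show that on $\Gamma^1_j$, $j\ge3$, the $n=\pm2$ modes carry too little weight to change the sign, or alternatively to rerun the first-critical-point argument using the Morse index bound of Proposition \ref{prop: estimates of Morse index}.
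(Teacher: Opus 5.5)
Your treatment of the range $\beta\le 3/7$ is correct and agrees with the paper. At $\nu=0$ one has $A_{\pm1}=0$, $A_0=0$ and $C_n<0$ for $|n|\ge 3$, while $C_{\pm2}=7\beta/3-1\le 0$, so Lemma \ref{lem: beta'e} gives the sign directly. Your continuity argument for $j=2$ is also valid, though it becomes unnecessary once the sign is known pointwise.

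The gap is the range $\beta\in(3/7,1)$. This range genuinely occurs on $\Gamma^1_j$ for $j\ge 3$: these curves start at $(j^2-1)/7\ge 8/7$ and tend to $1/56$ as $\fe\to1$. There you give no argument, and the estimate you propose first fails. From $C_2A_2=\frac{\fe}{2}A_3$ one gets $C_2A_2^2+C_3A_3^2=C_2A_2^2\bigl(1+4C_2C_3/\fe^2\bigr)$. This is positive whenever $4C_2|C_3|<\fe^2$, for example as $\beta\downarrow 3/7$ (where $C_2\to0^+$) at any fixed $\fe$, or near $\beta=1$ when $\fe^2>2/3$. Decay of $A_n$ cannot help, because the difficulty is that $|A_3|$ is too small compared with $|A_2|$. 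Your fallbacks do not close the case either. Proposition \ref{prop: estimates of Morse index} only gives $\beta(\fe)<\beta(0)$, not a pointwise sign of $\beta'$, and the Sturm-count idea is not developed.

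The missing step, which is how the paper finishes, is to carry the one-sided recursion one term further. On a nonzero half-line normalize $A_2=\fe^2/4$. The recursion with $A_1=0$ then gives $A_3=\frac{\fe}{2}C_2$ and $A_4=C_2C_3-\frac{\fe^2}{4}$. One then verifies the explicit inequality
\[
I_4:=C_2\frac{\fe^4}{16}+C_3C_2^2\frac{\fe^2}{4}+C_4\Bigl(C_2C_3-\frac{\fe^2}{4}\Bigr)^2<0 \quad\text{on } (3/7,1)\times(0,1).
\]
To see this, note that $480\,\partial_{\fe^2}I_4$ is affine in $\fe^2$. Its value at $\fe^2=0$ is $(7\beta-8)(7\beta-3)(7\beta+5)\le 0$, and its value at $\fe^2=1$ is $49\beta(\beta-1)(7\beta+1)<0$. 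Hence $I_4\le I_4(\beta,0)=C_4C_2^2C_3^2<0$. Near $\beta=3/7$ it is exactly the $n=4$ term, with $A_4\approx-\fe^2/4$, that absorbs the positive $n=2$ contribution.

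Finally, $C_n<0$ for $n\ge5$, and the tail $\{A_n\}_{n\ge 5}$ cannot vanish identically: otherwise the recursion, run backwards, would force $A_4=A_3=A_2=0$. So the full sum $\sum_n C_n|A_n|^2$ is strictly negative, and Lemma \ref{lem: beta'e} gives $(\beta^1_j)'(\fe)<0$ at every point of every $\Gamma^1_j$. No continuity argument is needed.
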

\begin{proof}
Since $\mathcal A^1_{\beta(\fe),\mathfrak{e}},\beta(\fe)=\beta_j^1(\mathfrak{e})$, is degenerate under the $1$-boundary condition, we have a non-trivial solution $x_{\mathfrak{e}}$ satisfying \eqref{equ: x equation} and \eqref{equ: iteration of An}.
Assume $x_{\mathfrak{e}}$ is real without loss of generality. Since $x_{\mathfrak{e}}=\bar x_{\mathfrak{e}}$ and $C_n=C_n(0)=7\beta(\mathfrak{e})/(n^2-1)-1$, we conclude that $a_{-n}=a_{n}$ and $C_{-n}=C_n$, which implies that $A_{-n}=A_n$ for every $n\in\mathbb{Z}$.
In particular, we have $A_{1}=A_{-1}=0$,
$C_{1}|A_1|^2=C_{-1}|A_{-1}|^2=0$ and $C_n|A_n|^2\leq 0$ for every $|n|\geq 3$.
Moreover, $C_{n}|A_n|^2\leq 0$ for $|n|=2$ and every $\beta(\mathfrak e)\in(0,3/7]$.

When $\beta=\beta(\mathfrak e)\in(3/7,1)$, using \eqref{equ: iteration of An}, we compute
\begin{align*}
C_{0}A_{0}&=(-7\beta-1)A_{0}=\frac{\mathfrak{e}}{2}(A_{-1}+A_{1})=0\ &\Rightarrow&\ A_{0}=a_{0}=0,\\
C_1A_1&=-7\beta a_1=\frac{\mathfrak{e}}{2}(A_{0}+A_{2})=\frac{\mathfrak{e}}{2}A_2,\ &\Rightarrow &\ \text{we assume}\ A_2=\frac{\mathfrak{e}^2}{4},\\
C_2A_2&=(\frac{7\beta}{3}-1)A_2=\frac{\mathfrak{e}}{2}(A_{1}+A_{3})=\frac{\mathfrak{e}}{2}A_3\ &\Rightarrow& \ A_3=\frac{\mathfrak{e}}{2}(\frac{7\beta}{3}-1),\\
C_3A_3&=(\frac{7\beta}{8}-1)A_3=\frac{\mathfrak{e}}{2}(A_{2}+A_{4})=\frac{\mathfrak{e}}{2}(\frac{\mathfrak{e}^2}{4}+A_4)\ &\Rightarrow& \ A_4=(\frac{7\beta}{8}-1)(\frac{7\beta}{3}-1)-\frac{\mathfrak{e}^2}{4}.
\end{align*}
We aim to show that $I_4(\beta,\mathfrak{e}^2):=\sum_{n=0}^4C_n|A_n|^2\leq0$ for every $(\beta,\mathfrak{e})\in(0,1)\times(0,1)$. We compute
$$\begin{aligned}
I_4(\beta,\mathfrak{e}^2)&=(\frac{7\beta}{3}-1)\frac{\mathfrak{e}^4}{16}+(\frac{7\beta}{8}-1)\frac{\mathfrak{e}^2}{4}(\frac{7\beta}{3}-1)^2 +(\frac{7\beta}{15}-1)((\frac{7\beta}{8}-1)(\frac{7\beta}{3}-1)-\frac{\mathfrak{e}^2}{4})^2,
\end{aligned}$$
and $\hat I_4(\beta,\mathfrak{e}^2):=480\partial_{\mathfrak{e}^2} I_4(\beta,\mathfrak{e}^2)=120 - 217 \beta - 294 \beta^2 + 343 \beta^3 - (120 - 168 \beta) \mathfrak{e}^2$. Since both $\hat I_4(\beta,0)=(-8 + 7\beta) (-3 + 7\beta) (5 + 7\beta)\leq 0$ and $\hat I_4(\beta,1)=49\beta(-1 + \beta) (1 + 7 \beta)\leq0$ are non-positive on $[3/7,1)$, we conclude that $\hat I_4(\beta,\mathfrak{e}^2)\leq 0$ and $I_4(\beta,\cdot)$ is non-increasing on $(0,1)$ for every $\beta\in [3/7,1)$.
Moreover, we compute $8640I_4(\beta,0)=(7\beta-15)(7\beta-8)^2(7\beta-3)^2< 0$ on $(3/7,1)$. Together with the case in $\beta\in(0,3/7]$, we conclude that $I_4\leq 0$ on $(0,1)\times (0,1)$. Since $\{a_{n}\}_{n>4}$ cannot be identically zero, whenever $0<\beta,\mathfrak{e}<1$, we have $\sum_{n\in\mathbb{Z}}C_n|A_n|^2<I_4$. Finally, Lemma \ref{lem: beta'e} implies that $\langle \partial_{\mathfrak{e}} \mathcal A^1_{\beta(\mathfrak{e}),\mathfrak{e}}x_{\mathfrak{e}},x_{\mathfrak{e}}\rangle <0$ and $\beta'(\mathfrak e)<0$ for every $\mathfrak e\in(0,1)$. The proof is now completed.
\end{proof}

Now we consider the case $\nu\in(0,1)$. Let $D_k(\nu):=\mathrm{diag}(C_{k+1}(\nu),C_k(\nu))$ and denote
$$
S_k(\nu):=\left\{\begin{aligned}
&I,\ k=-1,\quad M_k\cdots M_1M_0,\ k\geq 0,\\
&(M_{-1}M_{-2}\cdots M_{k+1})^{-1},\quad\ k\leq -2.
\end{aligned}\right.\quad \text{where}
\quad M_{n}:=\begin{pmatrix}2C_n(\nu)/\mathfrak{e} & -1 \\ 1 & 0\end{pmatrix}.
$$
From \eqref{equ: iteration of An}, we know that $(A_{n+1},A_n)^T=M_n(\nu)(A_n,A_{n-1})^T$ and $(A_{k+1},A_k)^T=S_k(\nu)(A_0,A_{-1})^T,\forall k\in\mathbb{Z}$.
Now we define
\begin{equation}\label{def: Q4}
Q_4(\nu):=\sum^2_{k=-2}S_{2k-1}(\nu)^TD_{2k-1}(\nu)S_{2k-1}(\nu).
\end{equation}

Unlike the case $\nu=0$, we cannot determine the initial values $(a_{-1},a_0)$ for $\nu\in(0,1)$. Instead, we provide the following lemma.
\begin{lem}\label{lem: negative definite}
Fix $\nu\in(0,1),j\in \N$, and let $\beta(\fe)=\beta_j^\omega(\fe)\ (\nu\neq 1/2)$ or $\beta_{j,\pm}(\fe)\ (\nu=1/2)$. If $Q_4(\nu)$ satisfies the following condition:
\begin{itemize}
\item[($Q_\nu$):] $Q_4(\nu)$ is non-positive definite for every $0<\beta,\mathfrak{e}<1$,
\end{itemize}
then $\beta'(\mathfrak{e})<0$ for every $(\beta(\mathfrak{e}),\mathfrak{e})\in(0,1)\times (0,1)$.
\end{lem}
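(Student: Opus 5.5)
By Lemma \ref{lem: beta'e}, the sign of $\beta'(\fe)$ is the sign of $\sum_{n\in\Z}C_n(\nu)|A_n|^2$, where $x_\fe(\theta)=\sum_n a_n e^{i(n+\nu)\theta}$ is a nonzero kernel element and we may take $\{A_n\}$ real. The plan is to show this sum is strictly negative. We split it into a finite block of indices near zero, which $Q_4(\nu)$ controls, and a tail on which every $C_n(\nu)$ is negative.

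\textbf{Finite block.} Pairing consecutive indices via $D_k=\mathrm{diag}(C_{k+1},C_k)$ gives, for $k=2m-1$ with $m=-2,\dots,2$,
\[
C_{2m}A_{2m}^2+C_{2m-1}A_{2m-1}^2=(A_{2m},A_{2m-1})\,D_{2m-1}\,(A_{2m},A_{2m-1})^T .
\]
These five pairs cover exactly the indices $n=-5,\dots,4$. By the recursion \eqref{equ: iteration of An}, $(A_{k+1},A_k)^T=S_k(\nu)(A_0,A_{-1})^T$ for every $k$. Substituting,
\[
\sum_{n=-5}^{4}C_n(\nu)A_n^2=v^TQ_4(\nu)v,\qquad v=(A_0,A_{-1})^T.
\]
Hypothesis $(Q_\nu)$ then gives $\sum_{n=-5}^{4}C_nA_n^2\le 0$ for all $0<\beta,\fe<1$. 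One point to check is that $M_n$ is invertible (its determinant is $1$), so $S_k$ is well defined for $k\le-2$; this is needed when $C_n=0$.

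\textbf{Tail.} For $n\notin\{-3,-2,1,2\}$ we have $C_n(\nu)<0$ whenever $\beta\in(0,1)$ and $\nu\in(0,1)$, as noted before Lemma \ref{lem: for 1-degenerate curves}. Every index outside $\{-5,\dots,4\}$ lies in this set, so $\sum_{n\ge5}+\sum_{n\le-6}C_nA_n^2\le0$. This inequality is strict as soon as some $A_n\ne0$ with $n\ge5$ or $n\le-6$.

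\textbf{Strictness.} This is the main obstacle. Suppose $A_n=0$ for all $n\ge5$ and all $n\le -6$. Since $C_n\ne0$ in the region $n\ge5$, the three-term recursion run backwards forces $A_{n-1}=\frac{2C_n}{\fe}A_n-A_{n+1}=0$ for $n\ge 5$. Repeating this step downward (for $n\le 4$ the recursion still determines $A_{n-1}$ from $A_n,A_{n+1}$, regardless of the sign of $C_n$) gives $A_n=0$ for all $n$. Then all $a_n=0$, because $(n+\nu)^2\neq1$ for $\nu\in(0,1)$. This contradicts $x_\fe\ne0$. Hence the full sum is strictly negative, and Lemma \ref{lem: beta'e} (with its prefactor $2\pi/(7\fe\beta)>0$) gives $\beta'(\fe)<0$ at every point of the curve in $(0,1)\times(0,1)$.

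The delicate points are the index bookkeeping, namely that the five pairs in $Q_4$ exactly cover every index where $C_n$ might be nonnegative, and the nondegeneracy argument via the backward recursion.
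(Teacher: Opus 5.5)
Your proposal is correct and follows essentially the same route as the paper, which also rewrites $\sum_{n=-5}^{4}C_n(\nu)|A_n|^2$ as $(A_0,A_{-1})\,Q_4(\nu)\,(A_0,A_{-1})^T$, uses $(Q_\nu)$ together with $C_n<0$ off $\{-3,-2,1,2\}$, and concludes via Lemma~\ref{lem: beta'e}. The one addition is your backward-recursion argument (two consecutive vanishing $A_n$ force all $A_n=0$ since $\fe\neq0$, hence $x_\fe=0$), which justifies the strict inequality $\sum_{n\in\mathbb Z}C_n|A_n|^2<I_4$ that the paper asserts without comment.
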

\begin{proof}
Let $I_4(\beta(\mathfrak{e}),\mathfrak{e}):=\sum^{4}_{n=-5}C_i(\nu)|A_i|^2$. We rewrite $I_4(\beta,\mathfrak{e})=(A_0,A_{-1}) Q_4(\nu) (A_0,A_{-1})^T$ due to \eqref{def: Q4}. Then condition $(Q_\nu)$ shows that $\sum_{n\in \mathbb{Z}}C_n(\nu)|A_n|^2< I_4(\beta(\fe),\mathfrak{e})\leq 0$, for every $\mathfrak{e}\in(0,1)$. From Lemma \ref{lem: beta'e}, we conclude that $\beta'(\fe)<0$ and $\langle\partial_{\mathfrak{e}} \mathcal A^\omega_{\beta(\mathfrak{e}),\mathfrak{e}}\cdot x_{\mathfrak{e}},x_{\mathfrak{e}}\rangle <0$ for every $\mathfrak{e}\in(0,1)$. Hence, this lemma follows.
\end{proof}
However, it is extremely hard to check condition ($Q_\nu$). To further simplify this problem, we observe that
$C_n(\nu)=C_{-n-1}(1-\nu)$ and $A_n(\nu)=A_{-n-1}(1-\nu)$ for every $\nu\in(0,1)$, which imply that
$$
D_{n}(\nu)=ND_{-n-2}(1-\nu)N,\quad  M_n(\nu)=NM_{-n-1}^{-1}(1-\nu)N,\quad N=\begin{pmatrix}0 & 1\\ 1& 0\end{pmatrix}.
$$
Then we obtain that $S_k(\nu)=NS_{-k-2}(1-\nu)N$ and
$$S_{2k-1}(\nu)^TD_{2k-1}(\nu)S_{2k-1}(\nu)=NS_{-2k-1}(1-\nu)^TD_{-2k-1}(1-\nu)S_{-2k-1}(1-\nu)N.$$
Let $Q_4^+(\nu):=\mathrm{diag}(C_0(\nu),0)+S_{1}(\nu)^TD_{1}(\nu)S_{1}(\nu)+S_3(\nu)^TD_3(\nu)S_3(\nu)$. We conclude that
\begin{equation*}\label{equ: Q4 and Q4+}
Q_4(\nu)=Q_4^+(\nu)+NQ_4^{+}(1-\nu)N.
\end{equation*}
In order to prove $(Q_\nu)$ for every $0<\nu<1$, it is sufficient to prove the following condition:
\begin{itemize}\label{def: Q4+}
\item[($Q_4^+$)]:  $Q_4^+(\nu)$ is non-positive definite for every $\nu\in(0,1)$ and $0<\beta,\mathfrak{e}<1$.
\end{itemize}

\begin{rem}\label{rem: 0 to ((n+nu)^2-1)/7}
Recall that $C_{n}(\nu)<0$ on $\nu\in (0,1)$, for every $\beta\in(0,1)$ and $n\notin \{-3,-2,1,2\}$. Moreover, $C_n(\nu)\leq 0$ on $\nu\in (0,1)$ if $\beta\in(0,((n+\nu)^2-1)/7]$ for $n=1,2$. Then both $D_{-1}(\nu), D_3(\nu)$ are negatively definite for every $\nu\in(0,1)$ and $\beta\in(0,1)$, and $D_1(\nu)$ is negatively definite for every $\nu\in(0,1)$ and $\beta\in(0,(2\nu+\nu^2)/7]$. Therefore, we have $Q^+_4(\nu)<0$ for every $\nu\in(0,1)$ and $(\beta,\mathfrak{e})\in(0,(2\nu+\nu^2)/7]\times (0,1)$. 
\end{rem}

Now we aim to check condition ($Q_4^+$) for every $(\beta,\mathfrak{e})\in \Gamma^\omega_j\ (\omega\neq -1)$ or $(\beta,\mathfrak{e})\in \Gamma_{j,\pm}$ ($\omega=-1$) in $(0,1)\times (0,1)$, where $\omega=e^{2\pi i \nu},\nu\in (0,1)$ and $j\in \mathbb Z_+$. Due to Lemma \ref{lem: negative definite}, it is sufficient to prove that
\begin{equation}\label{equ: condition Q4+<0}
\mathrm{tr}(Q_4^+(\nu))<0 \quad \mathrm{and} \quad \det(Q_4^+(\nu))>0,\quad \forall\ 0<\beta,\mathfrak e<1,\ 0<\nu<1.
\end{equation}
For simplicity, instead of considering $Q^+_4$, it is sufficient to consider
$$\begin{aligned}
\widetilde Q^+_4=(S_1^{-1})^TQ^+_4S_1^{-1}&=(S_1^{-1})^T\mathrm{diag}(C_0,0)S_1^{-1}+D_1+(M_3M_2)^TD_3(M_3M_2)\\
&=:\frac{1}{\mathfrak{e}^4}\begin{pmatrix}
d_1 & d_2 \\ d_2 & d_3
\end{pmatrix},
\end{aligned}$$
where
$$\begin{aligned}
d_1(\mathfrak{e},\beta,\nu)&=16 C_2^2C_3^2 C_4+(4C_2^2C_3-8C_2C_3C_4)\mathfrak{e}^2+(C_0+C_2+C_4)\mathfrak{e}^4,\\
d_2(\mathfrak{e},\beta,\nu)&=\mathfrak{e}(-8C_2C_3^2C_4-2(C_0C_1+C_3(C_2-C_4))\mathfrak{e}^2),\\
d_3(\mathfrak{e},\beta,\nu)&=\mathfrak{e}^2(4C_0C_1^2+4C_3^2C_4+(C_1+C_3)\mathfrak{e}^2).
\end{aligned}$$
Denoting $\mathfrak{e}^2$ as a new variable $e$, we can compute the trace and determinant of $\widetilde Q^+_4(\nu)$ as
$$\begin{aligned}
\mathrm{tr}^+_4(e)&=e^{-2}(\mathrm{tr}_0+\mathrm{tr}_1e+\mathrm{tr}_2e^2),\\
\mathrm{det}^+_4(e)&=e^{-3}(
\mathrm{det}_1+\mathrm{det}_2e+\mathrm{det}_3e^2+\mathrm{det}_4e^3).
\end{aligned}$$
where $\mathrm{tr}_i=\mathrm{tr}_i(\beta,\nu)$ and $\mathrm{det}_j=\mathrm{det}_j(\beta,\nu)$ are
$$\begin{aligned}
\mathrm{tr}_0&=16 C_2^2C_3^2 C_4,\\ \mathrm{tr}_1&=4(C_0C_1^2+C_3^2C_4+C_2^2C_3-2C_2C_3C_4),\\
\mathrm{tr}_2&=C_0+C_1+C_2+C_3+C_4,
\end{aligned}$$
and
$$\begin{aligned}
\mathrm{det}_1&=64C_0C_1^2C_2^2C_3^2C_4,\quad
\mathrm{det}_2=16C_1C_2C_3(C_0C_1C_2-2C_0C_1C_4-2C_0C_3C_4+C_2C_3C_4),\\
\mathrm{det}_3&=4C_2C_3(C_1+C_3)(C_2-2C_4)+4(C_0+C_2+C_4) (C_0C_1^2+C_3^2C_4)-4(C_0C_1+C_3(C_2-C_4))^2,\\
\mathrm{det}_4&=(C_1+C_3)(C_0+C_2+C_4).
\end{aligned}$$
Note that $C_0,C_3,C_4<0$ for every $0\leq \beta,\nu\leq 1$. We directly obtain that $\mathrm{tr}_0<0$ and $\mathrm{det}_1>0$.

Now we introduce the following lemmas. Since the proof is rather technical, we refer the proof of Lemma \ref{lem: trace and determinant are positive} to Section \ref{sec: supplementary proofs}, where some algebraic manipulations were performed by Mathematica \cite{Mathematica}.
\begin{lem}\label{lem: trace and determinant are positive}
Assume $\nu\in(0,1)$. Then the following estimates hold:
\begin{itemize}
\item[(i)] $\mathrm{tr}^+_4(e)<0$ for every $0< \beta,e< 1$,
\item[(ii)] $\mathrm{det}^+_4(e)>0$ for every $0< \beta,e< 1$.
\end{itemize}
Moreover, $Q_4^+(\nu)$ and $Q_4(\nu)$ are negative definite for every $0< \beta,\mathfrak e< 1$.
\end{lem}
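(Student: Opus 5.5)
The plan is to reduce both inequalities to sign statements for explicit polynomials in $(\beta,\nu,e)$ on the unit cube, and then to deduce the definiteness claims by elementary linear algebra.

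\textbf{Setup.} Write each coefficient as
$$C_n=C_n(\beta,\nu)=\frac{7\beta}{(n+\nu)^2-1}-1,\qquad n=0,\dots,4.$$
These are rational in $(\beta,\nu)$. The only denominator that can vanish for $\nu\in[0,1]$ is $(1+\nu)^2-1=\nu(2+\nu)$, in $C_1$, at $\nu=0$. Since $\det^+_4$ and $\mathrm{tr}^+_4$ are polynomial in the $C_n$, I will multiply them by the appropriate positive powers of $\prod_n |(n+\nu)^2-1|$. This gives polynomials
$$P_{\rm tr}(\beta,\nu,e),\qquad P_{\det}(\beta,\nu,e)$$
with integer coefficients and the same signs as $e^{2}\,\mathrm{tr}^+_4$ and $e^{3}\det^+_4$. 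By Remark \ref{rem: 0 to ((n+nu)^2-1)/7}, $Q_4^+$ is already negative definite when $\beta\le(2\nu+\nu^2)/7$. So only the region $(2\nu+\nu^2)/7<\beta<1$ remains. There $C_1$ (and for larger $\beta$ also $C_2$) becomes positive, and no sign argument on individual terms works.

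\textbf{Trace.} In $e$, the quantity $P_{\rm tr}$ is a quadratic $\mathrm{tr}_0+\mathrm{tr}_1e+\mathrm{tr}_2e^2$ with $\mathrm{tr}_0<0$ always. To show it is negative on $[0,1]$, I will verify the following.
\begin{itemize}
\item[(a)] The endpoint value $\mathrm{tr}_0+\mathrm{tr}_1+\mathrm{tr}_2$ is negative.
\item[(b)] Wherever the vertex $-\mathrm{tr}_1/(2\mathrm{tr}_2)$ lies in $(0,1)$ with $\mathrm{tr}_2<0$, the discriminant condition $\mathrm{tr}_1^2-4\mathrm{tr}_0\mathrm{tr}_2<0$ holds.
\end{itemize}
Equivalently, and more mechanically, I will rewrite the quadratic in the Bernstein basis on $[0,1]$ and check that all three Bernstein coefficients, namely
$$\mathrm{tr}_0,\qquad \mathrm{tr}_0+\tfrac12\mathrm{tr}_1,\qquad \mathrm{tr}_0+\mathrm{tr}_1+\mathrm{tr}_2,$$
are negative polynomials on the region $\{(2\nu+\nu^2)/7<\beta<1,\ 0<\nu<1\}$.

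\textbf{Determinant.} The same approach applies to the cubic $\det_1+\det_2e+\det_3e^2+\det_4e^3$, with $\det_1>0$. I will expand it in the degree-3 Bernstein basis and aim to show all four coefficients are positive. If some coefficient fails on a subregion, I will subdivide $e\in[0,1]$ (de Casteljau) or split the $(\beta,\nu)$ region along the curves where $C_1$ or $C_2$ change sign, and repeat. Each resulting two-variable polynomial inequality will be certified by computer algebra: cylindrical algebraic decomposition, or Sturm sequences along the boundary of the region together with the critical points of the polynomial in the interior. This is the step where Mathematica \cite{Mathematica} is needed. It is also the main obstacle. Near $\nu\to0^+$ with $\beta$ bounded away from $0$, the coefficient $C_1\sim 7\beta/(2\nu)$ blows up, so the leading behaviour of each cleared polynomial has to be checked on the face $\nu=0$ separately. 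The corner $\beta\to1$, $e\to1$ is also delicate, since margins there may degenerate to zero; I expect to need to verify that the polynomial vanishes at worst on the boundary of the cube, not inside it.

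\textbf{Conclusion.} Once $\mathrm{tr}^+_4<0$ and $\det^+_4>0$ are established, the symmetric $2\times2$ matrix $\widetilde Q^+_4$ is negative definite. The matrix $S_1$ is invertible, being a product of the $M_n$, each of which has determinant $1$. Hence $Q_4^+(\nu)=S_1^T\widetilde Q_4^+S_1$ is negative definite for every $\nu\in(0,1)$ and $0<\beta,\fe<1$. Applying this at $1-\nu$ and conjugating by the permutation matrix $N$, the identity $Q_4(\nu)=Q_4^+(\nu)+NQ_4^+(1-\nu)N$ expresses $Q_4(\nu)$ as a sum of two negative definite matrices. Therefore $Q_4(\nu)$ is negative definite, which in particular verifies condition $(Q_\nu)$ of Lemma \ref{lem: negative definite}.
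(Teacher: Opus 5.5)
\textbf{What works.} Your closing linear algebra is fine. Since $\det M_n=1$, $Q^+_4=S_1^T\widetilde Q^+_4S_1$ is congruent to $\widetilde Q^+_4$. Then $Q_4(\nu)=Q^+_4(\nu)+NQ^+_4(1-\nu)N$ is a sum of two negative definite matrices. The trace step can also be carried out as you describe. The middle Bernstein coefficient equals $2C_3C_4(8C_2^2C_3+C_3-2C_2)+2C_0C_1^2+2C_2^2C_3$. This is at most $2C_3C_4(4C_2^2C_3+C_3-2C_2)$, which is the quantity the paper proves negative via $F_0$. Note that only $\mathrm{tr}_0\le 0$ holds, since it vanishes on $C_2=0$, but that suffices for $e\in(0,1]$.

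\textbf{The gap: the determinant near $C_2=0$.} The plan fails exactly where you assert ``$\mathrm{det}_1>0$''. Both $\mathrm{det}_1=64C_0C_1^2C_2^2C_3^2C_4$ and $\mathrm{det}_2=16C_1C_2C_3\,\mathrm{det}_{2c}$ vanish on the curve $C_2=0$, i.e.\ $7\beta=3+4\nu+\nu^2$. This curve lies inside your region $\{(2\nu+\nu^2)/7<\beta<1\}$ for $0<\nu<2\sqrt2-2$. On it, $\mathrm{det}_{2c}=-2C_0C_4(C_1+C_3)<0$.

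\begin{itemize}
\item For $C_2=-\epsilon<0$ small, $\mathrm{det}_1\sim K_1\epsilon^2>0$ while $\mathrm{det}_2\sim -K_2\epsilon<0$.
\item So the first Bernstein coefficient on any interval $[0,h]$, namely $\mathrm{det}_1+h\,\mathrm{det}_2/3$, is negative once $\epsilon\lesssim h$.
\item Hence no $(\beta,\nu)$-independent subdivision in $e$ terminates.
\item Splitting the $(\beta,\nu)$-region along $C_2=0$ does not help, because the failure occurs on approach to the splitting curve.
\end{itemize}

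The degeneracy is worse than a simple zero. On $C_2=0$ one has $\mathrm{det}_3=4C_0C_4(C_1+C_3)^2$, hence $\mathrm{det}_{2c}^2=C_0C_4\,\mathrm{det}_3$. Therefore the cofactor of $C_2^2$ in
$\mathrm{det}_2^2-4\mathrm{det}_1\mathrm{det}_3=256C_1^2C_2^2C_3^2(\mathrm{det}_{2c}^2-C_0C_4\mathrm{det}_3)$ also vanishes there. With $e=\epsilon t$, $\epsilon^{-2}D^+_4(\epsilon t)$ tends to a perfect square in $t$. Positivity of $D^+_4$ near $e\approx 4C_1|C_3|\epsilon/(C_1+C_3)$ is therefore only an $O(\epsilon^3)$ effect.

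Concretely, at $\nu=1/2$, $\beta=0.74$ (so $C_2=-1/75$):
\begin{itemize}
\item $\mathrm{det}_1\approx0.19$, $\mathrm{det}_2\approx-10.8$, $\mathrm{det}_3\approx162.5$, $\mathrm{det}_4\approx-22.5$;
\item the first Bernstein coefficient on $[0,1]$ is about $-3.4$;
\item yet $\min_{[0,1]}D^+_4\approx 10^{-2}$, attained near $e\approx0.033$.
\end{itemize}
So the two-variable coefficient inequalities your plan hands to CAD or Sturm sequences are false near this interior curve, not merely hard. The delicate places you list ($\nu\to0^+$, the corner $\beta\to1$, $e\to1$) are not where the difficulty lies.

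\textbf{What is missing.} You need an elimination of $e$ that explicitly divides out these powers of $C_2$. The paper does this in three steps:
\begin{itemize}
\item It fixes the signs of $\mathrm{det}_2,\mathrm{det}_3,\mathrm{det}_4$ via the ordering $\beta_1<\cdots<\beta_5$ of the curves $S_1,\dots,S_5$.
\item When $\mathrm{det}_2>0>\mathrm{det}_4$, it uses $\partial_eD^+_4(1)>0$; with $\mathrm{det}_4<0$, $\partial_eD^+_4$ is a concave parabola, so $D^+_4$ is increasing on $[0,1]$.
\item When $\mathrm{det}_2<0<\mathrm{det}_3$ and $\mathrm{det}_4<0$, it controls the interior critical value through the cubic discriminant $D^*_4$. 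This is split into $D_{41},D_{42},D_{43}$, with the factor $C_2C_3$ pulled out of $D_{41}$, so that the remaining cofactors are strictly positive polynomials.
\end{itemize}
Your proposal contains no step of this kind.
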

Finally, by Lemmas \ref{lem: for 1-degenerate curves}, \ref{lem: negative definite} and \ref{lem: trace and determinant are positive}, we obtain that $(\beta_j^\omega)'(\mathfrak{e})<0$ and $(\beta_{j,\pm})'(\mathfrak{e})<0$ for every $(\beta_j^\omega(\mathfrak{e}),\mathfrak{e})$ and $(\beta_{j,\pm}(\mathfrak{e}),\mathfrak{e})\in(0,1)\times (0,1)$. We know that for every $\omega=e^{2\pi\nu i}\in\mathbf U\setminus\{-1\}$, the degenerate curves satisfy $\beta^\omega_j(0)> \beta_j^\omega(\mathfrak e)$ and $\beta_{j,\pm}(0)>\beta_{j,\pm}(\mathfrak e)$ for every $\mathfrak e\in(0,1)$ and $j\in \mathbb N$. Write $\beta(\fe)$ instead of $\beta^\omega_j(\fe)$ or $\beta_{j,\pm}(\fe)$, shortly. If $\beta=\beta(0)\in(0,1)$, then by Proposition~\ref{prop: degenerate curves}-(v), we have $\rho_{\beta,\mathfrak e}\geq \rho_{\beta(\mathfrak e),\mathfrak e}=\rho_{\beta,0}$ for every $\fe\in(0,1)$. Hence, $\rho_{\beta,\mathfrak e}$ is non-decreasing in $\mathfrak e\in(0,1)$ for every fixed $\beta\in(0,1)$. The proof of Theorem \ref{thm: main1} is now complete.

\section{Proof of Theorem \ref{thm: main2}}\label{sec: estimate the volume}

We want to estimate the volume ${\rm vol}(\mathfrak M) = H^{-1}(-1)$, where $H$ is the Hamiltonian in \eqref{equ: Ham1}
$$
H(p_r,p_z,r,z)=\frac{p_r^2+p_z^2}{2}+V(r,z),\quad V(r,z):=\frac{\varpi^2}{2r^2}-\frac{1}{r}-\frac{4\alpha^{-1}}{\sqrt{r^2+(1+2\alpha)z^2}}.
$$
Recall that $\lambda$ is a contact form on $\mathfrak M$ given by the restriction of the $1$-form $\tilde \lambda:=i_Y \tilde \omega_0$ to $\mathfrak M$, where $Y$ is a Liouville vector field defined on a tubular neighborhood $\mathcal{U}\subset \R^4$ of $\mathfrak M$, $Y$ is transverse to $\mathfrak M,$ and $d\tilde \lambda= \tilde \omega_0 = dp_r \wedge dr + dp_z \wedge dz$ is the standard symplectic form. The sublevel set $\mathfrak B=H^{-1}((-\infty,-1])$ is a compact $4$-ball bounded by $\mathfrak{M}$ and hence we may assume that $\tilde \lambda$ extends to a $1$-form also denoted $\tilde \lambda$, defined on a neighborhood $\mathcal{U}'\subset \R^4$ of $\mathfrak B$, satisfying $d\tilde \lambda = \tilde\omega_0$. We thus have
\begin{equation}\label{formula_volume}
{\rm vol}(\mathfrak{M}) = \int_{\mathfrak{M}} \lambda \wedge d\lambda = \int_{\mathfrak{M}} \tilde \lambda \wedge d\tilde \lambda =\int_{\mathfrak B} \tilde \omega_0 \wedge \tilde \omega_0 =  2 \int_{\mathfrak{B}} dp_r \wedge dr \wedge dp_z \wedge dz.
\end{equation}
We see that ${\rm vol}(\mathfrak M)$ does not depend on the choice of $\lambda$.

We shall compare ${\rm vol}(\mathfrak{M})$ with the contact volume of $\mathfrak M_2=H_2^{-1}(-1)$, where
$$
H_{2}(p_r,p_z,r,z)=\frac{p_r^2+p_z^2}{2}+V_2(r,z),\quad V_2(r,z)=\frac{\varpi^2}{2r^2}-\frac{1}{\beta\sqrt{r^2+(1+7\beta)z^2}}.
$$
Recall that $\beta = (1+4\alpha^{-1})^{-1},$ where $\alpha \in (1, +\infty)$.

\begin{lem}\label{lem_volume2} Let $\mathfrak{B}_2:=H_2^{-1}((-\infty,-1])$. Then $\mathfrak B_2$ is a $4$-ball contained in $\mathfrak B \setminus \mathfrak M$. Moreover,
\begin{equation}
   {\rm vol}(\mathfrak M) > {\rm vol}(\mathfrak M_2)= 2\pi^2\frac{(1-\sqrt{2} \varpi \beta)^2}{\beta^2\sqrt{1+7\beta}}=\frac{T_e^2}{\sqrt{1+7\beta}},
\end{equation}
where
$$
T_e=2\pi \frac{1-\sqrt{2}\varpi\beta}{\sqrt{2}\beta}= 2\pi\left(\frac{1-\sqrt{1-\mathfrak e^2}}{\sqrt{2}\beta}\right).
$$
\end{lem}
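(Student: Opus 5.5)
The plan is to prove the lemma in two independent steps: a pointwise comparison $V\le V_2$, which gives the inclusion $\mathfrak B_2\subset\mathfrak B$ and the strict volume inequality, and then an explicit computation of ${\rm vol}(\mathfrak M_2)$ through formula \eqref{formula_volume}.

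\textbf{Step 1: comparison of potentials.} Using $1/\beta=1+4/\alpha$, set $u:=z^2/r^2\ge 0$ and $f_u(c):=(1+cu)^{-1/2}$. The inequality $V\le V_2$ is equivalent to
$$
\beta f_u(0)+(1-\beta)f_u(1+2\alpha)\;\ge\; f_u(1+7\beta),
$$
after multiplying by $r\beta$, since $1-\beta=4\beta/\alpha$. Let $\bar c:=(1-\beta)(1+2\alpha)$ be the barycenter of the points $0$ and $1+2\alpha$ with weights $\beta,1-\beta$. A direct check gives
$$
\bar c=\frac{4(1+2\alpha)}{\alpha+4}=\frac{8\alpha+4}{\alpha+4}=1+7\beta .
$$
For $u>0$ the function $c\mapsto f_u(c)$ is strictly convex, so Jensen's inequality gives the displayed inequality, strictly when $z\neq0$. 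Hence $V\le V_2$ everywhere, with equality exactly on $\{z=0\}$.

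Consequently $H\le H_2$ and $\mathfrak B_2=\{H_2\le-1\}\subset\{H\le -1\}=\mathfrak B$. On the set $\{z\neq 0\}$ the inequality is strict, so near any point of $\mathfrak M_2$ with $z\neq0$ there is an open set contained in $\mathfrak B\setminus\mathfrak B_2$. This gives $\int_{\mathfrak B}>\int_{\mathfrak B_2}$. The point to handle carefully is the plane $z=p_z=0$, where $V=V_2$: there $\mathfrak M$ and $\mathfrak M_2$ meet exactly along the Euler orbit. So the inclusion should be read as $\mathfrak B_2\subset\mathfrak B$ with $\mathfrak B\setminus\mathfrak B_2$ having nonempty interior. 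That is all the strict volume inequality needs, and this step is where the wording of the lemma has to be checked. That $\mathfrak B_2$ is a $4$-ball follows as for $\mathfrak B$: the Hill region $\{V_2\le -1\}$ is a disk that is star-shaped about the point $(\varpi^2\beta,0)$, as one sees in the rescaled polar coordinates introduced below.

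\textbf{Step 2: computing ${\rm vol}(\mathfrak M_2)$.} By \eqref{formula_volume}, ${\rm vol}(\mathfrak M_2)=2\int_{\mathfrak B_2}$. The fibres in the momentum variables $(p_r,p_z)$ are disks of area $2\pi(-1-V_2)$, so
$$
{\rm vol}(\mathfrak M_2)=4\pi\int_{\{V_2<-1\}}(-1-V_2)\,dr\,dz .
$$
Substituting $z'=\sqrt{1+7\beta}\,z$ produces the factor $1/\sqrt{1+7\beta}$ and turns the potential into the Kepler-type potential $\varpi^2/(2r^2)-1/(\beta\rho)$, with $\rho=\sqrt{r^2+z'^2}$. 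In polar coordinates $r=\rho\sin\phi$, $z'=\rho\cos\phi$, the inner $\rho$-integral of the positive part of $-1-\varpi^2/(2\rho^2\sin^2\phi)+1/(\beta\rho)$ is elementary. It reduces to a one-variable integral in $\phi$ whose value should be $\pi^2(1-\sqrt2\varpi\beta)^2/(2\beta^2)$, up to the factor bookkeeping I still have to verify.

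A cleaner cross-check is to identify this value with $T_e^2/2$. The orbit $\zeta_e$ lies in $\{z=0\}$, where $V=V_2$, so it is also the Euler orbit of $H_2$. Its action equals the area of $\dot\Upsilon$, and from \eqref{equ: Euler} that area is
$$
T_e=2\pi\,\frac{1-\sqrt2\varpi\beta}{\sqrt2\beta}=2\pi\Big(\frac{1-\sqrt{1-\fe^2}}{\sqrt2\beta}\Big),
$$
using $\fe=\sqrt{1-2\varpi^2\beta^2}$. I expect the main obstacle to be getting the constants in the $\phi$-integral right so that it matches $T_e^2$ exactly. To avoid this, I would first try an alternative: after rescaling, the energy surface $\mathfrak M_2$ is a spatial Kepler-type system with a rotationally symmetric $S^1$-action. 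Its volume can then be written as (period of the action) $\times$ (area of the reduced disk), which should give $T_e^2$ directly.
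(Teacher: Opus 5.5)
Your Step 1 is correct and differs slightly from the paper. The paper computes $\partial_{z^2}V=(4+2/\alpha)(r^2+(1+2\alpha)z^2)^{-3/2}<(4+2/\alpha)(r^2+(1+7\beta)z^2)^{-3/2}=\partial_{z^2}V_2$ and integrates from $z=0$. Your Jensen argument, with barycenter $(1-\beta)(1+2\alpha)=1+7\beta$, gives the same strict inequality just as quickly.

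One correction: $V=V_2$ on the whole hyperplane $\{z=0\}$, whatever $p_z$ is. So $\mathfrak M\cap\mathfrak M_2$ is the two-sphere $\mathfrak M\cap\{z=0\}$, not just $\zeta_e$. Your conclusion is unaffected: the literal inclusion $\mathfrak B_2\subset\mathfrak B\setminus\mathfrak M$ is false, and what is proved (and all that is needed) is $\mathfrak B_2\subset\mathfrak B$ with $\mathfrak B\setminus\mathfrak B_2$ of nonempty interior. You do not need star-shapedness: the polar description $\rho_-(\phi)\le\rho\le\rho_+(\phi)$, $\phi\in[\phi_0,\pi-\phi_0]$, already shows the Hill region is a closed disk.

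The gap is Step 2. The identity ${\rm vol}(\mathfrak M_2)=T_e^2/\sqrt{1+7\beta}$ is the part of the lemma that feeds Theorem~\ref{thm: main2}, and you never establish it. The values you predict, $\pi^2(1-\sqrt2\varpi\beta)^2/(2\beta^2)$ and $T_e^2/2$, disagree with each other and with the target.

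Set $\tilde V:=\varpi^2/(2r^2)-1/(\beta\sqrt{r^2+z'^2})$, $c:=\sqrt2\varpi\beta$, $\sin\phi_0=c$ and $x:=\sqrt{1-c^2/\sin^2\phi}$. The inner integral $\int(-1-\tilde V)_+\,\rho\,d\rho$ equals $g(x)/(4\beta^2)$, where $g(x)=2x-(1-x^2)\ln\frac{1+x}{1-x}$. What must be shown is $\int_{\phi_0}^{\pi-\phi_0}g(x)\,d\phi=2\pi(1-c)^2$, i.e. $\iint(-1-\tilde V)_+\,dr\,dz'=\pi(1-c)^2/(2\beta^2)=T_e^2/(4\pi)$. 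This is true. Since $g'(x)=2x\ln\frac{1+x}{1-x}$, differentiating in $c$ and substituting $t=\cot\phi$ gives $\frac{d}{dc}\int g\,d\phi=-4\pi(1-c)$, and the integral is $0$ at $c=1$.

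Your $S^1$ alternative also works, but the relevant circle action is not a rotational symmetry; the rotation about the axis has already been reduced out. It is the flow of the reduced Kepler Hamiltonian itself.
\begin{itemize}
\item At energy $-1$ with $\mu=1/\beta$, every orbit is periodic with period $\pi/(\sqrt2\beta)$.
\item Each orbit has action $2\cdot\pi/(\sqrt2\beta)-2\pi\varpi=T_e$, by the virial theorem minus the azimuthal term $\varpi\cdot 2\pi$.
\item The disk $\{z'=0,\,p_z>0\}$ is a global section of area $T_e$, met once per period, so the volume is $T_e\cdot T_e$.
\end{itemize}

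The value of $T_e$ also needs the computation $2\sqrt2\int_{r_-}^{r_+}\sqrt{(r-r_-)(r_+-r)}\,dr/r$, which the paper carries out; it cannot be read off from \eqref{equ: Euler}. The paper itself only asserts the volume integral as a ``long but straightforward computation''. Either of your routes, once completed, would supply exactly what it omits.
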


\begin{proof}
It is immediate to check that $H_2$ has only one critical point, which coincides with the critical point of $H,$ a nondegenerate minimum. Since $7\beta = 7\alpha/(\alpha+4)<2\alpha,$ we have
$$
0<\partial_{z^2}V=\frac{4+2/\alpha}{\sqrt{(r^2+(1+2\alpha)z^2)^{3}}}<\partial_{z^2}V_2=\frac{4+2/\alpha}{\sqrt{(r^2+(1+7\beta)z^2)^{3}}},\quad \forall (r,z)\in \mathbb{R}^+\times(\mathbb{R}\setminus \{0\}).
$$
Since $V$ and $V_2$ coincide for $z=0$, the inequality above implies that $V(r,z)<V_2(r,z)$ for every $(r,z)\in \mathbb{R}^+\times (\mathbb{R}\setminus \{0\})$, and thus $\mathfrak{M}_2:=H_2^{-1}(-1)\subset \R^4$ is a sphere-like hypersurface bounding a $4$-ball $\mathfrak B_2 \subset \mathfrak B$. In particular, ${\rm vol}(\mathfrak M)>{\rm vol}(\mathfrak M_2).$

Both $\mathfrak{M},\mathfrak{M}_2$ project to the same disk $\Upsilon$ in the $(p_r,r)$-plane determined by
$$
\frac{p_r^2}{2} + \frac{\varpi^2}{2r^2} - \frac{1}{\beta r} \leq -1.
$$
The boundary $\partial \Upsilon$ coincides with the projection of $\zeta_e$ to the $(p_r,r)$-plane. Denote by $r_+>r_->0$ the two points given by the intersection of $\partial \Upsilon$ with the $r$-axis. They satisfy $r^2-\beta^{-1}r+\varpi^2/2=0$ and are given by $r_\pm = (1\pm \sqrt{1-2\varpi^2\beta^{2}})/(2\beta)=(1\pm \fe)/(2\beta).$

The Reeb period of $\zeta_e$ coincides with the symplectic area of $\Upsilon$. A straightforward computation gives
$$
\begin{aligned}
T_e & =\int_{\zeta_{e}}\lambda =\int_{\Upsilon}dp_r\wedge dr=
2\int_{r_{-}}^{r_{+}}\sqrt{-2-\frac{\varpi^2}{r^2}+\frac{2}{r\beta}}dr\\
& = 2\sqrt{2}\int_{r_-}^{r_+}
\frac{\sqrt{(r-r_-)(r_+-r)}}{r} dr
=\frac{1-\sqrt{2}\varpi\beta}{\sqrt{2}\beta}2\pi=2\pi\left( \frac{1-\sqrt{1-\mathfrak e^2}}{\sqrt{2}\beta} \right).
\end{aligned}
$$

To compute the contact volume of $\mathfrak M_2$, we first observe that the projection of $\mathfrak B_2$ to the $(p_r,p_z)$-plane is a disk of radius $\sqrt{-2-2V_2(r,z)}$. Moreover, the Hill region of $\mathfrak M_2$ is determined by $z\in [-z_{2,+}(r), z_{2,+}(r)], r\in [r_-,r_+]$, where
$$
z_{2,+}(r)=\frac{r}{\sqrt{1+7\beta}}\sqrt{\frac{4r^2}{\beta^2(\varpi^2+2r^2)}-1}.
$$
Hence,

$$\begin{aligned}
\mathrm{vol}(\mathfrak{M}_2)
&=\int_{\mathfrak{M}_2}\lambda\wedge d\lambda
=2\int_{\mathfrak B_2}dp_r\wedge dp_z\wedge dr\wedge dz\\
& = 8\pi \int_{r_-}^{r_+}\int_0^{z_{2,+}(r)}\left(-1-\frac{\varpi^2}{2r^2}+\frac{1}{\beta\sqrt{r^2+(1+7\beta)z^2}}\right) drdz\\
& = 2\pi^2\frac{(1-\sqrt{2} \beta \varpi)^2}{\beta^2\sqrt{1+7\beta}},
\end{aligned}
$$
where the last identity follows from a long but straightforward computation.
\end{proof}

Let us finish the proof of Theorem \ref{thm: main2}. Let $(\beta,\fe)\in (0,1) \times (0,1)$ satisfy $\beta^2 + \fe^2 <1$ and denote $\rho_{\beta,\fe}=\rho_e>2$ and $T_{\beta,\fe}=T_e$ the rotation number and the Reeb period of the Euler orbit $\zeta_e$. Theorem~\ref{thm: main1} gives us the crucial estimate $\rho_{\beta,\fe} > \rho_{\beta,0} = \sqrt{1+7\beta}+1$.
Hence $(\rho_{\beta,\fe}-1)^{-1}<1/\sqrt{1+7\beta}$. Using Lemma \ref{lem_volume2}, we obtain
$
{\rm vol}(\mathfrak{M}) > T_{\beta,\fe}^2(\rho_{\beta,\fe}-1)^{-1}$ as desired.

\subsection{Volume estimates}
We can further derive other estimates on the volume ${\rm vol}(\mathfrak M)$ depending on the parameters $(\beta,\fe)$.

\begin{prop}\label{prop_monotonicity of volume}
The following statement holds:
\begin{itemize}
\item[(i)] Fix $\varpi\in(1/\sqrt{2},+\infty)$ and let $(\beta(\vartheta),\mathfrak e(\vartheta)):=(\cos \vartheta/(\sqrt{2}\varpi),\sin \vartheta),\vartheta\in(0,\pi/2)$. The volume $\mathrm{vol}(\mathfrak M_{\beta(\vartheta),\mathfrak e(\vartheta)})$ strictly increases in $\vartheta\in(0,\pi/2)$. Moreover, $\mathrm{vol}(\mathfrak M_{\beta(\vartheta),\mathfrak e(\vartheta)})\rightarrow 0^+$ as $\vartheta\rightarrow 0^+$ and $\mathrm{vol}(\mathfrak M_{\beta(\vartheta),\mathfrak e(\vartheta)})\rightarrow +\infty$ as $\vartheta\rightarrow (\pi/2)^-$.
\item[(ii)] Fix $\mathfrak e\in(0,1)$. The volume $\mathrm{vol}(\mathfrak M_{\beta,\mathfrak e})\rightarrow +\infty$  as $\beta\rightarrow 0^+$.
\item[(iii)] Fix $\beta\in(0,1)$. The volume $\mathrm{vol}(\mathfrak M_{\beta,\mathfrak e})$ strictly increases in $\mathfrak e^2\in(0,1-\beta^2)$. Moreover, the limit of $\mathrm{vol}(\mathfrak M_{\beta,\mathfrak e})$ exists, as $\mathfrak e^2\rightarrow (1-\beta^2)^-$.
\end{itemize}
\end{prop}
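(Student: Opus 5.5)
The plan is to reduce the contact volume to a two–dimensional integral over the Hill region, using \eqref{formula_volume}. For each fixed $(r,z)$ with $V(r,z)<-1$, the fibre of $\mathfrak B$ in the $(p_r,p_z)$-plane is a disk of radius $\sqrt{-2-2V(r,z)}$, whose area is $2\pi(-1-V(r,z))$. Hence
$$
{\rm vol}(\mathfrak M)=4\pi\int_{\R_+\times\R}\big(-1-V(r,z)\big)_+\,dr\,dz .
$$
We then express $\alpha$ through $\beta$, using $4/\alpha=(1-\beta)/\beta$ and $1+2\alpha=(1+7\beta)/(1-\beta)$, and substitute $w=\sqrt{(1+7\beta)/(1-\beta)}\,z$. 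This gives
$$
{\rm vol}(\mathfrak M_{\beta,\fe})=4\pi\sqrt{\frac{1-\beta}{1+7\beta}}\int\Big(-1-\frac{\varpi^2}{2r^2}+\frac1r+\frac{1-\beta}{\beta\sqrt{r^2+w^2}}\Big)_+dr\,dw,
$$
with $\varpi^2=(1-\fe^2)/(2\beta^2)$. All three items will be read off from this formula by monotonicity in the parameters, together with Lemma \ref{lem_volume2}.

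For (i), $\varpi$ is fixed and $\beta(\vartheta)=\cos\vartheta/(\sqrt2\varpi)$ is strictly decreasing in $\vartheta$. As $\beta$ decreases, both the prefactor $\sqrt{(1-\beta)/(1+7\beta)}$ and the coefficient $(1-\beta)/\beta$ increase. So the integrand increases pointwise, strictly on the open set where it is positive, and the volume strictly increases in $\vartheta$.

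For the limit $\vartheta\to0^+$, we have $\fe\to0$. In the limit the level $-1$ equals the minimum value of $V$, attained at a single point, so the integrand tends to zero pointwise. For $\vartheta$ small the Hill regions stay in a fixed compact set and the integrands are uniformly bounded, so dominated convergence gives ${\rm vol}\to0^+$.

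For $\vartheta\to(\pi/2)^-$, and likewise for (ii), we use the lower bound ${\rm vol}(\mathfrak M)>T_e^2/\sqrt{1+7\beta}$ from Lemma \ref{lem_volume2}, with $T_e=2\pi(1-\sqrt{1-\fe^2})/(\sqrt2\beta)$. When $\beta\to0^+$ with $\fe\to1$ or with $\fe$ fixed, the numerator stays bounded away from zero, so $T_e\to+\infty$ and hence ${\rm vol}\to+\infty$.

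For (iii), $\beta$ is fixed and $\varpi^2=(1-\fe^2)/(2\beta^2)$ decreases strictly as $\fe^2$ increases. The only $\fe$-dependent term in the integrand is $-\varpi^2/(2r^2)$, which increases strictly. This yields strict monotonicity, and by monotone convergence the limit as $\fe^2\to(1-\beta^2)^-$ equals the integral computed at the critical parameter.

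The main obstacle is to show that this critical integral is finite, since the Hill region is then unbounded in $w$. At criticality, $\max_r(-1-\varpi^2/(2r^2)+1/r)=0$, attained nondegenerately at $r=\varpi^2$. Therefore, for $|w|$ large, the integrand is bounded by $C/|w|$. It is supported in the set where $c(r-\varpi^2)^2\le C/|w|$, an $r$-interval of length $O(|w|^{-1/2})$. The contribution from $|w|\ge R$ is thus $O(\int_R^\infty w^{-3/2}\,dw)<\infty$, while the part $|w|\le R$ lies in a compact set and is finite. I expect the step that needs care to be making this uniform estimate precise, namely the quadratic lower bound on the $r$-profile near $r=\varpi^2$ together with the uniform bound $r\ge r_0>0$ on the support.
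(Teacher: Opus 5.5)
Your proof is correct. For the monotonicity claims and for the limits $\vartheta\to0^+$, $\vartheta\to(\pi/2)^-$ and $\beta\to0^+$, it is essentially the paper's argument.
\begin{itemize}
\item The paper writes the volume as $8\pi\int_{r_-}^{r_+}\int_0^{z_+}(-1-V)\,dz\,dr$, which is your $4\pi\int(-1-V)_+$.
\item It reads off monotonicity from the pointwise behaviour of the integrand and the nesting of Hill regions.
\item For $\vartheta\to0^+$ it lets $\mathfrak M$ collapse onto the nondegenerate minimum.
\item For the divergent limits it uses ${\rm vol}(\mathfrak M)>T_e^2/\sqrt{1+7\beta}$.
\end{itemize}
Your treatment of (ii) via $T_e\to+\infty$ is clearer than the paper's text. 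The paper refers (ii) to the finiteness proposition, which really concerns (iii).

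The genuine difference is the finiteness of the limit in (iii).
\begin{itemize}
\item \textbf{The paper's route.} Proposition \ref{prop: boundness of vol} switches to the elliptic polar coordinates $(r,z)=(\rho\cos\varphi,\rho\sin\varphi/\sqrt{1+2\alpha})$ and integrates in $\rho$ in closed form. It then bounds the remaining one-dimensional integral by an arctan expression. This yields an explicit upper bound $8\sqrt2\pi^2(1-\beta)^2/(\beta^2\sqrt{1+7\beta})$ for the limiting volume.
\item \textbf{Your route.} You use monotone convergence to identify the limit with the integral at the critical value $\varpi^2=1/2$. You then prove that integral finite by a soft estimate. In the escape channel towards $z=\pm\infty$ the integrand has depth $O(|w|^{-1})$ and the channel has width $O(|w|^{-1/2})$.
\end{itemize}
You lose the explicit constant. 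In return you get a much shorter and more robust argument, and it shows why the limit is finite.

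The step you flag as delicate is in fact immediate. At $\varpi^2=1/2$ the $w$-independent part of the integrand is exactly $-1+\frac1r-\frac1{4r^2}=-\bigl(1-\frac1{2r}\bigr)^2$. So the set where it is $\ge-\delta$ is the interval $\frac{1}{2(1+\sqrt\delta)}\le r\le\frac{1}{2(1-\sqrt\delta)}$. This interval has length $\sqrt\delta/(1-\delta)$ and lies in $\{r\ge 1/4\}$ for $0<\delta<1$.
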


\begin{proof}
Recall that $\mathfrak e=\sqrt{1-2\varpi^2\beta^2}$ and
$$
r_\pm=r_{\pm}(\varpi,\beta)=\frac{1\pm \sqrt{1-2\varpi^2\beta^2}}{2\beta},\quad
z_+=z_+(r,\beta,\varpi)=\frac{r\sqrt{1-\beta}}{\sqrt{1+7\beta}}\sqrt{\frac{4(1-\beta)^2r^2}{\beta^2(\varpi^2-2r+2r^2)}-1},
$$
where $z_+$ solves $V(r,z_{+})=-1$ for every $r\in[r_-,r_+]$.

Fix $\beta\in(0,1)$, we see that $r_+,z_+$ decrease in $2\varpi^2\in(1,\beta^{-2})$ and $r_-$ increases in $2\varpi^2\in(1,\beta^{-2})$. Let $(\beta,\mathfrak e)=(\cos \vartheta/(\sqrt{2}\varpi),\sin \vartheta)$. Fix $2\varpi^2\in(1,+\infty)$, we see that $r_+,z_+$ decrease in $\beta=\cos \vartheta/(\sqrt{2}\varpi)\in(0,1/(\sqrt{2}\varpi))$ and then increase in $\vartheta\in(0,\pi/2)$. Since $\partial_\beta r_-=(1-\sqrt{1-2\varpi^2\beta^2})/(2\beta\sqrt{1-2\varpi^2\beta^2})>0$, then $r_-$ increases in $\beta\in(0,1/(\sqrt{2}\varpi))$ and decrease in $\vartheta\in(0,\pi/2)$. Hence, the Hill region $\mathcal H=\mathcal H_{\beta,\mathfrak e}:=V^{-1}((-\infty,-1])$ satisfies $\mathcal H_{\beta,\mathfrak e}\subset \mathcal H_{\beta,\mathfrak e'}$ for every $\mathfrak e<\mathfrak e'\in (0,1)$, and
$\mathcal H_{\beta(\vartheta),\mathfrak e(\vartheta)}\subset \mathcal H_{\beta(\vartheta'),\mathfrak e(\vartheta')}$ for every $\vartheta<\vartheta'\in(0,\pi/2)$.
Now we compute
\begin{equation}\label{equ: vol of M}
\begin{aligned}
\mathrm{vol}(\mathfrak{M}_{\beta,\mathfrak e})&=\int_{\mathfrak{M}} \lambda\wedge d\lambda=2\int_{\mathfrak B} dp_r\wedge dp_z\wedge dr\wedge dz\\
&=8\pi\int_{r_-}^{r_+}\int_0^{z_+} -1-\frac{\varpi^2}{2r^2}+\frac{1}{r}+\frac{4\alpha^{-1}}{\sqrt{r^2+(1+2\alpha)z^2}}drdz.
\end{aligned}
\end{equation}
Fix $\varpi\in(1/\sqrt{2},+\infty)$. We see that $\mathrm{vol}(\mathfrak{M}_{\beta,\mathfrak e})$ strictly decreases in $\beta\in(0,1/(\sqrt{2}\varpi))$ and then strictly increases in $\vartheta\in(0,\pi/2)$. In particular, as $\vartheta\rightarrow 0^+$, the volume $\mathrm{vol}(\mathfrak{M}_{\beta,\mathfrak e})\rightarrow 0^+$, since $\mathfrak M$ converges to the nondegenerate global minimal point $(0,0,\varpi^2\beta,0)$ with minimal value $-1/(2\varpi^2\beta^2)$. As $\vartheta\rightarrow (\pi/2)^+$, we see from Theorem \ref{thm: main2} that
$$
\mathrm{vol}(\mathfrak{M}_{\beta,\mathfrak e})>2\pi^2\frac{(1-\sqrt{2}\varpi \beta)^2}{\beta^2\sqrt{1+7\beta}}\rightarrow +\infty.
$$
Hence, item (i) holds. Similarly,  (iii) follows.

Fix $\beta\in(0,1)$, we see that $\mathrm{vol}(\mathfrak{M}_{\beta,\mathfrak e})$ strictly decreases in $2\varpi^2\in(1,\beta^{-2})$ and then strictly increases in $\mathfrak e\in(0,\sqrt{1-\beta^2})$.
Finally, (ii) follows from Proposition \ref{prop: boundness of vol} below. The proof is complete.
\end{proof}

\begin{prop}\label{prop: boundness of vol}
Fix $0<\beta<1$. As $\mathfrak{e}^2\rightarrow (1-\beta^2)^-$, the limit of volume $\mathrm{vol}(\mathfrak{M}_{\beta,\mathfrak e})$ is finite.
\end{prop}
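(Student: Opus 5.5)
Fix $\beta\in(0,1)$. We first rewrite the volume as an integral over the Hill region, using the formula \eqref{equ: vol of M}. Integrating out the momenta (the fibre over $(r,z)$ is a disk of area $2\pi(-1-V(r,z))$) gives
$$
\mathrm{vol}(\mathfrak M_{\beta,\fe}) = 4\pi\int_{\mathcal H_{\beta,\fe}} \bigl(-1-V(r,z)\bigr)\,dr\,dz ,
$$
where $\mathcal H_{\beta,\fe}=V^{-1}((-\infty,-1])$. By Proposition \ref{prop_monotonicity of volume}-(iii) the volume is increasing in $\fe^2$, so the limit exists in $(0,+\infty]$. The task is therefore to show that the integral stays bounded as $\fe^2\to(1-\beta^2)^-$. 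At criticality the region becomes unbounded in $z$, with $r\to\varpi^2$ at infinity.

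To get a bound uniform in $\fe$, we compare with the critical level. Write $\varpi_c$ for the value of $\varpi$ at criticality, so that $2\varpi_c^2\beta^2=\beta^2$, i.e.\ $\varpi_c^2=1/2$. The Hill regions are nested (shown in the proof of Proposition \ref{prop_monotonicity of volume}), and the integrand $-1-V$ decreases as $\varpi$ increases. Hence every $\mathrm{vol}(\mathfrak M_{\beta,\fe})$ is bounded by the same integral computed with $\varpi=\varpi_c$ over the critical Hill region $\mathcal H_c=\{V_c\le -1\}$, where $V_c=\frac{1}{4r^2}-\frac1r-\frac{4\alpha^{-1}}{\sqrt{r^2+(1+2\alpha)z^2}}$. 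Monotone convergence then shows that the limit equals this critical integral. It thus suffices to prove
$$
\int_{\mathcal H_c}\bigl(-1-V_c(r,z)\bigr)\,dr\,dz<\infty .
$$

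\textbf{Compact part.} The set $\mathcal H_c\cap\{|z|\le Z\}$ is compact and stays away from $r=0$, since $\varpi^2/(2r^2)$ dominates there. The integrand is bounded on it, so this part is finite.

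\textbf{Tail, the main obstacle.} Set $W(r)=\frac{1}{4r^2}-\frac1r$. This function attains its minimum $-1$ exactly at $r=\varpi_c^2=1/2$, and $W(r)+1=\frac{(1-2r)^2}{4r^2}\ge c(r-\tfrac12)^2$ near $r=1/2$. For large $|z|$ the perturbation term is
$$
\epsilon(z)=\frac{4\alpha^{-1}}{\sqrt{r^2+(1+2\alpha)z^2}}\asymp \frac{1}{|z|},
$$
uniformly for $r$ in a compact interval. On the tail, the constraint $V_c\le-1$ reads $W(r)+1\le\epsilon$. This forces $|r-1/2|\lesssim \epsilon^{1/2}\asymp |z|^{-1/2}$, and on that slice the integrand satisfies $-1-V_c=\epsilon-(W+1)\le\epsilon\lesssim |z|^{-1}$. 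The $z$-slice therefore contributes at most
$$
O\bigl(|z|^{-1/2}\bigr)\cdot O\bigl(|z|^{-1}\bigr)=O\bigl(|z|^{-3/2}\bigr),
$$
which is integrable on $|z|\ge Z$. Combined with the compact part, the critical integral is finite, and hence the limit of $\mathrm{vol}(\mathfrak M_{\beta,\fe})$ is finite.

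The delicate point is making the comparison and the asymptotics uniform. Concretely, we must check that the width bound $|r-1/2|\lesssim|z|^{-1/2}$ holds with constants independent of $z$ for $|z|\ge Z$. We must also check that no other component of $\{W+1\le\epsilon\}$ appears near $r\to\infty$; this is excluded because $W+1\to1$ there while $\epsilon\to0$.
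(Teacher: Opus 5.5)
Your argument is correct, and it takes a genuinely different route from the paper's.

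\textbf{The paper's route.} The paper passes to elliptic polar coordinates $(r,z)=(\rho\cos\varphi,\rho\sin\varphi/\sqrt{1+2\alpha})$ and integrates out $\rho$ exactly, which produces a logarithmic term. It then changes variables twice, $s$ and then $v,x$, and applies the elementary inequality $2x-(1-x^2)\ln\frac{1+x}{1-x}\le 2x^3$. This gives an explicit bound in $w=2\varpi^2$ that tends to $8\sqrt2\pi^2(1-\beta)^2/(\beta^2\sqrt{1+7\beta})$ as $w\to1^+$.

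\textbf{Your route.} You dominate every subcritical volume by the critical integral $4\pi\int_{\mathcal H_c}(-1-V_c)\,dr\,dz$, using that $(-1-V)_+$ increases pointwise as $\varpi\downarrow\varpi_c$. You then prove that integral is finite from the neck asymptotics: the slice has width $\lesssim |z|^{-1/2}$ and the integrand is $\lesssim|z|^{-1}$.

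\textbf{Comparison.} Your approach needs no explicit integration. Via monotone convergence it identifies the limit exactly as the volume of the non-compact critical energy surface, and it explains why that volume is finite. The paper's approach only bounds the limit from above, but it buys an explicit closed-form value for that bound.

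\textbf{The uniformity you flag is immediate.} Since $4\alpha^{-1}/\sqrt{r^2+(1+2\alpha)z^2}\le 4\alpha^{-1}/r$, the whole critical Hill region lies in $r\in[r_-,r_+]$ with $r_\pm=(1\pm\sqrt{1-\beta^2})/(2\beta)$. The bound $\epsilon\le 4\alpha^{-1}/(\sqrt{1+2\alpha}\,|z|)$ holds for every $r>0$. Writing $W+1=(1/(2r)-1)^2$ makes the slice explicit, $r\in[\frac{1}{2(1+\sqrt\epsilon)},\frac{1}{2(1-\sqrt\epsilon)}]$, of length $\sqrt\epsilon/(1-\epsilon)$. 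This also disposes of any spurious component at large $r$.

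\textbf{A remark on citing Proposition~\ref{prop_monotonicity of volume}-(iii).} You only need its monotonicity part, and your pointwise comparison of integrands already reproves it. It is worth saying this explicitly. The paper's proof of that proposition refers to the present statement for finiteness of the limit, so leaning on the full item (iii) would look circular.
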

\begin{proof}
Consider the polar coordinates
$(r,z)=(\rho\cos\varphi,\rho\sin\varphi/\sqrt{1+2\alpha})$. Let $\varphi^\pm$ denote the maximal and minimal value of $\varphi$, and let $\rho^\pm(\varphi)$ denote the maximal and minimal value of $\rho$ for every $\varphi\in[\varphi^-,\varphi^+]$. We compute that
$$\rho^{\pm}(\varphi)= \frac{1\pm\sqrt{1-2\varpi^2(1+4\alpha^{-1}\cos\varphi)^{-2}}}{2((\cos\varphi)^{-1}+4\alpha^{-1})^{-1}},\quad \varphi^{\pm}=\pm\arccos\frac{\sqrt{2\varpi^2}-1}{4\alpha^{-1}}.
$$
Using \eqref{equ: vol of M} and the variables
$$a=4\alpha^{-1},\quad w=2\varpi^2\in (1,\beta^{-2}),\quad s=((\cos\varphi)^{-1}+a)^{-1}\in(s_w:=\frac{\sqrt{w}-1}{a\sqrt{w}},\beta),$$
we compute
$$\begin{aligned}
\mathrm{vol}(\mathfrak{M}_{\beta,\mathfrak e})
&=\frac{4\pi}{\sqrt{1+2\alpha}}\int_{\varphi^-}^{\varphi^+}\int_{\rho^-(\varphi)}^{\rho^+(\varphi)}-\rho-\frac{\varpi^2}{2\rho\cos^2\varphi}+\frac{1}{\cos\varphi}+4\alpha^{-1}d\rho d\varphi\\
&=\frac{4\pi}{\sqrt{1+2\alpha}}\int_{\varphi^-}^{\varphi^+}\bigg(\frac{\sqrt{1-2\varpi^2(1+4\alpha^{-1}\cos\varphi)^{-2}}}{2((\cos\varphi)^{-1}+4\alpha^{-1})^{-2}}\\
&\qquad\qquad\qquad\qquad -\frac{\varpi^2}{2\cos^2\varphi}\ln\frac{1+\sqrt{1-2\varpi^2(1+4\alpha^{-1}\cos\varphi)^{-2}}}{1-\sqrt{1-2\varpi^2 (1+4\alpha^{-1}\cos\varphi)^{-2}}}\bigg)d\varphi\\
&=\frac{4\pi}{\sqrt{1+2\alpha}}\int_{s_w}^{\beta}\frac{2\sqrt{1-w(1-as)^2}-w(1-as)^2\ln\big(\frac{1+\sqrt{1-w(1-as)^2}}{1-\sqrt{1-w(1-as)^2}}\big)}{2s^2(1-as)\sqrt{(1-as)^2-s^2}}ds.
\end{aligned}$$
Let $v=\sqrt{w}(1-as),x=\sqrt{1-v^2}\in (0,\mathfrak e)$. Since $2x-(1-x^2)\ln(\frac{1+x}{1-x})\leq 2x^3$ on $x\in[0,1]$, we have
$$\begin{aligned}
\mathrm{vol}(\mathfrak{M}_{\beta,\mathfrak e})
&\leq \frac{4\pi w^{3/2} a}{\sqrt{1+2\alpha}}\int^{1}_{\sqrt{w}\beta}\frac{\sqrt{(1-v^2)^3}}{(\sqrt{w}-v)^2v\sqrt{v^2-(\sqrt{w}-v)^2/a^2}}dv\\
&\leq \frac{4\pi w^{3/2}(1-\beta)^2}{\beta\sqrt{1+7\beta}} \int^{1}_{\sqrt{w}\beta}\frac{\sqrt{8/(\sqrt{w}\beta)}}{v\sqrt{\sqrt{w}-v}\sqrt{v-\sqrt{w}\beta}}dv\\
&= \frac{8\sqrt{8}\pi w^{3/4}(1-\beta)^2}{\beta^2\sqrt{1+7\beta}} \arctan\sqrt{\frac{1-\sqrt{w}\beta}{\beta(\sqrt{w}-1)}}\rightarrow \frac{8\sqrt{2}\pi^2(1-\beta)^2}{\beta^2\sqrt{1+7\beta}},\quad \mathrm{as}\quad w\rightarrow 1^+.
\end{aligned}$$
Therefore, the volume $\mathrm{vol}(\mathfrak M_{\beta,\mathfrak e})$ is finite as $\mathfrak e^2\rightarrow (1-\beta^2)^-$. The proof is now complete.
\end{proof}

\section{Proof of Theorem \ref{thm: main4}}\label{sec: interval}

Consider the family of symplectic diffeomorphisms $\psi_s:(\Upsilon, dp_r \wedge dr) \to (\Upsilon, dp_r \wedge dr), s\in \R, \psi_0 = {\rm Id},$ representing the flow from $\Sigma_0$ to $\Sigma_s$, where $\Upsilon\subset \R^2$ is a smooth compact disk given by the closure of the projection $\dot \Upsilon$ of each $\Sigma_s$ to the $(p_r,r)$-plane, see \eqref{equ: Upsilon}. As mentioned before, the linearized dynamics near the Euler orbit $\zeta_e$ implies that $\psi_s$ is continuous along $\partial \Upsilon$, \cite{HLOSY2023}. Then following unexpected proposition states that $\psi_{1/2}$ is a symplectic diffeomorphism on the closed disk $\Upsilon$.

\begin{prop}\label{prop: smooth_map} The first return map $\psi_1=\psi_1(p_r,r):(\Upsilon,dp_r\wedge dr) \to (\Upsilon, dp_r\wedge dr)$ is a smooth area-preserving diffeomorphism.
\end{prop}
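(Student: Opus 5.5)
The plan is to realize the pages $\Sigma_s$ and the maps $\psi_s$ as restrictions of a flow on a smooth $3$-manifold with boundary. Concretely, I would blow up the binding $\zeta_e$ in the $(p_z,z)$-directions by introducing polar coordinates
\[
p_z+iz=\varrho\, e^{2\pi i s},\qquad \varrho\ge 0,\quad s\in \R/\Z .
\]
Since $\mathfrak M$ is a graph over the $(p_r,r,p_z,z)$ variables subject to the energy constraint $\frac{p_r^2+p_z^2}{2}+V(r,z)=-1$, I would use $(p_r,r,s)$ as coordinates. The key observation is that on each page $\Sigma_s$ the energy equation determines $\varrho^2$ as a smooth function $F(p_r,r,s)$ of $(p_r,r)$ and $s$: writing $z=\varrho\sin 2\pi s$ and using that $V$ is even and real-analytic in $z$ near $z=0$, the equation becomes
\[
\varrho^2\big(\cos^2 2\pi s + g(r,\varrho\sin 2\pi s)\sin^2 2\pi s\big) + O(\varrho^4)= -2-p_r^2-2V(r,0),
\]
where $g$ is, up to a factor, the analytic coefficient appearing in $\partial_z V = g(r,z)z$ (an even function of $z$). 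The right-hand side is the defining function of $\Upsilon$; it is positive on $\dot\Upsilon$ and vanishes transversally on $\partial\Upsilon$. By the implicit function theorem applied to the variable $u=\varrho^2$ (smooth, since all terms are even in $\varrho$), $u$ is a smooth function of $(p_r,r,s)$ on $\Upsilon\times\R/\Z$, vanishing exactly on $\partial\Upsilon$.

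Next I rewrite the Hamiltonian flow in the coordinates $(p_r,r,s)$, reparametrized by $s$. From $\dot\theta=\cos^2\theta+g\sin^2\theta>0$, the $s$-equation has a positive smooth speed. The $(p_r,r)$-equations involve $z$ only through $\partial_r V(r,z)$, which is even in $z$; hence they depend on $z^2=u\sin^2 2\pi s$, a smooth function of $(p_r,r,s)$. Dividing by $\dot s$ gives a smooth, $s$-periodic, non-autonomous vector field $X_s(p_r,r)$ on the closed disk $\Upsilon$. At $u=0$ this field is tangent to $\partial\Upsilon$, since it restricts to the planar Kepler-type motion of $\zeta_e$. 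It follows that $\psi_s$ is the time-$s$ flow of $X_s$ on the compact manifold with boundary $\Upsilon$, so $\psi_1$ is a smooth diffeomorphism mapping $\partial\Upsilon$ to itself. This agrees with the continuous extension already known from \cite{HLOSY2023}.

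For area preservation, on the interior the map $\psi_{0,1}$ is a return map of a Reeb flow between pages on which $d\lambda$ restricts to $dp_r\wedge dr$, and it therefore preserves $dp_r\wedge dr$. One can also check this directly: $\dot\Upsilon\ni(p_r,r)\mapsto\Sigma_s$ is a section, and Stokes on the flow tube shows that the form $\tilde\omega_0$ restricted is invariant. By continuity, $\psi_1^*(dp_r\wedge dr)=dp_r\wedge dr$ then holds on all of $\Upsilon$.

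The main obstacle is the first step: the smoothness of $u=\varrho^2$ and of the reparametrized field up to the boundary. The polar blow-up is singular at $\varrho=0$, so one must verify that every $z$-dependence enters through even powers. This evenness is what allows the argument to use $u$ rather than $\varrho$ and to avoid square-root singularities at $\partial\Upsilon$. I would check this explicitly using the analytic expansion of $(r^2+(1+2\alpha)z^2)^{-1/2}$ in $z^2$.
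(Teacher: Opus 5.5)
Your proposal is correct, but it reaches the statement by a different route than the paper.

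\textbf{The paper's route.} The paper works on the smooth sphere $\mathcal S=\mathfrak M\cap\{z=0\}$, which is the closure of $\Sigma_0\cup\Sigma_{1/2}$. In signed polar coordinates $(\rho,\theta,\vartheta)$ around $\zeta_e$ the equations of motion are smooth up to $\rho=0$. This makes the half-return map $\bar\psi$ smooth in the natural coordinates $(p_z,\vartheta)$ of $\mathcal S$. Since $(p_r,r)$ is only a square-root-type chart at $\partial\Upsilon$, the paper then argues as follows. The reflection $N$ shows that $\tilde p_z^2$ and $\tilde\vartheta$ are even in $p_z$. Whitney's theorem writes them as smooth functions of $p_z^2$. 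The identity $p_z^2+\ell^2=1/(\varpi\beta)^2-2$ on $\mathcal S$ then gives smoothness of $\bar g$ in $(\ell,\vartheta)$, hence of $\psi_1=\bar g^2$. General $s$ needs the separate argument of Remark~\ref{rem: smooth of psi_s}.

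\textbf{Your route.} You use the same evenness of $V$ in $z$, but at the level of the vector field rather than the map. The normal variables enter only through $p_z^2$ and $z^2$, i.e.\ through $u=\varrho^2$ and $s$. So in the chart $(p_r,r)$ the reparametrized system is a smooth, $1$-periodic, non-autonomous field on the closed disk, tangent to $\partial\Upsilon$.

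\textbf{What each buys.} Your approach gives the whole isotopy $\psi_s$ at once: jointly smooth in $(s,p_r,r)$, with $\psi_{s+1}=\psi_s\circ\psi_1$. It needs neither Whitney's theorem nor the half-return map. The paper's route additionally yields smoothness of $\bar\psi$ on $\mathcal S$ in its own smooth structure, together with the factorization $\psi_1=\bar g^2$.

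\textbf{One point to make explicit.} The implicit function theorem is local. For $u$ to be a globally defined smooth function on $\Upsilon\times\R/\Z$ you need two facts.
First, the $u$-derivative of the energy is $\tfrac12\big(\cos^2 2\pi s+g(r,z)\sin^2 2\pi s\big)>0$, the same expression as $\dot\theta$.
Second, a solution $u\ge 0$ exists for every $(p_r,r,s)$; this is the bijectivity of the projection $\Sigma_s\to\dot\Upsilon$.
On the rays $s=\pm1/4$, existence is exactly where $2\varpi^2>1$, i.e.\ $\beta^2+\fe^2<1$, is used.
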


\begin{proof} Recall that $\Sigma_s,s\in \mathbb R/\mathbb Z$, are open disks in $\mathfrak M$ with common boundary $\zeta_e$ forming an open book, and the projection of each $\Sigma_s$ to the $(p_z,z)$-plane is a line-segment through the origin with argument $2\pi s$. Denote by $\varphi^R_t:\mathfrak M\rightarrow \mathfrak M,t\in \R$, the Reeb flow on $\mathfrak M$. Let $\psi_{s,t}:=\varphi^R_{\tau_{s,t}}:\Sigma_s\rightarrow \Sigma_{t}$ be the first hitting map along the flow, where $\tau_{s,t}$ denotes the first hitting time from $\Sigma_s$ to $\Sigma_t$ for every $0\leq s<t\leq 1$. Denote by
$$
\mathcal S:=\mathfrak M\cap\{z=0\}=\{(p_r,p_z,r,0): (p_r^2+p_z^2)/2+V(r,0)=-1\},
$$
which is a smooth two-sphere in $\mathfrak M$ given by the closure of $\Sigma_0\cup\Sigma_{1/2}=\mathcal S\setminus \zeta_e$. Let $\bar \psi=\varphi^R_{\bar \tau}:\mathcal S\setminus\zeta_e \rightarrow \mathcal S\setminus\zeta_e$ be the first return map (interchanging $\Sigma_0$ and $\Sigma_{1/2}$) with first return time $\bar \tau:\mathcal S \setminus \zeta_e \to \R_{>0}$. Due to the $z$-symmetry of the system, we have
\begin{equation}\label{symmetries}
\bar \psi \circ N = N \circ \bar \psi \qquad \mbox{ and } \qquad \bar \tau = \bar \tau \circ N,
\end{equation}
where $N$ is the involution $(p_r,p_z,r,z) \mapsto (p_r,-p_z,r,-z)$. Notice that $\bar \psi^2$ is the first return map to each $\Sigma_0$ and $\Sigma_{1/2}$.

The following proposition shows that $\bar \psi$ smoothly extends to $\mathcal S$.

\begin{prop}\label{prop: smoothness 1}
The maps $\bar\psi:\mathcal S \setminus \zeta_e \to \mathcal S \setminus \zeta_e$ and $\bar \tau:\mathcal S \setminus \zeta_e \to \R_{>0}$ smoothly extend to $\mathcal S$. In particular, the identities in \eqref{symmetries} hold on $\mathcal S$.
More precisely, there exist smooth coordinates $(\rho=p_z,\vartheta)\in [-\epsilon,\epsilon] \times \R / 2\pi \Z, \epsilon>0$ sufficiently small, defined on an annular neighborhood of $\zeta_e\subset \mathcal S,$ with $\zeta_e \equiv \{\rho =0\}$ such that in these coordinates $\bar \psi$ and $\bar \tau$ are smooth and \eqref{symmetries} takes the form
\begin{equation}\label{equ: symmetries 2}
\bar \psi (-\rho,\vartheta)=N\circ \bar \psi(\rho,\vartheta) \quad \mbox{ and } \quad \bar \tau(-\rho,\vartheta) =
\bar \tau (\rho,\vartheta)
\end{equation}
for every $(\rho,\vartheta)\in[-\epsilon,\epsilon] \times \R / 2\pi \Z.$
\end{prop}

\begin{proof}
Since $\ddot z = -\partial_zV =-g(r,z)z,$ with smooth function $g(r,z)>\delta>0$, we know that $\bar \tau$ and $\bar \psi$  continuously extend to $\mathcal S$, see \cite[Section 7]{HLOSY2023} for a discussion.

We aim to show that $\bar \tau$ and $\bar \psi$ are smooth on $\mathcal S$. Consider polar coordinates $$
(p_z,z):=(\rho\cos \theta,\rho\sin \theta) \quad \mbox{ and } \quad (p_r,u) = (\ell \cos \vartheta, \ell \sin \vartheta),$$
with $u=u(r)=-\varpi/r+1/(\varpi \beta)$ (or $r=\varpi^2\beta/(1-\varpi\beta u)$). Then $\mathcal S=\{p_r^2+u^2+p_z^2=-2+1/(\varpi\beta)^2>0\}$ and $(p_z,z,\vartheta)\in[-\epsilon_0,\epsilon_0]\times [-\epsilon_0,\epsilon_0]\times \R / 2\pi \Z, \epsilon_0>0$ small, are smooth coordinates on a tubular neighborhood of $\zeta_e\subset \mathfrak M$. We also consider the two-fold coordinates $(\rho,\theta,\vartheta)\in [-\epsilon_0,\epsilon_0] \times \R / 2\pi \Z \times \R / 2\pi \Z),\,\rho\neq 0,\, \epsilon>0$ small, on this neighborhood removing $\zeta_e$. Fix $\theta=0$, the values $\rho>0$ correspond to a neighborhood of $\zeta_e$ in $\Sigma_{0}$ and the values $\rho<0$ correspond to a neighborhood of $\zeta_e$ in $\Sigma_{1/2}$, which smoothly extend to $\zeta_e$ as $\rho\rightarrow 0^\pm$. Thus $(\rho=p_z,\vartheta)\in[-\epsilon_0, \epsilon_0] \times \R / 2\pi \Z$ are smooth coordinates on a neighborhood of $\zeta_e\subset \mathcal S$.

The equations of motion \eqref{equ: Ham system} in these coordinates become
\begin{equation}\label{equ: theta and rho}
\begin{aligned}
\dot \rho & =\frac{p_z\dot p_z + z \dot z}{\rho}= \rho \left(1- \frac{4\alpha^{-1}(1+2\alpha)}{\sqrt{(r^2+(1+2\alpha)\rho^2\sin^2\theta)^{3}}}\right)\cos \theta \sin \theta,\\
\dot \theta & =\frac{p_z\dot z-z\dot p_z}{\rho^2}=\cos^2\theta+\frac{4\alpha^{-1}(1+2\alpha)}{\sqrt{(r^2+(1+2\alpha)\rho^2\sin^2\theta)^{3}}}\sin^2\theta>\min \{\delta,1\}>0,\\
\dot \vartheta & = \frac{p_r^2\dot u-u \dot p_r}{\ell^2}=\frac{\varpi}{r^2}\cos^2\vartheta+\frac{1}{\ell}\left(-\frac{\varpi^2}{r^3} +\frac{1}{r^2} +\frac{4\alpha^{-1}r}{\sqrt{(r^2+(1+2\alpha)\rho^2\cos^2\theta)^{3}}} \right) \sin \vartheta,
\end{aligned}
\end{equation}
where $r=r(u)=r(\ell\sin \vartheta)=\varpi^2\beta/(1-\varpi\beta \ell \sin \vartheta)$ is a smooth function of $\ell \sin \vartheta$, and
$\ell=\sqrt{p_r^2+u^2}>0$
is a smooth function of $(\rho,\theta, \vartheta)$ near $\rho=0$, due to implicit function theorem and
$$
\partial_\ell[H(\ell\cos\vartheta,p_z,r(\ell\sin\vartheta),z)]=l+\frac{4}{\alpha}\left(\frac{r}{\sqrt{(r^2+(1+2\alpha)z^2)^{3}}}-\frac{1}{r^2}\right)\partial_\ell r>0,\quad \mathrm{near}\ \rho =0.
$$
Hence, \eqref{equ: theta and rho} are smooth differential equations in $(\rho,\theta,\vartheta)$ near $\rho=0$ (including $\rho=0$), so that $\dot \rho$ is odd with respect to $\rho$, and the remaining equations are even with respect to $\rho$. In particular, $\theta(t)$ smoothly depends on the initial data of $(\rho,\theta,\vartheta)$. Since $\dot \theta$ is everywhere positive, we conclude from the implicit function theorem that the hitting time from $\theta=0$ to $\theta= \pi$ (or, equivalently, from $\theta=\pi$ to $\theta= 2\pi$) is a smooth function $\bar \tau = \bar \tau(\rho,\vartheta)$ defined for every $|\rho|\leq \epsilon< \epsilon_0$, for $\epsilon_0>0$ sufficiently small and $\vartheta\in \R / 2\pi \Z$. Since $\rho=p_z$ on a neighborhood of $\zeta_e \subset \mathcal S$ and we can rewrite $\bar \tau = \bar\tau(\rho,\vartheta)= \bar\tau(p_z,\vartheta)$ on $\mathcal S$. We conclude that $\bar \tau$ and $\bar \psi$ are both smooth on $\mathcal S$ in coordinates $(p_z,\vartheta)$.
\end{proof}

In smooth coordinates $(p_z,\vartheta)\in [-\epsilon, \epsilon] \times \R/ 2\pi \Z$ given in Proposition \ref{prop: smoothness 1}, the area form $\omega_0|_{\mathcal S}$, which is preserved by the map $\bar \psi=\bar \psi(p_z, \vartheta)$, vanishes at $p_z=0$. In non-smooth coordinates $(p_r,r)\in \Upsilon$, however, the area form $\omega_0|_{\mathcal S}$ is constant given by $dp_r\wedge dr$. We thus want to show that, in coordinates $(p_r,r)$, $\bar \psi$ is a smooth area-preserving diffeomorphism of $\Upsilon$.

We write $\bar\psi(p_z,\vartheta)=(\tilde p_z(p_z,\vartheta),\tilde \vartheta(p_z,\vartheta))$, defined for $(p_z,\vartheta)\in [-\epsilon, \epsilon] \times \R / 2\pi \Z.$ Due to the symmetries \eqref{equ: symmetries 2}, $\tilde p_z^2$ and $\tilde \vartheta$ are even functions of $p_z$. By Whitney's Theorem in \cite{Whitney1942} (or Schwarz \cite{Schwarz1975}), there exist smooth (even analytic) functions $f_0=f_0(t,\vartheta)$ and $f_1=f_1(t,\vartheta)$, defined in $[0,\epsilon) \times \R / 2\pi \Z$ for some $\epsilon>0$ small, such that $\tilde p_z^2(p_z,\vartheta) = f_0(p_z^2,\vartheta)$ and $\tilde \vartheta(p_z,\vartheta) = f_1(p_z^2,\vartheta)$.

Denote by $\pi_{s}:\overline{\Sigma}_s\rightarrow \Upsilon,s\in \R / \Z,$ the homeomorphism given by the projection $\overline{\Sigma}_s \ni (p_r,p_z,r,z)\mapsto (p_r,r)\in \Upsilon$.  Due to \eqref{symmetries}, the maps $\bar \psi|_{\overline{\Sigma}_0}$ and $\bar \psi|_{\overline{\Sigma}_{1/2}}$ induce the same area-preserving homeomorphism $\bar g:\Upsilon \to \Upsilon$ given by
$$
\bar g=\bar g(p_r,r):=\pi_{1/2} \circ \bar \psi|_{\overline{\Sigma}_0} \circ \pi_0^{-1} = \pi_{0} \circ \bar \psi|_{\overline{\Sigma}_{1/2}} \circ \pi_{1/2}^{-1}.
$$
We want to show that $\bar g$ is indeed a smooth diffeomorphism.

Instead of coordinates $(p_r,r)$, we consider coordinates $(\ell, \vartheta) \in [\ell^*-\epsilon,\ell^*] \times \R / 2\pi \Z$, $\epsilon>0$ small, on a neighborhood of $\partial \Upsilon \subset \Upsilon$, where $\ell= (p_r^2 +u(r)^2)^{1/2}$ and $u(r) = -\varpi/r+1/(\varpi \beta)$ are as in the proof of Proposition \ref{prop: smoothness 1}, and $\ell=\ell^* := (1/(\varpi\beta)^2-2)^{1/2}>0$ corresponds to $\partial \Upsilon$. Notice that $(\ell,\vartheta)$ are smooth coordinates with respect to $(p_r,r)$ on a neighborhood of $\partial \Upsilon\subset \Upsilon$. It is then enough to show that $\bar g$ is smooth in coordinates $(\ell,\vartheta)$.

We write $\bar g(\ell,\vartheta)=(\bar \ell(\ell,\vartheta),\bar \vartheta(\ell,\vartheta))$.  Because of the mechanical nature of the Hamiltonian $H$, the following crucial algebraic relation holds on $\mathcal{S}$
\begin{equation}\label{pz_ell}
p_z^2+\ell^2 = -2 + \frac{1}{\varpi^2\beta^2}.
\end{equation}
Hence, we obtain
$$
\bar \vartheta(\ell,\vartheta)=\tilde \vartheta(p_z(\ell,\vartheta),\vartheta)=f_1(p_z^2(\ell,\vartheta),\vartheta)=f_1\left(1/(\varpi\beta)^2-2-\ell^2,\vartheta\right),
$$
which is a smooth function of $(\ell,\vartheta)$.
To see that $\bar \ell=\bar \ell(\ell,\vartheta)$ is also smooth, we observe that
$$
\begin{aligned}
\bar \ell(\ell,\vartheta) & =(1/(\varpi \beta)^2-2-\tilde p_z^2(p_z,\vartheta))^{1/2}=(1/(\varpi \beta)^2-2-f_0(p_z^2(\ell,\vartheta),\vartheta))^{1/2}\\
& = (1/(\varpi \beta)^2-2-f_0(1/(\varpi\beta)^2-2-\ell^2,\vartheta))^{1/2},
\end{aligned}
$$
which is a positive smooth function near $\ell = \ell^*$. Hence, $\bar g$ is smooth in coordinates $(\ell,\vartheta)$ and thus also smooth in coordinates $(p_r,r)$. We conclude that the first return map to $\Sigma_0$, given by $\psi=\bar g^2$ in coordinates $(p_r,r)\in \Upsilon$, is also smooth. This finishes the proof of Proposition \ref{prop: smooth_map}. \end{proof}

\begin{rem}\label{rem: smooth of psi_s}
Following the ideas in the proof of Proposition \ref{prop: smooth_map}, it is possible to show that every map $\psi_s: \Upsilon \to \Upsilon, s\in \R,$ is a smooth symplectic diffeomorphism in coordinates $(p_r,r)$.  To see this, we let $\mathcal S_{s}:=\mathfrak M\cap \{\arg(p_z+iz)=2\pi s\}=\bar \Sigma_{s}\cup \bar \Sigma_{s+1/2},s\in \R/\Z$. We see that $(\rho,\vartheta)\in [-\epsilon,\epsilon]\times \R/2\pi \Z$ are smooth coordinates on a neighborhood of $\zeta_e\subset \mathcal S_s$ for every $s$.
As in Proposition \ref{prop: smoothness 1}, the first hitting time from $\mathcal S$ to $\mathcal S_s$ is a smooth function $\bar \tau_{s}=\bar \tau_s(\rho=p_z,\vartheta):\mathcal S\rightarrow \R_{>0}$ defined for every $|\rho|<\epsilon<\epsilon_0$ and $\vartheta\in \R/2\pi \Z$, where $\epsilon_0>0$ is sufficiently small. Therefore, $\bar \tau_s:\mathcal S \to \R_{>0}$ and $\bar \psi_s=\varphi^R_{\bar \tau_s}:\mathcal S\rightarrow \mathcal S_s$ are both smooth. We write $\bar \psi_s=(\tilde \rho_s(p_z,\theta),\tilde \vartheta_s(p_z,\theta))$ defined for $(p_z,\vartheta)\in [-\epsilon, \epsilon] \times \R / 2\pi \Z.$ The symmetries in \eqref{symmetries} imply that $\tilde \rho_s^2(p_z,\theta)$ and $\tilde \vartheta_s(p_z,\theta)$ are even analytic functions of $p_z$. Therefore, by Whitney's theorem, we can find smooth functions $f_{0,s}(t,\vartheta)$ and $f_{1,s}(t,\vartheta)$ so that
$\tilde \rho_s^2(p_z,\vartheta)=f_{0,s}(p_z^2,\vartheta)$ and $\tilde \vartheta_s(p_z,\vartheta)=f_{1,s}(p_z^2,\vartheta)$.

As in the proof of Proposition \ref{prop: smooth_map}, we consider coordinates $(\ell,\vartheta)$ on a neighborhood of $\partial \Upsilon \subset \Upsilon$, which are smooth with respect to $(p_r,r)$. We write $\psi_s(\ell,\vartheta)=(\bar \ell_s(\ell,\vartheta),\bar \vartheta_s(\ell,\vartheta))$. Similar to \eqref{pz_ell}, the following algebraic relation holds on $\mathcal S_s$
$$
\rho^2\cos^22\pi s+\ell^2=-2+\frac{8}{\alpha}\left(\frac{1}{\sqrt{r^2+(1+2\alpha)\rho^2\sin^2 2\pi s}}-\frac{1}{r}\right)+\frac{1}{(\varpi\beta)^2},
$$
where $r=r(u)=r(\ell \sin \vartheta)$ is a smooth function of $(\ell, \vartheta)$. Hence, by the implicit function theorem we obtain a smooth function $\ell_s=\ell_s(\rho,\vartheta)$ such that $\ell=\ell_s(\rho^2,\vartheta)>0$ on $\mathcal S_s$ near $\zeta_e$. Therefore, we obtain $\bar \vartheta_s(\ell,\vartheta)=\tilde \vartheta_s(p_z(\ell,\vartheta),\vartheta)=f_{1,s}(p_z^2(\ell,\vartheta),\vartheta)=f_{1,s}(1/(\varpi\beta)^2-2-\ell^2,\vartheta)$, which is a smooth function of $(\ell,\vartheta)$. Moreover,
we have
$$
\begin{aligned}
\bar \ell_s(\ell,\vartheta) & = \ell_s(\tilde \rho^2(p_z,\vartheta),\tilde \vartheta(p_z,\vartheta))= \ell_s(f_{0,s}(p_z^2,\vartheta),f_{1,s}(p_z^2,\vartheta))\\
& = \ell_s(f_{0,s}(1/(\varpi\beta)^2 -2 -\ell^2,\vartheta), f_{1,s}(1/(\varpi\beta)^2-2-\ell^2,\vartheta)),
\end{aligned}
$$ which is also a positive smooth function near $\ell=\ell^*>0$. Hence, $\psi_s$ is smooth in coordinates $(\ell,\vartheta)$ near $\partial \Upsilon$ and thus $\psi_s$ is smooth in coordinates $(p_r,r)$ on $\Upsilon$ for every $s$. This proves the claim. We conclude that $\psi_s:\Upsilon \to \Upsilon,s\in[0,1],$ is a smooth isotopy of diffeomorphisms preserving the area form $dp_r\wedge dr$, satisfying $\psi_0={\rm Id}$ and $\psi_1=\psi$. This isotopy admits a natural smooth extension for  $s\in \R$ such that $\psi_{s+1} = \psi_s \circ \psi_1$ for every $s\in \R$.
\end{rem}

Before continuing the proof of Theorem \ref{thm: main4}, we introduce some useful results from Hutchings \cite{Hutchings2016}, Pirnapasov \cite{Pirnapasov2021} and Bechara \cite{Bechara2023} to our setting. The following theorem is a generalized version of Hutchings' mean action theorem.

\begin{thm}[Hutchings \cite{Hutchings2016}, Pirnapasov \cite{Pirnapasov2021}] \label{thm: Hutchings2016}
Let $\phi:\D \to \D$ be a smooth diffeomorphism preserving the area form $\omega_0=dx\wedge dy$, where $x+iy$ are coordinates in $\D$. Let $\phi_t:\D \to \D,t\in [0,1],$ be a smooth isotopy of $\omega_0$-preserving diffeomorphisms satisfying $\phi_0 = {\rm Id}$ and $\phi_1 = \phi$. Let $\alpha$ be a smooth primitive of $\omega_0$ on $\D$, and let $\sigma:\D \to \R$ be the smooth function satisfying $d\sigma=\phi^* \alpha - \alpha$ and $\sigma(z) = \int_{c_z} \alpha$ for every $z\in \partial \D$, where $c_z$ is the curve $[0,1] \ni t \mapsto \phi_t(z)\in \partial \D$. Denote by $\sigma_\infty(z)$ the almost everywhere well-defined mean action given by $\sigma_\infty(z) := \lim_{n \to +\infty} (1/n) \sum_{i=0}^{n-1} \sigma(\phi^i(z)),z\in \D$. Let ${\rm CAL}(\phi_t):=(1/\pi)\int_\D \sigma \omega_0$ and ${\rm Rot}(\phi_t)$ be the Calabi invariant and the boundary rotation number associated with the isotopy $\phi_t,t\in [0,1]$. Let $\mathcal P_\phi\subset \D$ be the set of periodic points of $\phi$. If
$
{\rm CAL}(\phi_t) > \pi {\rm Rot}(\phi_t)$, then $ \sup\{ \sigma_\infty(z), z\in\mathcal{P}_\phi\}\geq \mathrm{CAL}(\phi_t).
$
\end{thm}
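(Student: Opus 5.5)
The plan is to follow Hutchings' strategy: realize the disk map $\phi$ as the return map of a Reeb flow on the tight three-sphere and then play the ECH Weyl law against the actions of orbit sets. First normalize the data. Replacing the isotopy $\phi_t$ by its composition with full boundary rotations shifts $\sigma$ by a constant, and shifts ${\rm CAL}(\phi_t)$ and $\pi\,{\rm Rot}(\phi_t)$ by the same constant. So the hypothesis ${\rm CAL}>\pi{\rm Rot}$ and the desired conclusion are unchanged, and we may assume $\sigma>0$ and ${\rm Rot}(\phi_t)>0$.

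Next, build the flow. On the mapping torus of $\phi$ with return time $\sigma$ (suspension with the primitive $\alpha$), form the contact form $dt+\alpha$ twisted by the isotopy. Glue in a standard neighborhood of a binding circle $b$, using the boundary rotation ${\rm Rot}(\phi_t)$ to match the linearized twisting. The result is a contact form $\lambda_\phi$ on $S^3$ with the following properties:
\begin{itemize}
\item the pages $\{t={\rm const}\}$ are disk-like global surfaces of section with return map $\phi$;
\item the binding $b$ has action $\pi$ (the area of $\D$) and transverse rotation number $1+1/{\rm Rot}$;
\item a periodic point $z$ of period $n$ gives a Reeb orbit with action $\sum_{i<n}\sigma(\phi^i z)$ and linking number $n$ with $b$.
\end{itemize}
By Stokes' theorem, ${\rm vol}(S^3,\lambda_\phi)=\int_\D\sigma\,\omega_0=\pi\,{\rm CAL}(\phi_t)$. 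Smoothness of $\lambda_\phi$ across the binding is exactly where Pirnapasov's refinement is needed: one has to use that $\phi$ is smooth up to $\partial\D$ (compare Proposition~\ref{prop: smooth_map}) and interpolate carefully.

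Then argue by contradiction. Suppose that $\sigma_\infty(z)\le{\rm CAL}-\delta$ for every periodic point $z$. Then every Reeb orbit $\gamma\neq b$ satisfies $T(\gamma)\le({\rm CAL}-\delta)\,{\rm lk}(\gamma,b)$, while the binding satisfies $T(b)=\pi=\pi{\rm Rot}\cdot\frac{1}{\rm Rot}$. Take orbit sets $\alpha_k=m_0b+\sum m_i\gamma_i$ realizing $c_k$, after nondegenerate perturbation and a limit as in the paper. Setting $\mathcal F(\alpha_k)=m_0/{\rm Rot}+\sum_i m_i{\rm lk}(\gamma_i,b)$ and using ${\rm CAL}-\delta>\pi{\rm Rot}$ (for $\delta$ small), we get
$$c_k\le({\rm CAL}-\delta)\,\mathcal F(\alpha_k).$$

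The key input, and the main obstacle, is the upper bound $\mathcal F(\alpha_k)\lesssim\sqrt{2k/\pi}$ asymptotically. I would derive it from the ECH index formula: the relative first Chern class plus the self-linking term $Q_\tau$ plus the Conley--Zehnder sums. Computed in the trivialization induced by the page, these give $I(\alpha_k)\ge\mathcal F(\alpha_k)^2\,\pi$ minus lower-order terms. The Conley--Zehnder contributions of non-binding orbits must be controlled by their linking with $b$ via the page rotation, which is where the binding's rotation number enters. Combining this with the Weyl law \eqref{eq:weyl}, $c_k/\sqrt{2k}\to\sqrt{{\rm vol}}=\sqrt{\pi\,{\rm CAL}}$ in the normalization of \cite{C-GHR}, yields
$$\sqrt{\pi{\rm CAL}}\le({\rm CAL}-\delta)\sqrt{1/(\pi{\rm CAL})}\cdot\pi<\sqrt{\pi{\rm CAL}},$$
a contradiction. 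Hence $\sup_{\mathcal P_\phi}\sigma_\infty\ge{\rm CAL}$.
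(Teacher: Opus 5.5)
The paper gives no proof of this statement; it is quoted from \cite{Hutchings2016, Pirnapasov2021}, so your outline has to stand on its own. Your reduction is fine up to the inequality $c_k\le({\rm CAL}-\delta)\,\mathcal F(\alpha_k)$. That covers the normalization by full turns, $T(b)=\pi$, binding rotation $1/{\rm Rot}$ in the page framing, and ${\rm vol}=\pi\,{\rm CAL}$. The step that carries all the content, however, is the asymptotic bound on $\mathcal F(\alpha_k)$, and it has a genuine gap.

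First, the bound you state, $\mathcal F(\alpha_k)\lesssim\sqrt{2k/\pi}$, is not the one your final display uses. That display needs $\mathcal F(\alpha_k)\lesssim\pi\sqrt{2k/(\pi{\rm CAL})}=T(b)\sqrt{2k/{\rm vol}}$. With $\sqrt{2k/\pi}$ you would get a contradiction only if ${\rm CAL}-\delta<\pi\sqrt{{\rm CAL}}$. This fails once you have added enough full turns to make ${\rm CAL}$ large.

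Second, and decisively, the bound you actually need cannot come from the ECH index formula. The index sees only rotation and linking numbers and is blind to the volume. An inequality $I(\alpha)\ge\kappa\,\mathcal F(\alpha)^2-(\text{lower order})$ derived from it would apply to every orbit set of index $2k$, not only to the spectral ones.

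\begin{itemize}
\item For $\alpha=b^m$, in the page trivialization, $I(b^m)=m^2/{\rm Rot}+O(m)$ and $\mathcal F(b^m)=m/{\rm Rot}$, which forces $\kappa\le{\rm Rot}$.
\item You need $\kappa={\rm vol}/T(b)^2={\rm CAL}/\pi$, and this exceeds ${\rm Rot}$ precisely under the hypothesis ${\rm CAL}>\pi{\rm Rot}$.
\item With $\kappa={\rm Rot}$ you only get $c_k\lesssim({\rm CAL}-\delta)\sqrt{2k/{\rm Rot}}$. For small $\delta$ this is compatible with $c_k\sim\sqrt{2k\pi{\rm CAL}}$, exactly because ${\rm CAL}>\pi{\rm Rot}$, so no contradiction arises.
\end{itemize}

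Note also that the estimate $\liminf_k c_k/\mathcal F(\alpha_k)\ge{\rm CAL}$ you are relying on is already stronger than the theorem itself.

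The missing ingredient is control of $\mathcal F$ along the min-max that does not come from the index: Hutchings' knot filtration. Choose $J$ so that $\mathbb R\times b$ is holomorphic. Positivity of intersections, together with the winding bounds at ends on $b$, then shows that $\partial$ and $U$ do not increase $\mathcal F$. Hence the class $x_k$ with $U^kx_k=[\emptyset]$ is represented by a cycle all of whose generators satisfy $\mathcal F\le\mathcal F_k$. Here $\mathcal F_k$ is determined by the binding alone, roughly $\sqrt{2k/{\rm Rot}}$ as in an ellipsoid model.

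Since this bound depends on ${\rm Rot}$ rather than on the volume, one must also modify $\phi$ in a thin collar near $\partial\D$. The modification raises the boundary rotation to about ${\rm CAL}/\pi$ while creating only periodic points of mean action at most about ${\rm CAL}$. This is where ${\rm CAL}>\pi{\rm Rot}$ is really used.

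Separately, for a $\phi$ that is not a rotation near $\partial\D$, gluing in a binding neighbourhood directly is not justified. Pirnapasov's route is instead to extend $\phi$ to a slightly larger disk, cf.\ Lemma~\ref{lem: extension 0}.
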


The following useful lemma extends a smooth diffeomorphism of $\D$ to a larger disk $\D_{1+\epsilon}$ so that it is the indentity near $\partial \D_{1+\epsilon}$.

\begin{lem}[Pirnapasov \cite{Pirnapasov2021}]\label{lem: extension 0}
Let $\phi:\D \to \D$ be a smooth diffeomorphism preserving the area form $\omega_0 = dx \wedge dy$. Let $\phi_t:\D \to \D,t\in [0,1],$ be a smooth isotopy of $\omega_0$-preserving diffeomorphisms satisfying $\phi_0 = {\rm Id}$ and $\phi_1 = \phi$. Assume that the boundary rotation number  ${\rm Rot}(\phi_t) \in (-1,1)$. Let $\alpha_0=(r^2/2)d\theta$ be the primitive of $\omega_0$ in polar coordinates, and let $\sigma:\D \to \R$ be a smooth function satisfying $d\sigma=\phi^* \alpha_0 - \alpha_0$ and $\sigma(z) = \int_{c_z} \alpha_0$ for every $z\in \partial \D$, where $c_z$ is the curve $[0,1] \ni t \mapsto \phi_t(z)\in \partial \D$. Denote by $\sigma_\infty(z)$ the almost everywhere well-defined mean action given by $\sigma_\infty(z) := \lim_{n \to +\infty} (1/n) \sum_{i=0}^{n-1} \sigma(\phi^i(z))$,  $z\in \D$. Given $0<\epsilon<1$, there exists a smooth $\omega_0$-preserving diffeomorphism $f_\epsilon:\D_{1+\epsilon} \to \D_{1+\epsilon}:= \{z\in \C: |z| \leq 1+\epsilon\}$, such that the following properties hold:
\begin{itemize}

\item $f_\epsilon|_{\mathbb{D}}= \phi$;

\item $f_\epsilon$ is the identity map near $\partial \mathbb{D}_{1+\epsilon}$;

\item Let $\sigma_\epsilon: \D_{1+\epsilon} \to \R$ be the action of $f_\epsilon$ satisfying $d\sigma_\epsilon=f_\epsilon^*\alpha_0-\alpha_0$ and $\sigma_\epsilon=0$ near $\partial \mathbb{D}_{1+\epsilon}$. Let $\sigma_{\epsilon,\infty}$ be the associated almost everywhere well-defined mean action. Then
\begin{itemize}
\item[(i)] $\sigma_{\epsilon}|_{\D}=\sigma$. In particular, $\sigma_{\epsilon,\infty}|_{\D}=\sigma_\infty$ and $\sigma_{\epsilon,\infty}|_{\partial \mathbb{D}}=\sigma_{\infty}|_{\partial \mathbb{D}}\equiv \pi{\rm Rot}(\phi_t)$.
\item[(ii)] $|\sigma_\epsilon|, |\sigma_{\epsilon,\infty}|\leq 4\pi$ on $\mathbb D_{[1,1+\epsilon]}=\{z\in \C: 1\leq |z|\leq 1+\epsilon\}$.
\end{itemize}
\end{itemize}
\end{lem}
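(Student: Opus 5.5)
We would prove this lemma by an explicit construction on the collar $\mathbb D_{[1,1+\epsilon]}$, gluing $\phi$ to an area-preserving twist map that undoes the boundary rotation of $\phi$ and becomes the identity near $\partial\mathbb D_{1+\epsilon}$.

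\emph{Step 1: make $\phi$ radial near the boundary.} Using the isotopy $\phi_t$, we first modify the picture near $\partial\mathbb D$. Restricted to $\partial\mathbb D$, each $\phi_t$ is a circle diffeomorphism. Smoothness of $\phi$ up to the boundary and area preservation give a collar in which we can write $\phi$ in polar coordinates $(r,\theta)$. We then extend $\phi$ across $|z|=1$ to a symplectic map $g$ of a slightly larger disk that matches $\phi$ to infinite order along $\partial\mathbb D$. The extension is made via a generating function, or by extending the Hamiltonian generating $\phi_t$ near the boundary.

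\emph{Step 2: interpolate to the identity by a twist.} On the collar we compose the extension with a map of the form $(r,\theta)\mapsto(r,\theta+2\pi h(r))$. This map is area preserving for any $h$. We choose $h$ smoothly so that $h=0$ near $r=1$ (so the map equals the extension of $\phi$ there) and so that, near $r=1+\epsilon$, the combined map is isotopic to the identity through maps whose boundary rotation is $0$. Concretely, we take the extension of $\phi_t$ near $r\approx 1$ to be generated by a Hamiltonian $H_t$ with $H_t$ approximately a function of $r$ alone. We then cut it off by a Hamiltonian $K(r)$ whose angular frequency decreases from ${\rm Rot}(\phi_t)$ to $0$. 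The assumption ${\rm Rot}(\phi_t)\in(-1,1)$ is used here: the twist needs less than one full turn, which keeps the actions of the interpolating region bounded.

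\emph{Step 3: action bookkeeping.} We define $\sigma_\epsilon$ by $d\sigma_\epsilon=f_\epsilon^*\alpha_0-\alpha_0$, normalized to be $0$ near the outer boundary. On the twist region, where $f_\epsilon(r,\theta)=(r,\theta+2\pi h(r))$, a direct computation gives $f_\epsilon^*\alpha_0-\alpha_0=\pi r^2h'(r)\,dr$. Hence
\[
\sigma_\epsilon(r)=-\int_r^{1+\epsilon}\pi s^2h'(s)\,ds.
\]
At $r=1$ this equals $\pi\,{\rm Rot}(\phi_t)$ up to the sign conventions, which is exactly the boundary value $\sigma|_{\partial\mathbb D}=\int_{c_z}\alpha_0$ after averaging. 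This gives $\sigma_\epsilon|_{\mathbb D}=\sigma$, since both are primitives of the same form on $\mathbb D$ that agree on $\partial\mathbb D$.

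For the bound, since $|h|<1$ and $r\le 2$, integration by parts gives $|\sigma_\epsilon|\le \pi(1+\epsilon)^2|h|+2\pi\int|h|\,s\,ds\le 4\pi$. The circles in the collar are invariant, so $\sigma_{\epsilon,\infty}=\sigma_\epsilon$ there and obeys the same bound. On $\mathbb D$, $f_\epsilon$ preserves $\mathbb D$, so the Birkhoff averages coincide with $\sigma_\infty$. On $\partial\mathbb D$, the standard rotation-number argument gives $\sigma_\infty\equiv\pi{\rm Rot}$.

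\emph{Main obstacle.} The hard part is Step 1 combined with the gluing: $\phi$ is not a rotation on $\partial\mathbb D$, so a pure twist cannot be glued directly. We would handle this by a symplectic isotopy on a thin collar $1\le r\le 1+\epsilon/2$ that conjugates $\phi|_{\partial\mathbb D}$, tracked via its isotopy class, to a rigid rotation by ${\rm Rot}$. This can be built from a Hamiltonian depending on $\theta$ and cut off in $r$, with its action contribution made arbitrarily small so that it stays within the $4\pi$ bound. In short, we would follow Pirnapasov's construction.
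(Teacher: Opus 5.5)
There is a genuine gap in your verification of (i). You conclude $\sigma_\epsilon|_{\D}=\sigma$ because both are primitives of $\phi^*\alpha_0-\alpha_0$ on $\D$ that agree on $\partial\D$ ``after averaging''. Two such primitives differ by a constant, and only exact agreement at some point removes that constant. Equality of Birkhoff means does not do this. In fact $\sigma|_{\partial\D}=\frac12(\Theta_1(1,\theta)-\theta)$ is in general not constant; only its mean is $\pi\,\mathrm{Rot}(\phi_t)$.

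The constant is fixed by integrating $d\sigma_\epsilon$ radially through the whole collar from $\partial\D_{1+\epsilon}$, and your own formula shows it comes out wrong. For a twist profile with $h(1)=\mathrm{Rot}(\phi_t)$ and $h\equiv 0$ near $1+\epsilon$,
\[
\sigma_\epsilon(1)=-\int_1^{1+\epsilon}\pi s^2h'(s)\,ds=\pi\,\mathrm{Rot}(\phi_t)+2\pi\int_1^{1+\epsilon}s\,h(s)\,ds .
\]
For a profile going monotonically from $\mathrm{Rot}(\phi_t)$ to $0$, as you propose, the last integral is nonzero whenever $\mathrm{Rot}(\phi_t)\neq 0$. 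So you get $\sigma_\epsilon|_{\D}=\sigma+\kappa$ with $\kappa\neq 0$. This $\kappa$ is of order $\epsilon$, so it would be harmless for the limit $\epsilon\to 0$ in Lemma~\ref{lem: generalize Bechara}, but it contradicts (i) as stated. The non-rigid collar of your Step~1 contributes its own shift of the constant, which would also have to be tracked.

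The missing idea is a zero-flux condition on the collar: whatever generates the collar map must vanish at both ends, so the angular displacement has to reverse sign somewhere. For a pure twist this means $\int_1^{1+\epsilon}s\,h(s)\,ds=0$, i.e.\ the radial Hamiltonian $K$ with $K'=2\pi r h$ satisfies $K(1)=K(1+\epsilon)=0$.

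The paper builds this in from the start. It writes $\phi$ near $\partial\D$ through a generating function $G(R,\theta)$ with $\partial_RG=R(\theta-\Theta)$, $\partial_\theta G=\frac12(R^2-r^2)$ and $G|_{\partial\D}=0$. It then continues $G$ over the collar as $G_\epsilon=u(R)(\theta-\Theta(1,\theta))$, with $u(R)=\frac12(R^2-1)$ near $R=1$, $u\equiv 0$ near $1+\epsilon$, and $u'\in[-c,1+\epsilon]$. Since $u(1)=u(1+\epsilon)=0$ and $u'(1)=1$, $u'$ must be negative somewhere; this is exactly the backward twist you are missing. After a $C^1$-small smoothing $W$, which keeps $|\partial^2_{R\theta}W-R|\ge c/2$ so that $W$ still generates a map, the action $\sigma_\epsilon=W+\frac12R^2(\Theta-\theta)$ is automatically continuous with $\sigma$ and vanishes near $\partial\D_{1+\epsilon}$.

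This also removes your ``main obstacle''. The $\theta$-dependence of the boundary map is carried through the collar directly, so no rigid rotation ever has to be reached. Your proposed fix would not work as stated anyway: a smooth circle diffeomorphism is in general not smoothly conjugate to the rotation by its rotation number, and for rational rotation number not even topologically. What is available is an area-preserving interpolation on the collar, and that is what the generating function provides.
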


\begin{proof}[Sketch of the proof of Lemma \ref{lem: extension 0}] In polar coordinates near $\partial \D$, we write $f:(r,\theta)\mapsto (R,\Theta)$, where $\Theta$ is a real-valued function and $\Theta|_{\partial \D}=\Theta_1(1,\cdot)$ is determined by the isotopy $\phi_t|_{\partial \D}:(1,\theta) \mapsto (1,\Theta_t(1,\theta)), \theta\in \R, t\in [0,1]$, with $\Theta_0={\rm Id}$. Since ${\rm Rot}(\phi_t) \in (-1,1)$, we have $|\Theta(1,\theta) - \theta| <2\pi$ for every $\theta\in \R$. Since $R(1,\theta)=1$, $\partial_\theta R(1,\theta)=0$ and $\partial_\theta\Theta(1,\theta)\in (c,1/c)$ for some $0<c<1$, the preservation of $\omega_0$ implies that $\partial_r R>c/2$ near $r=R=1$. Hence, we can write $r=r(R,\theta)$ near $r=R=1$, and thus there exists a generating function $G=G(R,\theta)$ defined near $\partial \D$ satisfying
\begin{equation*}
dG=\frac{R^2}{2}d\Theta-\frac{r^2}{2}d\theta+d\left(\frac{R^2}{2}(\theta-\Theta)\right)=R(\theta-\Theta)dR+\left(\frac{R^2}{2}-\frac{r^2}{2}\right)d\theta.
\end{equation*}
In particular, $\partial_RG=R(\theta-\Theta)$ and $\partial_{\theta}G=\frac{1}{2}(R^2-r^2)$. Since $\partial_\theta G|_{R=1}\equiv 0$, we may assume that $G|_{\partial\mathbb{D}}\equiv 0$. Also, the function $\sigma(r,\theta):=G(R(r,\theta),\theta)+\frac{1}{2}R(r,\theta)^2(\Theta(r,\theta)-\theta)$ is an action of $\phi$ with respect to $\alpha_0$, defined near $\partial \D$, and the associated mean action $\sigma_\infty$ along $\partial \D$ is  constant equal to $\pi{\rm Rot}(\phi_t)\in (-\pi, \pi)$.

Given $\epsilon>0$ sufficiently small, we extend $G=G(R,\theta)$ to a $C^1$-function $G_\epsilon: \mathbb{D}_{[1-\epsilon,1+\epsilon]}\rightarrow \mathbb{R}$,  by defining $G_\epsilon(R, \theta):=u(R)(\theta-\Theta(1,\theta))$, where $u:[1,1+\epsilon]\rightarrow [0,1]$ is a smooth function satisfying $u(R)=\frac{1}{2}(R^2-1)$ near $R=1$, $u'(R)\in[-c,1+\epsilon]$ for every $R\in [1,1+\epsilon]$ and $u\equiv 0$ near $R=1+\epsilon$. In this way, $\partial_R G_\epsilon, \partial_\theta G_\epsilon$ and $\partial^2_{R\theta}G_\epsilon$ are continuous along $\partial \D$.

Take a smooth generating function $W$ that is $C^1$-close to $G_\epsilon$ on $\mathbb{D}_{[1,1+\epsilon]}$, coincides with $G_\epsilon$ on $\D$ and on a small neighborhood of $\partial\D_{1+\epsilon}$, and satisfies $\|W-G_\epsilon\|_{C^1}+\|\partial^2_{R \theta}W-\partial^2_{R\theta}G_\epsilon\|_{C^0}<\delta$ for any apriori given $\delta>0$, see \cite[Lemma 3.3]{Pirnapasov2021}. This is always possible by an approximation theorem due to Whitney \cite{Whitney1934}. For every $R\in[1,1+\epsilon]$, we have
$$\begin{aligned}
 |\partial^2_{R\theta}W(R,\theta)-R| & \geq |\partial^2_{R\theta}G_\epsilon(R,\theta)-R|-\delta=|u'(R)(1-\partial_\theta\Theta(1,\theta))-R|-\delta\\
&\geq\left\{
\begin{aligned}
&1-(1+\epsilon)(1-c)-\delta\geq c/2, &&\partial_\theta \Theta(1,\theta)\in(c,1),\\
&1-c(1/c-1)-\delta\geq c/2, &&\partial_\theta \Theta(1,\theta)\in [1,1/c),
\end{aligned}
\right.
\end{aligned}$$
for every $0<\delta,\epsilon<c/4$.
Hence, by the implicit function theorem,  the equation $\partial_\theta W(R,\theta)=\frac{1}{2}(R^2-r^2)$ solves for $R=R(r,\theta)$. The equation $\partial_RW(R,\theta)=R(\theta-\Theta)$ then gives $\Theta=\Theta(r,\theta)$, obtaining the local diffeomorphism $f_\epsilon:(r,\theta)\mapsto (R(r,\theta),\Theta(r,\theta))$ preserving $\omega_0$. By standard degree theory, $f_\epsilon$ is a diffeomorphism.

Finally, we obtain an action $\sigma_\epsilon: \D_{1+\epsilon} \to \R$ associated with $f_\epsilon$ and $\alpha_0$ which coincides with  $W(R(r,\theta),\theta)+\frac{1}{2}R(r,\theta)^2(\Theta(r,\theta)-\theta)$ near $\partial \D$. In particular, it coincides with $\sigma$ on $\D$. Since $\Theta(r, \theta)=\theta$ for every $\theta\in \R$ and $r$ sufficiently close to $1+\epsilon$, we see that $\sigma_\epsilon$ vanishes near $\partial \D_{1+\epsilon}$.    This gives (i). Moreover, we have the following estimate on $\D_{[1,1+\epsilon]}$
$$
\begin{aligned}
|\sigma_\epsilon(r,\theta)|&\leq |u(R)(\theta-\Theta(1,\theta))+(R^2/2)(\Theta(r,\theta)-\theta)|+\delta\leq 2\pi+(R/2)|\partial_RW|+ \delta\\
&\leq 2\pi+(R/2)|\partial_RG_\epsilon|+2\delta\leq 2\pi+(R/2)|u'(R)(\theta-\Theta(1,\theta))|+2\delta\leq 4\pi,
\end{aligned}$$
which implies (ii).
\end{proof}

\begin{lem}[Bechara \cite{Bechara2023}] \label{lem: Fathi and Bechara}
Let $\phi:\D \to \D$ be a smooth diffeomorphism that preserves the area form $\omega_0 = dx \wedge dy$ and coincides with the identity map near $\partial \D$. Let $\phi_t:\D\rightarrow \D,t\in[0,1]$, be a smooth isotopy of $\omega_0$-preserving diffeomorphisms satisfying $\phi_0={\rm Id}$ and $\phi_1=\phi$, and so that every $\phi_t$ coincides with the identity map on a uniform neighborhood of $\partial \D$. Let $w_\infty(z,w,\phi_t)$ denote the mean relative winding number of $z\neq w\in \D$ with respect to the isotopy $\phi_t,t\in[0,1]$, as defined in \eqref{equ: winding number}. Let $\alpha$ be a primitive of $\omega_0$, and let $\sigma:\D \to \R$ be a smooth function satisfying $d\sigma=\phi^* \alpha - \alpha$ and $\sigma|_{\partial \D}\equiv 0$. Denote by $\sigma_\infty(z)$ the almost everywhere well-defined mean action given by $\sigma_\infty(z) = \lim_{n \to +\infty} (1/n) \sum_{i=0}^{n-1} \sigma(\phi^i(z)), z\in \D$. Then
$$
\sigma_\infty(z)=\int_{\mathbb{D}}w_\infty(z,w ,\phi_t)\omega_0(w),\quad \mathrm{a.e.}\quad z\in\mathbb{D}.
$$
\end{lem}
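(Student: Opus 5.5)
Since every $\phi_t$ is the identity on a uniform collar of $\partial\D$, we may extend the isotopy by the identity to all of $\R^2$ (or compactly inside a larger disk) and regard $\phi_t$ as a compactly supported Hamiltonian isotopy. We take $\alpha=\alpha_0=\frac{r^2}{2}d\theta$ first; the statement for an arbitrary primitive then follows, because changing $\alpha$ by an exact form $dh$ changes $\sigma$ by the coboundary $h\circ\phi-h$. A coboundary has zero Birkhoff average, and the normalization $\sigma|_{\partial\D}=0$ is unaffected since $h\circ\phi=h$ near $\partial\D$. The plan is to reduce the statement to a pointwise formula for the one-step action in terms of one-step relative winding, and then to apply Birkhoff's ergodic theorem to both sides.

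\textbf{Step 1: the one-step action as an angular integral.} Let $W_1(z,w)$ be the total change of the continuous argument of $\phi_t(z)-\phi_t(w)$ for $t\in[0,1]$, divided by $2\pi$. We aim to prove
\[
\sigma(z)=\int_\D W_1(z,w)\,\omega_0(w)\qquad\text{for every } z\in\D .
\]
The method is to fix $z$ and integrate $\alpha_0$ in coordinates centred at the moving point $\phi_t(z)$. More precisely, for the Hamiltonian isotopy with compactly supported generating Hamiltonian $H_t$, the action can be written as
\[
\sigma(z)=\int_0^1\bigl(\alpha_0(X_t)+H_t\bigr)(\phi_t(z))\,dt .
\]
We then use the two–dimensional identity $\int_{\R^2} d\arg(\,\cdot-p)(v)\,\omega_0=\dots$, i.e.\ the fact that the angular form $\frac{1}{2\pi}d\arg(x-y)$ integrated against area in the second variable recovers a primitive of $\omega_0$ up to an exact term. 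Written differently, the winding integral equals a flux/Calabi-type expression, and differentiating both sides in $t$ matches $H_t(\phi_t(z))+\alpha_0(X_t)$. Boundary terms vanish because everything is the identity near $\partial\D$, and the normalization $\sigma|_{\partial\D}=0$ fixes the constant. This is the step we expect to be the main obstacle: the singularity at $w=z$ must be handled. It is only logarithmic-angular, and $W_1$ is bounded since $\phi_t$ is a smooth isotopy, so the integral converges. However, the differentiation under the integral sign needs a cutoff $|w-\phi_t(z)|>\varepsilon$ followed by the limit $\varepsilon\to0$, and the small-circle contribution tends to zero.

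\textbf{Step 2: cocycle and ergodic averaging.} Using $\phi_{t+1}=\phi_t\circ\phi$, the winding over $[0,n]$ is the cocycle sum $W_n(z,w)=\sum_{i<n}W_1(\phi^i z,\phi^i w)$. Because $\phi$ preserves $\omega_0$, substituting $w\mapsto\phi^i(w)$ in Step 1 gives
\[
\sum_{i<n}\sigma(\phi^iz)=\int_\D W_n(z,w)\,\omega_0(w).
\]
Dividing by $n$, the left side tends to $\sigma_\infty(z)$ almost everywhere. On the right, $W_n/n\to w_\infty$ almost everywhere on $\D\times\D$ by Birkhoff's theorem for $\phi\times\phi$, since $W_1\in L^1(\D\times\D)$. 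To pass the limit inside $\int d w$ for a.e.\ $z$, we argue in $L^1(\D\times\D)$. The averages $W_n/n$ converge in $L^1$ by the $L^1$ ergodic theorem, so $\int W_n/n\,dw\to\int w_\infty\,dw$ in $L^1(dz)$. A subsequence then converges almost everywhere, which identifies the limit with $\sigma_\infty(z)$ almost everywhere and proves the lemma.
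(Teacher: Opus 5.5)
Your argument is correct, but it takes a different route from the paper's.

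\textbf{How the two routes differ.} The paper, following Bechara, works on the suspension $\D\times[0,1]$ with $\tilde\omega_0=\omega_0-dH_t\wedge dt$. It forms the ruled surface $S^p(z,\tilde\phi_t)$ of segments from $p\in\partial\D$ to $\phi_t(z)$, identifies the integer intersection number $I^p(z,w,\tilde\phi_t)$ with a local degree, and gets by Stokes $\int_\D I^p\,\omega_0=\sigma(z)+\int_{[p,z]}\alpha-\int_{[p,\phi(z)]}\alpha$ for an arbitrary primitive $\alpha$. After iterating, it still needs a separate comparison (Bechara's Proposition 2.10) showing these intersection numbers stay within bounded distance of the $n$-step winding.

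You instead fix $\alpha_0$, for which there is an exact pointwise identity $\sigma(z)=\int_\D W_1(z,w)\,\omega_0(w)$ with the real-valued one-step winding. The cocycle property of $W_n$ and the invariance of $\omega_0$ then do the iteration for free. The bound $|W_1|\le\sup_t\mathrm{Lip}(X_t)/2\pi$ gives $|W_n/n|\le\|W_1\|_\infty$, so plain dominated convergence on the Fubini full-measure set already justifies exchanging limit and integral, without the subsequence argument. The paper's sketch passes over that exchange.

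The same bound on angular speed shows your integer-time limit agrees with the paper's continuous-time definition of $w_\infty$. The paper's route handles all primitives at once and is more topological. Yours is the analytic Gambaudo--Ghys/Fathi computation and is more self-contained, at the cost of the potential-theoretic input in Step 1 and the (correct) coboundary reduction $h\circ\phi-h$.

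\textbf{Corrections to Step 1.} Step 1 is asserted rather than proved, and two points in your description need fixing when you write it out.
\begin{enumerate}
\item The identity is exact, not ``up to an exact term'': $\frac{1}{2\pi}\int_\D d_x\arg(x-w)\,\omega_0(w)=\tfrac12(x\,dy-y\,dx)=\alpha_0$ for $|x|\le1$. This follows from Gauss' law, since the ``field'' of the uniform density on $\D$ is $x/2$. An exact error term would in any case only add a coboundary, which disappears after averaging.
\item No cutoff is needed to differentiate $\int_\D W_t(z,w)\,\omega_0(w)$, because $\partial_tW_t$ is bounded. After substituting $w'=\phi_t(w)$ and setting $x=\phi_t(z)$, the derivative splits into $\alpha_0(X_t)(x)$ and $\int_\D\eta_x(X_t)\,\omega_0$, where $\eta_x=\frac{1}{2\pi}d\arg(\cdot-x)$. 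Write $\eta_x(X_t)\omega_0=\eta_x\wedge\iota_{X_t}\omega_0$ and excise $B_\varepsilon(x)$. The outer boundary term vanishes because $H_t=0$ near $\partial\D$. The small-circle term does not tend to zero: it is exactly what produces $\pm H_t(x)$, with sign matching your convention for $X_t$.
\end{enumerate}
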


\begin{proof}[Sketch of the proof of Lemma \ref{lem: Fathi and Bechara}] Associated to the isotopy $\phi_t,t\in[0,1]$, there exists a time dependent Hamiltonian $H_t:\D \to \R,t\in[0,1],$ which vanishes on a uniform neighborhood of $\partial \D$. Denote by $X_t$ the Hamiltonian vector field of $H_t$ determined by $-dH_t=\iota_{X_t}\omega_0$, and let $\tilde X(z,t):=X_t(z)+\partial_t$ be the time-independent vector field on $\mathbb D\times [0,1]$. Then $\iota_{\tilde X}\tilde \omega_0=0$, where $\tilde \omega_0:=\omega_0-dH_t\wedge dt$. Consider the $1$-form $\tilde \alpha:=\alpha-H_tdt$ on $\D \times [0,1]$. Then $d\tilde \alpha=\tilde \omega_0$. Let $\tilde \phi_t:=(\phi_t, t), t\in [0,1]$, denote the flow of $\tilde X$. For every  $z\in \mathbb D\equiv \D \times\{0\}$, denote by $\Gamma(z,\tilde \phi_t)\subset \D \times [0,1]$ the trajectory $[0,1]\ni t\mapsto \tilde \phi_t(z)$. Given $p\in \partial \D$ and $z\in \D \setminus \partial \D$, let $S^p(z,\tilde \phi_t):=\{(z',t): z'\in [p,\tilde \phi_t(z)],t\in[0,1]\}$ be the surface consisting of a family of line-segments  $[p,\tilde \phi_t(z)] \subset \D \times \{t\}, t\in [0,1],$ connecting $p$ to $\tilde \phi_t(z)$. Given $p\in \partial \D$ and $z\neq w\in \D \setminus \partial \D$, let $I^p(z,w,\tilde \phi_t)$ denote the algebraic intersection number between $\Gamma(w,\tilde \phi_t)$ and $S^p(z,\tilde \phi_t)$. Here, $\D \times [0,1]$ is oriented by $dx\wedge dy\wedge dt>0$, the orientation of $\Gamma(w,\tilde \phi_t)$ is given by the flow, and the orientation of $S^p(z,\tilde \phi_t)$ is the one induced by $\Gamma(z,\tilde \phi_t)$. Let $h:S^p(z,\tilde \phi_t)\rightarrow \mathbb D\ ((z',t)\mapsto \phi_{-t}(z'))$. Then one can prove that $h^*\omega_0=\tilde \omega_0|_{S^p(z,\tilde \phi_t)}$, $I^p(z,w,\tilde \phi_t)$ coincides with the degree of $h$ at $w$, i.e.
$\deg (h,S^p(z,\tilde \phi_t),w):=\sum_{\hat w\in h^{-1}(w)}\mathrm{sgn}\det dh(\hat w)$, for every regular value $w\in \D$ of $h$. Notice that, by Sard's theorem, the regular values of $h$ form a full measure subset of $\D$.  By Stokes' theorem, we obtain
\begin{equation}\label{equ: I and sigma 1}
\begin{aligned}
\int_{\mathbb D} I^p(z,w,\tilde \phi_t)\omega_0(w)&=\int_{\D}\deg (h,S^p(z,\tilde \phi_t),w)\omega_0(w)=\int_{S^p(z,\tilde \phi_t)}\tilde \omega_0\\
&=\sigma(z)+\int_{[p,z]}\alpha-\int_{[p,\phi(z)]}\alpha.
\end{aligned}
\end{equation}
Consider the $n$-iterated isotopy $\tilde \phi_{nt},t\in[0,1]$, satisfying $\tilde \phi_n=\phi^n$. We obtain from \eqref{equ: I and sigma 1} that
$$
\frac{1}{n}\int_{\mathbb D} I^p(z,w,\tilde \phi_{nt})\omega_0|_z=\frac{1}{n}\int_{S^p(z,\tilde \phi_{nt})}\tilde \omega_0=\frac{1}{n}\left(\sum_{i=0}^{n-1}\sigma(\phi^i(z))+\int_{[p,z]}\alpha-\int_{[p,\phi^n(z)]}\alpha\right),\quad \forall n\in \mathbb N.
$$
Since $|\int_{[p,z]}\alpha|,|\int_{[p,\phi^n(z)]}\alpha|$ and $|w_\infty(z,w,\phi_t)-\frac{1}{n}I^p(z,w,\tilde \phi_{nt})|$ are uniformly bounded, see \cite[Proposition 2.10]{Bechara2023},  Lemma \ref{lem: Fathi and Bechara} follows by taking $n \to \infty$.
\end{proof}

Continuing the proof of Theorem \ref{thm: main4}, we first combine Lemmas \ref{lem: extension 0} and \ref{lem: Fathi and Bechara} to prove the following lemma.

\begin{lem}\label{lem: generalize Bechara}
Consider the isotopy $\psi_s:\Upsilon\rightarrow \Upsilon,s\in[0,1]$, of the first return map $\psi=\psi_1:\Upsilon\rightarrow \Upsilon$ as in Remark \ref{rem: smooth of psi_s}, so that each $\psi_s$ is a smooth diffeomorphism preserving the area form $dp_r\wedge dr$. The boundary $\partial \Upsilon$ admits a rotation number ${\rm Rot}(\psi_s)=(\rho_e-1)^{-1}$, see \cite[Lemma 7.7]{HLOSY2023}. The first return Reeb time $\tau=\tau_{0,1}\circ \pi_0^{-1}: \Upsilon\rightarrow \R_{>0}$ is the action of $\psi$ with respect to the smooth one-form $\bar \lambda$ given by $\bar \lambda := (\pi_0^{-1})^*(\lambda|_{\Sigma_0})$. In particular, $\tau$ satisfies $d\tau=\psi^*\bar \lambda-\bar\lambda$ and $\tau(z) = \int_{c_z} \bar \lambda$ for every $z\in \partial \Upsilon$, where $c_z(s) = \psi_s(z), s\in [0,1]$. Denote by $\tau_\infty$ the almost everywhere well-defined mean action given by $\tau_{\infty}(z):=\lim_{n\to +\infty} (1/n) \sum_{i=0}^{n-1} \tau(\psi^i(z)),z\in \D$. Then
\begin{equation}\label{equ: generalize Bechara}
\tau_\infty(z)=\int_{\Upsilon}w_\infty(z,w)(dp_r\wedge dr)(w),\quad \textrm{a.e.}\quad z\in\mathbb{D}.
\end{equation}
\end{lem}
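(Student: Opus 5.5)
The plan is to reduce to Lemma \ref{lem: Fathi and Bechara} by uniformizing $\Upsilon$ to a round disk, extending with Lemma \ref{lem: extension 0}, applying Bechara's formula on the larger disk, and then removing the contribution of the collar.

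First, choose an area-preserving diffeomorphism $\Phi:(\Upsilon,dp_r\wedge dr)\to(\D_R,\omega_0)$ with $\pi R^2=T_e$; rescaling makes $R=1$, so it suffices to treat $\D$. Conjugating $\psi_s$ by $\Phi$ gives a smooth isotopy $\phi_s$ of area-preserving diffeomorphisms of $\D$ with $\phi_0={\rm Id}$ and $\phi_1=\phi$. Its boundary rotation number is ${\rm Rot}=(\rho_e-1)^{-1}\in(0,1)$, since $\rho_e>2$. Set $\bar\alpha=(\Phi^{-1})^*\bar\lambda$. Because $\bar\alpha-\alpha_0$ is closed on the disk, $\bar\alpha=\alpha_0+dg$ for a smooth $g$. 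The action of $\phi$ with respect to $\alpha_0$, normalized on $\partial\D$ by the isotopy, is then $\sigma=\tau\circ\Phi^{-1}-g\circ\phi+g$. The coboundary $g\circ\phi-g$ averages to zero along orbits, so $\sigma_\infty=\tau_\infty\circ\Phi^{-1}$ almost everywhere. Winding numbers are not invariant under $\Phi$, but $\Phi$ is isotopic to a linear map. Differences of arguments are uniformly controlled, up to bounded error, on compact sets away from the diagonal. Near the diagonal one would use that $d\Phi$ is bounded. Dividing by $s$ then shows that the mean winding is unchanged. Alternatively, one can define $w_\infty$ directly in the $\D$ picture, since the statement only uses its integral.

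Second, apply Lemma \ref{lem: extension 0} to obtain $f_\epsilon$ on $\D_{1+\epsilon}$, equal to $\phi$ on $\D$ and the identity near the outer boundary. Its action $\sigma_\epsilon$ agrees with $\sigma$ on $\D$ and is bounded by $4\pi$ on the collar. We also need an isotopy $f_{\epsilon,t}$ that extends $\phi_t$ and equals the identity near $\partial\D_{1+\epsilon}$. I would build it by the same generating-function interpolation applied to each $\phi_t$, or by composing with a collar rotation that undoes the boundary rotation. The choice of isotopy must be consistent with the normalization $\sigma_\epsilon=0$ near the outer boundary, which fixes the Hamiltonian up to the correct constant. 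Lemma \ref{lem: Fathi and Bechara} then gives, for almost every $z\in\D$,
$$\sigma_\infty(z)=\int_{\D}w_\infty(z,w)\,\omega_0(w)+\int_{\D_{[1,1+\epsilon]}}w_\infty(z,w,f_{\epsilon,t})\,\omega_0(w),$$
where the first integrand uses the winding of the isotopy $\phi_t$, because the extended isotopy restricts to it on $\D$.

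Third, and this is the main obstacle, we must show that the collar term tends to $0$ as $\epsilon\to0$. Its domain has area $O(\epsilon)$, so a uniform bound on $w_\infty(z,w,f_{\epsilon,t})$ for $z\in\D$ and $w$ in the collar would suffice. A crude bound comes from the construction. In the collar the angular displacement is at most $|\theta-\Theta(1,\theta)|<2\pi$ per iterate, because $\partial_R W$ is close to $R(\theta-\Theta(1,\theta))u'$ with $u'$ bounded. The relative winding of a collar point against an interior point should therefore be bounded by a constant depending only on $c$, through the estimate in \cite[Proposition 2.10]{Bechara2023}. I would try to make this precise by comparing with the winding of $z$ around the boundary-rotation profile: points in the collar rotate by at most one turn per iterate, so the relative winding stays bounded. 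If this bound is uniform in $\epsilon$, letting $\epsilon\to0^+$ yields \eqref{equ: generalize Bechara}. Both sides are independent of $\epsilon$ except for the collar term, so the identity holds exactly.
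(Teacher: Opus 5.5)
Your overall route matches the paper's: uniformize $\Upsilon$ to a round disk, extend with Lemma~\ref{lem: extension 0}, apply Lemma~\ref{lem: Fathi and Bechara} on $\mathbb{D}_{1+\epsilon}$, and let $\epsilon\to0^+$. Your handling of the change of primitive, and of the fact that winding numbers are not invariant under $\Phi$, is more careful than the paper's, which simply identifies $\Upsilon$ with $\mathbb{D}$.

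The gap is in your third step. It is the only genuinely new estimate, and you leave it at ``should therefore be bounded''. A per-iterate bound on how far a collar point $w$ turns about the origin does not by itself control how $f_{\epsilon,t}(w)$ winds about the \emph{moving} point $f_{\epsilon,t}(z)$. Neither Bechara's Proposition~2.10 nor a comparison with the winding of $z$ around the boundary profile supplies that link. As written, the uniform bound on the collar term is asserted, not proved.

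The missing idea is a separation argument.
\begin{itemize}
\item Every $f_{\epsilon,t}$ restricts to $\phi_t$ on $\mathbb{D}$, so both $\mathbb{D}$ and the collar $\mathbb{D}_{(1,1+\epsilon]}$ are invariant for all $t\in\mathbb{R}$.
\item For $z\in\mathbb{D}$ and $w$ in the collar, put $a=f_{\epsilon,t}(w)$ and $b=f_{\epsilon,t}(z)$. Then $|a|>1\ge|b|$, so $\langle a,a-b\rangle\ge|a|(|a|-|b|)>0$. Hence the angle between $a-b$ and $a$ is less than $\pi/2$ for every $t$.
\item The continuous arguments of $a-b$ and of $a$ therefore differ by a bounded amount. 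So the mean relative winding of $(z,w)$ equals the mean winding of $w$ about the origin. This is the paper's identity $w^\epsilon_\infty(z,w)=w^\epsilon_\infty(0,w)$.
\item The winding of $w$ about the origin is read off at integer times from the generating-function lift $\Theta^\epsilon_1$. This is the lift determined by an extended isotopy fixing $\partial\mathbb{D}_{1+\epsilon}$, i.e.\ the one equal to $\theta$ near the outer boundary.
\item On the collar, $|\Theta^\epsilon_1(r,\theta)-\theta|=|\partial_RW|/R\le(1+\epsilon)\sup_\theta|\Theta(1,\theta)-\theta|+\delta$, a bound independent of $\epsilon$. The paper records it as $4\pi$, giving $|w^\epsilon_\infty|\le 2$.
\end{itemize}
Thus the collar integrand is bounded uniformly in $\epsilon$, and the collar term is $O\big((1+\epsilon)^2-1\big)$. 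Letting $\epsilon\to0^+$ along a sequence, and intersecting the corresponding full-measure sets of $z$, gives \eqref{equ: generalize Bechara}.
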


\begin{proof}
The contact form $\lambda$ on $\mathfrak M$ is given by $\lambda=(p_rdr+p_zdz-\hat \epsilon d(p_r\partial_rV+p_z\partial_zV))|_{\mathfrak M}$, where $\hat \epsilon>0$ is fixed sufficiently small, see \cite{HZ94}. Hence, $\bar \lambda := (\pi_0^{-1})^*(\lambda|_{\Sigma_0})=p_rdr-\hat \epsilon d(p_r\partial_rV(r,0))$, which is a smooth primitive of $dp_r\wedge dr$, and well-defined in a neighborhood of $\Upsilon\subset \C$.

By changing coordinates using Moser's trick,  we identify $(\Upsilon,dp_r\wedge dr)\equiv (\mathbb D,\omega_0'=(T_e/\pi)\omega_0)$, where $\omega_0=dx\wedge dy$ and $z=x+iy\in \mathbb D$.
Write $\psi_s(r,\theta)=(R_s(r,\theta),\Theta_s(r,\theta))$ in polar coordinates near $\partial \D$, where $\Theta_s$ is a real-value function and $\Theta_s|_{\partial\D}=\Theta_s(1,\cdot)$ is determined by $\Theta_0(1,\cdot)=\mathrm{Id}$. For every $k\in \Z$, we choose the isotopy $\psi_{s,k}=e^{2\pi k is}\circ \psi_s:(r,\theta)\mapsto (R_s,\Theta_s+2\pi ks),s\in[0,1]$, whose boundary rotation number is given by $(\rho_e-1)^{-1}+k$. Hence, $|\Theta_{1,k}(1,\theta)-2\pi \lfloor (\rho_e-1)^{-1}+k\rfloor-\theta|<2\pi$ for every $\theta\in \mathbb R$.
Let $k_0:=\lfloor (\rho_e-1)^{-1} \rfloor$.
Since the isotopy $\psi_{s,-k_0},s\in[0,1]$, admits boundary rotation number $\mathrm{Rot}(\psi_{s,-k_0})=(\rho_e-1)^{-1}-k_0\in(-1,1)$, we obtain from Lemma \ref{lem: extension 0} a smooth map $\psi_{\epsilon}:\D_{1+\epsilon}\rightarrow \D_{1+\epsilon}$ so that $\psi_{\epsilon}|_{\D}=\psi_{1,-k_0}=\psi$ and $\psi_{\epsilon}\equiv \mathrm{Id}$ near $\partial \mathbb D_{1+\epsilon}$ for every $\epsilon>0$ sufficiently small, and $\tau_\infty-T_ek_0$ is the mean action associated with the one-form $\alpha_0 =T_e r^2 d\theta/(2\pi)$ and the isotopy $\psi_{s,-k_0},s\in[0,1]$, so that $(\tau_\infty-T_ek_0)|_{\partial \D}=T_e\mathrm{Rot}(\psi_{s,-k_0})\in (-T_e,T_e)$. Indeed, the mean action does not depend on the choice of the primitive. Moreover, we also obtain from Lemma \ref{lem: extension 0} the mean action $\tilde \tau_\infty:\mathbb{D}_{1+\epsilon}\rightarrow \R$ of $\psi_\epsilon$ satisfying $\tilde \tau_\infty|_{\mathbb D}=\tau_\infty-T_ek_0$. In particular, $\tilde \tau_\infty|_{\partial \mathbb D}=T_e\mathrm{Rot}(\psi_{s,-k_0})$ and $\tilde \tau_\infty\equiv 0$ near $\partial \mathbb D_{1+\epsilon}$.

Extend the isotopy $\psi_{s,-k_0},s\in[0,1]$, to an isotopy $\psi_{\epsilon,s}:\D_{1+\epsilon}\rightarrow \D_{1+\epsilon},s\in[0,1]$, continuously, so that each $\psi_{\epsilon,s}$ is a homeomorphism, $\psi_{\epsilon,0}=\mathrm{Id}$ and $\psi_{\epsilon,1}=\psi_\epsilon$. Using the relation $\psi_{\epsilon,s+1} = \psi_{\epsilon,s} \circ \psi_{\epsilon,1}, s\in \R,$ we can extend $\psi_s$ to an isotopy defined for every $s\in \R$. Denote by $w_\infty^\epsilon(z,w),z\neq w\in \D_{1+\epsilon}$, the mean relative winding number of the isotopy
$\psi_{\epsilon,s},s\in[0,1]$, as in \eqref{equ: winding number}. In particular, $w_\infty^\epsilon(z,w)+k_0 =w_\infty(z,w)$ for almost every $z\neq w\in \D$. Applying Lemma~\ref{lem: Fathi and Bechara}, we conclude that
\begin{equation}\label{equ: integral splitting}
\begin{aligned}
&\quad\ \tau_\infty(z)=\tilde\tau_\infty(z)+T_ek_0=\frac{1}{(1+\epsilon)^2} \int_{\mathbb{D}_{1+\epsilon}} w_\infty^\epsilon(z,w)\omega'_0(w)+T_ek_0\\
&= \frac{1}{(1+\epsilon)^2} \left(\int_{\mathbb{D}} w_\infty(z,w)\omega'_0(w) + \int_{\mathbb{D}_{[1,1+\epsilon]}} w_\infty^\epsilon(z,w)\omega'_0(w)\right)+\frac{\epsilon(2+\epsilon)}{(1+\epsilon)^2}T_ek_0,\quad \forall z\in \mathbb{D}.
\end{aligned}
\end{equation}
Since $\mathbb{D}_{[1,1+\epsilon]}$ is $\psi_{\epsilon,s}$-invariant, the arguments of $\psi_{\epsilon,s}(w)-\psi_{\epsilon,s}(z)$ and $\psi_{\epsilon,s}(w)=\psi_{\epsilon,s}(w)-0$ differ by at most $\pi/2$ for every $(z,w)\in \mathbb D\times \mathbb{D}_{(1,1+\epsilon]}$, and $s\in\mathbb R$. Then  $w^\epsilon_\infty(z,w)=w^\epsilon_\infty(0,w)$ due to the definition of the mean relative winding number.

To estimate $w^\epsilon_\infty(0,w)$, we consider the generating function $W_1(R_1^\epsilon,\theta)$ of the extended map $\psi_\epsilon(r,\theta)=(R^\epsilon_1(r,\theta),\Theta^\epsilon_1(r,\theta))$ in polar coordinates, which coincides with $(R_1(r,\theta),\Theta_1(r,\theta)-2\pi k_0)$ on $\D$, and satisfies
$$
dW_1=\frac{(R_1^\epsilon)^2}{2}d\Theta^\epsilon_1-\frac{r^2}{2}d\theta+d\left(\frac{(R_1^\epsilon)^2}{2}(\theta-\Theta^\epsilon_1)\right) =R_1^\epsilon(\theta-\Theta^\epsilon_1)dR_1^\epsilon+\left(\frac{(R_1^\epsilon)^2}{2}-\frac{r^2}{2}\right)d\theta.
$$
In particular, $W_1$ satisfies $R_1^\epsilon(\theta-\Theta^\epsilon_1)=\partial_{R_1^\epsilon }W_1$ and $\frac{1}{2}((R_1^\epsilon)^2-r^2)=\partial_\theta W_1$. Due to the proof of Lemma \ref{lem: extension 0}, $W_1$ is $C^1$-close to the function $G_\epsilon(R_1^\epsilon,\theta)=u(R_1^\epsilon)(\theta-\Theta^\epsilon_1(1,\theta))$ on $\D_{[1,1+\epsilon]}$ satisfying $\|W_1-G_\epsilon\|_{C^1}+\|\partial_{R_1^\epsilon\theta}^2W_1-\partial_{R_1^\epsilon\theta}^2G_\epsilon\|_{C^0}<\delta$ for any apriori given $\delta>0$, where $u$ is a smooth function on $[1,1+\epsilon]$ so that $u(R_1^\epsilon)=\frac{1}{2}((R_1^\epsilon)^2-1)$ near $R_1^\epsilon=1$, $u(R_1^\epsilon)\equiv 0$ near $1+\epsilon$ and $u'(R_1^\epsilon)\in[-c,1+\epsilon]$ for some $c\in(0,1)$. We obtain the following estimate on $\mathbb{D}_{[1,1+\epsilon]}$
$$
\begin{aligned}
|\Theta^\epsilon_1(r,\theta)-\theta| & =\frac{|\partial_{R_1^\epsilon}W_1(R_1^\epsilon(r,\theta),\theta)|}{R_1^\epsilon(r,\theta)}\leq |u'(R_1^\epsilon)(\theta-\Theta^\epsilon_1(1,\theta))|+\delta\\
& \leq (1+\epsilon)|\theta-\Theta^\epsilon_1(1,\theta)|+\delta=(1+\epsilon)|\theta-\Theta_1(1,\theta)+2\pi k_0|+\delta\leq 4\pi.
\end{aligned}
$$
This implies that $|w^\epsilon_\infty(z,w)|=|w^\epsilon_\infty(0,w)|\leq 2$ for every $(z,w)\in \mathbb{D}\times\mathbb{D}_{(1,1+\epsilon]}$. Thus, as $\epsilon\rightarrow 0$, we obtain \eqref{equ: generalize Bechara} from the inequality \eqref{equ: integral splitting}. The proof is complete.
\end{proof}

After changing coordinates using Moser's trick,  we identify $(\Upsilon,dp_r\wedge dr)\equiv (\mathbb D,\omega_0'=(T_e/\pi)\omega_0)$, where $\omega_0=dx\wedge dy$ and $z=x+iy\in \mathbb D$. By Proposition \ref{prop: smooth_map}, the first return map $\psi=\psi(x,y)$ associated with $\Sigma_0\subset \mathfrak M$ is a smooth diffeomorphism preserving $\omega_0'$. Moreover $\psi = \psi_1$, where $\psi_s,s\in \R,$ is a smooth family of symplectic diffeomorphisms of $\D$, each representing the first hitting map from $\Sigma_0$ to $\Sigma_s$, see Remark \ref{rem: smooth of psi_s}. In particular, the points in $\partial \D$ admit a rotation number with respect to $\psi_s,s\in[0,1]$, given by $(\rho_e-1)^{-1}$, see \cite[Lemma 7.7]{HLOSY2023}. By Theorem \ref{thm: main2}, we have $\mathrm{vol}(\mathfrak M)>T_e^2(\rho_e-1)^{-1}$. Recall that $\tau := \tau_{0,1}\circ \pi_0^{-1}:\D \to [0,+\infty)$ is the first return Reeb time (action) associated with the disk-like global surface of section $\Sigma_0\subset \mathfrak M$. Notice that $\tau$ satisfies $d\tau =\psi^*\bar \lambda- \bar \lambda$, where $\bar \lambda := (\pi_0^{-1})^*(\lambda|_{\Sigma_0})=p_rdr-\hat \epsilon d(p_r\partial_rV(r,0))$, which is a smooth primitive of $\omega_0'$, see the proof of Lemma \ref{lem: generalize Bechara}.

To apply Hutchings' mean action theorem, we need some normalization. Instead of $\omega_0'$, we consider the area-form $\omega_0$ on $\D$, then the primitive $\bar \lambda$ of $\omega_0'$ and the action $\sigma$ get multiplied by $\pi/T_e$. Thus the Calabi invariant of $\psi_s, s\in [0,1],$ satisfies
$$
{\rm CAL}(\psi_s) :=\frac{1}{\pi}\int_\D \frac{\pi}{T_e}\tau \omega_0= \frac{1}{T_e} \int_\D \tau \omega_0= \frac{\pi}{T_e^2} \int_\D \tau \omega_0'=\frac{\pi}{T_e^2} {\rm vol}(\mathfrak M)>\pi \frac{1}{\rho_e-1}=\pi{\rm Rot}(\psi_s).
$$
Therefore, Theorem \ref{thm: Hutchings2016} implies that for every $\epsilon>0$ small, there exists a periodic point $z_{1,\epsilon}\in \mathbb D$ with prime period $k_1$ under $\psi$, such that
$$
\frac{\pi}{T_e}\tau_\infty(z_{1,\epsilon})>\frac{\pi}{T_e^2} (\mathrm{vol}(\mathfrak M)-\epsilon)>\pi \frac{1}{\rho_e-1},
$$ for every $\epsilon>0$ sufficiently small. Recall that $\tau_\infty|_{\partial \D}$ is a constant given by $T_e(\rho_e-1)^{-1}.$ In particular, $z_{1,\epsilon} \in \D \setminus \partial \D$.

Lemma \ref{lem: generalize Bechara} gives
$$
\tau_\infty(z_{1,\epsilon})=\int_{\mathbb D} w_\infty(z_{1,\epsilon},z_2) \omega_0'(z_2)>\frac{{\rm vol}(\mathfrak M) - \epsilon}{T_e}.
$$
Hence, there exists $\hat z_{2,\epsilon}\in\mathbb{D}\setminus\{z_{1,\epsilon}\}$, not necessarily periodic, such that $w_\infty(z_{1,\epsilon},\hat z_{2,\epsilon})$ is well-defined and $$w_\infty(z_{1,\epsilon},\hat z_{2,\epsilon})>\frac{\mathrm{vol}(\mathfrak M)-\epsilon}{T_e^2}.$$ Since $({\rm vol}(\mathfrak M) -\epsilon)/T_e^2>(\rho_e-1)^{-1}=w_\infty(z_{1,\epsilon},w),\forall w\in \partial \D$, for every $\epsilon>0$ sufficiently small, we may assume that $\hat z_{2,\epsilon} \in \D \setminus (\partial \D\cup \{z_{1,\epsilon}\})$. Since the orbit of $z_{1,\epsilon}$ has measure zero, we may assume that $\hat z_{2,\epsilon}$ does not lie in the orbit of $z_{1,\epsilon}$.

Finally, we consider the iterated map $\psi^{k_1}$, which makes $z_{1,\epsilon}$ a fixed point, and consider the isotopy $\psi_{k_1s},s\in[0,1]$, of $\psi^{k_1}$, under which the points in $\partial \D$ admit a rotation number given by $k_1(\rho_e-1)^{-1}$. Define the mean relative winding number $w_\infty(z,w,\psi_{k_1s}), z\neq w\in \D$ as in \eqref{equ: winding number} with $\psi_s$ replaced with $\psi_{k_1s}$, and satisfying
$$\begin{aligned}
w_\infty(z,w,\psi_{k_1s}) &= k_1(\rho_e-1)^{-1}, \forall z\in \D \setminus \partial \D, w\in \partial \D, \ \ \mbox{ and } \\
w_\infty(z,w,\psi_{k_1s}) &= k_1 w_\infty(z,w),\ \mathrm{a.e.}\ z\neq w\in \D.
\end{aligned}$$
We obtain
$$
w_\infty(z_{1,\epsilon},\hat z_{2,\epsilon},\psi_{k_1s})=k_1 w_\infty(z_{1,\epsilon},\hat z_{2,\epsilon})> k_1 \frac{{\rm vol}(\mathfrak M)-\epsilon}{T_e^2} >k_1 \frac{1}{\rho_e-1} = w_\infty(z_{1,\epsilon},w,\psi_{k_1s}),\quad \forall w\in \partial \D.
$$

Choose a smooth family of area-preserving diffeomorphism $h_s:\D \to \D,s\in \R$, such that $h_{s+1}=h_s$, $h_0=\mathrm{Id}$, $h_s(\psi_{k_1s}(z_{1,\epsilon}))=z_{1,\epsilon}$ and $h_s$ is the identity on a uniform neighborhood of $\partial \D$. Consider the smooth family of area-preserving diffeomorphisms $\hat \psi_{s}:=h_s\circ \psi_{k_1s},s\in \R,$ which is also an isotopy of $\hat \psi_1 =\psi_{k_1}= \psi^{k_1}$,  starting from the identity and satisfying $\hat \psi_{s+1} = \hat \psi_s \circ \hat \psi_1$ for every $s\in \R$. By construction the isotopies $\hat \psi_s$ and $\psi_{k_1s}$ are in the same homotopy class and $\hat \psi_s(z_{1,\epsilon})=z_{1,\epsilon}$ for every $s\in \R$. Hence, the definition of the relative winding number \eqref{equ: winding number} gives
$$\begin{aligned}
w_\infty(z_{1,\epsilon},\hat z_{2,\epsilon},\hat \psi_{s})&=w_\infty(z_{1,\epsilon},\hat z_{2,\epsilon},\psi_{k_1s})=k_1 w_\infty(z_{1,\epsilon},\hat z_{2,\epsilon})>k_1\frac{\mathrm{vol}(\mathfrak M)-\epsilon}{T_e^2},\\
w_\infty(z_{1,\epsilon},w,\hat \psi_{s})&=w_\infty(z_{1,\epsilon},w,\psi_{k_1s})=k_1 \frac{1}{\rho_e-1}, \quad \forall w\in \partial \D.
\end{aligned}$$

Blowing up the family $\hat \psi_s, s\in \R,$ at $z_{1,\epsilon}$, we obtain a unique continuous family of area-preserving annulus homeomorphisms $f_s:\R/ \Z \times [0,1] \to \R / \Z \times [0,1],s\in\R,$ which uniquely lifts to a continuous family of area-preserving homeomorphisms $\tilde f_s: \R \times [0,1] \to \R \times [0,1]$ satisfying $f_0 = {\rm Id}$. Moreover, $\R/\Z \times \{0\} \equiv z_{1,\epsilon},$ $\R/\Z \times \{1\} \equiv \partial \D$, and the (horizontal) rotation number of $ \hat z_{2,\epsilon} \equiv \tilde z_{2,\epsilon}\in \R \times (0,1)$ and $w\in \R \times \{1\}$ with respect to $\tilde f_s$ coincides with $ k_1 w_\infty(z_{1,\epsilon},\hat z_{2,\epsilon})$ and $k_1 (\rho_e-1)^{-1}$, respectively.

The following theorem by Franks is crucial to find periodic orbits in the complement of $z_{1,\epsilon}$ with prescribed relative winding numbers.

\begin{thm}[{Franks \cite{ Franks1988a, Franks1988b, Franks03}}]\label{thm:  Franks}
Consider the annulus $\mathbb A:=\R/\Z\times [0,1]$, equipped with the standard area form $\bar \omega_0:=du\wedge dv$ in coordinates $(u,v)\in \mathbb A$.
Let $f:\mathbb A \to \mathbb A$ be a homeomorphism, isotopic to the identity map and preserving $\bar \omega_0$. Let $\tilde f_s:\R \times [0,1]\to \R \times  [0,1],s\in[0,1],$ be an isotopy of $f$ so that $\tilde f_0=\mathrm{Id}$ and $\tilde f_1$ is a lift of $f$. Extend $\tilde f_s$ to an isotopy $\tilde f_s, s\in \R,$ satisfying $f_{s+1} = f_s \circ f_1$ for every $s\in \R$. Given $x\in \mathbb A$, let $\mathrm{Rot}(x,\tilde f_s):=\lim_{s\rightarrow +\infty}P_1(\tilde f_s(\tilde x)-\tilde x)/s\in \R$ be the (horizontal) rotation number of $x\in \mathbb A$ with respect to the isotopy $\tilde f_s,s\in[0,1]$, where $\tilde x\in \R\times [0,1] $ is any lift of $x$, and $P_1: \R \times [0,1] \to \R$ is the projection into the first factor. If there exist $x_0,x_1\in \mathbb{A}$ and $p/q\in \mathbb{Q}$ such that
$\mathrm{Rot}(x_0,\tilde f_s) \leq  p/q \leq \mathrm{Rot}(x_1,\tilde f_s),$ then $f$ has at least two distinct periodic orbits of period $q$ and rotation number $p/q$. Moreover, the set of numbers $\{\mathrm{Rot}(x,\tilde f_s): x\in \mathcal P_f\}$ includes all rational numbers in the  interval $[\min \mathrm{Rot}(\cdot,\tilde f_s),\max \mathrm{Rot}(\cdot,\tilde f_s)]$. Here, $\mathcal P_f$ denotes the set of periodic points of $f$.
\end{thm}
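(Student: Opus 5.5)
The statement is Franks' generalization of the Poincaré--Birkhoff theorem and is quoted from \cite{Franks1988a, Franks1988b, Franks03}. If I had to reprove it, I would follow Franks' strategy. The plan has four steps: reduce to rotation number zero, show that a fixed-point-free lift forces all rotation numbers to one side of zero, count fixed points with the Lefschetz index, and then derive the ``moreover'' clause.

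\emph{Reduction.} Replace $f$ by $g:=f^q$ and the lift by $\tilde g:=T^{-p}\circ \tilde f_1^{\,q}$, where $T(u,v)=(u+1,v)$. Rotation numbers then transform by $\mathrm{Rot}(x,\tilde g)=q\,\mathrm{Rot}(x,\tilde f_s)-p$. The hypothesis becomes $\mathrm{Rot}(x_0,\tilde g)\le 0\le \mathrm{Rot}(x_1,\tilde g)$. It therefore suffices to find points $x$ with $\tilde g(\tilde x)=\tilde x$: these project to periodic points of $f$ of period dividing $q$ and with rotation number $p/q$. After passing to lowest terms and using the standard argument, the period is exactly $q$. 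To work with open surfaces, I would first extend $f$ to the open annulus, either by doubling along the boundary circles or by adding collars on which the map is a rigid rotation matching the boundary behaviour. This keeps the map area-preserving and leaves rotation numbers unchanged.

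\emph{Core step.} Suppose $\tilde g$ has no fixed point. Then $\tilde g$ is a fixed-point-free, orientation-preserving homeomorphism of the strip $\R\times(0,1)\cong\R^2$, so Brouwer's plane translation theorem applies. Its consequence, Franks' lemma, says there is no periodic chain of free disks for $\tilde g$. Area preservation and Poincaré recurrence make every point of $\mathbb A$ non-wandering, in fact chain recurrent. Suppose there were a recurrent point with non-negative displacement and another with non-positive displacement. Following Franks, I would concatenate $\epsilon$-chains for $g$ between their neighbourhoods and lift them, producing a periodic free disk chain for some $T^{k}\circ\tilde g$ or for $\tilde g$ itself, which is a contradiction. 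Hence either all rotation numbers are $>0$ or all are $<0$, contradicting $x_0,x_1$. This yields one fixed point of $\tilde g$. Extracting the strict-sign conclusion when $0$ is attained only at an endpoint (for example $\mathrm{Rot}(x_0)=0$) is the delicate part and the main obstacle. There I would use the version of Franks' lemma for chain-transitive maps, which shows that a fixed-point-free lift forces a uniform positive or negative drift.

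\emph{Second orbit.} The fixed points of $\tilde g$ project to a compact set of fixed points of $g$, and their total Lefschetz index equals $\chi(\mathbb A)=0$. If only one periodic orbit existed, it would be isolated with index zero. A small area-preserving perturbation supported near it could then remove it while keeping the rotation numbers of $x_0$ and $x_1$ on either side of $0$, up to a small shift that I would handle by slightly tilting $p/q$ or by an approximation argument. This contradicts the core step. Hence there are at least two distinct periodic orbits of period $q$ and rotation number $p/q$.

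\emph{Moreover clause.} Take ergodic invariant measures realizing $\min$ and $\max$ of the rotation numbers; the extrema exist by compactness of the space of invariant measures. Generic points of these measures give $x_0$ and $x_1$, and the first part then produces periodic points for every rational number in $[\min,\max]$.
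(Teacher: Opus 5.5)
\textbf{The paper's side and the part of your outline that works.} The paper gives no proof of this statement. It quotes it from Franks and applies it only with $p/q$ strictly between two rotation numbers. Your reduction to $p/q=0$ via $\tilde g=T^{-p}\circ\tilde f_1^{\,q}$ follows Franks' route, as does your use of Brouwer theory and free disk chains to get one fixed point of $\tilde g$, and your proof of the ``moreover'' clause via extremal ergodic measures. The endpoint case for \emph{one} orbit does go through along the lines you indicate:
\begin{itemize}
\item area preservation on the connected annulus gives chain transitivity;
\item if $\tilde g$ had no fixed point, Franks' lemma would forbid closed $\epsilon$-chains of $\tilde g$;
\item chain transitivity then forces all closed $\epsilon$-chains of $g$ to lift with integer displacements of a single common sign;
\item a cycle decomposition over a cover of $\mathbb A$ by $M$ small balls gives $|\mathrm{Rot}(x,\tilde g)|\ge 1/M$ for every $x$.
\end{itemize}

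\textbf{The gap is the second-orbit step.} With non-strict inequalities the two-orbit conclusion is false, so no argument can prove it when $p/q$ equals $\mathrm{Rot}(x_0,\tilde f_s)$ or $\mathrm{Rot}(x_1,\tilde f_s)$. Let $f$ be the time-one map of the Hamiltonian flow of $H(u,v)=\frac{v^2}{2}+\epsilon v(1-v)\sin^2(\pi u)$ with $0<\epsilon<\frac12$.
\begin{itemize}
\item Both boundary circles are invariant, and $\partial_vH>0$ on $\mathbb A\setminus\{(0,0)\}$.
\item Hence each level set $\{H=c\}$ with $0<c\le\frac12$ is an essential closed orbit of the flow, and its points have rotation number $1/T(c)>0$.
\item The points of $\{v=0\}\setminus\{(0,0)\}$ converge to the unique zero $(0,0)$ and are not periodic.
\end{itemize}
With $p/q=0$ and $x_0=(0,0)$ the hypothesis holds, but there is exactly one periodic orbit of rotation number $0$.

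Your perturbation argument breaks exactly here. The orbits realizing the extreme rotation number can be, or can accumulate on, the orbit you want to remove. Their rotation numbers are then not preserved by a perturbation supported near it, and ``tilting $p/q$'' changes the problem.

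Even under strict inequalities the step is unjustified. Removing an isolated index-zero fixed point by a small \emph{area-preserving} perturbation of a homeomorphism is not a standard tool, and rotation numbers of individual points are not stable under such perturbations. Also, the relevant index count is that of the fixed point class of $\tilde g$, not the Lefschetz number of $g$, which also counts fixed points of the other lifts $T^k\circ\tilde g$. The class index is still $0$, by following the isotopy $T^{-p}\circ\tilde f_{qs}$ back to $T^{-p}$.

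\textbf{The missing idea.} Brouwer theory gives more than a fixed point. Strict inequalities $\mathrm{Rot}(x_0,\tilde g)<0<\mathrm{Rot}(x_1,\tilde g)$ give positively and negatively returning free disks, via ergodic decomposition and recurrence. Franks' lemma, in its index form, then produces a simple closed curve on which $\tilde g$ has index $1$. Suppose only one periodic orbit of rotation number $p/q$ existed.
\begin{itemize}
\item Its $q$ points are permuted by $f$, whose lift commutes with $\tilde g$ and preserves orientation, so all $q$ points have the same index $i$.
\item The class index $qi=0$ then forces $i=0$.
\item This contradicts the index-$1$ curve.
\end{itemize}
This is where strictness enters, and it should replace your perturbation step.

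A minor point: collars on which the map is a rigid rotation cannot in general match the boundary circle maps. Interpolating collars can also create fixed points of the lift outside $\mathbb A$. Doubling by reflection along the boundary avoids both problems.
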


By Theorem \ref{thm:  Franks}, every rational number $p/q$ satisfying
$$
k_1\cdot \frac{p}{q}\in k_1\cdot \left(\frac{1}{\rho_e-1}, w_\infty(z_{1,\epsilon},\hat z_{2,\epsilon})\right) \supset k_1\cdot\left(\frac{1}{\rho_e-1}, \frac{{\rm vol}(\mathfrak M)-\epsilon}{T_e^2}\right),
$$
is realized by some $\tilde z_{2,\epsilon}'\in \R \times (0,1)$, which projects to a $q$-periodic point $z_{2,\epsilon}\in \mathbb A\setminus \partial \mathbb A$ under $\psi^d$. This point corresponds to a $k_1q$-periodic point of $\psi$, still denoted $z_{2,\epsilon}\in \D \setminus (\partial \D\cup \{z_{1,\epsilon}\})$. Also, $z_{2,\epsilon}$ corresponds to a periodic orbit $\zeta_{2,\epsilon}\subset \mathfrak M\setminus \zeta_e$ which is geometrically distinct to the periodic orbit $\zeta_{1,\epsilon}\subset \mathfrak M\setminus \zeta_e$ corresponding to $z_{1,\epsilon}$.  Moreover,
$$
\begin{aligned}
w_\infty(z_{1,\epsilon}, z_{2,\epsilon}) & = \frac{1}{k_1}w_\infty(z_{1,\epsilon},z_{2,\epsilon}, \psi_{k_1s}) = \frac{1}{k_1}w_\infty(z_{1,\epsilon},z_{2,\epsilon}, \hat \psi_{s})\\ &  = \frac{1}{k_1}{\rm Rot}(\tilde z_{2,\epsilon}',\tilde f_s)=\frac{p}{q}\in \left(\frac{1}{\rho_e-1}, w_\infty(z_{1,\epsilon}, \hat z_{2,\epsilon})\right)\supset\left(\frac{1}{\rho_e-1}, \frac{{\rm vol}(\mathfrak M)-\epsilon}{T_e^2}\right).
\end{aligned}
$$
The proof of Theorem \ref{thm: main4} is complete by taking $\epsilon \to 0^+$.

\section{Proof of Theorem \ref{thm: main5}} \label{sec: unbounded}

Consider the Hamiltonian $H=H(p_r,p_z,r,z)$ as in \eqref{equ: Ham1}.
Hamilton's equations are
\[
\left\{\begin{aligned}
 \dot r = p_r,\qquad
 \dot p_r & = \frac{\varpi^2}{r^3}-\frac{1}{r^2}
            -\frac{4r}{\alpha\sqrt{(r^2+(1+2\alpha)z^2)^3}},\\
 \dot z = p_z,\qquad
 \dot p_z & = -\frac{4(1+2\alpha)z}{\alpha\sqrt{(r^2+(1+2\alpha)z^2)^3}}.
\end{aligned}\right.
\]

We introduce McGehee-type coordinates adapted to the region $z\gg 1$ by setting
$$
(x,y,u,v)
=
\Big(
\Big(\frac{8}{\alpha\sqrt{1+2\alpha}\,z}\Big)^{1/2},
\ p_z,\
\frac{\varpi}{r}-\frac{1}{\varpi},\
p_r
\Big)\in \R_{>0}\times\R^3,
\qquad
\frac{dt}{d\mathfrak t}=K:=\frac{16}{\alpha\sqrt{1+2\alpha}}.
$$
In these variables one has $r=r(u)=\varpi^2/(1+u\varpi)>0$, and the energy constraint
$H=-1$ becomes
\[
-1=\frac{y^2+u^2+v^2}{2}-\frac{1}{2\varpi^2}
     -\frac{x^2}{2\sqrt{\frac{\alpha^2}{64}\,r(u)^2\,x^4+1}}.
\]

The equations of motion take the form
\begin{equation}\label{equ: Ham system 2-new}
\left\{\begin{aligned}
\frac{dx}{d\mathfrak t}&=-x^3y,\\
\frac{dy}{d\mathfrak t}&=-\Big(\frac{\alpha^2}{64}r(u)^2x^4+1\Big)^{-\frac{3}{2}}x^4,\\
\frac{du}{d\mathfrak t}&=-\frac{\varpi K}{r(u)^2}\, v,\\
\frac{dv}{d\mathfrak t}&=\frac{\varpi K}{r(u)^2}\, u
-\Big(\frac{\alpha^2}{64}r(u)^2x^4+1\Big)^{-\frac{3}{2}}\frac{\alpha r(u)x^6}{8\sqrt{1+2\alpha}}.
\end{aligned}\right.
\end{equation}
By reversibility, if $(x,y,u,v)(\mathfrak t)$ solves \eqref{equ: Ham system 2-new},
then $(x,-y,u,-v)(-\mathfrak t)$ also solves \eqref{equ: Ham system 2-new}.

In polar coordinates $(u,v)=(\rho\cos\theta,\rho\sin\theta)$ the energy constraint
can be written as
\begin{equation}\label{equ: initial condition-new}
y^2-x^2+\rho^2
=g_x(\rho,\theta)
:=\frac{1-2\varpi^2}{\varpi^2}
 -x^2\left(1-\frac{1}{\sqrt{\frac{\alpha^2}{64}\,r(\rho\cos\theta)^2\,x^4+1}}\right).
\end{equation}
Moreover, \eqref{equ: Ham system 2-new} implies
\begin{equation}\label{equ: Ham system 3-new}
\left\{\begin{aligned}
\frac{d\rho}{d\mathfrak t}
&= -\Big(\frac{\alpha^2}{64}r(\rho\cos\theta)^2x^4+1\Big)^{-\frac{3}{2}}
\frac{\alpha\,r(\rho\cos\theta)\,x^6\sin\theta}{8\sqrt{1+2\alpha}},\\[0.3em]
\frac{d\theta}{d\mathfrak t}
&= \frac{\varpi K}{r(\rho\cos\theta)^2}
-\Big(\frac{\alpha^2}{64}r(\rho\cos\theta)^2x^4+1\Big)^{-\frac{3}{2}}
\frac{\alpha\,r(\rho\cos\theta)\,x^6\cos\theta}{8\sqrt{1+2\alpha}\,\rho}.
\end{aligned}\right.
\end{equation}

The invariant ``sphere at infinity'' corresponds to $x=0$. Restricting
\eqref{equ: initial condition-new} to $x=0$ gives
$$
\mathcal S_{+\infty}
=
\left\{x=0,\ y^2+\rho^2=\rho_\infty^2\right\},
\qquad
\rho_\infty^2:=\frac{1-2\varpi^2}{\varpi^2}>0.
$$
Its equator $y=0$ is the periodic orbit
$$
\zeta_{+\infty}
=
\{(x,y,u,v)=(0,0,\rho_\infty\cos\theta,\rho_\infty\sin\theta):
\ \theta\in\R/2\pi\Z\}.
$$

Since $d\theta/d\mathfrak t>0$ in a neighborhood of $\zeta_{+\infty}$,
we may use $\theta$ as a local transverse coordinate. For $\delta>0$ small,
let $\mathbb D_\delta^+:=\{(x,y)\in\mathbb D_\delta:x\ge 0\}$ be a standard half disk with radius $\delta>0$ and define
$$
W^\delta_\theta:=
\left\{(x,y,u,v)\in\mathfrak M: (x,y)\in\mathbb D_\delta^+, (u,v)=(\rho\cos\theta,\rho\sin\theta)\ \text{for some }\rho\ge 0\right\},
\quad \theta\in\R/2\pi\Z.
$$
Then $W_\theta^\delta$ is a smooth embedded surface transverse to the flow,
and $(x,y)$ gives a smooth chart on $W^\delta_\theta$.

\begin{prop}[{McGehee \cite[Theorem 1, Proposition 11]{McGehee1973}}]\label{prop_McGehee}
The periodic orbit $\zeta_{+\infty}$ admits
analytic local stable and unstable manifolds
$
W^{\rm s}_{\rm loc}(\zeta_{+\infty})$ and $
W^{\rm u}_{\rm loc}(\zeta_{+\infty}),$
contained in $\{z>1/\delta\}$ for $\delta>0$ sufficiently small.
Moreover, for every $\theta\in\R/2\pi\Z$,
the intersection
$W^{\rm s,u}_{\rm loc}(\zeta_{+\infty})\cap W^\delta_\theta$
is the graph of a real-analytic function
$y=\bar y^{\rm s,u}_\theta(x)$,
defined for $x\in[0,\delta_1]$, for some $0<\delta_1<\delta$,
with
$(\bar y^{\rm s}_\theta)'(0)=1$ and
$(\bar y^{\rm u}_\theta)'(0)=-1.$
\end{prop}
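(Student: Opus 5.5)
Following McGehee's analysis, I will reduce the statement to his degenerate stable manifold theorem applied to the periodic orbit $\zeta_{+\infty}$ in the coordinates $(x,y,u,v)$ of \eqref{equ: Ham system 2-new}. The first step is to use $\theta$ as the new time near $\zeta_{+\infty}$, which is allowed since $d\theta/d\mathfrak t>0$ there by \eqref{equ: Ham system 3-new}. I will also use the energy relation \eqref{equ: initial condition-new} to eliminate $\rho$ as a real-analytic function $\rho=\rho(x,y,\theta)$ near $(x,y)=(0,0)$. Near $x=0$ we have $g_x=\rho_\infty^2+O(x^6)$, so $\rho^2=\rho_\infty^2+x^2-y^2+O(x^6)$ and the implicit function theorem applies. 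This produces a $2\pi$-periodic, nonautonomous, real-analytic planar system in $(x,y)$:
$$
\frac{dx}{d\theta}=-c(\theta)x^3y\,(1+O(|x|^2)),\qquad \frac{dy}{d\theta}=-c(\theta)x^4(1+O(|x|^2)),
$$
where $c(\theta)=r(u)^2/(\varpi K)>0$ is evaluated along $\zeta_{+\infty}$ plus higher order terms. The line $x=0$ is invariant and consists of fixed points of the time-$2\pi$ map, so the origin is degenerate, with the linear part of the period map equal to the identity.

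Next, I will rescale to expose the hyperbolic-like structure. Passing to $\mathfrak s$ with $d\mathfrak s= c(\theta)x^3\,d\theta$ gives leading order $dx/d\mathfrak s=-y$ and $dy/d\mathfrak s=-x$. This saddle has invariant lines $y=\pm x$. The line $y=x$ with $x>0$ is the stable one, along which $x$ decreases, i.e. $z\to+\infty$ forward in time. The line $y=-x$ is the unstable one. This matches the claimed slopes $(\bar y^{\rm s}_\theta)'(0)=1$ and $(\bar y^{\rm u}_\theta)'(0)=-1$. The $\theta$-dependence enters only through $c(\theta)$ and the higher-order terms, which are $2\pi$-periodic.

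The core step is to invoke McGehee's theorem \cite[Theorem 1]{McGehee1973}. That theorem treats a degenerate fixed point of a real-analytic map of the form $x\mapsto x-x^3(y+O)$, $y\mapsto y-x^4(\dots)$ after the substitution $q=x$, $p=y$ with the sectors $y\approx\pm x$. Equivalently, it treats periodic systems of the above type, and it yields real-analytic stable/unstable curves tangent to $y=\pm x$ at $x=0$, defined on $[0,\delta_1]$. I will verify the hypotheses of \cite[Proposition 11]{McGehee1973} for the Poincaré map of $W^\delta_0$ and then transport the result to every section $W^\delta_\theta$ via the flow. The transport preserves analyticity and tangency because the flow restricted to $x=0$ fixes the line $y=0$ pointwise at leading order, and the variational equation at $(0,0)$ is trivial to first order in $(x,y)$. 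Finally, the curves in $\{x\le\delta_1\}$ correspond to $z\ge 8/(\alpha\sqrt{1+2\alpha}\,\delta_1^2)$, so they lie in $\{z>1/\delta\}$ after shrinking $\delta$.

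I expect the main obstacle to be checking that the nonautonomous higher-order terms satisfy McGehee's precise form. In particular, the $O(x^6)$ coupling through $dv/d\mathfrak t$ and the dependence of $\rho$ on $x$ must not destroy the cone conditions in the sectors $|y\mp x|\le\kappa x$. I would first try averaging over $\theta$: one analytic change of variables removes the $\theta$-dependence of $c(\theta)$ at leading order, after which the system is an $O(x^5)$-perturbation of an autonomous one, and McGehee's cone estimates apply directly.
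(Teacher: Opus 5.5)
Your proposal takes the same route as the paper, which gives no argument of its own and simply cites McGehee's degenerate stable manifold theorem; your reduction (using $\theta$ as time, solving the energy relation for $\rho$, and reading off the slopes $\pm1$ from the leading saddle $dx/d\mathfrak s=-y$, $dy/d\mathfrak s=-x$) is a sound sketch of why that theorem applies, and the averaging step is unnecessary because the time-$2\pi$ map in $\theta$ already has leading part $(x,y)\mapsto(x-\bar c\,x^3y,\;y-\bar c\,x^4)$ with $\bar c=\int_0^{2\pi}c(\theta)\,d\theta$. Some minor corrections:
\begin{itemize}
\item The common factor is $c(\theta)\bigl(1+O(x^2+y^2)\bigr)$, since $\rho$ also depends on $y$.
\item It is the whole line $\{x=0\}$, not $\{y=0\}$, that consists of equilibria of the $\theta$-system. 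This, together with the vanishing linear part at the origin, is what makes transport between sections preserve tangency.
\item McGehee's theorem gives analyticity of the curves only for $x\in(0,\delta_1]$, with continuity and tangency at $x=0$. So analyticity at the endpoint $x=0$ is not actually delivered by the citation, in your sketch or in the paper; it is also never used later.
\end{itemize}
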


Fix $x_0\in(0,\delta_1]$.
Define the transversal disks
$$
\Sigma_{x_0}^\pm
:=\{x=x_0,\ \pm y>0\}\cap\mathfrak M,
$$
which are smooth open disks transverse to the flow of
\eqref{equ: Ham system 2-new}.
By Proposition~\ref{prop_McGehee},
the local stable and unstable manifolds of $\zeta_{+\infty}$
intersect $\Sigma_{x_0}^\pm$ along real-analytic circles
$C^{\rm s}_{x_0}\subset\Sigma_{x_0}^+$ and $
C^{\rm u}_{x_0}\subset\Sigma_{x_0}^-,$
bounding closed disks $D^{\rm s}_{x_0}\subset\Sigma_{x_0}^+$ and
$D^{\rm u}_{x_0}\subset\Sigma_{x_0}^-$, respectively, corresponding to trajectories that escape to $z=+\infty$ (or, equivalently, converge to $x=0^+$) as $\mathfrak t\to \pm \infty$.
Moreover, trajectories starting from the open annulus $A^{\rm s}_{x_0}:=\Sigma_{x_0}^+\setminus D^{\rm s}_{x_0}$
return to $A^{\rm u}_{x_0}:=\Sigma_{x_0}^-\setminus D^{\rm u}_{x_0}$.

In polar coordinates $(u,v)=(\rho\cos\theta,\rho\sin\theta)$, the disk $D^{\rm s}_{x_0}$ is parametrized by
$$
D^{\rm s}_{x_0}: \quad 0 \leq \rho\leq \rho^{\rm s}_{x_0}(\theta), \quad \theta\in \R / 2\pi \Z,$$
where $\rho^{\rm s}_{x_0} = \rho^{\rm s}_{x_0}(\theta)>0$ is a real-analytic function satisfying for $y=\bar y^{\rm s}_\theta(x_0)>0$
$$(\bar y^{\rm s}_\theta(x_0))^2-x_0^2+(\rho^{\rm s}_{x_0}(\theta))^2=g_{x_0}(\rho^{\rm s}_{x_0}(\theta),\theta), \quad \forall \theta \in \R / 2\pi \Z.$$  The annulus $A^{\rm s}_{x_0}$ is then parametrized by
$$
A^{\rm s}_{x_0}: \quad \rho^{\rm s}_{x_0}(\theta) < \rho \leq \rho^{\rm s}_{\rm max}(\theta), \quad \theta \in \R / 2\pi \Z,
$$
where $\rho^{\rm s}_{\rm max}=\rho^{\rm s}_{\rm max}(\theta)>\rho^{\rm s}_{x_0}(\theta)$ is a real-analytic function satisfying for $y=0$
$$-x_0^2+(\rho^{\rm s}_{\rm max}(\theta))^2=g_{x_0}(\rho^{\rm s}_{\rm max}(\theta),\theta), \quad \forall \theta \in \R / 2\pi \Z.$$

For any initial condition
$$
(x_0,y_0,u_0,v_0)\in A^{\rm s}_{x_0},
\qquad
(u_0,v_0)=(\rho_0\cos\theta_0,\rho_0\sin\theta_0),
$$
let $\zeta(\mathfrak t)$ be the corresponding solution of
\eqref{equ: Ham system 2-new}.
Denote by $T_{x_0}(\rho_0,\theta_0)>0$ the first hitting time such that
$\zeta\bigl(T_{x_0}(\rho_0,\theta_0)\bigr)\in \Sigma_{x_0}^-,$
and define the net angular variation
$$
\Theta_{x_0}(\rho_0,\theta_0)
:= \theta\bigl(T_{x_0}(\rho_0,\theta_0)\bigr)-\theta_0,
$$
where $\theta(\mathfrak t)=\arg(u(\mathfrak t)+iv(\mathfrak t))$
is chosen continuously along the trajectory.
Notice that $\Theta_{x_0}(\rho_0,\theta_0)=0$ for $\rho_0=\rho^{\rm s}_{\rm max}(\theta_0)$.

\begin{figure}[hbpt]
    \centering
    \includegraphics[width=0.4\linewidth]{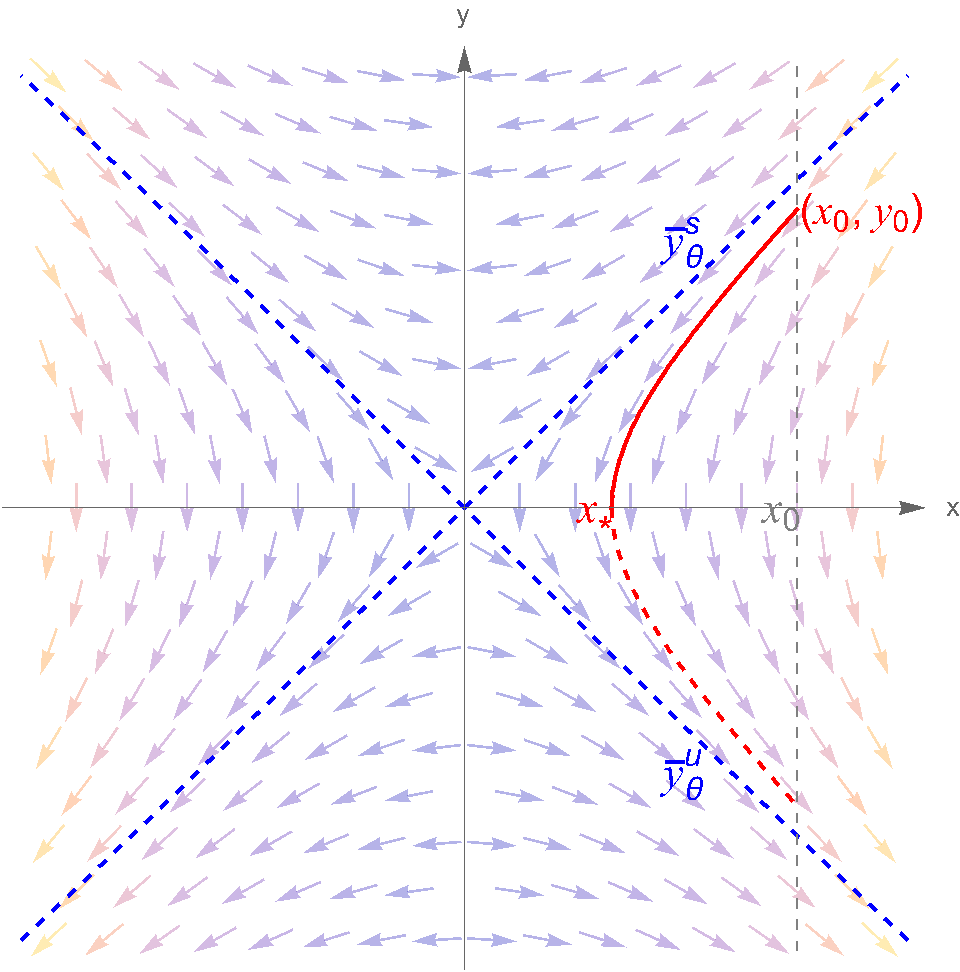}
    \caption{The Hamiltonian vector field of $H$ in $(x,y)$-coordinates.}
    \label{fig:limit-vf}
\end{figure}

\begin{prop}\label{prop: local twist}
Fix $x_0>0$ sufficiently small and $\theta_0\in\R/2\pi\Z$.
Then
$$
T_{x_0}(\rho_0,\theta_0)\to +\infty
\quad\text{and}\quad
\Theta_{x_0}(\rho_0,\theta_0)\to +\infty
\qquad
\text{as }\ \rho_0\to\left(\rho^{\rm s}_{x_0}(\theta_0)\right)^+.
$$
\end{prop}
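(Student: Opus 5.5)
Both limits come from the degenerate saddle structure of $\zeta_{+\infty}$ at $x=0$, combined with $d\theta/d\mathfrak t$ being bounded below near the sphere at infinity. The idea is that a trajectory starting in $A^{\rm s}_{x_0}$ just outside $C^{\rm s}_{x_0}$ shadows $W^{\rm s}_{\rm loc}(\zeta_{+\infty})$ for a long time. While it stays in a small neighborhood of $\zeta_{+\infty}$, the angle $\theta$ keeps increasing at a definite rate. So it suffices to show that the passage time $T_{x_0}$ blows up and that $\dot\theta$ is positive and bounded below along the whole passage.

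\textbf{Step 1: the angle grows at a definite rate.} From \eqref{equ: Ham system 3-new}, near $x=0$ we have
$$
\frac{d\theta}{d\mathfrak t}=\frac{\varpi K}{r(\rho\cos\theta)^2}+O\bigl(x^6/\rho\bigr).
$$
Trajectories starting in $A^{\rm s}_{x_0}$ have $\rho$ close to $\rho_\infty>0$ as long as $(x,y)$ stays in $\mathbb D^+_\delta$. This follows from the energy relation \eqref{equ: initial condition-new}: $\rho^2=g_x-y^2+x^2$, and $y$, $x$ are small there. Hence there is $c>0$ with $d\theta/d\mathfrak t\ge c$ on the region $\{x\le \delta,\ |y|\le\delta\}\cap\mathfrak M$, for $\delta$ small.

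It follows that, on the portion of the trajectory inside this box, $\Theta$ is bounded below by $c$ times the time spent there, minus a uniform constant accounting for the time outside it. The trajectory does leave the box: it starts at $x=x_0$, $y>0$ and ends at $x=x_0$, $y<0$. The time spent outside the box is bounded uniformly, because near $C^{\rm s}_{x_0}$ the excursion from $\Sigma^+_{x_0}$ into the box and from the box to $\Sigma^-_{x_0}$ happens in bounded time by continuity of the flow on compact pieces.

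\textbf{Step 2: the passage time blows up.} This is where I expect the main obstacle to lie. The planar $(x,y)$ subsystem is, to leading order,
$$
x'=-x^3y,\qquad y'=-x^4,
$$
with a nonlinearity coupled to $(u,v)$ only at higher order, $O(x^8)$. Its phase portrait (Figure \ref{fig:limit-vf}) has the degenerate saddle at the origin, with the stable and unstable curves $y=\pm x$ from Proposition \ref{prop_McGehee}.

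I would argue by continuity and contradiction. Take $\rho_0\to(\rho^{\rm s}_{x_0})^+$. The initial points converge to a point $p$ of $C^{\rm s}_{x_0}$, and the forward orbit of $p$ converges to $\zeta_{+\infty}$, never reaching $\Sigma^-_{x_0}$. Suppose $T_{x_0}$ stayed bounded along a subsequence. By continuous dependence on initial conditions, the limit orbit would reach $\overline{\Sigma^-_{x_0}}$ in finite time, which is a contradiction. The orbit of $p$ cannot hit $\{y=0,\ x=x_0\}$ either, since $y>0$ and $x\to0$ along $W^{\rm s}$.

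So for any $M$, the orbits for $\rho_0$ close to $\rho^{\rm s}_{x_0}$ shadow the orbit of $p$ for time $M$, and in particular remain in the box $\{x\le\delta,\ |y|\le\delta\}$ during most of that time. The one subtlety is uniformity: the box estimate of Step 1 must hold along the shadowing interval, which is just the statement that the orbit of $p$ enters and stays in the box after a fixed time.

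\textbf{Conclusion.} Combining the two steps, for $\rho_0$ close to $\rho^{\rm s}_{x_0}$ the trajectory spends time at least $M-C$ in the box, and there $d\theta/d\mathfrak t\ge c$. Therefore $\Theta_{x_0}(\rho_0,\theta_0)\ge c(M-C)-C'$. Letting $M\to\infty$ gives both $T_{x_0}\to+\infty$ and $\Theta_{x_0}\to+\infty$.
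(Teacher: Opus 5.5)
Your proposal is correct and follows the same route as the paper. Both arguments combine a uniform lower bound $d\theta/d\mathfrak t\ge c_0>0$ near the sphere at infinity (from $\rho$ close to $\rho_\infty$ and $r$ bounded) with the blow-up of $T_{x_0}$ to get $\Theta_{x_0}\ge c_0T_{x_0}\to+\infty$; your continuity-and-contradiction argument for $T_{x_0}\to+\infty$ justifies more carefully what the paper only asserts.

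Your bookkeeping for time spent outside the box is unnecessary. Since $dx/d\mathfrak t=-x^3y$ and $dy/d\mathfrak t<0$, the whole segment from $\Sigma^+_{x_0}$ to $\Sigma^-_{x_0}$ stays in $\{x\le x_0\}$ ($x$ decreases until $y=0$ and increases afterwards), and there the lower bound on $d\theta/d\mathfrak t$ already holds for $x_0$ small.
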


\begin{proof}
Fix $\theta_0$ and consider initial conditions in $A^{\rm s}_{x_0}=\Sigma_{x_0}^+\setminus D^{\rm s}_{x_0}$
which in polar coordinates is given by $(\rho_0,\theta_0)$.
Let $x_*=x_*(\rho_0,\theta_0)\in(0,x_0)$ be the minimum value of $x(\mathfrak t)$ along the trajectory
before it hits $\Sigma_{x_0}^-$ (equivalently, the unique time when $y(\mathfrak t)=0$).
As $\rho_0\rightarrow (\rho^{\rm s}_{x_0}(\theta_0))^{+}$, the initial condition approaches the
stable boundary $C^{\rm s}_{x_0}$, hence the trajectory spends longer and longer in a neighborhood of $(x,y)=(0,0)$. In particular $x_*\to0^+$ and $T_{x_0}(\rho_0,\theta_0)\to +\infty$.

For the angular variation, \eqref{equ: Ham system 3-new} gives
$$
\frac{d\theta}{d\mathfrak t}
=\frac{\varpi K}{r(\rho\cos\theta)^2}+\frac{1}{\rho}O(x_0^6).
$$
Along the trajectories in McGehee's neighborhood one has $0<r_-\le r(\cdot)\le r_+$
and $\rho(\mathfrak t)\ge \rho_\infty/2$ for $x_0$ sufficiently small, hence
$d\theta/d\mathfrak t\ge c_0>0$ uniformly on $[0,T_{x_0}]$.
Consequently,
$$
\Theta_{x_0}(\rho_0,\theta_0)
=\int_0^{T_{x_0}(\rho_0,\theta_0)}\frac{d\theta}{d\mathfrak t}\,d\mathfrak t
\geq c_0\,T_{x_0}(\rho_0,\theta_0)\to+\infty,
$$
which concludes the proof.
\end{proof}

 Denote the $T_{x_0}$-map
$$
\Psi_{x_0}:A^{\rm s}_{x_0} \to A^{\rm u}_{x_0},
\qquad
(\rho_0,\theta_0)\mapsto (\rho(T_{x_0}),\theta(T_{x_0})),
$$
in polar coordinates $(u,v)=(\rho\cos\theta,\rho\sin\theta)$. This map preserves the area form $\omega=(\varpi/(u+1/\varpi)^2)du\wedge dv=dp_r\wedge dr$ induced by $\tilde \omega_0$. Identify $D_{x_0}^{\rm s,u}$ with its projection $\mathcal D_{x_0}^{\rm s,u}$ to the $(u,v)$-plane. The $\omega$-areas of $\mathcal D_{x_0}^{\rm s,u}$ coincides with the area of the projection of $\mathcal S_{\pm\infty}$ to the $(u,v)$-plane, given by $\mathcal D_{\pm\infty}:=\{(u,v):u^2+v^2\leq \rho^2_\infty\}$.
Let $\mathcal N(u,v)=(u,-v)$. Due to the reversibility, we have
$$
\mathcal N(\mathcal D_{x_0}^{\rm s})=\mathcal D_{x_0}^{\rm u}.
$$

We replace $x_0$ with $z_0=8/(\alpha\sqrt{1+2\alpha}\,x_0^2)$ and denote $\Sigma_{x_0}^\pm,D_{x_0}^{\rm s,u},A_{x_0}^{\rm s,u},C_{x_0}^{\rm s,u}$ by $\Sigma_{z_0}^\pm,D_{z_0}^{\rm s,u},A_{z_0}^{\rm s,u}$, $C_{z_0}^{\rm s,u}$, respectively.  By forward flowing $D_{z_0}^{\rm u}$ and backward flowing $D_{z_0}^{\rm s}$, we obtain disks $D_{0}^{\rm s,u}\subset \Sigma_0^\pm$ and circles $C^{\rm s,u}_{0}=\partial D_{0}^{\rm s,u}$. Let $U^{\rm s,u}_{0}\subset \Sigma^\pm_0\setminus D_{0}^{\rm s,u}$ be the open annuli so that $A_{z_0}^{\rm s}$ is the image of $U_{0}^{\rm s}$ along the forward flow and $U_{0}^{\rm u}$ is the image of $A_{z_0}^{\rm u}$ along the forward flow. In this way, we obtain a map
$$
\Phi_{0}:=\Phi^{\rm u}_{z_0,0}\circ \Phi_{z_0}\circ \Phi^{\rm s}_{0,z_0}:U^{\rm s}_{0}\to U^{\rm u}_{0}
\qquad\forall\ z\in[0,z_0],
$$
where $\Phi_{z_0}:A^{\rm s}_{z_0}\to A^{\rm u}_{z_0}$ is the map $\Psi_{x_0}$, $\Phi^{\rm s}_{0,z_0}:U_{0}^{\rm s}\to A_{z_0}^{\rm s}$ and $\Phi^{\rm u}_{z_0,0}:A_{z_0}^{\rm u}\to U_{0}^{\rm u}$ denote the first hitting maps along the flow. Notice that $\Sigma^{+}_0=\Sigma_0$ and $\Sigma^{-}_0=\Sigma_{1/2}$, which project to $\dot \Upsilon\subset \Upsilon$ in the $(p_r,r)$-plane.

Let $\pi_0^{\pm}:\Sigma_0^{\pm}\to \Upsilon$ be the projection of $\Sigma_0^\pm$ to the $(u,v)$-plane. Identify $C_{0}^{\rm s,u},D_{0}^{\rm s,u},U_{0}^{\rm s,u}$ with their projections $\mathcal C_{0}^{\rm s,u},\mathcal D_{0}^{\rm s,u}, \mathcal U_{0}^{\rm s,u}$ to $\Upsilon$, respectively. The map $\Phi_0$ further induces the following map
$$
g_{0}=\pi^-_{0}\circ \Phi_0\circ (\pi^+_{0})^{-1}:\mathcal U^{\rm s}_{0}\to \mathcal U_{0}^{\rm u},
$$
which preserves the area form $\omega=dp_r \wedge dr$.  The reversibility implies that
$$
g_0\circ \mathcal N\circ g_0=\mathcal N,\qquad
\mathcal N(\mathcal C_{0}^{\rm s})=\mathcal C_{0}^{\rm u},\qquad
\mathcal N(\mathcal D_{0}^{\rm s})=\mathcal D_{0}^{\rm u},\qquad
\mathcal N(\mathcal U_{0}^{\rm s})=\mathcal U_{0}^{\rm u}.
$$

The map $g_0$ becomes the restriction of
$\bar g:=\pi_0^- \circ \psi_{0,1/2}\circ (\pi^+_0)^{-1}:\Upsilon\setminus \mathcal D_0^{\rm s}\to \Upsilon\setminus \mathcal N(\mathcal D_0^{\rm s})$ determined by the first hitting map $\psi_{0,1/2}:\Sigma_0\setminus D_{0}^{\rm s}\to \Sigma_{1/2}\setminus D_{0}^{\rm u}$.
Since $\Phi^{\rm s}_{0,z_0}$ and $\Phi^{\rm u}_{z_0,0}$ are smooth diffeomorphisms,  Proposition \ref{prop: local twist} implies that $g_0$ has an infinite twist near the inner boundary component $\mathcal C_0^{\rm s}$. This means that the image under $g_0$ of small arcs in $\mathcal U_0^{\rm s}$ with an endpoint in $\mathcal C^{\rm s}_0$ spirals infinitely often around $\mathcal C^{\rm u}_0$. Although $g_0$ is not defined on the closed disk $\mathcal D^{\rm s}_{0}$, we may use the notation
$$
g_0(\mathcal D^{\rm s}_0) = \mathcal D^{\rm u}_0=\mathcal N(\mathcal D^{\rm s}_0).
$$ If $\mathcal D^{\rm u}_0 \subset \Upsilon \setminus \mathcal D^{\rm s}_0$, then we may consider the set $g_0^2(\mathcal D_0^{\rm s})$ even if the map $g_0^2$ is not defined on that set. In the same way, we may consider higher iterates $g^k(\mathcal D^{\rm s}_0)$ whenever $g_0^{j}(\mathcal D^{\rm s}_0)\subset \Upsilon \setminus \mathcal D^{\rm s}_0, j=1,\ldots k-1$. All such iterates have the same symplectic area given by the action of the orbits $\zeta_{\pm\infty}$ at infinity.

Now we focus on the proof of Theorem \ref{thm: main5}. Since $g_0$ preserves area, there exists a least $k\geq 1$ such that $g_0^k(\mathcal D^{\rm s}_0) \cap \mathcal D
^{\rm s}_0 \neq \emptyset$ and $g_0^j(\mathcal D^{\rm s}_0) \cap \mathcal D^{\rm s}_0 = \emptyset$ for every $j=1,\ldots, k-1$. In particular, $g_0^k(\mathcal C^{\rm s}_0) \cap \mathcal C
^{\rm s}_0 \neq \emptyset$ and $g_0^j(\mathcal C^{\rm s}_0) \cap \mathcal C^{\rm s}_0 = \emptyset$ for every $j=1,\ldots, k-1$. Such intersections corresponds to parabolic trajectories converging to $\zeta_{\pm\infty}$ for $t\to \pm \infty$. In fact, if $k$ is even, then the parabolic trajectory converges to the same orbit at infinity, and if $k$ is odd, then the parabolic trajectory converges to distinct orbits $\zeta_{+\infty}\subset \{z=+\infty\}$ and $\zeta_{-\infty}\subset \{z=-\infty\}$ at infinity forward and backward in time.

Let us distinguish two cases: (I) $g_0^k(\mathcal D^{\rm s}_0) = \mathcal D^{\rm s}_0$; and (II) $g_0^k(\mathcal D^{\rm s}_0) \neq \mathcal D^{\rm s}_0$. If (I) holds, it is immediate that there exists infinitely many parabolic trajectories. Also, all iterates of $g_0$ are well-defined on the holed-disk $\Upsilon \setminus \bigcup_j g_0^j(\mathcal D^{\rm s}_0)$, and the map $g_0^k$ has an infinite twist around each hole $g_0^j(\mathcal D^{\rm s}_0)$. A direct application of Franks' generalization of the Poincar\'e-Birkhoff theorem \cite[Theorem 2.1]{Franks92} implies that $g_0$ has infinitely many periodic points, which correspond to periodic orbits in $\mathfrak M$.

Now assume that (II) holds. Since $\mathcal C^{\rm s}_0 = \partial \mathcal D^{\rm s}_0$ is a real-analytic curve and $g_0$ is a real-analytic map, a small open arc $\gamma \subset g_0^k(\mathcal C^{\rm s}_0)$ in $\mathcal U^{\rm s}_0$ with an endpoint in $\mathcal C^{\rm s}_0$ can be iterated under $g_0^k$. By the infinite twist described in Proposition \ref{prop: local twist},  $g_0^k(\gamma)$ spirals and converges to $g_0^k(\mathcal D^{\rm s}_0)$ and thus intersects $\mathcal C^{\rm s}_0$ infinitely often. These intersections produce infinitely many parabolic trajectories associated with the map $g_0^{k}$. In particular, these parabolic trajectories converge to the same periodic orbit at infinity if $k$ is even, and converge to distinct orbits $\zeta_{\pm\infty}$ at infinity forward and backward in time if $k$ is odd.

Let us now seek for brake orbits and parabolic-brake orbits. Recall that $\mathcal H\,=V^{-1}((-\infty,-1])\subset \mathbb R^2$ is the Hill region of $\mathfrak M$. Its boundary $\partial \mathcal H$ is identified with the points in $\mathfrak M \cap \{p_r=p_z=0\}$. We split $\partial \mathcal H\cap \{z\neq 0\}$ into two parts $\mathcal B_+\cup \mathcal B_-$, where $\mathcal B_+\subset \{z>0\}$ and $\mathcal B_-\subset \{z<0 \}$. Since $\mathcal H$ is unbounded in the $z$, both $\mathcal B_+$ and $\mathcal B_-$ consist of two connected components, $\mathcal B_{\pm,1}$ and $\mathcal B_{\pm,2}$, where $\mathcal B_{\pm,1}$ is the graph of a function $\pm \hat z(r),r\in(r_-,r^*_{-})$, and $\mathcal B_{\pm,2}$ is the graph of a function $\pm \hat z(r),r\in(r^*_{+},r_+)$. Here, $r_\pm=(1\pm \fe)/(2\beta)$ and $r^*_\pm=(1\pm\sqrt{1-2\varpi^2})/2$. Notice that $\hat z(r)>0$ is increasing in $r\in(r_-,r^*_-)$ from $0$ to $+\infty$ and decreasing in $r\in(r^*_+,r_+)$ from $+\infty$ to $0$. The points in $\partial \mathcal H\cap \{z\neq 0\}$ correspond to $\mathfrak M \cap \{y=v=0\}$ in McGehee's coordinates $(x,y,u,v)$ near $z=\pm \infty$.

Forward flowing $\mathcal B_+$ and $\mathcal B_-$, we obtain the first hitting curves $C_-\subset \Sigma_0^-$ and $C_+\subset \Sigma_0^+$, respectively. The $z$-symmetry and the reversibility of the flow imply that both $C_+$ and $C_-$ are embedded curves in $\Sigma_0^\pm\setminus D_{0,\pm}^{\rm u}$ with the same projection $\mathcal C\subset \Upsilon\setminus \mathcal D_0^{\rm u}$ and satisfying $g_0\circ \mathcal N (\mathcal C)=\mathcal C$. Here, $D_{0,+}^{\rm u}=D_0^{\rm u}$ is given as before and $D_{0,-}^{\rm u}$ is the analog disk that associates to the unstable manifold of $\zeta_{-\infty}$.
Moreover, $\mathcal C = \mathcal C_1\cup \mathcal C_2$ has two components, where $\mathcal C_1$ corresponds to the left component $\mathcal B_{\pm,1}$ and $\mathcal C_2$ corresponds to the right component $\mathcal B_{\pm,2}$. As discussed in \cite[Theorem 2.3-(ii)]{HLOSY2023}, since the rotation number of the Euler orbit satisfies $\rho_e>2$, the curves $\mathcal C_1$ and $\mathcal C_2$ have points in $\{p_r>0\}$ and in $\{p_r<0\}$, respectively, near one of the endpoints, and spiral and converge to the circle $\mathcal C^{\rm u}_0$ at the other end. This last property follows from the proof of Proposition \ref{prop: local twist}. In particular, at least one such curve intersects $\dot \Upsilon \cap \{p_r=0\}$ and thus at least one $z$-symmetric brake orbit exists.

%

If (I) holds, then since $g_0^k$ is reversible with respect to $\mathcal N$, a theorem of J. Kang \cite{Kang2018} implies that $g_0^k$ admits infinitely many $\mathcal N$-symmetric periodic orbits in $\Upsilon$. This follows from the existence of a $\mathcal N$-symmetric fixed point in $\mathcal C\cap \{p_r=0\}$ and another periodic point due to Franks. Each $\mathcal N$-symmetric periodic orbit corresponds to a $z$-symmetric orbit in $\mathfrak M$ that hits $\{z=p_r=0\}$, see Proposition 7.2 in \cite{HLOSY2023}. This proves (ii).

If (II) holds, then recall from the previous arguments that both curves $\mathcal C_1$ and $\mathcal C_2$ spiral and converge to $\mathcal C_0^{\rm u}$ at one of the endpoints. This implies that $g_0^{j}(\mathcal C_i),j=0,\ldots,k-1$, spiral and converge to $g_0^j(\mathcal C_0^{\rm u})$. Hence, $g_0^{k-1}(\mathcal C_i)$ intersects $\mathcal N(\mathcal C_l)$ at infinitely many points. Each such an intersection point corresponds to a brake orbit in $\mathfrak M$, possibly connecting $\mathcal B_{+,i}$ and $\mathcal B_{+,l}$ or $\mathcal B_{+,i}$ and $\mathcal B_{-,l}$. In particular, if $i\neq l$, then the brake orbit is not $z$-symmetric. This finishes the proof of (i) and (iii). The proof of Theorem \ref{thm: main5} is complete.

\appendix
\section{Maslov-type index}\label{sec: maslov-type indices and rotation numbers}
In this section, we briefly introduce the Maslov-type index, mean index and the rotation number of the periodic orbits, see \cite{HLOSY2023,Long2002} for more details. Let $\mathrm{Sp}(2n)$ denote the set of $2n\times2n$ real symplectic matrix. For every $\omega\in\mathbf{U}$, the $\omega$-singular set is defined as
$\mathrm{Sp}(2n)_\omega^0:=\left\{M\in \mathrm{Sp}(2n): \det(M-\omega I_{2n})=0 \right\}\subset \mathrm{Sp}(2n)$, on which there exists a nowhere vanishing vector field $V(M):=\frac{d}{dt}Me^{tJ_{2n}}|_{t=0},\forall M\in \mathrm{Sp}(2n)_\omega^0$, which is everywhere transverse to $\mathrm{Sp}(2n)_\omega^0$ and determines a positive co-orientation of $\mathrm{Sp}(2n)_\omega^0$. Denote the $\omega$-regular set as $\mathrm{Sp}(2n)^*_{\omega}:=\mathrm{Sp}(2n)\setminus \mathrm{Sp}(2n)_\omega^0$. 
\begin{defi}\label{def:Maslov-type index}
Let $\gamma\in \mathcal{P}_{\tau}(2n):=\left\{\gamma \in C([0,\tau], \mathrm{Sp}(2n)): \gamma(0)=I_{2 n}\right\}$. For every $\omega\in \mathbf{U}$, we define the $\omega$-index and $\omega$-nullity of $\gamma$ as
$$
i_\omega(\gamma):=\begin{cases}(e^{-\epsilon J_{2n}}\gamma)\cdot \mathrm{Sp}(2n)_\omega^0-n, & \text { if } \omega=1 \\
(e^{-\epsilon J_{2n}}\gamma)\cdot \mathrm{Sp}(2n)_\omega^0, & \text { if } \omega\neq1,\end{cases} \quad \mathrm{and}\quad
\nu_\omega(\gamma):=\dim_{\mathbb{C}}\ker_{\mathbb{C}}(\gamma(\tau)-\omega I_{2n}),
$$
for every $\epsilon>0$ sufficiently small, where $(e^{-\epsilon J_{2n}}\gamma)\cdot \mathrm{Sp}(2n)_\omega^0$ denotes the algebraic intersection number between $e^{-\epsilon J_{2n}}\gamma$ and $\mathrm{Sp}(2n)_\omega^0$. Define the mean index and the rotation number of $\gamma$ as
$$
\hat{i}(\gamma):=\lim_{m\rightarrow+\infty}\frac{i_{1}(\gamma^m)}{m},\quad \mathrm{and}\quad \rho(\gamma):=\frac{\hat i(\gamma)}{2},
$$
where $\gamma^m:[0,k\tau]\rightarrow \mathrm{Sp}(2n)$ denotes the $m$-th iteration of $\gamma$ for every $m\in  \mathbb Z_+$, which is defined as
$$
\gamma^m(t):=\gamma(t-j\tau)\gamma(\tau)^j,\quad\forall t\in[j\tau,(j+1)\tau],\quad j=0,\cdots,m-1.
$$
\end{defi}
Note that if $\omega=1$, the $1$-index coincides with the Conley-Zehnder index. In $\mathrm{Sp}(2)$, we can define the $1$-index as follows. Let $\eta_v(t)\in C([0,\tau],\mathbb R)$ be the argument of $\gamma(t)v$ for any $v\in \mathbf{U}$. Let $\Delta(v):=(\eta_v(\tau) - \eta_v(0))/(2\pi)$ be the net variation of $\eta$ on $[0,\tau]$. Denote by $I_\gamma:= \{\Delta(v), v \in \mathbf{U}\} \subset \mathbb{R}$, which is a closed interval of length $<1/2$. For every $\epsilon>0$ sufficiently small, we define $i_1(\gamma):=2k+1$ if $I_\gamma-\epsilon\in (k,k+1)$ and define $i_1(\gamma): = 2k$ if $k\in I_\gamma-\epsilon$. The rotation number of $\gamma$ becomes
$$
\rho(\gamma):=\frac{\hat i(\gamma)}{2}= \lim_{k\to \infty}\frac{\eta_v(k\tau)}{2\pi k}.
$$

In \cite{Long2002}, Long provided an explicit formula for the mean index. For simplicity, we only consider the paths in $\mathrm{Sp}(2)$. Firstly, every $M\in \mathrm{Sp}(2)$ is symplectically similar ($\approx$) to one of the following matrices:
\begin{equation}\label{definition of R(theta)}
D(\lambda)=\left(\begin{array}{cc}
\lambda & 0 \\
0 & 1/\lambda
\end{array}\right),\ \
N(\pm 1, a)=\left(\begin{array}{cc}
\pm 1 & a \\
0 & \pm 1
\end{array}\right), \ \
R(\vartheta)=\left(\begin{array}{cc}
\cos \vartheta & -\sin \vartheta \\
\sin \vartheta & \cos \vartheta
\end{array}\right),\nonumber
\end{equation}
where $\lambda\in \mathbb{R}\setminus\{0\}$, $a\in\{\pm1,0\}$ and $\vartheta\in(0,\pi)\cup(\pi,2\pi)$. By Corollary $8.3.2$ in \cite{Long2002}, we have
\begin{equation} \label{mean index 2}
\hat{i}(\gamma)=\left\{\begin{array}{ll}
i_1(\gamma)-1+\vartheta/\pi, & \text {if}\ \ \gamma(\tau)\approx R(\vartheta), \vartheta \in(0, \pi) \cup(\pi, 2 \pi),  \\
i_1(\gamma)+1, & \text{if}\ \ \gamma(\tau)\approx N_1(1,a), a=1,0, \\ i_1(\gamma), & \text{otherwise.}
\end{array}\right.
\end{equation}

The $\omega$-index also relates to the Morse index of a certain self-adjoint operator. Consider a second order system $\ddot{x}=\mathcal D(t)x$, $t\in[0,\tau]$. Let $\mathcal{A}:=-\frac{d^2}{dt^2}+\mathcal D$ be a self-adjoint operator on $L^2([0,\tau],\mathbb{C}^n)$ with domain
$$
D(\omega,\tau):=\left\{W^{2,2}([0,\tau], \mathbb{C}^n): y(\tau)=\omega y(0), \dot{y}(\tau)=\omega \dot{y}(0)\right\}.
$$
Let $m^-_\omega(\mathcal{A})$ denote the Morse index of $\mathcal A$, which is the total multiplicity of the negative eigenvalues of $\mathcal A$. Let $\nu_{\omega}(A):=\dim\ker(A)$ denote the nullity of $\mathcal{A}$. From Theorem 7.3.4 in \cite{Long2002}, we have the following relations
\begin{equation*}
m^-_{\omega}(\mathcal{A})=i_{\omega}(\gamma), \quad \nu_{\omega}(\mathcal{A})=\nu_{\omega}(\gamma),\quad \forall \omega\in \mathbf U,
\end{equation*}
where $\gamma\in \mathcal P_\tau(2n)$ is the fundamental solution of the linear system
$\dot{\gamma}=J_{2n} \mathrm{diag}(I_n,-D(t))\gamma$. For more general boundary conditions, we refer to \cite{HWY} for a similar result.

Consider a contact-type sphere-like energy surface $M=H^{-1}(h)\subset \mathbb R^4$ of $H=H(y_1,y_2,x_1,x_2)$. Let $\mathcal{P}$ denote the set of all periodic orbits of the Hamiltonian equation $\dot x=X_H(x)$. Let $\zeta\in\mathcal{P}$ be a $T$-periodic orbit with the linearized flow $d\varphi_t:\mathbb{R}^4\rightarrow \mathbb{R}^4,\forall t\in \mathbb R$, which preserves the standard symplectic form $\hat \omega_0=dy_1\wedge dx_1+dy_2\wedge dx_2$. Consider a quaternion frame $\{X_0,X_1,X_2,X_3\}$ in $\mathbb{R}^4$, where
\begin{equation}\label{equ: global trivalization}
\begin{aligned}
X_0&=\frac{\nabla H}{|\nabla H|},\quad X_1=\frac{1}{|\nabla H|}\big(\partial_{x_2}H,-\partial_{x_1}H,\partial_{y_2}H,-\partial_{y_1}H\big),\\
X_2&=\frac{1}{|\nabla H|}\big(-\partial_{y_2}H,\partial_{y_1}H,\partial_{x_2}H,-\partial_{x_1}H\big),\quad X_3=\frac{X_H}{|\nabla H|}.
\end{aligned}
\end{equation}
Under this global trivialization, the linearized flow $d\varphi_t$ becomes a symplectic path $\Phi_\zeta:[0,T]\rightarrow \mathrm{Sp}(4)$ with $\Phi_\zeta(0)=I_{2n}$. Moreover, since $d\varphi_t$ has an invariant subspace $\{X_1,X_2\}$, we obtain a $2\times 2$-symplectic matrix path, named $\Phi_0:[0,T]\rightarrow \mathrm{Sp}(2)$. In the reordered basis $\{X_1,X_2,X_0,X_3\}$, the path $\Phi_\zeta(t)$ can be rephrased as
$$
\Phi_\zeta(t)=\begin{pmatrix}\Phi_0(t) & 0\\
0 & N(1,d(t))^T
\end{pmatrix},\quad
N(1,c):=\begin{pmatrix}
1 & c \\ 0 & 1
\end{pmatrix},\quad d\in C^\infty([0,T],\mathbb{R}).
$$
The rotation number of $\zeta$ can be computed as
$$
\rho(\zeta):=\rho(\Phi_\zeta)=\frac{\hat i(\Phi_\zeta)}{2}=\frac{\hat i(\Phi_0)}{2}=\lim_{k\rightarrow +\infty}\frac{\arg(\Phi_0(kT)v)}{2\pi k},\quad \forall v\in\mathbb{C}\setminus\{0\}.
$$

\section{Relative winding number and linking number}\label{sec: wind and link}
For every periodic points $x_1\neq x_2\in \dot \Upsilon$ in Theorem \ref{thm: main4}, an interesting question can be asked: what is the relationship between the mean relative winding number $w_\infty(x_1,x_2),x_1\in \mathcal O(\zeta_1),x_2\in\mathcal O(\zeta_2)$ and the linking number $\mathrm{lk}(\zeta_1,\zeta_2)$ (Remark \ref{rem: pairing and w_infty})? The following theorem provides a general relation between these two quantities.

\begin{thm}\label{thm: Winfty and lk}
Let $\zeta_1,\zeta_2 \subset \mathfrak{M}\setminus \zeta_e$ be two geometrically distinct simple periodic orbits. Let $x_j\in \zeta_j\cap \Sigma_0$ be an intersection point, which is a periodic point of the first return map $\psi$ with prime period $k_j:=\mathrm{lk}(\zeta_j,\zeta_e)$ for $j=1,2$. Then
\begin{equation}\label{equ: average mean winding number}
\begin{aligned}
\sum_{l_1=0}^{k_1-1}\sum_{l_2=0}^{k_2-1}w_\infty(\psi^{l_1}(x_1),\psi^{l_2}(x_2))=\mathrm{lk}(\zeta_1,\zeta_2).
\end{aligned}
\end{equation}
Moreover, if $\mathrm{gcd}(k_1,k_2)=1$, then
$w_\infty(\psi^{l_1}(x_1),\psi^{l_2}(x_2))=\mathrm{lk}(\zeta_1,\zeta_2)/(k_1k_2)$ is independent of $l_1,l_2$.
\end{thm}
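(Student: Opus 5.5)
Since $x_j$ is $\psi$-periodic of prime period $k_j$, the trajectory $s\mapsto \psi_s(x_j)$, $s\in[0,k_j]$, is a closed loop in $\Upsilon$. Under the open book $(\zeta_e,\Phi)$ this loop is exactly the projection of $\zeta_j$ to $\Upsilon$, traversed once. We will therefore work in the solid torus $\mathfrak M\setminus \zeta_e\cong \dot\Upsilon\times \R/\Z$, with coordinates $(\pi_s(q),s)$ for $q\in\Sigma_s$. Set $K:=k_1k_2$. For each pair $(l_1,l_2)$, the two points $\psi_s(\psi^{l_1}(x_1))$ and $\psi_s(\psi^{l_2}(x_2))$ are both $K$-periodic in $s$. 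Hence the winding of their difference over $s\in[0,K]$ is an integer $W_{l_1,l_2}$, and
\[
w_\infty(\psi^{l_1}(x_1),\psi^{l_2}(x_2))=\frac{W_{l_1,l_2}}{K},
\]
because after one period of length $K$ the configuration repeats exactly, so the limit in \eqref{equ: winding number} is realized on each period.

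Step 1 (linking as winding). We compute $\mathrm{lk}(\zeta_1,\zeta_2)$ in the trivialization of the solid torus. In $\dot\Upsilon\times\R/\Z$ with the product framing inherited from the disk pages, $\zeta_j$ is a closed braid with $k_j$ strands $\{\psi_s(\psi^{l}(x_j))\}_{l=0}^{k_j-1}$. The linking number of two disjoint closed braids in $D\times S^1\subset S^3$, where the pages $D\times\{s\}$ are Seifert-type disks bounded by the binding, equals the total signed crossing count divided by $2$. Equivalently, it is the sum over strand pairs of the winding of their difference during one turn $s\in[0,1]$, summed as the strands are permuted around the full cycle.

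To make this precise, lift to the $K$-fold cover $s\in[0,K]$. Over this interval each pair of strands $(l_1,l_2)$ closes up. In the $K$-fold cover every strand pair is visited exactly once per orbit of the cyclic action $(l_1,l_2)\mapsto(l_1+1,l_2+1)$, and the total twisting of the $(l_1,l_2)$ pair over $[0,K]$ counts each crossing of the original braids $K$ times. We expect
\[
\mathrm{lk}(\zeta_1,\zeta_2)=\frac1K\sum_{l_1,l_2}W_{l_1,l_2}=\sum_{l_1,l_2}w_\infty(\psi^{l_1}(x_1),\psi^{l_2}(x_2)).
\]
One must also check that the framing by pages contributes no correction: the linking number of $\zeta_j$ with a pushoff along the pages is irrelevant here, since only mutual linking appears. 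It remains to verify that $\mathrm{lk}$ in $S^3$ agrees with the braid-winding count in $\dot\Upsilon\times S^1$. This holds because a Seifert surface for $\zeta_2$ can be built from the vertical annuli swept by segments joining each strand of $\zeta_2$ to a boundary point, as in the sketch of Lemma \ref{lem: Fathi and Bechara}, together with copies of pages. Intersections of $\zeta_1$ with the page parts cancel, since both orbits cross each page positively and the pages are capped off along $\zeta_e$; this step needs care. Intersections with the vertical parts count windings. We expect this identification to be the main obstacle. We would handle it by the intersection count $I^p$ from Lemma \ref{lem: Fathi and Bechara}, taking $p\in\partial\Upsilon$, and using that strands near $\partial\Upsilon$ all rotate at the common rate $(\rho_e-1)^{-1}$, so boundary contributions are equal for both families and cancel in the relative winding.

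Step 2 (equidistribution when $\gcd(k_1,k_2)=1$). Replacing $(x_1,x_2)$ by $(\psi(x_1),\psi(x_2))$ only shifts the time parameter by $1$, so $w_\infty(\psi^{l_1}(x_1),\psi^{l_2}(x_2))$ depends only on the orbit of $(l_1,l_2)$ under $(l_1,l_2)\mapsto(l_1+1,l_2+1)$ on $\Z/k_1\times\Z/k_2$. When $\gcd(k_1,k_2)=1$ this action is transitive by the Chinese remainder theorem, so all $K$ terms are equal. The sum identity then gives $w_\infty=\mathrm{lk}(\zeta_1,\zeta_2)/(k_1k_2)$.
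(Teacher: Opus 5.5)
Your bookkeeping is correct, and your Step~2 is also correct (and cleaner than the paper's Chinese-remainder manipulation). The gap is Step~1.

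On the bookkeeping: $w_\infty=W_{l_1,l_2}/K$ holds. Write $\omega(a,b)$ for the winding of $\psi_s(\psi^a x_1)-\psi_s(\psi^b x_2)$ over $s\in[0,1]$. The sum of all $W_{l_1,l_2}$ contains each $\omega(a,b)$ exactly $K$ times, so $\sum_{l_1,l_2}w_\infty=\sum_{a,b}\omega(a,b)$. A given pair is visited $\gcd(k_1,k_2)$ times along one diagonal orbit, not once, but the total is right.

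The gap is Step~1, namely $\sum_{a,b}\omega(a,b)=\mathrm{lk}(\zeta_1,\zeta_2)$. This is the whole content of the theorem, and you leave it at ``we expect''. Moreover, your assertion that the framing by pages ``contributes no correction since only mutual linking appears'' is false. Identify $\Upsilon$ with the round disk by a fixed diffeomorphism. Then replace the trivialization $q\in\Sigma_s\mapsto(\pi_s(q),s)$ of $\mathfrak M\setminus\zeta_e$ by $q\mapsto(R_{2\pi s}\pi_s(q),s)$. The pages, $\psi$ and the braid picture are unchanged, but every $\omega(a,b)$ increases by $1$. So the left-hand side changes by $k_1k_2$, while $\mathrm{lk}(\zeta_1,\zeta_2)$ does not. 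The closed-braid crossing formula you quote therefore holds only for the untwisted trivialization.

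The missing idea is why the projection to the $(p_r,r)$-plane is untwisted. The reason is that $\pi_s|_{\zeta_e}$ is independent of $s$. Hence for $c\in\partial\Upsilon$ the vertical segment $\{c\}\times[0,1]$ is a single point of $\zeta_e\subset\mathfrak M$; equivalently, constant sections are pairwise unlinked. The paper uses exactly this through the segments $\{p\}\times[\bar k,\bar k+1]$ with $p\in\zeta_e$. These close up the lifted loops $\hat\zeta_{1,\bar k}$ that bound the disks $\hat D_{\bar k}$ over $D_k$.

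Your proposed repair does not supply this, and it contains errors. With $p\in\partial\Upsilon$ fixed, let $S$ be the union of the ribbons swept by the segments $[p,\psi_s(\psi^b x_2)]$, $s\in[0,1]$, $b=0,\dots,k_2-1$. Once the vertical segment over $p$ is collapsed, $S$ already has boundary exactly $\zeta_2$ in $\mathfrak M$; the segments at $s=0$ and $s=1$ cancel because $\psi$ permutes $\mathcal O(\zeta_2)$. No page pieces are needed. Adding pages would not cancel anything, since $\zeta_1$ meets each page positively $k_1$ times.

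Now count the crossings of $\psi_s(\psi^a x_1)$ through these segments; the fractional endpoint terms cancel after summing over all pairs. This gives $\zeta_1\cdot S=\sum_{a,b}\omega(a,b)-k_1\sum_b\eta_b$, where $\eta_b$ is the winding of $\psi_s(\psi^b x_2)-p$ over $s\in[0,1]$. Finally $\sum_b\eta_b=0$, because these vectors stay in a half-plane and their endpoints are permuted.

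The boundary rotation number $(\rho_e-1)^{-1}$ plays no role here. The point $p$ is fixed in the coordinates, not transported by $\psi_s$. So ``boundary contributions cancel because boundary strands rotate at a common rate'' is not the mechanism. What matters is only that the vertical line over $p$ collapses to a point in $\mathfrak M$.
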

\begin{proof}
It suffices to show that $\sum_{l_2=0}^{k_2-1}w_\infty(x_1,\psi^{l_2}(x_2))=\mathrm{lk}(\zeta_1,\zeta_2)/k_1$ is independent of $x_1\in \mathcal O(\zeta_1)$. We establish this identity as follows. Let $\mathfrak M_*:=\mathfrak M\setminus \zeta_e \simeq \dot \Upsilon\times\mathbb R/\mathbb Z$ and let $\mathfrak M'_*:=\dot \Upsilon\times(\mathbb R/(k_1k_2\mathbb Z))$ be the $(k_1k_2)$-covering space of $\mathfrak M_*$. Denote by $\zeta_{j,k}\in\mathfrak M_*,k=0,\cdots,k_j-1,j=1,2,$ the segment of trajectory from $\psi^k(x_j)$ to $\psi^{k+1}(x_j)$. Take a point $p\in \zeta_e$ and choose a path $\beta\in \Upsilon\setminus \mathcal O(\zeta_2)$ connecting $x_1$ to $p$. Choose $k_1$ paths $\beta_k\subset \dot \Upsilon\setminus (\dot \beta\cup \mathcal O(\zeta_2)),k=0,\ldots k_1-1$, with disjoint interiors and connecting $\psi^k(x_1)$ to $x_1$, where $\beta_0\equiv x_1$ and $\dot \beta = \beta \setminus \{x_1,p\}$. We thus obtain loops $\bar \zeta_{1,k}:=\bar \beta_{k}\ast\zeta_{1,k} \ast\beta_{k+1},k=0,\ldots k_1-1$. Notice that $\zeta_1$ is the sum of $\bar\zeta_{1,0},\ldots,\bar\zeta_{1,k_1-1}$ and each $\bar\zeta_{1,k}$ bounds a disk $D_k\subset \mathfrak M$. Hence,
$$
\mathrm{lk}(\zeta_1,\zeta_2)=\sum_{k=0}^{k_1-1}\mathrm{lk}(\bar \zeta_{1,k}, \zeta_{2})=\sum_{k=0}^{k_1-1}\zeta_2\cdot D_{k}=\sum_{k'=0}^{k_2-1}\sum_{k=0}^{k_1-1}\zeta_{2,k'}\cdot D_k.
$$

Fix a lift $\tilde \zeta_j\in \mathfrak M_*'$ of $\zeta_j$ for both $j=1,2$. Denote by $\tilde\zeta_{j,\bar k}\subset\Upsilon\times[\bar k,\bar k+1]$ the lift of $\zeta_{j,k}$ for every $\bar k=k+lk_j\in [0,k_1k_2-1]$ and define a loop $\hat \zeta_{1,\bar k}:=\bar \beta\ast \bar \beta_{k}\ast \tilde \zeta_{1,\bar k} \ast\beta_{k+1} \ast\beta\ast (\{p\}\times [\bar k,\bar k+1])$ for every $k=0,\cdots,k_1-1$. Then $\hat \zeta_{j,\bar k}$ bounds a disk $\hat D_{\bar k}$, whose projection is $D_k$. We can naturally extend the disks for every $\bar k\in\N$ such that $\hat D_{\bar k}=\hat D_{\bar k+k_1}$. By definition, we can explain the mean relative winding number $W_\infty$ as an average intersection number
$$\begin{aligned}
w_\infty(x_1,\psi^{l_2}(x_2))&=\frac{1}{k_1k_2}\sum_{\bar k=0}^{k_1k_2-1}\tilde \zeta_{2,\bar k+l_2}\cdot \hat D_{\bar k}=\frac{1}{k_1k_2}\sum_{\bar k=0}^{k_1k_2-1}\zeta_{2,k'}\cdot D_{k},
\end{aligned}$$
where $(k,k')=(\bar k,\bar k+l_2)\ \mathrm{mod}\ (k_1,k_2)$. Let $[n]_j:=n\ \mathrm{mod}\ k_j,j=1,2$. We compute
$$\begin{aligned}
&\quad\ \sum_{l_2=0}^{k_2-1}w_\infty(x_1,\psi^{l_2}(x_2))=\frac{1}{k_1k_2}\sum_{k=0}^{k_1-1}(\sum_{l_2=0}^{k_2-1}\sum_{j=0}^{k_2-1}\zeta_{2,[jk_1+k+l_2]_2})\cdot D_{k}\\
&=\frac{1}{k_1k_2}\sum_{k=0}^{k_1-1}(\sum_{l_2=0}^{k_2-1}\sum_{j=0}^{k_2-1}\zeta_{2,[jk_1+l_2]_2})\cdot D_{k}=\frac{1}{k_1}\sum_{k=0}^{k_1-1}\zeta_{2}\cdot D_{k}=\frac{\mathrm{lk}(\zeta_1,\zeta_2)}{k_1}.
\end{aligned}$$
Here, the third equality follows from the fact that $\sum_{l_2=0}^{k_2-1}\zeta_{2,[jk_1+l_2]_2}$ is independent of $j$. Similarly, we obtain
$$\sum_{l_1=0}^{k_1-1}w_\infty(\psi^{l_1}(x_1),x_2)=\frac{\mathrm{lk}(\zeta_1,\zeta_2)}{k_2}.$$
Hence, \eqref{equ: average mean winding number} holds. In particular, if $\mathrm{gcd}(k_1,k_2)=1$, we further show that
$$\begin{aligned}
w_\infty(x_1,x_2)&=\frac{1}{k_1k_2}\sum_{\bar k=0}^{k_1k_2-1}\zeta_{2,k'}\cdot D_k=\frac{1}{k_1k_2}\sum_{k=0}^{k_1-1}\sum_{k'=0}^{k_2-1}\zeta_{2,k'}\cdot D_k\\
&=\frac{1}{k_1k_2}\sum_{k=0}^{k_1-1}\mathrm{lk}(\bar\zeta_{1,k},\zeta_{2})=\frac{\mathrm{lk}(\zeta_1,\zeta_2)}{k_1k_2},\quad (k,k')=\bar k\ \mathrm{mod}\ (k_1,k_2).
\end{aligned}$$
Here, we have used the identity $\sum_{\bar k=0}^{k_1k_2-1}a_{k,k'}=\sum_{k=0}^{k_1-1}\sum_{k'=0}^{k_2-1}a_{k,k'},(k,k')=\bar k\ \mathrm{mod}\ (k_1,k_2)$ for every $k_1,k_2$ satisfying $\mathrm{gcd}(k_1,k_2)=1$. Therefore, $w_\infty(x_1,x_2)$ is independent of $x_j\in\mathcal O(\zeta_j), j=1,2$. The proof of Theorem \ref{thm: Winfty and lk} is complete.
\end{proof}

\section{Proof of Lemma \ref{lem: trace and determinant are positive}}\label{sec: supplementary proofs}
\begin{proof}[Proof of Lemma \ref{lem: trace and determinant are positive}-(i)]
Fix an arbitrary $\nu\in(0,1)$. The proof includes two cases.

\textbf{Case 1:} If $\mathrm{tr}_2\leq 0$, then we only need to prove $\mathrm{tr}_0+\mathrm{tr}_1e<0$. Since $\mathrm{tr}_0<0$, then it is sufficient to show $\mathrm{tr}_0+\mathrm{tr}_1<0$ for every $\beta\in[0,1]$. Since $\mathrm{tr}_0+\mathrm{tr}_1<4C_3C_4(4C_2^2C_3+C_3-2C_2)$, we compute
$$
F_0(\beta,\nu):=(1+\nu)^2(2+\nu)(3+\nu)^2(4+\nu)(4C_2^2C_3+C_3-2C_2)=f_0+f_1\beta+f_2\beta^2+f_3\beta^3,
$$
where
$$\begin{aligned}
f_0&=-3(3+4\nu+\nu^2)^2(8+6\nu+\nu^2),\\
f_1&=7(189+420\nu+320\nu^2+100\nu^3+11\nu^4),\\
f_2&=-196(14+14\nu+3\nu^2),\quad f_3=1372.
\end{aligned}$$
In particular, $F_0(0,\nu)=f_0<0$, $F_0(1,\nu)=-265-542\nu+665\nu^2+40\nu^3-157\nu^4-42\nu^5-3\nu^6<0$ for every $\nu\in(0,1)$. Since $\partial_\beta F_0(\beta,\nu)$ is a parabola of $\beta$, we find a local maximal point $\beta^+=(1/42)(28(1+\nu)+6\nu^2-\sqrt{217+308\nu+160\nu^2+36\nu^3+3\nu^4})$ that solves $\partial_\beta F_0(\beta^+,\nu)=0$. Then we compute
$$\begin{aligned}
F_0(\beta^+,\nu)&=\frac{1}{27}\big((217+308 \nu+160\nu^2+36\nu^3+3\nu^4)^{3/2}\\
&\qquad\quad -2(5+2\nu)(397+746\nu+529\nu^2+162\nu^3+18\nu^4)\big)\leq 0,
\end{aligned}$$
which is because
$$\begin{aligned}
&\quad\ (217+308 \nu+160\nu^2+36\nu^3+3\nu^4)^{3}-4(5+2\nu)^2(397+746\nu+529\nu^2+162\nu^3+18\nu^4)^2\\
&\leq 27(1+\nu)^2(3+\nu)^2\cdot(-22809)<0,\quad \forall \nu\in(0,1).
\end{aligned}$$
Finally, since $F_0(\cdot,\nu)$ can only reach to its maximum at $\beta=0,\beta^+,1$, therefore, the previous computation implies that $F_0(\beta,\nu)<0$, which implies that $\mathrm{tr}_1+\mathrm{tr}_2<0$ for every $\beta\in[0,1]$.

\textbf{Case 2:} If $\mathrm{tr}_2>0$, then $C_1+C_2>0$ necessarily holds due to $C_0,C_3,C_4<0$. We only need to prove $\mathrm{tr}_0+\mathrm{tr}_1e+(C_1+C_2)e^2<0$. Due to the convexity of this parabola, it is sufficient to prove
$\mathrm{tr}_0+\mathrm{tr}_1+(C_1+C_2)<0$ for every $\beta\in[0,1]$.

We first compute
$$\begin{aligned}
&\quad\ \mathrm{tr}_0+\mathrm{tr}_1+C_1+C_2\\
&=4 (C_2C_3-1)^2C_4+4C_2^2C_3+4C_0C_1^2+4C_3^2C_4
-C_4+C_1+C_2\\
&\leq 4C_2^2C_3+4C_0C_1^2+4C_3^2C_4-C_4+C_1+C_2\\
&=:4C_2^2C_3+4(C_0+1)C_1^2+F_2+F_3,
\end{aligned}$$
where $F_2=-4C_1^2+C_1+C_2-2(C_3+1)C_4$ and $
F_3=(4C_3^2+2C_3+1)C_4$.
We compute
$$\tilde F_2=\nu^2(1+\nu)(2+\nu)^2(3+\nu)(4+\nu)(5+\nu)F_2=f_{20}+f_{21}\beta+f_{22}\beta^2,$$
where
$$\begin{aligned}
f_{20}&=-6\nu^2(2+\nu)^2(60+107\nu+59\nu^2+13\nu^3+\nu^4),\\
f_{21}&=7\nu(1080+2606\nu+2263\nu^2+907\nu^3+170\nu^4+12\nu^5),\\
f_{22}&=-98(120+214\nu+120\nu^2+29\nu^3+3\nu^4).
\end{aligned}$$
Then $\tilde F_2$ admits a maximal point $\beta^+_2=-f_{21}/(2f_{22})$. Then we compute
$$
\tilde F_2(\beta^+_2,\nu)=\frac{\nu^2(2+\nu)^2g(\nu)}{8(120+214\nu+120\nu^2+29\nu^3+3\nu^4)}<0,
$$
where $g(\nu)=-54000-117000\nu-53255\nu^2+47502\nu^3+
56101\nu^4+21684\nu^5+3724\nu^6+240\nu^7<0$ on $(0,1)$. Therefore, $F_2$ is negative. Moreover, since
$$\tilde F_3=(2+\nu)^2(4+\nu)^2\frac{F_3}{C_4}
=3(8+6\nu+\nu^2)^2-42(8+6\nu+\nu^2)\beta+196\beta^2,$$
and $\tilde F_3(42(8+6\nu+\nu^2)/392,\nu)=3(8 + 6 v + v^2)^2/4>0$, we see that $F_3$ is also negative. Finally, due to $C_0+1<0$, we conclude that
$\mathrm{tr}_0+\mathrm{tr}_1+C_1+C_2<0$ for every $\beta\in[0,1]$.
\end{proof}

Before the proof of Lemma \ref{lem: trace and determinant are positive}-(ii), we need the following lemma.
\begin{lem}\label{lem: order Sj}
Define the curves $\{S_i,i=1,\cdots,5\}$ in $[0,1]\times [0,1]$ as follows.
$$\begin{aligned}
S_1:&=\{(\beta_1(\nu),\nu)\subset[0,1]\times[0,1]\}=\{7\beta=2\nu+\nu^2\}=\{C_1=0\}\subset\{\mathrm{det}_2=0\},\\
S_2:&=\{(\beta_2(\nu),\nu)\subset[0,1]\times[0,1]\}=\{7\beta=4\nu+\nu^2\}=\{C_1+C_3=0\}=\{\mathrm{det}_4=0\},\\
S_3:&=\{(\beta_3(\nu),\nu)\subset[0,1]\times[0,1]\}\\
&=\{C_0C_1C_2-2C_0C_1C_4-2C_0C_3C_4+C_2C_3C_4=0\}\subset\{\mathrm{det}_2=0\}\\
S_4:&=\{(\beta_4(\nu),\nu)\subset[0,1]\times[0,1]\}=\{7\beta=3+4\nu+\nu^2\}=\{C_2=0\}\subset\{\mathrm{det}_2=0\},\\
S_5:&=\{(\beta_5(\nu),\nu)\subset[0,1]\times[0,1]\}=\{\mathrm{det}_3=0\}=\big\{(C_0C_1+C_3(C_2-C_4))^2\\
&\quad\ \ =C_2C_3(C_1+C_3)(C_2-2C_4)+(C_0+C_2+C_4) (C_0C_1^2+C_3^2C_4)\big\}.
\end{aligned}$$
Then for every $(\beta,\nu)\in [0,1]\times(0,1)$, we have $\beta_{i}(\nu)<\beta_j(\nu)$ for every $1\leq i<j\leq 5$.
\end{lem}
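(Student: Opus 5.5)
The plan is to work throughout with the explicit formulas $C_n=C_n(\nu)=7\beta/((n+\nu)^2-1)-1$. For $n=0,3,4$ and $(\beta,\nu)\in[0,1]\times(0,1)$ these satisfy $C_0,C_3,C_4<0$, while $C_1$ and $C_2$ change sign. $S_1,S_2,S_4$ are explicit parabolas in $\nu$:
$$\beta_1=\frac{2\nu+\nu^2}{7},\qquad \beta_2=\frac{4\nu+\nu^2}{7},\qquad \beta_4=\frac{3+4\nu+\nu^2}{7}.$$
Hence $\beta_1<\beta_2<\beta_4$ follows at once for $\nu\in(0,1)$, since $2\nu<4\nu<3+4\nu$. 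The identification $\{C_1+C_3=0\}=\{7\beta=4\nu+\nu^2\}$ is a one-line computation. Indeed, $C_1+C_3=0$ reads
$$7\beta\Big(\frac{1}{(1+\nu)^2-1}+\frac{1}{(3+\nu)^2-1}\Big)=2.$$
Clearing denominators gives a linear equation in $\beta$. I will verify it explicitly, and if it does not simplify exactly to $7\beta=4\nu+\nu^2$, I will define $\beta_2$ by that linear equation and compare with it instead. The inclusions into $\{\det_2=0\}$ are immediate from the factorization $\det_2=16C_1C_2C_3\cdot(\ldots)$.

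The substantive work is to locate $\beta_3$ and $\beta_5$, which are only implicitly defined. For $S_3$, I multiply $P_3:=C_0C_1C_2-2C_0C_1C_4-2C_0C_3C_4+C_2C_3C_4$ by the positive product of the denominators $\prod_{n}((n+\nu)^2-1)$ with $n\in\{0,\dots,4\}$; this product is positive up to the sign fixed by $n=0$. The result is a cubic polynomial $\tilde P_3(\beta,\nu)$ in $\beta$. I then show three facts:
\begin{itemize}
\item[(a)] $\tilde P_3(\beta_2(\nu),\nu)$ and $\tilde P_3(\beta_4(\nu),\nu)$ have opposite signs. At $\beta_4$ we have $C_2=0$, so $P_3=-2C_0C_4(C_1+C_3)$, whose sign is determined by $C_1+C_3>0$ there. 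At $\beta_2$ we have $C_3=-C_1$, so $P_3=C_1(C_0C_2-2C_0C_4-C_2C_4+2C_0C_4)$, which simplifies and has a definite sign.
\item[(b)] $\partial_\beta\tilde P_3$ has a fixed sign on $[\beta_2,\beta_4]$, so the root is unique.
\item[(c)] $\tilde P_3$ has no root in $[0,\beta_2]$.
\end{itemize}
This pins down $\beta_2<\beta_3<\beta_4$ and shows that $S_3$ is a graph.

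For $S_5$, $\det_3$ becomes, after clearing denominators, a polynomial $\tilde D_3(\beta,\nu)$ in $\beta$ of degree at most $4$. I would evaluate it at $\beta=\beta_4$, where $C_2=0$ and the expression simplifies considerably, and show it has one sign there. I would then show it has the opposite sign at $\beta=1$, or at the upper end of the relevant range if $\beta_5$ may exceed $1$ for some $\nu$; in that case the statement is understood for the portion inside $[0,1]^2$. Finally, I would check monotonicity in $\beta$ on $[\beta_4,\cdot]$ and that $\tilde D_3$ has no root on $[0,\beta_4]$. The last check can be done by splitting $[0,\beta_4]$ at $\beta_1,\beta_2,\beta_3$ and using on each subinterval the known signs of $C_1$, $C_1+C_3$ and $C_2$ to bound the square term $(C_0C_1+C_3(C_2-C_4))^2$ against the right-hand side.

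The main obstacle is this sign analysis of $\det_3$ (and, to a lesser extent, of $P_3$) as two-variable polynomials on a curved region. My first approach will be the same reduction used in the proof of Lemma \ref{lem: trace and determinant are positive}-(i): substitute $\beta=\beta_i(\nu)+t(\beta_{i+1}(\nu)-\beta_i(\nu))$ with $t\in[0,1]$, so that the region becomes the unit square in $(t,\nu)$. I would then try to show that all coefficients of the resulting polynomial in $t$ have a fixed sign for $\nu\in(0,1)$, or fall back on locating critical points of a quadratic or cubic in $\beta$ as was done for $F_0$ and $\tilde F_2$. Polynomial identities and the one-variable inequalities in $\nu$ would be verified symbolically, with Mathematica as in Section \ref{sec: supplementary proofs}.
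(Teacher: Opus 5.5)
\textbf{Where you agree with the paper.} Your reduction to $S_3$ and $S_5$ matches the paper. Your endpoint signs for $P_3$ are in fact cleaner than the paper's: $P_3=C_1C_2(C_0-C_4)>0$ at $\beta_2$ and $P_3=-2C_0C_4(C_1+C_3)<0$ at $\beta_4$. These replace the paper's explicit values $\mathrm{det}_{21}(\beta_2,\nu)=24\nu^2(4+\nu)^2$ and $\mathrm{det}_{21}(\beta_4,\nu)=-96(3+4\nu+\nu^2)^2$.

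\textbf{A small omission for $S_3$.} Your steps (b) and (c) only control $[0,\beta_4]$. You never exclude zeros of $P_3$ on $(\beta_4,1]$. This is needed for $S_3$ to be a graph, and it is used later in the paper, where $\mathrm{det}_2>0$ on $(\beta_4,1]$. It is easy to add. When $C_2\ge 0$, write $P_3=C_2(C_0C_1+C_3C_4)-2C_0C_4(C_1+C_3)$. The first term is $\le 0$ because $C_0C_1<-1<-C_3C_4$ there, and the second is $<0$ because $C_1+C_3>0$. The paper avoids the case split altogether. Its $\mathrm{det}_{21}$ is a cubic in $\beta$ with negative leading coefficient whose derivative has at most one positive root, so $\mathrm{det}_{21}(0,\nu)>0$ forces exactly one zero on $[0,\infty)$.

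\textbf{The step that fails: uniqueness of $\beta_5$.} You propose to prove uniqueness by monotonicity of $\mathrm{det}_3$ in $\beta$ on $[\beta_4,\cdot]$. This is false. Not only is $\mathrm{det}_3(\beta_4,\nu)>0$, but also $\partial_\beta\mathrm{det}_3(\beta_4,\nu)>0$ for every $\nu$; up to a positive factor this is the paper's $\partial_\beta\mathrm{det}_{31}(\beta_4,\nu)=168(1+\nu)(3+\nu)(40+44\nu+11\nu^2)$. So $\mathrm{det}_3$ increases just after $\beta_4$ and can vanish only after it turns around. Whenever $\beta_5$ exists, $\mathrm{det}_3$ is therefore not monotone on $[\beta_4,\beta_5]$.

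Two examples illustrate this. At $\nu=1/10$, $\mathrm{det}_3$ rises from about $3\cdot 10^3$ at $\beta_4$ and then falls to about $-1.7\cdot 10^4$ at $\beta=1$. At $\nu=1/2$ it rises and falls but stays positive on $[\beta_4,1]$ (about $1.3\cdot 10^2$ at $\beta=1$). So $\beta_5(1/2)>1$, and your sign test at $\beta=1$ fails there too.

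\textbf{What you need instead.} The right tool is unimodality. $\partial_\beta\mathrm{det}_3$ is a cubic in $\beta$ with negative leading coefficient. Suppose you show it is negative at some $\beta_*<\beta_4$ (the paper uses $\beta_*=\nu(2+\nu)(4+\nu)/(7(8+\nu))$) and positive at $\beta_4$. Then the cubic has three real roots, exactly one of them above $\beta_4$. Hence $\mathrm{det}_3$ increases and then decreases on $(\beta_4,1]$, has at most one zero there, and stays negative after that zero. This is exactly the sign information used later in the proof of Lemma \ref{lem: trace and determinant are positive}-(ii).

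Your plan for positivity of $\mathrm{det}_3$ on $[0,\beta_4]$ by sign bookkeeping is in line with the paper. The paper does it with two regroupings of $\mathrm{det}_3$, split only at $\beta_1$.
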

\begin{proof}
It is sufficient to consider the order of curves $S_3$ and $S_5$.

Let $\mathrm{det}_{21}=(1/2)(1-\nu)\nu(1+\nu)^2(3+\nu)^2(4+\nu)(5+\nu)(C_0C_1C_2-2C_0C_1C_4-2C_0C_3C_4+C_2C_3C_4)$. We compute
$$\begin{aligned}
\mathrm{det}_{21}&=(1-\nu)\nu(1+\nu)^2(3+\nu)^2(4+\nu)(5+\nu)+21 (1+\nu)(3+\nu)(-5+32\nu+24\nu^2+8\nu^3+\nu^4)\beta \\
&\quad\ -49(54+92\nu+71\nu^2+24\nu^3+3\nu^4)\beta^2+343(-9+4\nu+\nu^2)\beta^3.
\end{aligned}$$
Since the discriminant of $\partial_\beta\mathrm{det}_{21}(\beta,\nu)$ is
$9604(1701 + 16632\nu + 29326 \nu^2 + 20264\nu^3 + 7333\nu^4 + 1440\nu^5 + 120\nu^6)>0$, we see that $\mathrm{det}_{21}$ admits two different critical points and one of them is negative. Therefore, $\mathrm{det}_{21}(0,\nu)>0$ implies that $\mathrm{det}_{21}(\cdot,\nu)$ is either increasing and then decreasing on $[0,1]$ or monotonically decreasing on $[0,1]$. In either case, $\mathrm{det}_{21}(\beta,\nu)=0$ admits only one branch in $[0,+\infty)\times [0,1]$, that is, $(\beta_3(\nu),\nu)$. Therefore, for every $\nu\in(0,1)$, we have $\mathrm{det}_{21}>0,\forall\beta\in [0,\beta_3(\nu))$ and $\mathrm{det}_{21}<0,\forall \beta\in(\beta_3(\nu),+\infty)$. Since
$$
\mathrm{det}_{21}(\beta_2(\nu),\nu)=24\nu^2(4+\nu)^2>0\quad \text{and}\quad \mathrm{det}_{21}(\beta_4(\nu),\nu)=-96(3+4\nu+\nu^2)^2<0,
$$
we have $\beta_2(\nu)<\beta_3(\nu)<\beta_4(\nu)$ for all $\nu\in(0,1)$.

To see $S_5$, we can rewrite $\mathrm{det}_3$ as
\begin{equation}\label{equ: det3}
\begin{aligned}
\mathrm{det}_3&=4(C_2C_3(C_2-C_4)(2C_1-C_3)-C_2^2C_3(C_1-C_3)+C_0C_2(C_3^2+2C_1^2)\\
&\quad\ +C_0(C_4-C_2)(C_1+C_3)^2)\\
&=4(C_1C_3(C_2-2C_0)(C_2-C_4)+C_2C_3(C_2-C_4)(C_1-C_3)-C_2^2C_3(C_1-C_3)\\
&\quad\ +C_0C_2(C_3^2+2C_1^2)+C_0(C_4-C_2)(C_1^2+C_3^2)).
\end{aligned}
\end{equation}
For every $\beta\in[0,1]$, one can check that $C_0<C_4<C_3<0,C_2-C_4>0, C_1-C_3>0$ and
$$
2C_1-C_3=\frac{7(8+\nu)\beta-\nu(2+\nu)(4+\nu)}{\nu(2+\nu)(4+\nu)},\quad
C_2-2C_0=\frac{7(7+\nu)\beta+(1-\nu^2)(3+\nu)}{(1-\nu^2)(3+\nu)}>0.
$$
In particular, $2C_1-C_3\geq0$ for every $\beta\geq \beta_*$, where $\beta_*:=\nu(2+\nu)(4+\nu)/(7(8+\nu))<\beta_1(\nu)$. Using the first expression of $\mathrm{det}_3$ in \eqref{equ: det3}, we have $\mathrm{det}_3>0$ due to the fact that $C_2\leq 0$ for every $\beta_1\leq\beta\leq\beta_4$. Using the second expression in \eqref{equ: det3}, we also have $\mathrm{det}_3>0$, since $C_1,C_2<0$ for every $0<\beta<\beta_1$. In conclusion, we have $\mathrm{det}_3>0$ for every $\beta\in[0,\beta_4(\nu)]$ and $\nu\in(0,1)$.

Let $\mathrm{det}_{31}:=(1/12)(1-\nu)\nu^2(1+\nu)^2(2+\nu)^2(3+\nu)^2(4+\nu)^2(5+\nu)\mathrm{det}_{3}$. We compute
$$
\partial_{\beta}\mathrm{det}_{31}=g_0+g_1\beta+g_2\beta^2+g_3\beta^3,
$$
where
$$\begin{aligned}
g_0&=14\nu(1+\nu)(2+\nu)^2(3+\nu)(4+\nu)(-15+20\nu+37\nu^2+16\nu^3+2\nu^4),\\
g_1&=-98(-480+780\nu+4979\nu^2+7512\nu^3+5675 \nu^4+2496\nu^5+656\nu^6+96\nu^7+6\nu^8),\\
g_2&=2058(1+\nu)(3+\nu)(64+108\nu+59\nu^2+16\nu^3+2\nu^4),\\
g_3&=-9604(8+3\nu+\nu^2)(12+5\nu+\nu^2).
\end{aligned}$$
We also compute
$$\begin{aligned}
\partial_{\beta}\mathrm{det}_{31}(\beta_*,\nu)&=\frac{-84(1-\nu)\nu(2+\nu)(4+\nu)G(\nu)}{(8 + \nu)^3}<0,\\
\partial_{\beta}\mathrm{det}_{31}(\beta_4,\nu)&=168(1+\nu)(3+\nu)(40+44\nu+11\nu^2)>0,
\end{aligned}$$
where
$G(\nu)=2560+10176\nu+15160\nu^2+11317\nu^3+4461\nu^4+880\nu^5+68\nu^6$. Since $g_3<0$, $\mathrm{det}_{31}(\cdot,\nu)$ only possesses at most one critical point on $\beta\in(\beta_4,1]$. Therefore, $\mathrm{det}_{31}(\beta,\nu)$ is either increasing on $\beta\in(\beta_4,1]$ or first increasing then decreasing on $\beta\in(\beta_4,1]$. In either case, there is only one branch in $S_5$, named $(\beta_5(\nu),\nu)$. We have $\beta_5(\nu)>\beta_4(\nu)$ in $(\beta,\nu)\in[0,1]\times (0,1)$. The proof is now completed.
\end{proof}
\begin{proof}[Proof of Lemma \ref{lem: trace and determinant are positive}-(ii)]
Due to Remark \ref{rem: 0 to ((n+nu)^2-1)/7}, it is sufficient to consider $\beta\in((2\nu+\nu^2)/7,1]$ for every $\nu\in(0,1)$. Let
$D_4^+(e):=e^3\mathrm{det}^+_4(e)=\mathrm{det}_1+\mathrm{det}_2e+\mathrm{det}_3e^2+\mathrm{det}_4e^3.$
For every $\beta\in[0,1]$, we have known that $\mathrm{det}_1>0$. Moreover, we have
$$
\mathrm{det}_2\left\{\begin{aligned}
&>0,\quad \forall\ \beta_1(\nu)<\beta<\beta_3(\nu)\\
&<0,\quad \forall\ \beta_3(\nu)<\beta<\beta_4(\nu)\\
&>0,\quad \forall\
\beta_4(\nu)<\beta\leq 1
\end{aligned}\right.
,\quad
\mathrm{det}_3\left\{\begin{aligned}
&>0,\quad \forall\  0<\beta<\min\{1,\beta_5(\nu)\}\\
&<0,\quad \forall\  \min\{1,\beta_5(\nu)\}<\beta\leq 1.
\end{aligned}\right.
$$
Since $C_0+C_2+C_4=(21\beta+3(1-\nu)(5+\nu))/((-1+\nu)(5+\nu))<0$ for every $\beta\in[0,1]$, we have
$$
\mathrm{det}_4\left\{\begin{aligned}
&>0,\quad \forall\ 0<\beta<\beta_2(\nu)\\
&<0,\quad \forall\ \beta_2(\nu)<\beta<1.
\end{aligned}\right.
$$
From these estimates, we discuss in three cases:
\begin{itemize}
\item[(a)] For every $\beta\in(\beta_1,\beta_2]$, we have $\mathrm{det}_2,\mathrm{det}_3,\mathrm{det}_4\geq 0$. Then $\mathrm{det}^+_4(e)>0$ for every $e\in(0,1)$;
\item[(b)] For every $\beta\in(\beta_2,\beta_3)\cup (\beta_4,1]$, we have $\mathrm{det}_2,-\mathrm{det}_4>0$. Then
$$D^+_4(e)\geq\min\{\mathrm{det}_1,\mathrm{det}^+_4(1)\}.$$
\item[(c)] For every $\beta\in(\beta_3,\beta_4)$, we have $-\mathrm{det}_2,\mathrm{det}_3,-\mathrm{det}_4>0$. Then
$$D^+_4(e)\geq \min\{ D^+_4(e_*),\mathrm{det}^+_4(1)\},$$
where $e_*$ is the local minimal point of $D^+_4(e)$ in $[0,1]$ so that $\partial_eD^+_4(e_*)=0$.
\end{itemize}

Now we aim to prove $\partial_eD^+_4(1)=\mathrm{det}_2+2\mathrm{det}_3+3\mathrm{det}_4>0$ for every $\beta\in[\beta_2,1]$. Let $s=\beta-(4\nu+\nu^2)/7$. Since $\partial_eD^+_4(1)$ is a polynomial of $\beta$ up to order six, we find an explicit splitting for this polynomial as follows:
$$\begin{aligned}
\partial_{e}D^+_4(1)&=\mathrm{det}_2+2\mathrm{det}_3+3\mathrm{det}_4\\
&=10+2\beta\cdot\frac{s^3h_0+s^2(1-s)h_1+\nu(4+\nu)((1-s)^3h_2+s(1-s)^4h_3)}{(1-\nu)\nu^2(1+\nu)^3(2+\nu)^2(3+\nu)^3(4+\nu)^2(5+\nu)},
\end{aligned}$$
where
$$\begin{aligned}
h_1&=7((49392 - 250704 \nu + 400000 \nu^2) + \nu^2 (802172 (1 - \nu) + 101545 \nu^2)\\
&\quad\ +5172 (1 - \nu) \nu^3  + 8 (1 - \nu)^2 \nu^3 (123830 + 119047 \nu + 46828 \nu^2)\\
&\quad\ +\nu^8 (205334 + 62000 \nu + 11020 \nu^2 + 1080 \nu^3 + 45 \nu^4))>0,\\
h_2&=7(8100 + 54324 \nu + 42957 \nu^2 + 14048 \nu^3 + 11596 \nu^4 + 15240 \nu^5 + 10230 \nu^6 + 3840 \nu^7\\
&\quad\  + 840 \nu^8 + 100 \nu^9 + 5 \nu^{10})>0,\\
h_3&=7(102420 - 24264 \nu - 14594 \nu^2 - 44264 \nu^3 - 8773 \nu^4 + 29620 \nu^5 + 26735 \nu^6 + 10960 \nu^7\\
&\quad\  + 2485 \nu^8 + 300 \nu^9 + 15 \nu^{10})>0.
\end{aligned}$$
are positive functions on $\nu\in [0,1]$, and $h_0=h_{00}+h_{01}\beta+h_{02}\beta^2$ is a parabola of $\beta$ with
$$\begin{aligned}
h_{00}&=(1/7)(33882912+249905488\nu+266191156\nu^2+ 139182320\nu^3 +59571566 \nu^4 + 31765912 \nu^5\\
&\quad\ - 164062 \nu^6 - 26219904 \nu^7- 28729104 \nu^8 - 17465140 \nu^9 - 7130937 \nu^{10} - 2080520 \nu^{11} \\
&\quad\ - 442195 \nu^{12} - 67900 \nu^{13} - 7225 \nu^{14} - 480 \nu^{15} - 15 \nu^{16}),\\
h_{01}&=7(6050520 - 12550496 \nu - 9311576 \nu^2 - 3952768 \nu^3 - 3411664 \nu^4 - 1960224 \nu^5 \\
&\quad\ + 1026824 \nu^6 + 2271312 \nu^7 + 1643997 \nu^8 + 711460 \nu^9  + 205765 \nu^{10} + 40680 \nu^{11} \\
&\quad\ + 5335 \nu^{12} + 420 \nu^{13} + 15 \nu^{14}),\\
h_{02}&=2(-2420208 + 665968\nu+ 263548\nu^2 + 82640 \nu^3 + 191650 \nu^4 + 79356 \nu^5- 109707 \nu^6\\
&\quad\  - 136560 \nu^7 - 70575 \nu^8 - 20900 \nu^9 -
 3685 \nu^{10} - 360 \nu^{11} - 15 \nu^{12})<0,\quad \forall \nu\in [0,1].
\end{aligned}$$
Then $\min_{\beta\in[\beta_2,1]} h_0(\beta,\nu)=\min\{h_0(\beta_2,\nu),h_0(1,\nu)\}$ and we compute
$$\begin{aligned}
h_0(\beta_2,\nu)&=14(345744 + 3043976 \nu + 1420050 \nu^2 + 315128 \nu^3 + 167863 \nu^4 + 6772 \nu^5 \\
&\quad\ - 190593 \nu^6 - 194288 \nu^7 - 96263 \nu^8 - 28100 \nu^9 - 4925 \nu^{10} - 480 \nu^{11} - 20 \nu^{12})>0,\\
h_0(1,\nu)&=(\nu/7)(106830976 + 148742944 \nu + 87892928 \nu^2 + 21199120 \nu^3 + 8211220 \nu^4 \\
 &\quad\ + 8835831 \nu^5- 1112976 \nu^6 - 9171321 \nu^7 - 8528800 \nu^8 - 4430792 \nu^9 \\
 &\quad\ - 1528640 \nu^{10} - 368240 \nu^{11} - 62020 \nu^{12} - 7015 \nu^{13} - 480 \nu^{14} - 15 \nu^{15})>0,
\end{aligned}$$
are both positive on $\nu\in [0,1]$. Therefore, $\partial_eD^+_4(1)>0$ has been proved for every $\beta_2\leq \beta\leq 1$. Then $D^+_4(e)>0$ for every $e\in (0,1)$ in case (b). 

Next, we aim to prove $D^+_4(e_*)>0$ for every $\beta\in[\beta_3,\min\{1,\beta_4\}]$, where
$$e_*\in\left\{\frac{-\mathrm{det}_3+\sqrt{\mathrm{det}_3^2-3\mathrm{det}_2\mathrm{det}_4}}{\mathrm{det}_4},\frac{-\mathrm{det}_3- \sqrt{\mathrm{det}_3^2-3\mathrm{det}_2\mathrm{det}_4}}{\mathrm{det}_4}\right\},$$
is the local minimal point of $D^+_4$ solving $\partial_eD^+_4(e)=\mathrm{det}_2+2\mathrm{det}_3e_*+3\mathrm{det}_4e_*^2=0$. For every $\beta\in[\beta_3,\min\{1,\beta_4\}]$, we know that $-\mathrm{det}_4>0,\partial_eD^+_4(1)>0$ and $D^+_4(0)=\mathrm{det}_1>0$ are positive. Hence, $e_*=(-\mathrm{det}_3+(\mathrm{det}_3^2-3\mathrm{det}_2\mathrm{det}_4)^{1/2})/\mathrm{det}_4$, and we compute
that
$$
D^+_4(e_*)=\frac{1}{27\mathrm{det}_4^2}\big(2\mathrm{det}_3^3- 9\mathrm{det}_2\mathrm{det}_3\mathrm{det}_4+ 27\mathrm{det}_1\mathrm{det}_4^2-2(\mathrm{det}_3^2-3\mathrm{det}_2\mathrm{det}_4)^{3/2}\big).
$$
Let $\mathrm{det}_{2c}:=\mathrm{det}_2/(16C_1C_2C_3)=C_0C_1C_2-2C_0C_1C_4-2C_0C_3C_4+C_2C_3C_4$. Denote by
$$\begin{aligned}
D^*_4:&=\frac{1}{27\mathrm{det}_4^2}\big((2\mathrm{det}_3^3- 9\mathrm{det}_2\mathrm{det}_3\mathrm{det}_4+ 27\mathrm{det}_1\mathrm{det}_4^2)^2-4(\mathrm{det}_3^2-3\mathrm{det}_2\mathrm{det}_4)^3\big)\\
&=4\mathrm{det}_1\mathrm{det}_3^3+4\mathrm{det}_2^3\mathrm{det}_4-\mathrm{det}_2^2\mathrm{det}_3^2-18\mathrm{det}_1\mathrm{det}_2\mathrm{det}_3\mathrm{det}_4+
27\mathrm{det}_1^2\mathrm{det}_4^2\\
&=(C_2^2\mathrm{det}_3^2)\cdot D_{41}+(-\mathrm{det}_2\mathrm{det}_3)\cdot D_{42}+(\frac{(C_1+C_3)\mathrm{det}_2^3}{16C_1C_3})\cdot D_{43}+27\mathrm{det}_1^2\mathrm{det}_4^2.
\end{aligned}$$
where
$$\begin{aligned}
D_{41}&=\frac{1}{C_2^2\mathrm{det}_3^2}\big(4\mathrm{det}_1\mathrm{det}_3^3-\mathrm{det}_2^2\mathrm{det}_3^2+76C_1C_2^2(\frac{C_3+C_4}{C_1+C_3})\mathrm{det}_2\mathrm{det}_3^2\mathrm{det}_4\big)\\
&=64C_1^2(4C_3^2(C_0C_4\mathrm{det}_3-\mathrm{det}_{2c}^2)+19C_2C_3(C_3+C_4)(C_0+C_2+C_4)\mathrm{det}_{2c}),\\
D_{42}&=\frac{-1}{\mathrm{det}_2\mathrm{det}_3}\big(\frac{\nu(C_1+C_3)}{16C_1C_3}\mathrm{det}_{2}^3\mathrm{det}_{3}+18\mathrm{det}_{1}\mathrm{det}_{2}\mathrm{det}_{3}\mathrm{det}_{4}-76C_1C_2^2(\frac{C_3+C_4}{C_1+C_3})\mathrm{det}_2\mathrm{det}_3^2\mathrm{det}_4\big)\\
&=76C_1C_2^2(C_3+C_4)(C_0+C_2+C_4)\mathrm{det}_{3}+\nu C_2(C_1+C_3)\mathrm{det}_{2}\mathrm{det}_{2c}+18\mathrm{det}_{1}\mathrm{det}_{4},\\
D_{43}&=\frac{16C_1C_3}{(C_1+C_3)\mathrm{det}_2^3}\big(4\mathrm{det}_2^3\mathrm{det}_4-\frac{\nu(C_1+C_3)}{16C_1C_3}\mathrm{det}_{2}^3\mathrm{det}_{3}\big)
=64C_1C_3(C_0+C_2+C_4)-\nu \mathrm{det}_{3}.
\end{aligned}$$
Recall that $-\mathrm{det}_2,\mathrm{det}_3,C_1,C_1+C_3>0$ for every $\beta\in[\beta_3,\min\{1,\beta_4\}]$. We obtain that $-\mathrm{det}_2\mathrm{det}_3>0$ and $(C_1+C_3)\mathrm{det}_2^3/(C_1C_3)>0$ for every $\beta\in[\beta_3,\min\{1,\beta_4\}]$.
Now we aim to prove $D_{41},D_{42},D_{43}>0$ for every $\beta\in[\beta_2,\min\{1,\beta_4\}]$. Let $b:=\beta-(4\nu+\nu^2)/7\in[0,3/7]$ and $s=1-t:=7b/3\in[0,1]$. 
Note that $\beta\in[\beta_2,\min\{1,\beta_4\}]$ implies $b\in [0,3/7]$. Since $D_{41},D_{42},D_{43}$ are all polynomials of $\beta$, we can manipulate them as
$$\begin{aligned}
D_{41}(b,\nu)&=128C_1^2C_2C_3\cdot \frac{I_0\beta^4+(I_1+(I_2+I_3t)s^4+I_4s^2t^2+(I_5+I_6s)t^4)(1-\beta)}{7^3(1-\nu)^2\nu^2(1+\nu)^3(2+\nu)^3(3+\nu)^3(4+\nu)^3(5+\nu)^3},\\
D_{42}(b,\nu)&=16C_1C_2^2\cdot \frac{(J_0+J_1 s+J_2st)s^3t+(J_3+s^5J_4)t^3+(J_5+J_6s+J_7s^3)st^4 + J_8s^3 t^5}{(1-\nu)^2\nu^2 (1+\nu)^4(2+\nu)^3(3+\nu)^4(4+\nu)^4(5+\nu)^3},\\
D_{43}(b,\nu)&=12\cdot \frac{K_0+K_1b(1-b)+K_2b^3+K_3b^2(1-b)^2}{(1-\nu)\nu(1+\nu)^2 (2+\nu)^2(3+\nu)^2(4+\nu)^2(5+\nu)},
\end{aligned}$$
where $I_i,J_i,K_i$ are positive polynomials on $[0,3/7]\times[0,1]$ listed below. Then we obtain that both $D^*_4>0$ and $D^+_4(e_*)>0$ happen for every $\beta\in[\beta_3,\min\{1,\beta_4\}]$. Finally, we conclude that $D^+_4(e)>0$ for every $e\in[0,1]$ in case (c). Then $D^+_4>0$ for every $\nu\in(0,1)$ and $(\beta,e)\in[0,1]\times [0,1]$.
\end{proof}

To support the previous proof, one can easily check that all the following polynomials $I_i=I_i(b,\nu),J_i=J_i(b,\nu),K_i=K_i(b,\nu)$ are positive for every $(b,\nu)\in [0,3/7]\times(0,1)$.
\begin{align*}
I_0&=343(318988323106 (1 - \nu)^6 \nu^6 + (1 - \nu)^5 \nu^5(709472721174 (1 - \nu)^2 + 32628342299 \nu^2) + (1 - \nu)^3 \nu^3\cdot\\
&\quad\ ((707818267936 (1 -\nu)^4 + 3641094530 \nu^4) (1 - \nu) \nu + (343012672392 (1 -\nu)^6 + 12267971328 \nu^6)) \\
&\quad\ + (1 - \nu)^2 \nu^2 (66265149648 (1 - \nu)^8 + 438255383 \nu^8) + (1 - \nu)\nu(15252878392 (1 - \nu)^6 + 603430459 \nu^6)  \\
&\quad\ + 128*10^8 (0.85^3 - \nu^3)^2(1 -\nu) \nu + 64*10^9 \nu^2 (0.85^2 -\nu^2)^2 + 1896652800 (1 - \nu)^5  + 754785954 \nu^5 \\
&\quad\ + 22371740 \nu^{13} + 25051317 \nu^{14} + 7031231 \nu^{15} + 1105850 \nu^{16} + 104668 \nu^{17} + 5590 \nu^{18} + 130 \nu^{19},\\
I_1&=18522000 (4 + \nu) ((15 b - 4\nu (4 + \nu))^2 + 1147 b^2 + 4\nu^2 (4 + \nu)^2 + b\cdot \nu (4 + \nu)).\\
I_2&=241645420800 (1 - \nu^3) + 1260506753856 \nu( 1- \nu^2) + 2050516055808 \nu^2( 1 - \nu) + 521733557184 \nu^3\\
&\quad\ + 1730182425720 \nu^3 (1 - \nu^2)^3 + 2921955506072 \nu^6 + 6633393718672 \nu^7 + 15753975073825 \nu^8 \\
&\quad\ + 14197563542359 \nu^9 +
 1642563861432 \nu^3 (1 - \nu)^2 + 3446328021553 \nu^{10} + 962775581704 \nu^{11} \\
&\quad\ + 960140343217 \nu^{10} (1 - \nu^2) +
 947144396383 \nu^{10}(1 - \nu^3) + 452003667786 \nu^{10} (1 - \nu^4) \\
&\quad\ + 127612732476 \nu^{10} (1 - \nu^5) + 14037277419 \nu^{10} (1 - \nu^6) +
 5487164043 \nu^{17} + 3457358326 \nu^{18} \\
&\quad\ + 1027169240 \nu^{19} +
 202448765 \nu^{20} + 28122083 \nu^{21} + 2744586 \nu^{22} + 180068 \nu^{23} +
 7150 \nu^{24} + 130 \nu^{25},\\
I_3&=124023312000 + 1216470286080 \nu+ 1204432368354\nu^2 + 99276282000 \nu^3 + 1544534463582 \nu^2 (1 - \nu^2)^2 \\
&\quad\ + 1620812805408\nu^3 (1 -\nu^2)^2 + 9654250620761\nu^6 + 32853763782805\nu^7 + 47722287341434\nu^8 \\
&\quad\ + 40589709344272\nu^9 + 13162363827505 \nu^{10} + 4656337501615 \nu^{11} +  2708100932398 \nu^{10} (1 - \nu^2) \\
&\quad\ + 3230941529560 \nu^{10} (1 - \nu^3) +
 1661051347221\nu^{10} (1 - \nu^4) + 500525897751 \nu^{10} (1 -\nu^5) \\
&\quad\ + 64788105330 \nu^{10} (1 - \nu^6) + 17794421376 \nu^{17} + 12790147447 \nu^{18} +
 3938461781 \nu^{19} + 790914794 \nu^{20} \\
&\quad\ + 111112640 \nu^{21} + 10918674 \nu^{22} + 719102 \nu^{23} + 28600\nu^{24} + 520 \nu^{25},\\
I_4&=\nu (194260323600 + 1041641902980 \nu + 4702581180036\nu^2 +
    9421403814825\nu^3 + 14227219295475\nu^4  \\
&\quad\ + 27898819192772\nu^5+ 55000286797582\nu^6 + 77607181907617 \nu^7 + 73193852329735\nu^8\\
&\quad\  + 29321729717068 \nu^9 + 13095199552939\nu^{10} + 3492436109089\nu^9 (1 -\nu^2) + 6288457544413\nu^9 (1 -\nu^3) \\
&\quad\ + 3639542450967\nu^9 (1 -\nu^4) + 1203800745639\nu^9 (1 -\nu^5) + 187451603343\nu^9 (1 -\nu^6) + 30916447305 \nu^{16}\\
&\quad\  + 28463618350\nu^{17} + 9260743244\nu^{18} + 1911700964\nu^{19} + 272977208\nu^{20} + 27087840\nu^{21} + 1793660 \nu^{22} \\
&\quad\ + 71500 \nu^{23} + 1300\nu^{24})\\
&+ 7 b (37140314400 + 390647305440\nu + 866912460912\nu^2 -  45329796384\nu^3 - 2807177085248\nu^4 \\
&\quad\ - 3813210047396 \nu^5 - 5315060210 \nu^6 + 5558428206018\nu^7 + 7590346635878 \nu^8 + 5250708041890\nu^9 \\
&\quad\ + 1760280008837 \nu^{10} - 237319765741 \nu^{11} - 603266516808 \nu^{12} - 335505829558 \nu^{13} - 97614680521 \nu^{14} \\
&\quad\  - 9434791343 \nu^{15} + 4705949148\nu^{16} + 2554200390\nu^{17} + 664929512\nu^{18} + 111178972\nu^{19} + 12476864\nu^{20}\\
&\quad\  + 914788\nu^{21} + 39780\nu^{22} + 780\nu^{23}),\\
I_5&=\nu^3 (4 + \nu)^3 (5738644800 + 18296792640 \nu + 24292250784 \nu^2 +
22370899520 \nu^3 + 29209611064 \nu^4 \\
&\quad\ + 40145872456 \nu^5 + 35996617304 \nu^6 + 7319255909 \nu^7 +
 566191800 \nu^8 + (5030541720 (1 -\nu^2)\\
&\quad\ + 3542767236 (1 - \nu^3) + 1188542244 (1 - \nu^4) + 140026419 (1 - \nu^5)) \nu^7 + 53365815 \nu^{13} + 30652353 \nu^{14}\\
&\quad\  + 7628317 \nu^{15} + 1141340 \nu^{16}  + 105578 \nu^{17} + 5590 \nu^{18} + 130 \nu^{19}), \\
I_6&=\nu^2 (4 + \nu)^2 (6215850900 + 111272090580\nu + 276773602533 \nu^2 + 366947626719 \nu^3 + 500138559512 \nu^4 \\
&\quad\ + 839451658266 \nu^5 + 1108468475510 \nu^6 + 945649109766\nu^7 + 264954806710 \nu^8 + 4581438074 \nu^9\\
&\quad\ + 107946065802 \nu^8 (1 - \nu^2) + (87928518714 + 34802068134 \nu)\nu^8 (1 - \nu^3) + 6617278746 \nu^9 (1 - \nu^4)\\
&\quad\ + 552146583 \nu^{14}  + 770745393 \nu^{15} + 254828074 \nu^{16} + 49722180 \nu^{17} + 6304918 \nu^{18} + 512922\nu^{19}\\ &\quad\ + 24440 \nu^{20} + 520\nu^{21}),\\
J_0&=13824(3+4\nu+\nu^2)^4(928+1576\nu+1536 \nu^2+803\nu^3+197\nu^4+18\nu^5),\\
J_1&=1728(3+4\nu+\nu^2)^3 (103872 + 654896\nu+ 1184224\nu^2 + 1039034 \nu^3 + 503002 \nu^4 + 136847 \nu^5 \\
&\quad\ + 19585 v^6 + 1146 v^7),\\
J_2&=2592 (3 + 4\nu+\nu^2)^2 (131304 + 3426438\nu+ 12294878\nu^2 + 19264450\nu^3 + 16666402\nu^4 + 8680667\nu^5 \\
&\quad\ + 2791869\nu^6 + 543001\nu^7 + 58569 \nu^8 + 2690\nu^9),\\
J_3&=30780\nu^2 (1 +\nu)^2 (4 +\nu)^5 (375 + 1415\nu + 1171\nu^2 + 361\nu^3 +  38\nu^4),\\
J_4&=81(41508288 + 1862770464\nu+ 11951373816 \nu^2 + 35330437075\nu^3 + 61279679097 \nu^4 \\
&\quad\ + 69169678653\nu^5 + 53583593209\nu^6 + 29269308867 \nu^7 + 11382733755\nu^8 + 3134375639\nu^9 \\
&\quad\ + 597182883 \nu^{10} + 74871798 \nu^{11} + 5556344\nu^{12} + 184896\nu^{13}),\\
J_5&=108 \nu(4 +\nu)^3 (1593000 + 16578225\nu + 54224115\nu^2 + 88491266\nu^3 + 84139780 \nu^4 + 49615937\nu^5 \\
&\quad\ + 18377575\nu^6 + 4152092\nu^7 + 521722\nu^8 + 27888\nu^9),\\
J_6&=81(4 +\nu)^2 (432000 + 46686600\nu + 325921440\nu^2 + 945321835\nu^3 + 1512906297\nu^4 + 1500039736\nu^5 \\
&\quad\ + 971849206 \nu^6 + 418953983 \nu^7 + 119063577\nu^8 + 21403542\nu^9 + 2202968\nu^{10} + 98816\nu^{11}),\\
J_7&=162(41333328 + 1867391352\nu + 11970628812 \nu^2 + 35356663039 \nu^3 +
 61297005537\nu^4 + 69175942653 \nu^5 \\
&\quad\ + 53584863181\nu^6 + 29269445055\nu^7 + 11382739803 \nu^8 + 3134375639 \nu^9 +597182883 \nu^{10} \\
&\quad\ + 74871798\nu^{11} + 5556344 \nu^{12} + 184896 \nu^{13}),\\
J_8&=81(42208128 + 1872338832\nu+ 11989210536 \nu^2 + 35382594163\nu^3 + 61314411033 \nu^4 + 69182267565\nu^5 \\
&\quad\ + 53586144169\nu^6 + 29269581891 \nu^7 + 11382745851 \nu^8 + 3134375639 \nu^9 +597182883\nu^{10} \\
&\quad\ + 74871798 \nu^{11} + 5556344 \nu^{12} + 184896 \nu^{13}).\\
K_0&=4\nu(4 + \nu)^2 (720 + 1845\nu+ 1532\nu^2 + 583\nu^3 + 80 \nu^4),\\
K_1&=56 (2880 + 9252\nu + 11189\nu^2 + 8264 \nu^3 + 3644\nu^4 + 932\nu^5 + 131\nu^6 + 8 \nu^7),\\
K_2&=56 (2064 + 12638\nu+ 20831\nu^2 + 15884 \nu^3 + 6854\nu^4 + 1766\nu^5 + 255\nu^6 + 16 \nu^7) \\
&\quad\ - 28 b (-288 + 14108\nu+ 27250 \nu^2 + 20000 \nu^3 + 7883 \nu^4 + 1864 \nu^5 + 255 \nu^6 + 16 \nu^7),\\
K_3&=7 (31776 + 82500 \nu+ 121005\nu^2 + 82744 \nu^3 + 31875 \nu^4 + 7456 \nu^5 +
1020\nu^6 + 64 \nu^7).
\end{align*}
In particular, the following polynomials are positive for every $\nu\in(0,1)$.
\begin{align*}
I_4(\frac{3}{7},\nu)&=111420943200 + 1366202239920\nu + 3642379285716 \nu^2 +
 4566591790884 \nu^3 + 999872559081 \nu^4 \\
&\quad\ + 2787589153287 \nu^5 +
 27882874012142 \nu^6 + 71675571415636 \nu^7 + 100378221815251 \nu^8 \\
&\quad\ + 88945976455405 \nu^9 + 31479222136333 \nu^{10} + 12383240255716 \nu^{11} +
 5302235659513 \nu^{10} (1 - \nu^2) \\
&\quad\ + 7294975033087 \nu^{10} (1 -\nu^3) +
 3932386492530 \nu^{10} (1 - \nu^4) + 1232105119668 \nu^{10} (1 - \nu^5) \\
&\quad\ +
 173333755899 \nu^{10} (1 - \nu^6) + 38579048475 \nu^{17} + 30458406886 \nu^{18} +
 9594280160 \nu^{19} \\
&\quad\ + 1949131556 \nu^{20} + 275721572 \nu^{21} + 27207180\nu^{22} +
 1796000 \nu^{23} + 71500 \nu^{24} + 1300 \nu^{25},\\
K_2(\frac{3}{7},\nu)&=4 (29760 + 134608 \nu + 209884 \nu^2 + 162376 \nu^3 + 72307 \nu^4 + 19132 \nu^5 + 2805 \nu^6 + 176 \nu^7).
\end{align*}

\hfill\newline
\noindent{\bf Acknowledgment.}
XH and ZQ are partially supported by the National Natural Science Foundation of China (Grant number: 12521001) and the Taishan Scholars Climbing Program of Shandong Province (Grant number: TSPD20240802). LL is partially supported by the National Natural Science Foundation of China (Grant number: 12401238) and the Natural Science Foundation of Shandong Province (Grant number: ZR2024QA188). YO is partially supported by the National Natural Science Foundation of China (Grant number: 12371192), the Young Taishan Scholars Program of Shandong Province (Grant number: tsqn202312055), and the Qilu Young Scholar Program of Shandong University. XH, LL, YO and ZQ thank the support of the School of Mathematics at Shandong University. PS is partially supported by the National Natural Science Foundation of China (Grant number: w2431007). LL and PS thank the support of the Shenzhen International Center for Mathematics - SUSTech.

\end{document}